\newtheorem{theorem}{Theorem}[section]
\newtheorem{defn}[theorem]{Definition}
\newtheorem{lemma}[theorem]{Lemma}
\newtheorem{eple}[theorem]{Example}
\newtheorem{rmk}[theorem]{Remarks}
\newtheorem{dsc}[theorem]{Discussion}
\newtheorem{nota}[theorem]{Notation}
\newsavebox{\indbin}
\savebox{\indbin}{\begin{picture}(0,0)
\newlength{\gnu}
\settowidth{\gnu}{$\smile$} \setlength{\unitlength}{.5\gnu}
\put(-1,-.65){$\smile$} \put(-.25,.1){$|$}
\end{picture}}
\newcommand{\be}{\begin{enumerate}}
\newcommand{\bd}{\begin{defn}}
\newcommand{\bt}{\begin{theorem}}
\newcommand{\bl}{\begin{lemma}}
\newcommand{\ee}{\end{enumerate}}
\newcommand{\ed}{\end{defn}}
\newcommand{\et}{\end{theorem}}
\newcommand{\el}{\end{lemma}}
\begin{document}
\title{Flash Geometry of Algebraic Curves}
\author{Tristram de Piro}
\address{ Flat 1, 98 Prestbury Road, Cheltenham, GL52 2DJ} \email{depiro100@gmail.com}
\thanks{}
\begin{abstract}

In this paper, we develop the theory of flashes of an algebraic curve. We show that the theory is birationally invariant in a sense which we will make more precise below. We also show how the theory provides a foundation for the method of asymptotic degenerations, a particular class of degenerations of plane projective algebraic curves. In particular, we consider the geometrical technique in relation to the Severi problem of degenerating nodal curves to lines in general position, and correct Severi's original proof of his conjecture.

\end{abstract}
\maketitle

\begin{section}{Degenerating Nodal Curves}

In this section, we return to the convention in \cite{depiro6} that a plane algebraic curve $C$ is an \emph{irreducible} subvariety of $P^{2}$, of dimension $1$. By a node, we mean an ordinary double point, as in Definition 1.2. of \cite{depiro8}. By a plane nodal curve, we will mean a plane algebraic curve which has at most \emph{ordinary double points} as singularities, as defined in \cite{depiro8}. In particular, this convention is \emph{different} from the use of the term "node" in \cite{depiro6}, see also the explanation in \cite{depiro8}. We will also occasionally make use of the further assumption on $C$, see \cite{depiro6} and \cite{depiro7};\\

A generic point of $C$ has character $(1,1)$ $(\dag)$\\

This is to exclude certain exceptional curves in non-zero characteristic. We will return to these exceptional cases in the final section.\\

\indent An interesting geometric problem in the theory of nodal curves is to provide a general method of "degenerating" a plane nodal curve $C$ of degree $m$ into a union of $m$ lines, having transverse intersections. More specifically, if $C$ is a plane nodal curve of degree $m$, one can consider degenerations, over an irreducible parameter space, which, for convenience of terminology, we denote by $P^{1}$, of the form $\{C_{t}\}_{t\in P^{1}}$ with the following properties;\\

(i). The family $\{C_{t}:t\in P^{1}\}$ consists of plane reduced algebraic curves.\\

(ii). $C_{0}=C$.\\

(iii). For any $t\in P^{1}$, all the singularities of $C_{t}$ are
nodes.\\

(iv). For any $t\in (P^{1}\setminus\infty)$, $C_{t}$ is a nodal curve (in particularly irreducible) and has the same number of singular points as $C$.\\

(v) $C_{\infty}$ is a union of $m$ lines $\{l_{1},\ldots,l_{m}\}$.\\
 in general position, that is, the lines are distinct and, if $m\geq 3$, there does not exist a triple $(j_{1},j_{2},j_{3})$, with $j_{1}<j_{2}<j_{3}$ such that $(l_{j_{1}}\cap l_{j_{2}})=(l_{j_{2}}\cap l_{j_{3}})$.\\

Severi referred to a plane nodal curve, having such a degeneration, as a "nodal curve of the main stream". He conjectured that \emph{any} nodal curve is a nodal curve of the main stream. However, his proof of the result is erroneous.\\
\indent We do not attempt to answer this conjecture fully in the following paper. However, it is hoped that the general theory of flash geometry, which we will develop here, can help to resolve the fallacious steps in Severi's argument. We will return to this question in a later section.\\

The following lemma will be used repeatedly in arguments concerning nodal curves. The reader should look at \cite{depiro6} or \cite{depiro7} for relevant terminology.\\

\begin{lemma}{Nodal Curve Presentation}\\

Let $C$ be a plane nodal curve of degree $m$, satisfying $(\dag)$. Then there exists an affine coordinate system $(x,y)$ for $P^{2}$, such that, in this coordinate system;\\

(i). The line at $\infty$ cuts $C$ transversely in $m$ distinct non-singular points.\\

(ii). The tangent lines to $C$ parallel to the $y$-axis all have $2$-fold contact(contatto), and are based at non-singular points of $C$, which are in finite position.\\

(iii). Considering $x$ as a rational function on $C$, in the terminology of \cite{depiro7}, the weighted set $G=(x=0)$ consists of $m$ distinct branches, each counted once, lying inside $NonSing(C)$. Moreover, these branches are all in finite position, with base points distinct from the points of contact in $(ii)$.\\

(iv). There is no line $l$, parallel to the $y$-axis, intersecting $C$ in more than one point of the set consisting of its finitely many ordinary double points or the finitely many points named in (ii).

\end{lemma}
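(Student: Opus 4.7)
The plan is to build the coordinate system in three stages, each a Bertini-style genericity argument: first choose the line at infinity $L_\infty\subset P^2$, then the point $P_\infty\in L_\infty$ at infinity of the $y$-axis, then the line $(x=0)$ itself (some line through $P_\infty$ distinct from $L_\infty$). The four conditions translate into finitely many open conditions on these parameters; using that $C$ has only finitely many nodes and, by hypothesis $(\dag)$, only finitely many flex tangents, each bad locus will turn out to be a proper closed subvariety of the relevant parameter space, and the intersection of the corresponding opens will be non-empty.

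Condition (i) is standard: $L_\infty$ generic in $(P^2)^\ast$ avoids the dual curve $C^\ast$ and the finitely many pencils of lines through the nodes of $C$, hence cuts $C$ transversely in $m$ distinct smooth points. With $L_\infty$ fixed, condition (ii) asks that every tangent line to $C$ passing through $P_\infty$ be a simple (non-flex) tangent at a smooth point of $C$ that lies away from $L_\infty$; equivalently, $P_\infty$ must avoid the set $C\cap L_\infty$, the finitely many intersections of flex tangents with $L_\infty$, the intersections of the tangents at each branch of each node with $L_\infty$, and the intersections of $T_QC$ with $L_\infty$ for the $m$ points $Q\in C\cap L_\infty$. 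Each is a finite subset of $L_\infty$, so (ii) holds for all but finitely many $P_\infty\in L_\infty$.

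For condition (iv), $P_\infty$ must additionally fail to be collinear with any two distinct special points (nodes or tangent-contact points of (ii)). For pairs of nodes, this excludes finitely many further points of $L_\infty$. For pairs of tangent-contact points, the contact points themselves depend on $P_\infty$, so the argument must be indirect: let $\Sigma\subset(C\times C)\setminus\Delta$ be the constructible set of pairs $(Q_1,Q_2)$ of smooth points for which the three lines $T_{Q_1}C$, $T_{Q_2}C$ and $\overline{Q_1Q_2}$ are concurrent. This is one non-trivial algebraic condition on a surface, so $\Sigma$ is at most one-dimensional; the image of $\Sigma$ in $P^2$ under $(Q_1,Q_2)\mapsto T_{Q_1}C\cap T_{Q_2}C$ is therefore at most one-dimensional and meets $L_\infty$ in a finite set. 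A parallel argument handles the mixed case of one node and one tangent-contact point. Removing all these finitely many points from $L_\infty$ leaves a cofinite choice of $P_\infty$ satisfying (ii) and (iv) simultaneously. Finally, with $P_\infty$ chosen, (iii) becomes a genericity choice in the pencil of lines through $P_\infty$: avoid $L_\infty$, the finitely many lines in the pencil that are tangent to $C$ or pass through a singular point, and the finitely many lines joining $P_\infty$ to a contact point of (ii); only finitely many lines of the pencil are excluded.

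I expect the main obstacle to be condition (iv) for pairs of tangent-contact points, since those points are defined in terms of the very $P_\infty$ one is trying to constrain. The dimension count for $\Sigma$ is the key technical step, and it relies on $(\dag)$ to ensure that the dual map $C\to C^\ast$ is generically birational (so that $\Sigma$ is genuinely a proper subvariety of $C\times C$ rather than the whole thing), ruling out the pathological strange-curve behaviour that can occur in positive characteristic.
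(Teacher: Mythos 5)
Your proposal is correct and rests on the same ingredients as the paper's proof --- finiteness of flex tangents via $(\dag)$, finiteness of bitangent lines via duality, finiteness of nodes and of tangent lines through a given node --- but it organizes the genericity argument in the reverse order. The paper first chooses the pencil centre $P$ (your $P_{\infty}$) off an explicitly enumerated finite list of bad lines (flex tangents, nodal tangent lines, lines through two nodes, bitangent lines --- finite by Lemma 5.11 of \cite{depiro13} --- and lines through a node that are tangent elsewhere, finite by considering the pencil through each node), and only then takes the line at infinity to be a generic member of the pencil through $P$, finishing with a translation to arrange $(x=0)$; you fix $L_{\infty}$ first and then constrain $P_{\infty}\in L_{\infty}$. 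The two orderings are equivalent, yours needing only the extra observation that a generic $L_{\infty}$ is not itself one of the finitely many bad lines nor a component of the bad loci. Where you genuinely diverge is the treatment of condition (iv) for pairs of tangent-contact points via the incidence set $\Sigma$. That detour is sound but can be short-circuited: since $T_{Q_{1}}C$ and the secant $\overline{Q_{1}Q_{2}}$ already meet at $Q_{1}\in C$, and $P_{\infty}$ has been arranged to lie off $C$, concurrency of the three lines at $P_{\infty}$ forces $T_{Q_{1}}C=\overline{Q_{1}Q_{2}}=T_{Q_{2}}C$, i.e.\ the vertical line is a bitangent; so the whole case collapses to the finiteness of bitangent lines, which is exactly what the paper invokes from duality (and this is indeed where $(\dag)$ is needed, as you note, to rule out an inseparable or non-birational dual map). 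Two small points to tighten: the map $(Q_{1},Q_{2})\mapsto T_{Q_{1}}C\cap T_{Q_{2}}C$ is not single-valued on the bitangent locus (there the fibre is the whole line), so the ``image'' should be read off from the incidence correspondence, and its trace on $L_{\infty}$ is finite only after noting that $L_{\infty}$, being generic and chosen independently of this fixed one-dimensional set, is not one of its components.
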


\begin{proof}
By the assumption $(\dag)$ and Remarks 6.6 of \cite{depiro6}, there exist only finitely many non-ordinary branches. Hence, there exist finitely many tangent lines $\{l_{\gamma_{1}},\ldots,l_{\gamma_{r}}\}$, based at $\{p_{1},\ldots,p_{r}\}$, (possibly with repetitions), such that;\\

$I_{\gamma_{j}}(C,p_{j},l_{\gamma_{j}})\geq 3$, (for $1\leq j\leq r$)\\

Moreover, $C$ has finitely many ordinary double points $\{q_{1},\ldots,q_{s}\}$ as singularities. Let $\{l_{\gamma_{q_{1}}^{1}},l_{\gamma_{q_{1}}^{2}},\ldots,l_{\gamma_{q_{s}^{1}}},l_{\gamma_{q_{s}^{2}}}\}$
be the $2s$ tangent lines, (possibly with repetitions), corresponding to these ordinary double points. Let $\{l_{q_{i}q_{j}}:1\leq i<j\leq s\}$ be the finitely many lines, (possibly with repetitions), passing through at least $2$ of these ordinary double points. Let $\{l_{1},\ldots,l_{t}\}$ be the finitely many lines which are tangent to at least $2$ distinct branches of $C$. That there exist finitely many such lines follows from duality arguments, the reader should look at Lemma 5.11 of \cite{depiro13}. Finally, let $\{l_{1},\ldots,l_{w}\}$ be the finitely many lines, which have the property that, they pass through one of the ordinary double points $\{q_{1},\ldots,q_{s}\}$, and are also tangent to a branch, centred at a nonsingular point. That there are finitely many such lines follows from consideration of each of the finite number of pencils centred at $\{q_{1},\ldots,q_{s}\}$, and results from \cite{depiro6}.\\
\indent Now choose a point $P$ not lying on $C$ or any of the above defined lines. Let $\Sigma=\{l_{\lambda}^{P}\}_{\lambda\in P^{1}}$ be the pencil defined by all lines passing through the point $P$. Then $\Sigma$ defines a $g_{m}^{1}$ on $C$ without fixed branches. By Lemma 2.17 of \cite{depiro6}, for generic $\lambda$, $l_{\lambda}^{P}$ intersects $C$ transversely in $m$ distinct branches, based at non singular points of $C$. By construction, we also have that, if $l_{Px}$ belongs to $\Sigma$ and defines the tangent line $l_{\gamma_{x}}$ to a branch $\gamma_{x}$ based at $x$, then $x\in NonSing(C)$, $l_{Px}$ has $2$-fold contact (contatto) with the branch $\gamma_{x}$, and $l_{Px}$ cannot pass through either an ordinary double point or be tangent to another branch $\gamma_{x'}$ of $C$, for $x'$ distinct from $x$. $(*)$. Moreover, if $l_{Px}$ belongs to $\Sigma$, it can pass through at most one ordinary double point of $C$, $(**)$. Now choose a homography, sending the point $[0:1:0]$ and the line $Z''=0$, in the original coordinates $[X'':Y'':Z'']$ on $P^{2}$, to $P$ and $l_{\lambda}^{P}$. Let $[X':Y':Z']$ be the new coordinate system defined by this homography. For the affine coordinate system $(x',y')$, defined by $x'={X'\over Z'}$ and $y'={Y'\over Z'}$, we have that the line at $\infty$ has the property $(i)$. The lines parallel to the $y'$-axis correspond to the lines, excluding $l_{\lambda}^{P}$, in the pencil defined by $\Sigma$, in this new coordinate system. Hence, $(ii)$ and $(iv)$ follows immediately from $(*)$ and $(**)$. Now, considering $x'$ as a rational function on $C$, by the same argument as above, for generic $\mu\neq\infty$, the line defined by $(x'=\mu)$, intersects $C$ transversely in $m$ distinct non-singular branches. Moreover, as $P$ ($[0:1:0]$ in this new coordinate system) does not lie on $C$, it follows that all these intersections lie in finite position, $(***)$. Now, let $\theta$ be the homography of $P^{2}$ defined by the affine translation $(x',y')\mapsto (x'-\mu,y')$ and let $(x,y)$ be the new coordinates defined by this map. As $P$ and the line at $\infty$ are fixed by $\theta$, and, moreover, the tangent lines parallel to the $y'$-axis are translated to tangent lines parallel to the $y$-axis, conditions $(i)$, $(ii)$ and $(iv)$ are preserved. By $(***)$, we then have that $(iii)$ holds as well.

\end{proof}

\end{section}

\begin{section}{Newtonian Methods}

We now discuss some geometric methods, originally developed by Isaac Newton, that constitute the main ideas behind the technique of flash geometry.\\

\begin{defn}{Asymptotes, Hyperbolic and Parabolic Branches}\\

Let $C$ be a plane irreducible algebraic curve of degree $m$, defined in the coordinate system $(x,y)$, not equal to a line. We define a line $l_{(a,b,t)}$, given by the equation $ax+by=t$, to be an asymptote if;\\

(i). $l_{(a,b,t)}$ passes through one of the intersections $p\in C\cap l_{\infty}$\\

(ii). $l_{(a,b,t)}$ is tangent to at least one of the branches $\gamma_{p}$ centred at $p$.\\

We define a branch $\gamma_{p}$, centred at one of the intersections $p\in C\cap l_{\infty}$, to be an infinite branch, and a finite branch otherwise. We also define;\\

$\gamma_{p}$ to be parabolic if the line $l_{\infty}$ is the tangent line to the branch.\\

$\gamma_{p}$ to be hyperbolic otherwise.\\

If $C$ is equal to a line $l$, and this line does not coincide with the line $l_{\infty}$, we define $l$ to be its asymptote, and its one infinite branch $\gamma_{\infty}$ to be hyperbolic. If $C$ is the line $l_{\infty}$, we say that it has no asymptotes and all its infinite branches are parabolic.\\

If $C$ is not irreducible, we consider its \emph{distinct} irreducible factors $\{C_{1},\ldots,C_{k}\}$. The asymptotes of $C$ are defined as the union of the asymptotes of each factor $C_{j}$, for $1\leq j\leq k$. Similarly, its branches are classified according to the definition above, for each irreducible factor.\\

\end{defn}

As a simple consequence of this definition, we have;\\

\begin{lemma}
Let notation be as in Definition 2.1 and let $C$ be \emph{any} plane algebraic curve (possibly not irreducible) of degree $m$, having finite intersection with $l_{\infty}$. Then $C$ has $r_{1}\leq m$ parabolic branches and $r_{2}\leq m$ hyperbolic branches with $r_{1}+r_{2}\leq m$. In particular there exist $r_{3}\leq r_{2}\leq m$ distinct asymptotes to the curve $C$. If $C$ is a plane nodal curve and the coordinate system is chosen as in Lemma 1.1, then there exist exactly $m$ asymptotes to the curve $C$.
\end{lemma}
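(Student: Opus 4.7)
The plan is to apply B\'ezout's theorem to the curve $C$ and the line $l_{\infty}$. The hypothesis that $C \cap l_{\infty}$ is finite is equivalent to saying that $l_{\infty}$ is not a component of any irreducible factor of $C$, so B\'ezout gives that the total intersection number equals $\deg(C) \cdot 1 = m$. I would then decompose this total, at each point $p \in C \cap l_{\infty}$, as a sum of contact indices $I_{\gamma}(C, p, l_{\infty})$ over the branches $\gamma$ of $C$ centred at $p$, yielding
$$m \;=\; \sum_{\gamma \text{ infinite}} I_{\gamma}(C, p_{\gamma}, l_{\infty}).$$
Each summand is $\geq 1$, and by the definition of parabolic/hyperbolic it is $\geq 2$ precisely when $\gamma$ is parabolic and equal to $1$ precisely when $\gamma$ is hyperbolic. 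Hence $m \geq 2 r_{1} + r_{2} \geq r_{1} + r_{2}$, so $r_{1}, r_{2} \leq m$ and $r_{1} + r_{2} \leq m$.

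For the inequality $r_{3} \leq r_{2}$, I would consider the map $\Phi$ sending each hyperbolic infinite branch $\gamma_{p}$ to its unique tangent line $t_{\gamma_{p}}$. By the definition of asymptote, $t_{\gamma_{p}}$ is an asymptote. Conversely, every asymptote $l_{(a,b,t)}$ is a proper affine line, hence distinct from $l_{\infty}$, and is by definition tangent to some branch $\gamma_{p}$ at a point $p \in C \cap l_{\infty}$; this $\gamma_{p}$ cannot be parabolic, because its tangent is not $l_{\infty}$, so it is hyperbolic and $\Phi(\gamma_{p}) = l_{(a,b,t)}$. Thus $\Phi$ surjects onto the set of asymptotes, giving $r_{3} \leq r_{2} \leq m$.

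For the final assertion in the nodal case, I would invoke clause $(i)$ of Lemma 1.1, according to which $l_{\infty}$ meets $C$ transversely in $m$ distinct non-singular points $p_{1}, \ldots, p_{m}$. Transversality together with non-singularity forces a unique branch $\gamma_{p_{i}}$ at each $p_{i}$, with contact exactly $1$, so every infinite branch is hyperbolic, $r_{1} = 0$ and $r_{2} = m$. Each asymptote $t_{\gamma_{p_{i}}}$, being a proper affine line, meets $l_{\infty}$ only at $p_{i}$; since the $p_{i}$ are distinct, so are the $m$ asymptotes, yielding $r_{3} = m$. The most delicate step I anticipate is the decomposition of the local intersection number $I(C, l_{\infty}, p)$ into the sum of contact indices over branches, particularly when $C$ is reducible or has singularities lying on $l_{\infty}$; I would rely on the intersection theoretic and branch formalism developed in \cite{depiro6} and \cite{depiro7} to make this step rigorous.
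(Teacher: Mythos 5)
Your proposal is correct and follows essentially the same route as the paper: the (Hyperspatial) B\'ezout identity $\sum_{\gamma\ \mathrm{infinite}} I_{\gamma}(C,l_{\infty})=m$, the surjection from hyperbolic branches onto asymptotes via unique tangent lines, and Lemma 1.1(i) for the nodal case, with the paper merely making the reduction to distinct irreducible factors explicit where you defer it to the formalism of the references. One minor slip: a hyperbolic branch has contact with $l_{\infty}$ equal to its order, which can exceed $1$ for a singular branch centred on $l_{\infty}$, so "equal to $1$ precisely when hyperbolic" is not quite right — but since only the bound $\geq 1$ is needed, all your inequalities stand.
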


\begin{proof}
Suppose first that $C$ is irreducible. As we may assume that $C$ has finite intersection with $l_{\infty}$, by the Hyperspatial Bezout Theorem (Theorem 2.3 of \cite{depiro6}), we have that;\\

$\sum_{infinite\ branches\ \gamma_{p}}I_{\gamma_{p}}(C,l_{\infty})=m$ $(*)$\\

It follows trivially that $r_{1}+r_{2}\leq m$. If $l_{(a,b,t)}$ is an asymptote, then it defines the tangent line to an infinite branch $\gamma_{p}$. As $l_{(a,b,t)}$ is distinct from the line $l_{\infty}$, such a branch $\gamma_{p}$ must be hyperbolic. Any (hyperbolic) branch has a unique tangent line, hence distinct asymptotes must define tangent lines to distinct hyperbolic branches. Therefore, $r_{3}\leq r_{2}\leq m$. If $C$ is not irreducible, let $\{C_{1},\ldots,C_{j},\ldots,C_{k}\}$ be its distinct irreducible factors, let $r_{1,j}$ be the number of parabolic branches of $C_{j}$ and let $r_{2,j}$ be the number of hyperbolic branches of $C_{j}$. If $deg(C_{j})=m_{j}$, we clearly have that $m_{1}+\ldots+m_{j}+\ldots+ m_{k}\leq m$, $(**)$. Applying the previous considerations to each irreducible factor $C_{j}$,  we have that $r_{1,j}+r_{2,j}\leq m_{j}$. Applying the above Definition 2.1 and $(**)$, we have that $r_{1}+r_{2}\leq m$. Again, by a similar argument to the above, we have that $r_{3}\leq r_{2}\leq m$. For the final part of the lemma, by $(i)$ of Lemma 1.1 and $(*)$, we have that there exist $m$ distinct infinite hyperbolic branches. The tangent lines of these hyperbolic branches must be distinct, otherwise at least one would coincide with the line $l_{\infty}$, hence there exist exactly $m$ asymptotes as required.
\end{proof}

\begin{rmk}
An elegant method of determining the real asymptotes to a given plane real irreducible algebraic curve is given in \cite{Nunemacher}. I hope the author will not mind me essentially restating his results in the context of algebraically closed fields.\\

Theorem. (adapted almost verbatim from Nunemacher)\\

Let $C$ be any any plane algebraic curve and let $C$ be defined in the coordinate system $(x,y)$ by;\\

$P(x,y)=\sum_{k=0}^{m}P_{k}(x,y)$\\

where $P_{k}(x,y)$ is a homogeneous polynomial of degree $k$. Suppose that $(ax+by)$ is a factor of the top degree form $P_{m}(x,y)$ of multiplicity $r$. Let $s\leq r$ denote the largest integer with the property that there exist polynomials $Q_{j}(x,y)$ for $m-s+1\leq j\leq m$ satisfying the conditions;\\

$P_{m}(x,y)=(ax+by)^{s}Q_{m}(x,y)$\\

$P_{m-1}(x,y)=(ax+by)^{s-1}Q_{m-1}(x,y)$\\

$\ldots\ldots$\\

$P_{m-s+1}(x,y)=(ax+by)Q_{m-s+1}(x,y)$ $(*)$\\

Then associated with the factor $(ax+by)$ is a set of at most $s$ asymptotes $ax+by=t_{0}$, where $t_{0}$ is a root of the equation;\\

$t^{s}Q_{m}(b,-a)+t^{s-1}Q_{m-1}(b,-a)+\ldots+tQ_{m-s+1}(x,y)+P_{m-s}(b,-a)=0$\\

All the asymptotes to the curve $C$ in the coordinate system $(x,y)$ arise in this way as $(ax+by)$ ranges over the linear factors of $P_{m}(x,y)$\\

Corollary. (adapted almost verbatim from Nunemacher)\\

With the notation and hypotheses of the above theorem, if $(ax+by)$ is a simple factor of $P_{m}(x,y)$, then associated with this factor is the single asymptote to $C$, defined by the equation;\\

$(ax+by)Q_{m}(b,-a)+P_{m-1}(b,-a)=0$\\

The proof of the Theorem and its Corollary follow easily from the paper \cite{Nunemacher}. We have strengthened the hypotheses of the Theorem to include any plane algebraic curve, the corresponding Theorem in \cite{Nunemacher} \emph{excludes} the degenerate case that $C$ contains a line as one of its irreducible factors. However, given Definition 2.1, which includes this case, one can check that his proof
is unaffected in this greater degree of generality.\\

In order to give an illustration of the theorem and its corollary, consider the following curves;\\

(i). The hyperbola defined by $xy=\lambda$.\\

In this case, we have that $P_{2}(x,y)=xy$, whose simple linear factors are $x$ and $y$, and $P_{1}(x,y)=0$. By the corollary, the asymptotes are given by;\\

$x.y(0,-1)+0(0,-1)=0$ that is $x=0$\\

$y.x(1,0)+0(1,0)=0$ that is $y=0$\\

(Observe that this gives the correct result even in the degenerate case when $\lambda=0$)\\

(ii). The parabola defined by $x^{2}-y=0$.\\

In this case, we have that $P_{2}(x,y)=x^{2}$, which has $x$ as a linear factor of multiplicity $2$, and $P_{1}(x,y)=-y$. We have that $x$ does not divide $P_{1}(x,y)$, hence, by the theorem, the asymptotes are given by $x=t_{0}$, where $t_{0}$ is a root of;\\

$t.x(0,-1)-y(0,-1)=0$ that is $1=0$\\

As this has no solutions, we conclude that the parabola has no asymptotes.\\

(iii). The curve defined by;\\

$xy^{2}+ey=a^{2}x^{3}+bx^{2}+cx+d$, see (1) of Remarks 2.4.\\

Case 1. $a\neq 0$.\\

In this case, $P_{3}(x,y)=xy^{2}-a^{2}x^{3}$, which, assuming $char(L)\neq 2$, has three distinct linear factors $\{x,(y+ax),(y-ax)\}$, and $P_{2}(x,y)=-bx^{2}$. By the corollary, the asymptotes are given by;\\

$x.(y^{2}-a^{2}x^{2})(0,-1)+(-bx^{2})(0,-1)=0$ that is $x=0$\\

$(y-ax)(xy+ax^{2})(1,a)+(-bx^{2})(1,a)$ that is $y-ax={b\over 2a}$\\

$(y+ax)(xy-ax^{2})(1,-a)+(-bx^{2})(1,-a)$ that is $y+ax={-b\over 2a}$\\

The curve therefore has three distinct asymptotes, see (2) of Remarks 2.4. If $b=0$, these asymptotes intersect at the origin $(0,0)$, see (3) of Remarks 2.5. If $b\neq 0$, these asymptotes intersect in a triangle with vertices centred at $\{(0,{b\over 2a}),(0,{-b\over 2a}), ({-b\over 2a^{2}},0)\}$, see (4) of Remarks 2.4.\\

Case 2. $a=0, b\neq 0$.\\

In this case, $P_{3}(x,y)=xy^{2}$, which has $x$ as a simple factor and $y$ as a linear factor of multiplicity $2$, again $P_{2}(x,y)=-bx^{2}$. By the corollary, one asymptote is given by;\\

$x.y^{2}(0,-1)+(-bx^{2})(0,-1)=0$ that is $x=0$.\\

As $y$ does not divide $P_{2}(x,y)$, the theorem gives the other asymptotes as $y=t_{0}$, where $t_{0}$ is a root of;\\

$t.(xy)(1,0)+(-bx^{2})(1,0)=0$ that is $b=0$\\

As this has no solutions, we conclude there are no further asymptotes, see (5) of Remarks 2.4.\\

Case 3. $a=0,b=0$.\\

By the same reasoning as Case 2, we obtain $x=0$ as an asymptote. In this case, however, $y$ divides $P_{2}(x,y)=0$, so we have to consider $P_{1}(x,y)=ey-cx$. By the theorem, the other asymptotes are then given by $y=t_{0}$, where $t_{0}$ is a root of;\\

$t^{2}.x(1,0)+t.0(1,0)+(ey-cx)(1,0)$, that is $t^{2}-c=0$\\

In the case when $c=0$, we obtain one further asymptote $y=0$, otherwise, assuming $char(L)\neq 2$, we obtain $2$ further asymptotes $y=c^{1\over 2}$ and $y=-c^{1\over 2}$, see (6) of Remarks 2.4.\\

(iv). The curve defined by;\\

$xy=ax^{3}+bx^{2}+cx+d=0$, see (7) of Remarks 2.4.\\

 We have that $P_{3}(x,y)=ax^{3}$, which has $x$ as a linear factor of multiplicity $3$, $P_{2}(x,y)=bx^{2}-xy$. By the theorem, the asymptotes are given by $x=t_{0}$, where $t_{0}$ is a root of;\\

$t^{2}.x(0,-1)+t.(bx-y)(0,-1)+cx(0,-1)=0$ that is $t=0$\\

Hence, $x=0$ is the only asymptote to this curve.\\

(v). The curve defined by;\\

 $y^{2}=ax^{3}+bx^{2}+cx+d$, with $a\neq 0$, see (8) of Remarks 2.4.\\

 We have that $P_{3}(x,y)=ax^{3}$, which has $x$ as a linear factor of multiplicity $3$, $P_{2}(x,y)=bx^{2}-y^{2}$. By the theorem, the asymptotes are given by $x=t_{0}$, where $t_{0}$ is a root of;\\

$t.x^{2}(0,-1)+(bx^{2}-y^{2})(0,-1)=0$, that is $-1=0$\\

This has no solutions, hence the curve has no asymptotes.\\

(vi). The curve defined by;\\

 $y=ax^{3}+bx^{2}+cx+d$, with $a\neq 0$, see (9) of Remarks 2.4\\

 We have that $P_{3}(x,y)=ax^{3}$, which has $x$ as a linear factor of multiplicity $3$, $P_{2}(x,y)=bx^{2}$, and $P_{1}(x,y)=cx-y$. By the theorem, the asymptotes are given by $x=t^{0}$, where $t_{0}$ is a root of;\\

$t^{2}.x(0,-1)+t.bx(0,-1)+(cx-y)(0,-1)=0$, that is $1=0$\\

This has no solutions, hence the curve has no asymptotes.\\

\end{rmk}

\begin{rmk}

Isaac Newton gives the first effective method of determining the asymptotes of a real algebraic curve $C$ in \cite{Anal}. The technique is part of a more general construction called  "Newton's parallelogram". An excellent account of this construction is given in \cite{Talbot}. Moreover, Newton makes a systematic use of the method of asymptotes in his classification of real cubic curves, see \cite{Cubics} and \cite{Talbot}. His analysis of such curves in \cite{Cubics} begins with the observation;\\

"All lines of the 1st, 3rd, 5th, 7th, or odd orders, have at least two infinite branches extending in opposite directions"\\

In the case of a real cubic curve $C$, this translates to the fact that the line $l_{\infty}$ intersects $C$ in at least $1$ point, counted with multiplicity (appropriately defined). The observation is easily seen to be true from the fact that any real polynomial $p\in {\mathbb R}[y]$ of degree $3$ has at least $1$ real solution and consideration the limiting behavior of such a solution as $x\rightarrow\infty$ or $x\rightarrow -\infty$ in the defining equation $P(x,y)$ of $C$.  Excluding the degenerate case when $C$ has a parabolic branch, that is the line $l_{\infty}$ intersects $C$ in a point with multiplicity $2$, we obtain a hyperbolic branch and at least $1$ asymptote to the curve $C$. In this case, by taking such an asymptote to be the coordinate axis $x=0$, the general equation of $C$ reduces to the form;\\

$xy^{2}+(bx^{2}+cx+d)y=cx^{3}+dx^{2}+ex+f$ $(*)$\\

Such a quadratic equation can be analysed using the simple method of completing the square. This construction forms the basis for Newton's division of real cubics into four Cases. These Cases are then further analysed according to the behaviour of their asymptotes and the existence of a diameter (see \cite{Cubics} and \cite{Talbot} for a precise definition.) This leads to Newton's division of real cubics into a total of $72$ species. The curves considered as examples in the previous remark constitute a number of these species;\\

(1). Newton refers to this equation as The Case I cubic, see \cite{Cubics}.\\

(2). Newton refers to such a curve, in \cite{Cubics}, as a redundant or triple hyperbola (if $a^{2}>0$), and as a defective hyperbola (if $a^{2}<0$).\\

(3). The triple hyperbolas with this property are considered as Species 24-32 in Newton's classification of cubics, see \cite{Cubics}.\\

(4). The triple hyperbolas with this property are considered as Species 1-23 in Newton's classification of cubics (and are subdivided further according to the number of diameters). The defective hyperbolas are considered as Species 33-45 in Newton's classification, see \cite{Cubics}.\\

(5). Newton refers to such curves as parabolic hyperbolas, they account for Species 46-56 of his classification, see \cite{Cubics}.\\

(6). Newton refers to such curves as the hyperbolisms of the hyperbola ,$(c>0)$, ellipse, $(c<0)$, or the parabola, $(c=0)$, they account for species 57-60, species 61-63 and species 64-65 respectively, see \cite{Cubics}.\\

(7). Newton calls this equation The Case II cubic, see \cite{Cubics}, also known as Newton's trident.\\

(8). Newton calls this equation The Case III cubic, see \cite{Cubics}, referred to there as the Divergent Parabola.\\

(9). Newton calls this equation The Case IV cubic, see \cite{Cubics}, referred to there as the Cubic Parabola.\\

Perhaps the most important ingredient in Newton's analysis of cubics is the use that he makes of asymptotes as a means of representing cubic curves. A purely algebraic
proof of his result would be inconceivable without the supporting intuition of such a representation. The reader is strongly encouraged to look at the illustrations that Newton originally gave of each species in his classification, see \cite{Talbot}. The fact that Newton referred to a cubic curve (curve of degree $m$) as a "line of the
third order" (a "line of the m'th order"), clearly shows that he thought of such curves in terms of a system of lines, $(\dag)$, (\footnote{In the following section, this viewpoint will become clearer when we introduce the notion of flashes, as a means of representing an algebraic curve.}). I discuss this idea in greater detail in my forthcoming book "Christian Geometry", where I consider the aesthetics ideas underlying Newton's work. The principle $(\dag)$ is also illustrated by the problem of degenerating curves of degree $m$ to a union of $m$ lines, which we discussed in the first section.
\end{rmk}

Using the coordinate system of Lemma 1.1, we can find the following decomposition of $C$;\\

\begin{lemma}{Newton's Theorem}\\

Let hypotheses be as in Lemma 1.1, and let $F(x,y)$ define $C$ in the coordinate system $(x,y)$ given by Lemma 1.1. Then we can find algebraic power series $\{\eta_{1}(x),\ldots,\eta_{m}(x)\}$ in $L[[x]]$, with $\{\eta_{1}(0),\ldots,\eta_{m}(0)\}$ distinct, such that;\\

$F(x,y)=(y-\eta_{1}(x))\ldots (y-\eta_{m}(x))$\\

\noindent as an identity in the ring $L[[x]][y]$.

\end{lemma}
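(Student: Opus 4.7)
The plan is to regard $F(x,y)$ as a monic polynomial of degree $m$ in $y$ with coefficients in the integral domain $L[[x]]$, produce $m$ pairwise distinct roots $\eta_1(x),\ldots,\eta_m(x)\in L[[x]]$ by lifting via Hensel's lemma the $m$ intersection points of $C$ with the line $(x=0)$, and then compare $F$ with the product $\prod_{i=1}^{m}(y-\eta_i(x))$ by a simple degree-and-root count inside $L[[x]][y]$. The normalisation of $F$ to be monic in $y$ is immediate from the construction in the proof of Lemma 1.1: the distinguished point $P$, which becomes $[0:1:0]$ in the new coordinates $(x,y)$, was chosen off $C$, so the coefficient of $y^{m}$ in $F(x,y)$ is a nonzero constant in $L$, and we rescale.

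For the choice of initial values, I would appeal directly to Lemma 1.1(iii): the weighted set $(x=0)$ on $C$ consists of $m$ distinct branches, each counted once, based at non-singular points $(0,\alpha_1),\ldots,(0,\alpha_m)$ in finite position. Distinct smooth branches have distinct centres, so the $\alpha_i\in L$ are pairwise distinct. Lemma 1.1(iii) also separates these base points from the contact points of clause (ii), where the vertical tangents occur, so the tangent to $C$ at each $(0,\alpha_i)$ is non-vertical, which at a smooth point is equivalent to $\partial F/\partial y(0,\alpha_i)\neq 0$. Combined with $F(0,\alpha_i)=0$ and the monicity of $F$ in $y$, this is precisely the hypothesis of Hensel's lemma for $F$ viewed over the complete local ring $(L[[x]],(x))$, yielding for each $i$ a unique $\eta_i(x)\in L[[x]]$ with $\eta_i(0)=\alpha_i$ and $F(x,\eta_i(x))=0$. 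Since each $\eta_i$ satisfies the polynomial relation $F(x,y)=0$ with $F\in L[x][y]$, it is an algebraic power series in the required sense.

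To finish, let $H(x,y):=\prod_{i=1}^{m}(y-\eta_i(x))\in L[[x]][y]$. Then $F$ and $H$ are both monic of degree $m$ in $y$ over $L[[x]]$, so $F-H$ has $y$-degree at most $m-1$; yet each of the $m$ pairwise distinct elements $\eta_1,\ldots,\eta_m$ of the integral domain $L[[x]]$ is a root of $F-H$, forcing $F-H=0$ in $L[[x]][y]$. The only genuinely substantive step is the translation of the branch-theoretic content of Lemma 1.1(iii) into the algebraic Hensel hypothesis $\partial F/\partial y(0,\alpha_i)\neq 0$; once this is in place, the lifting and the degree-and-root count are routine, and no restriction on the characteristic of $L$ beyond $(\dag)$ is involved, since the argument never divides by any integer depending on $m$.
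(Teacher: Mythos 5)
Your argument is correct and is essentially the paper's proof: both lift the $m$ distinct roots of $F(0,y)=0$ to power series in $L[[x]]$ via the nonvanishing of $\partial F/\partial y$ guaranteed by Lemma 1.1(iii) (Hensel's lemma being the implicit function theorem used in the paper), and both then identify $F$ with the product of the resulting monic linear factors. The only cosmetic difference is in the last step, where you use a degree-and-root count over the integral domain $L[[x]]$ while the paper argues via coprime divisibility in $L((x))[y]$ and then normalises the leading coefficient.
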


\begin{proof}

Let $\{a_{1},\ldots,a_{m}\}$ enumerate the $m$ distinct solutions to $F(0,y)=0$. Making the linear change of coordinates $y'=y-a_{j}$, the equation for $C$ in this system is given by;\\

$F_{j}(x,y')=F(x,y'+a_{j})=0$ $(\dag)$\\

By construction, we have that $F_{j}(0,0)=0$ and, moreover, by $(iii)$ of Lemma 1.1, ${\partial F_{j}\over \partial y}(0,0)\neq 0$. It follows, applying the implicit function theorem, that we can find an algebraic power series $\delta_{j}(x)\in L[[x]]$, with $\delta_{j}(0)=0$, such that $F_{j}(x,\delta_{j}(x))=0$, $(*)$, see especially the following remark. Now let $\eta_{j}(x)=a_{j}+\delta_{j}(x)$. By $(\dag)$, it follows immediately that $F(x,\eta_{j}(x))=0$, for $1\leq j\leq m$. Hence, we have that;\\

$(y-\eta_{j}(x))|F(x,y)$, for $1\leq j\leq m$, in $L((x))[y]$\\

using the fact that the division property holds in the polynomial ring $L((x))[y]$, as $L((x))$ is a field. We clearly have that each $(y-\eta_{j}(x))$ is irreducible in $L((x))[y]$, hence prime. Moreover, as the power series $\{\eta_{1}(x),\ldots,\eta_{m}(x)\}$ are distinct, the factors $(y-\eta_{j}(x))$ are coprime in $L((x))[y]$, for $1\leq j\leq m$. It, therefore, follows that;\\

$\prod_{1\leq j\leq m}(y-\eta_{j}(x))|F(x,y)$ in $L((x))[y]$ $(**)$\\

Now, using the fact that both sides of the relation have degree $m$, we can find $g(x)\in L((x))$ such that;\\

$F(x,y)=g(x)(y-\eta_{1}(x))\ldots(y-\eta_{m}(x))$ in $L((x))[y]$\\

As $F(0,y)=0$ has $m$ distinct solutions in $L$, we can write $F$ in the form;\\

$F(x,y)=y^{m}+p_{1}(x)y^{m-1}+\ldots+p_{j}(x)y^{m-j}+\ldots+p_{m}(x)$\\

\noindent with $p_{j}\in L[x]$ and $deg(p_{j})=j$. It follows that we can take $g(x)=1$ in $(**)$, so that the identity holds in $L[[x]][y]$. This shows the lemma.

\end{proof}

\begin{rmk}
The existence of power series solutions to polynomial equations $G(x,y)=0$, satisfying $(*)$ of the previous lemma, was first shown by Isaac Newton. He gives the following method of constructing a solution by successive approximations, in "The Praxis of Resolution", Paragraph 36 of \cite{Flux};\\

The assumption that $G(0,0)=0$ and ${\partial G\over\partial y}(0,0)\neq 0$ allows us to write;\\

$G(x,y)=p_{m}(x)y^{m}+\ldots+ p_{1}(x)y+p_{0}(x)$ $(*)$\\

with $p_{0}(0)=0$ and $p_{1}(0)\neq 0$. For $(x,y)$ "small", Newton observes that it, therefore, makes sense to take as a first approximate solution;\\

$y_{0}={-\lambda_{0}x^{i_{0}}\over p_{1}(0)}$\\

where $\lambda_{0}x^{i_{0}}$, $(i_{0}\geq 1$), is the first term in the expression for $p_{0}(x)$. Now, Newton makes the substitution $y=(y'+y_{0})$
in $(*)$, this results in a further polynomial equation of the same form;\\

$q_{m}(x)y'^{m}+\ldots+q_{1}(x)y'+q_{0}(x)=0$ $(**)$\\

By a straightforward algebraic calculation, using the fact that $i_{0}\geq 1$, one checks that $q_{1}(0)\neq 0$ and $ord(q_{0}(x))>ord(p_{0}(x))$. Hence, one can take as the second quote;\\

$y_{1}={-\lambda_{1}x^{i_{1}}\over q_{1}(0)}$\\

where $\lambda_{1}x^{i_{1}}$, $(i_{1}>i_{0})$, is the first term in the expression for $q_{0}(x)$. Continuing in this way, one obtains a sequence of approximate solutions;\\

$s_{n}(x)=y_{0}(x)+y_{1}(x)+\ldots +y_{n}(x)$, for $n\geq 0$\\

Either this process terminates after a finite number of approximations, giving a polynomial solution to $(*)$, or one obtains an infinite power series;\\

$s(x)=\sum_{i\geq 0}y_{i}(x)$\\

A straightforward algebraic calculation shows that;\\

$ord(G(x,s_{n+1}(x)))\geq ord(G(x,s_{n}(x)))+1$\\

Hence, by elementary completeness arguments for the power series ring $L[[x]]$, we are guaranteed that $G(x,s(x))=0$.\\

This method of constructing a solution to the polynomial equation $G(x,y)=0$ is closely related to the possibly more familiar Newton-Raphson method. Namely, one considers the function;\\

$G:L[x]\rightarrow L[x]$, $G(y)=p_{m}(x)y^{m}+\ldots+ p_{1}(x)y+p_{0}(x)$ $(\dag)$\\

Having obtained a first approximation $y_{0}=s_{0}$ to the equation $G(y)=0$, the Newton-Raphson method gives the further approximation;\\

$s_{1}=y_{0}-{G(y_{0})\over G'(y_{0})}=y_{0}(x)-{q_{0}(x)\over q_{1}(x)}$\\

where $q_{0}(x)$ and $q_{1}(x)$ are obtained from the transformed polynomial $(**)$ above. By a similar argument to the above, replacing the successive approximations $-{\lambda_{1}x^{i_{1}}\over q_{1}(0)}$ by $-{q_{0}(x)\over q_{1}(x)}$ and noting that;\\

${q_{0}(x)\over q_{1}(x)}={\lambda_{1}x^{i_{1}}\over q_{1}(0)}u(x)$ for a unit $u(x)\in L[[x]]$\\

one is similarly guaranteed that this method also yields the same power series solution $s(x)$ to the equation $G(y)=0$ in $(\dag)$. The Newton-Raphson method is usually applied to functions of a real variable, rather than the rings $L[x]$ or $L[[x]]$. However, that Newton intended his method of finding power series solutions to polynomial equations of the form $G(x,y)=0$, ("species"), to be a partial generalisation of this method is borne out by his consideration of the case of "affected equations", at the beginning of \cite{Flux}. An affected equation is just the case where the indeterminate $x$ is replaced by an explicit numerical value.\\
\indent Newton also gives a general method for finding power series solutions to polynomial equations of the form $G(x,y)=0$, without the simplifying assumption that ${\partial G\over\partial y}(0,0)\neq 0$. This is done by the introduction of "Newton's parallelogram", in Paragraph 29 of \cite{Flux}, and fractional exponents $x^{1\over t}$, for $t\geq 1$. The parallelogram method is further explained in \cite{Anal}. This leads to a general method of finding $n$ solutions $y_{j}(x)\in L((x^{1\over t}))$, for $1\leq j\leq n$, of a polynomial equation $G(x,y)=0$ of degree $n$, see \cite{Aby1} for a more precise statement of "Newton's Theorem". However, it seems clear from Newton's highly geometric approach, that he intended the theorem to provide a method of understanding the global structure of an algebraic curve, rather than as a means of analysing the nature of local singularities. This view is supported by consideration of his work on the classification of plane cubics, in \cite{Cubics}. Here, his power series method is used to find the asymptotes of such curves, which he refers to as "infinite branches", and branches at finite distances (those crossing the axis $x=0$), also referred to in the literature as "satellite branches". Much of the work in \cite{Cubics} is concerned with "filling in" the rest of the curve, from this initial information. This is also the approach taken by a number of Newton's followers in England, see, for example, the papers \cite{Smart},\cite{Tracing} and the commentary on \cite{Cubics}, in \cite{Talbot}. As one can always choose the "satellite branches" to be non-singular, see Lemma 1.1, one can obtain a solution to "Newton's Theorem" without introducing "Puiseux Series", as, indeed, is done in the previous Lemma 2.5. It is extremely unclear how to give a geometric interpretation of "Puiseux Series". It seems, therefore, to be a rather unfortunate historical accident that these were introduced, as a response to Newton's work, in order to analyse algebraic curves. The proofs that we give in the following section, which develop flash geometry, closely follows Newton's original approach.\\
 \indent I also discuss the general aesthetics behind Newton's representation of algebraic curves in "Christian Geometry" (in preparation). As already mentioned, the reader would benefit greatly from looking at Newton's original "sketches" of cubic curves in \cite{Cubics}.
\end{rmk}

\end{section}

\begin{section}{Flash Geometry of Nodal Curves}
 \begin{defn}

We will denote the coordinates of $P^{2}$ by $\{X,Y,W\}$ or $\{X,Z,W\}$, the coordinates of $P^{3}$ by $\{X,Y,Z,W\}$ and the points $\{[0:0:1:0],[0:1:0:0]\}$ by $\{Q_{1},Q_{2}\}$. We let $\{pr_{1},pr_{2}\}$ be the canonical conic projection morphisms defined by;\\

$pr_{1}:(P^{3}\setminus Q_{1})\rightarrow P_{1}^{2}$; $pr_{1}([X:Y:Z:W])=[X:Y:0:W]$\\

$pr_{2}:(P^{3}\setminus Q_{2})\rightarrow P_{2}^{2}$; $pr_{2}([X:Y:Z:W])=[X:0:Z:W]$\\

We let $A_{1}^{2}\subset P_{1}^{2}$ be the open subset defined by the coordinate system $(x,y)$, where $x={X\over W}$ and $y={Y\over W}$, and $l_{1,\infty}\subset P_{1}^{2}$ be the line at infinity defined by $(W=0)\cap P_{1}^{2}$. Similarly, we let $A_{2}^{2}\subset P_{2}^{2}$ be the open subset defined by the coordinate system $(x,z)$, where $x={X\over W}$ and $z={Z\over W}$, and $l_{2,\infty}\subset P_{2}^{2}$ the line at infinity defined by $(W=0)\cap P_{2}^{2}$. We let $A^{3}\subset P^{3}$ be the open subset defined by the coordinate system $(x,y,z)$, where $x={X\over W}$, $y={Y\over W}$ and $z={Z\over W}$. The restriction of the projection morphisms $\{pr_{1},pr_{2}\}$ are then given in affine coordinates by;\\

$pr_{1}:A^{3}\rightarrow A_{1}^{2}$; $pr_{1}(x,y,z)=(x,y)$\\

$pr_{2}:A^{3}\rightarrow A_{2}^{2}$; $pr_{2}(x,y,z)=(x,z)$\\

We will also refer to the projected points $\{pr_{2}(Q_{1}),pr_{1}(Q_{2})\}$ as $\{Q_{1},Q_{2}\}$, and the restriction of the projections $\{pr_{1},pr_{2}\}$ to $\{A^{2}_{2},A^{2}_{1}\}$ by $pr$.

\end{defn}

We observe the following straightforward property;\\

\begin{lemma}
There does not exist a pair $\{a,b\}$, distinct from $\{Q_{1},Q_{2}\}$, with $\overline{pr_{1}^{-1}(a)}=\overline{pr_{2}^{-1}(b)}$.
\end{lemma}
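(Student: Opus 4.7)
The plan is to compute the fibers $\overline{pr_1^{-1}(a)}$ and $\overline{pr_2^{-1}(b)}$ explicitly as lines in $P^3$, observe a geometric constraint on any line that is simultaneously such a fiber over each projection, and conclude that the only possibility is the unique line through $Q_1$ and $Q_2$.

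First I would write the two projection maps in homogeneous coordinates. Given $a=[X_0:Y_0:0:W_0]\in P_1^2$, the set $pr_1^{-1}(a)$ consists of points $[X_0\lambda:Y_0\lambda:Z:W_0\lambda]$ with $(\lambda,Z)\neq(0,0)$ and $[\lambda:Z]\in P^1\setminus\{[0:1]\}$; its projective closure is the line $L_a\subset P^3$ joining $Q_1=[0:0:1:0]$ to the point $[X_0:Y_0:0:W_0]$ viewed inside $P^3$. In particular $L_a$ is always a line through $Q_1$, and distinct $a$'s give distinct lines through $Q_1$ because $pr_1$ is a linear projection from $Q_1$. The symmetric computation shows that $\overline{pr_2^{-1}(b)}=L'_b$ is a line through $Q_2$, and distinct $b$'s yield distinct lines through $Q_2$.

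Now suppose $L_a=L'_b$ for some pair $\{a,b\}$. Then this common line $L$ contains both $Q_1$ and $Q_2$. Since $Q_1\neq Q_2$, there is a unique line in $P^3$ through both, namely the line $\{[0:s:t:0]:[s:t]\in P^1\}$ defined by $X=W=0$. Reading off the projections of any point on this line other than $Q_1$ gives $pr_1([0:s:t:0])=[0:s:0:0]=Q_2$ (inside $P_1^2$, using the convention of Definition 3.1), and similarly $pr_2([0:s:t:0])=[0:0:t:0]=Q_1$ (inside $P_2^2$). Therefore the only pair for which $\overline{pr_1^{-1}(a)}=\overline{pr_2^{-1}(b)}$ is $\{a,b\}=\{Q_2,Q_1\}$, which under the identification of Definition 3.1 is exactly the excluded pair $\{Q_1,Q_2\}$.

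I do not expect a genuine obstacle here; the only point to handle with some care is that we are taking projective closures of the scheme-theoretic fibers, which only adds back the corresponding center of projection, so the closure of a fiber of $pr_i$ is honestly a projective line through $Q_i$. Once that is in place, the uniqueness of the line joining $Q_1$ and $Q_2$ does all the work.
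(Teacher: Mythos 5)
Your proof is correct and rests on the same key observation as the paper's: the closures of the fibres of $pr_{1}$ and $pr_{2}$ are lines through $Q_{1}$ and $Q_{2}$ respectively, so a common fibre closure must be the unique line $l_{Q_{1}Q_{2}}=\{X=W=0\}$. Your way of finishing --- intersecting $l_{Q_{1}Q_{2}}$ with the two target planes to read off $a=Q_{2}$ and $b=Q_{1}$ directly --- is in fact cleaner and more transparent than the paper's concluding contradiction argument.
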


\begin{proof}

Observe that a fibre of $pr_{1}$ consists of a line passing through $Q_{1}$, with the point $Q_{1}$ removed. Similar considerations apply to $pr_{2}$, $(\dag)$. Suppose a pair, as in the lemma, existed. If $\{a,b\}$ are distinct, then, by $(\dag)$, we would clearly have that $\overline{pr_{1}^{-1}(a)}=\overline{pr_{2}^{-1}(b)}=l_{ab}=l_{Q_{1}Q_{2}}$. As the image of $pr_{1}\circ pr_{2}$ is $P_{1}^{2}\cap P_{2}^{2}$, this implies that $l_{ab}=P_{1}^{2}\cap P_{2}^{2}$. As $P_{1}^{2}\cap P_{2}^{2}$ is fixed both by $pr_{1}$ and $pr_{2}$, $(*)$, it follows that $a=pr_{2}(a)=pr_{1}(b)=b$. Hence, we have that $pr_{1}^{-1}(a)=pr_{2}^{-1}(a)=P_{1}^{2}\cap P_{2}^{2}$, which clearly contradicts $(*)$.

\end{proof}

We now show the following;\\

\begin{lemma}

Let hypotheses be as in Lemma 2.5 and notation as in Definition 3.1, then we can find an irreducible plane projective curve $C'\subset P^{2}$, relative to the coordinate system $(x,z)$, such that;\\

(i). There exists an open $V'\subset C'$, with $(0,0)\in V'$, such that;\\

$pr:(V',(00))\rightarrow (A^{1},0)$, $(*)$\\

is an etale morphism.\\

(ii). $\{\delta_{1}(x),\ldots,\delta_{m}(x)\}\subset R(V')$, for a given embedding of $R(V')$ in $L[[x]]\cap L(x)^{alg}$.\\

(iii). The function field $L(V')$ is generated over $L(x)$ by the power series $\{\delta_{1}(x),\ldots,\delta_{m}(x)\}$.\\

(iv). The cover defined by $(*)$ is Galois, with the extension $L(V')/L(x)$ equal to the Galois closure of the extension $L(C)/L(x)$.

\end{lemma}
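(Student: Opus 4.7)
The plan is to realise the Galois closure of $L(C)/L(x)$ as a plane curve in the coordinate system $(x,z)$ via a carefully chosen primitive element. By Lemma 2.5, $F(x,y)$ factors in $L[[x]][y]$ as $\prod_{j=1}^{m}(y-\eta_{j}(x))$ with $\eta_{j}=a_{j}+\delta_{j}$ and the $a_{j}$ distinct; since $F$ is irreducible in $L(x)[y]$, it is the minimal polynomial of each $\eta_{j}$ over $L(x)$, so the Galois closure of $L(C)/L(x)$, embedded inside $L((x))$, equals
\[
M:=L(x)(\eta_{1},\ldots,\eta_{m})=L(x)(\delta_{1},\ldots,\delta_{m}).
\]
I will build $C'$ so that $L(V')$ is canonically identified with $M$, which will take care of (iii) and (iv) at once.

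Choose generic $c_{1},\ldots,c_{m}\in L$ and set $z_{0}:=\sum_{j}c_{j}\eta_{j}$, $b:=\sum_{j}c_{j}a_{j}$, and $z:=z_{0}-b=\sum_{j}c_{j}\delta_{j}(x)\in L[[x]]$, so that $z(0)=0$. Writing $\sigma(\eta_{j})=\eta_{\pi_{\sigma}(j)}$ for $\sigma$ in the Galois group, the constant term of $\sigma(z)$ is $\sum_{j}c_{j}a_{\pi_{\sigma}(j)}-b$; for $\sigma\neq\tau$, equality of these constant terms is the linear condition $\sum_{j}c_{j}(a_{\pi_{\sigma}(j)}-a_{\pi_{\tau}(j)})=0$, which is a proper Zariski-closed condition on $c\in L^{m}$ because the $a_{j}$ are distinct and $\pi_{\sigma}\neq\pi_{\tau}$. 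Thus for generic $c$ the conjugates of $z$ have pairwise distinct constant terms in $L$, with precisely one of them (the one for $\sigma = \mathrm{id}$) equal to $0$; in particular $z$ is a primitive element for $M/L(x)$. Let $G(x,z)\in L[x,z]$ be the minimal polynomial of $z$ over $L(x)$ cleared of denominators (irreducible by Gauss's lemma), and define $C'\subset P^{2}$ to be the projective closure of $V(G)\subset A^{2}_{(x,z)}$.

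Since $G(0,z)$ has $\deg G$ distinct roots, $(0,0)$ is a smooth point of $C'$ with $\partial G/\partial z(0,0)\neq 0$, so the projection $pr:C'\to A^{1}$, $(x,z)\mapsto x$, is \'etale at $(0,0)$. Take $V'\subset C'$ to be an affine open neighbourhood of $(0,0)$ contained in the \'etale locus of $pr$ on which each $\delta_{j}\in M=L(V')$ is a regular function (finitely many $\delta_{j}$, each with only finitely many poles, so this is possible). \'Etaleness at $(0,0)$ identifies the completion $\widehat{\mathcal{O}}_{V',(0,0)}$ with $L[[x]]$ and supplies the embedding $R(V')\hookrightarrow L[[x]]\cap L(x)^{\mathrm{alg}}$ under which each $\delta_{j}$ maps to its original power series; the choice of basepoint $(0,0)\in V'$ pins down the branch of $pr^{-1}$ at $0$ corresponding to $\sigma=\mathrm{id}$, so that $z$ itself expands to $\sum_{j}c_{j}\delta_{j}$ and no Galois-conjugate ambiguity arises. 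This establishes (i) and (ii); (iii) and (iv) are immediate from $L(V')=M$ and the fact that $M/L(x)$ is the Galois closure of $L(C)/L(x)$.

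The crux is the coordinated choice of primitive element: it must generate $M$, vanish at $x=0$, and have Galois conjugates with pairwise distinct constant terms, this last being exactly what forces $pr$ to be \'etale at $(0,0)$. All three requirements are open genericity conditions on $c\in L^{m}$, and the whole argument hinges on the distinctness of the $a_{j}$ (guaranteed by Lemma 1.1 (iii)), which renders the relevant linear forms in $c$ non-degenerate.
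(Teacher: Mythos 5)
Your proof is correct, but it takes a genuinely different route from the paper's. The paper works through the \'etale local ring $\mathcal{O}^{et}_{(A^{1},0)}\cong L[[x]]\cap L(x)^{alg}$: it represents each $\delta_{i}$ on an \'etale cover $U_{i}\rightarrow (A^{1},0)$, forms the fibre product $U_{1\ldots m}$, passes to a nonsingular model $C''$ of $L(x,\delta_{1},\ldots,\delta_{m})$, and only then extracts a plane model by invoking the local presentation lemma for \'etale morphisms (Fact 1.5 of \cite{depiro5}) and clearing denominators in the resulting monic $F(z)$. You instead construct the plane model directly by a primitive element $z=\sum_{j}c_{j}\delta_{j}$, with a single genericity condition on $c\in L^{m}$ doing triple duty: it makes $z$ generate the splitting field $M$, normalises $z(0)=0$, and forces the Galois conjugates of $z$ to have pairwise distinct constant terms, which is exactly what makes $G(0,Z)$ separable and hence the projection \'etale at $(0,0)$ without any appeal to a presentation lemma. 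Your injectivity argument $\sigma\mapsto\pi_{\sigma}$ (valid because $M$ is generated by the roots $\eta_{j}$) and the observation that distinctness of the $a_{j}$ keeps the relevant linear forms in $c$ nonzero are the right points to check, and they hold. The trade-off: your construction is more elementary and explicit but depends on a choice of primitive element, whereas the paper's is choice-free and is phrased so as to reuse the author's earlier machinery (the identification of $L[[x]]\cap L(x)^{alg}$ with the \'etale local ring, which also delivers the embedding of $R(V')$ that you obtain instead from the isomorphism of completed local rings along an \'etale map). Two small points you gloss over, both harmless: after clearing denominators you should divide $q(x)P(x,Z)$ by its content before invoking Gauss's lemma (the content is prime to $x$ since $q(0)\neq 0$, so nothing changes at $x=0$); and in positive characteristic you are implicitly using that $F$ is separable over $L(x)$ (it has $m$ distinct roots in $L[[x]]$), which the paper also only records in a footnote.
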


\begin{proof}

From the previous construction of Lemma 2.5, the power series $\{\delta_{1}(x),\ldots,\delta_{m}(x)\}$ belong to $L[[x]]\cap L(x)^{alg}$. By Theorem 1.3 and subsequent remarks at the beginning of Section 3 of \cite{depiro5}, we have that $L[[x]]\cap L(x)^{alg}$ is isomorphic to the local ring for the etale topology ${\mathcal O}_{(A^{1},0)}^{et}$.\\
\indent By the definition of this local ring, we can find irreducible varieties;\\

$\{(U_{1},(0)^{lift}_{1}),\ldots,(U_{i},(0)^{lift}_{i}),\ldots,(U_{m},(0)^{lift}_{m})\}$\\

and etale morphisms;\\

$\phi_{i}:(U_{i},(0)^{lift}_{i})\rightarrow (A^{1},0)$, for $1\leq i\leq m$\\

with $\delta_{i}(x)\in R(U_{i})$. It follows, from algebraic considerations, see, for example, \cite{M}, that the fibre product;\\

$(U_{1\ldots m},(0)^{lift})=(U_{1}\times_{A^{1}}\times\ldots\times_{A^{1}}U_{m},(0)^{lift}_{1}\times_{(0)}\ldots\times_{(0)}(0)^{lift}_{m})$\\

is irreducible, nonsingular, and defines etale covers;\\

$\psi_{1\ldots m}:(U_{1\ldots m},(0^{lift}))\rightarrow (A^{1},0)$\\

$\psi_{i}:(U_{1\ldots m},(0^{lift}))\rightarrow (U_{i},(0)^{lift}_{i})$, for $1\leq i\leq m$\\

such that $\phi_{i}\circ\psi_{i}=\psi_{1\ldots m}$. By the definition of the etale local ring, in terms of maps and equivalence relations, we can assume that all the power series $\{\delta_{1}(x),\ldots,\delta_{m}(x)\}$ are represented in the coordinate ring $R(U_{1\ldots m})$.\\
\indent The dominant morphism $\psi_{1\ldots m}$ induces an inclusion;\\

$L(x)\subset L(U_{1\ldots m})$\\

and a corresponding factorisation;\\

$L(x)\subset_{i_{1}}L(x,\delta_{1}(x),\ldots,\delta_{m}(x))\subset_{i_{2}}L(U_{1\ldots m})$,(\footnote{When $char(L)\neq 0$, using the fact that the roots ${\eta_{1}(x),\ldots,\eta_{m}(x)}$, found in Lemma 2.5, are distinct, it is straightforward to check that $L(x,\delta_{1}(x),\ldots,\delta_{m}(x))$ is a seperable extension of $L(x)$.})\\

Let $C''$ be an irreducible nonsingular model of $L(x,\delta_{1}(x),\ldots,\delta_{m}(x))$, with corresponding dominant morphisms;\\

$\alpha_{2}:U_{1\ldots m}\rightarrow C''$,\ $\alpha_{1}:C''\rightarrow A^{1}$\\

corresponding to the inclusions $\{i_{1},i_{2}\}$, such that $\alpha_{1}\circ\alpha_{2}=\psi_{1\ldots m}$. Let $(0)^{lift}\in C''$ also denote the image $\alpha_{2}((0)^{lift})$, for $(0)^{lift}\in U_{1\ldots m}$. As the composition $\alpha_{1}\circ\alpha_{2}$ is etale, it follows, using an elementary Zariski structure argument, that the morphism $\alpha_{1}$ is Zariski unramified at $(0)^{lift}\in C''$. By nonsingularity of $C''$, Footnote 2 and Theorems (6.10,6.11) of \cite{depiro7}, there exists an open subset $W\subset C''$, containing $(0)^{lift}$, such that the restriction;\\

$\alpha_{1}:(W,(0)^{lift})\rightarrow (A^{1},0)$\\

is etale.\\

We now use the local presentation lemma for etale morphisms, given in Fact 1.5 of \cite{depiro5}, that is we can find an open subset $U'\subset A^{1}$, containing $(0)$, and a monic polynomial $F(z)\in R(U')[z]$, such that $\alpha_{1}$ can be presented for an open subset $V'\subset W\subset C''$ in the form;\\

$Spec(({R(U')[z]\over F(z)})_{d})\rightarrow Spec(R(U'))$ $(**)$\\

with $F'(z)$ invertible in $({R(U')[z]\over F(z)})_{d}$. Now let $p(x)\in L[x]$ be a polynomial, with $p(0)\neq 0$, such that $R(U')=L[x]_{p}$. Then we can write the equation for $F(z)$ in the form;\\

$F(z)=z^{n}+{c_{1}(x)\over p^{r_{1}}(x)}z^{n-1}+\ldots+{c_{n-1}(x)\over p^{r_{n-1}}(x)}z+{c_{n}(x)\over p^{r_{n}}(x)}=0$\\

for some $n\geq 1$ and $0\leq r_{j}\leq r$, with $c_{j}(x)$ coprime to $p(x)$ for $1\leq j\leq n$.  Clearing coefficients and using the fact that $V'$ is irreducible, the polynomial;\\

$G(x,z)=p^{r}(x)z^{n}+c_{1}(x)p^{r-r_{1}}(x)z^{n-1}+\ldots+c_{n}(x)p^{r-r_{n}}(x)$\\

defines an irreducible curve $C'$ in $A_{2}^{2}$, whose restriction to $V'=(pr_{1}^{-1}(U')\cap(G=0)\cap (d\neq 0))$ corresponds to the cover defined in $(**)$. By making a vertical translation of the curve, we can assume that $G(0,0)=0$ and $(0,0)$ corresponds to the point $(0)^{lift}$ of $V'\subset W\subset C''$. We finally check the properties $(i)$ to $(iv)$ for $(V',C',(0,0))$.\\

(i). Follows immediately from the presentation $(**)$.\\

(ii). As there exists a birational map between $(V',C'')$ and $(V',C')$ over $(A^{1},0)$, we can assume that the power series $\{\delta_{1}(x),\ldots,\delta_{m}(x)\}\subset R(V')\subset L(C')$. That $\delta_{1}(0,0)=\ldots=\delta_{m}(0,0)=0$, follows from the definition of the maximal ideal $\mathfrak{m}^{et}_{(A^{1},0)}\subset\mathcal{O}^{et}_{(A^{1},0)}$.\\

(iii). Follows from the construction of $C''$ and the fact that $\{C',C''\}$ are birational.\\

(iv). Follows from $(iii)$ and the fact that $L(x,\delta_{1}(x),\ldots,\delta_{m}(x))=L(x,\eta_{1}(x),\ldots,\eta_{m}(x))$ is a splitting field for the polynomial $F(x,y)$, given in Lemma 2.5.\\

\end{proof}

The existence of a curve $C'\subset P^{2}$, satisfying the conditions of the lemma, is far from unique. More precisely, we have the following;\\

\begin{lemma}

Let $C_{1}$ satisfy the conditions of Lemma 3.3 and let;\\

$g(x,z)={a(x)z+b(x)\over c(x)z+d(x)}$\\

with $\{a(x),b(x),c(x),d(x)\}$ polynomials in $L[x]$, such that $b(0)=0$ and $a(0)d(0)\neq 0$\\

Then $g(x,z)$ determines a birational,(\footnote{Unless otherwise stated, when speaking of a birational morphism $\theta$ between curves $C_{1}$ and $C_{2}$, satisfying the conditions of Lemma 3.3, we will always assume that $\theta$ is an isomorphism in the etale topology with $(0,0)$ as labelled points, and $(pr\circ\theta)=pr$. Obvious examples of birational morphisms which do not satisfy this requirement are non-identity elements of the automorphism group $(L(C_{1})/L(x))$. It is not immediately obvious that, for any two curves $C_{1}$ and $C_{2}$, satisfying the conditions of Lemma 3.3, there does exist a birational morphism between $C_{1}$ and $C_{2}$, with the above extra requirement. However, this will be shown later in the paper.}), morphism $\theta_{g}:C_{1}\leftrightsquigarrow C_{2}$, with $C_{2}$ satisfying the conditions of Lemma 3.3.

\end{lemma}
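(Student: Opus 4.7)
The plan is to define $\theta_g$ explicitly by the formula $(x, z) \mapsto (x, g(x,z))$, take $C_2$ to be the Zariski closure in $P_2^2$ of the image of $C_1$ under $\theta_g$, and transfer each of conditions (i)--(iv) of Lemma 3.3 from $C_1$ to $C_2$ along this birational identification.

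I would first check that $\theta_g$ is defined at the labelled point $(0,0)$ of $C_1$ and sends it to $(0,0)$. Since $d(0) \neq 0$, the rational function $g(x,z)$ is regular at $(0,0)$, and since $b(0) = 0$ we get $g(0,0) = b(0)/d(0) = 0$. The M\"obius inverse $g^{-1}(x, w) = (d(x)w - b(x))/(a(x) - c(x)w)$ is regular at $(0,0)$ because $a(0) \neq 0$, and again $g^{-1}(0,0) = 0$ by $b(0) = 0$. The identity $(ad - bc)(0) = a(0)d(0) \neq 0$ forces the M\"obius transformation to be non-degenerate over $L(x)$, so $\theta_g$ is birational and restricts to a Zariski-local isomorphism between neighbourhoods of $(0,0) \in C_1$ and $(0,0) \in C_2$; irreducibility of $C_2$ is immediate from that of $C_1$. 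An explicit global defining polynomial for $C_2$ in $A_2^2$ is obtained by substituting $z = g^{-1}(x, w)$ into the defining equation of $C_1$ and clearing denominators.

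For conditions (i)--(iv), the key point is that $g$ modifies only the $z$-coordinate with coefficients drawn from $L[x]$, so on the common domain $pr \circ \theta_g = pr$ (this is precisely the strict equality required by the footnote convention). Taking $V_2 := \theta_g(V_1'')$ for a suitable open $V_1'' \subset V_1'$ containing $(0,0)$, the restriction $pr : V_2 \to A^1$ factors as $pr \circ \theta_{g^{-1}}$, a composition of etale morphisms, which yields (i). For (ii), the ring isomorphism $\theta_g^*: R(V_2) \to R(V_1'')$ fits into a commutative triangle with the canonical embeddings of both rings into $L[[x]] \cap L(x)^{\mathrm{alg}}$ induced by completion at the labelled point along $pr$, so their images in $L[[x]]$ coincide and in particular contain $\{\delta_1, \ldots, \delta_m\}$. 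Conditions (iii) and (iv) follow immediately, since $L(C_1)$ and $L(C_2)$ are identified as the same subfield of $\mathrm{Frac}(L[[x]])$ over $L(x)$, so the generating set $\{\delta_i\}$ and the Galois property over $L(x)$ transfer verbatim, and the Galois closure characterisation of (iv) is preserved.

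The main piece of bookkeeping, and really the only subtle step, is the compatibility between the etale-local embeddings $R(V_1'') \hookrightarrow L[[x]]$ and $R(V_2) \hookrightarrow L[[x]]$: the hypothesis $b(0)=0$ is precisely what forces $\theta_g$ to carry the labelled point to a point still lying over $0 \in A^1$, while the hypothesis that $a, b, c, d$ lie in $L[x]$ (rather than $L[x,z]$) is what guarantees the strict equality $pr \circ \theta_g = pr$ of labelled projections rather than mere compatibility up to a birational twist. Once these two compatibilities are recorded, conditions (i)--(iv) follow formally from the birational identification, and the main work of the argument consists of this bookkeeping rather than any substantive algebraic difficulty.
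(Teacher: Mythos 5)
Your proposal is correct and follows essentially the same route as the paper: define $\theta_g:(x,z)\mapsto(x,g(x,z))$ on the locus where the M\"obius transformation is invertible, take $C_2$ to be the closure of the image, and transfer conditions (i)--(iv) of Lemma 3.3 through the resulting isomorphism of open subsets, using $b(0)=0$ and $a(0)d(0)\neq 0$ to keep the labelled point at $(0,0)$ and $pr\circ\theta_g=pr$. The only cosmetic difference is in verifying (i): you compose etale morphisms directly, whereas the paper checks that $pr$ is Zariski unramified at $(0,0)\in C_2$ by an infinitesimal argument and invokes nonsingularity; both are sound.
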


\begin{proof}
As $a(0)d(0)\neq b(0)c(0)$, there exists an open subset $U\subset A^{1}$, containing $0$, such that, for all $x\in U$, $g(x,z)={a(x)z+b(x)\over c(x)z+d(x)}$ defines an invertible Mobius transformation. Let;\\

$V_{1}=\{(x,z)\in U\times A^{1}:c(x)z+d(x)\neq 0\}$\\

$V_{2}=\{(x,z)\in U\times A^{1}:c(x)z-a(x)\neq 0\}$\\

Then the morphism;\\

$\theta_{g}:V_{1}\rightarrow V_{2}$; $(x,z)\mapsto (x,g(x,z))$\\

has an inverse provided by the transformation;\\

$\theta_{g}^{-1}:V_{2}\rightarrow V_{1}$; $(x,z)\mapsto (x,g^{-1}(x,z))$\\

where $g^{-1}$ is the inverse Mobius transformation to $g$. Moreover, we have that $pr_{1}\circ\theta_{g}=Id_{(U\setminus c(x)=0)}$.\\

The image of the irreducible open subset $(C_{1}\cap V_{1})\subset C_{1}$ is constructible, hence, we can form its projective closure $C_{2}$. It follows immediately, that $\theta_{g}$ defines an isomorphism between the open subsets $(C_{1}\cap V_{1})$ and $(C_{2}\cap V_{2})$, $(\dag)$, therefore, defines a birational map $\theta_{g}:C_{1}\leftrightsquigarrow C_{2}$. We check conditions $(i)$ to $(iv)$ for the curve $C_{2}$.\\

(i). We have that $(0,0)\in (V_{1}\cap V_{2})$ and $g(0,0)=0$. By $(\dag)$, $(0,0)$ defines a nonsingular point of the curve $C_{2}$. In order to verify that $pr_{1}$ defines an etale morphism in an open neighborhood of $(0,0)\in C_{2}$, it is then sufficient to verify that the morphism is Zariski unramified at $(0,0)$, by Theorems (6.10,6.11) of \cite{depiro7}. Suppose not, then we can find $\epsilon\in({\mathcal V}_{0}\cap A^{1})$, and distinct $\{\epsilon_{1},\epsilon_{2}\}\subset{\mathcal V}_{0}$ such that $\{(\epsilon,\epsilon_{1}),(\epsilon,\epsilon_{2})\}\subset{\mathcal{V}}_{(0,0)}\cap C_{2}\cap V_{1}\cap V_{2}$. It follows immediately that $\{(\epsilon,g^{-1}(\epsilon,\epsilon_{1})),(\epsilon,g^{-1}(\epsilon,\epsilon_{2}))\}\subset {\mathcal{V}}_{(0,0)}\cap C_{1}\cap V_{1}\cap V_{2}$. Moreover, the pair $\{(\epsilon,g^{-1}(\epsilon,\epsilon_{1})),(\epsilon,g^{-1}(\epsilon,\epsilon_{2}))\}$ is distinct, by birationality. This contradicts the fact that $pr_{1}$ defines an etale morphism in an open neighborhood of $(0,0)\in C_{1}$.\\

The remaining conditions $(ii),(iii),(iv)$ are straightforward to check, using the fact that;\\

$\theta_{g}:(C_{1}\cap V_{1},(0,0))\rightarrow (C_{2}\cap V_{2},(0,0))$\\

is an isomorphism, with the property that $(pr_{1}\circ\theta_{g})=pr_{1}$.

\end{proof}

The verticalised, polar geometry of such birational transformations, suggests that we should give special consideration to the effect on branches, centred at the point $Q_{1}$, determined by the coordinate system $(x,z)$;\\ \\

\begin{lemma}{Effect on hyperbolic branches at $Q_{1}$}\\

Let $C_{1},C_{2}$ and $\theta_{g}$ satisfy the conditions of the previous lemma and suppose that $\gamma$ is a hyperbolic branch of $C_{1}$, centred at $Q_{1}$, with tangent line $x=\alpha$. Then, the corresponding branch $\theta_{g}(\gamma)$ of $C_{2}$, determined by the birational map $\theta_{g}$, is centred at $Q_{1}$, if $c(\alpha)=0$ and $a(\alpha)\neq 0$, and at $(\alpha,{a(\alpha)\over c(\alpha)})$, if $c(\alpha)\neq 0$.\\

\end{lemma}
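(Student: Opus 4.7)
The plan is to parameterise the branch $\gamma$ locally in an affine chart around $Q_{1}$, apply $\theta_{g}$ coordinate-wise, and then read off the centre of the image branch by reducing the local parameterisation modulo the uniformiser. The key observation is that $\theta_{g}$ fixes the $x$-coordinate and acts on $z$ by the Mobius transformation $z\mapsto g(x,z)$, so once we know that $x\to\alpha$ and $z\to\infty$ along $\gamma$, the behaviour of the new $z$-coordinate reduces to an elementary computation in a power series ring.

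First I would pass to the standard affine chart $(u,v)=(W/Z,X/Z)$ around $Q_{1}$, in which $u=1/z$, $v=x/z$, and the line $x=\alpha$ becomes $v=\alpha u$. The hyperbolic branch $\gamma$, centred at $Q_{1}$ with tangent $x=\alpha$, admits a formal parameterisation $u=u(t)$, $v=v(t)$ in $L[[t]]$ with $u(0)=v(0)=0$, $u(t)\not\equiv 0$, and, crucially, with $v(t)-\alpha u(t)$ of strictly higher $t$-order than $u(t)$; this last condition encodes the fact that the tangent direction is $v=\alpha u$ rather than $u=0$ (i.e.\ hyperbolicity). Translating back to the original chart gives $x(t)=v(t)/u(t)=\alpha+O(t^{\epsilon})$ for some $\epsilon\geq 1$ and $z(t)=1/u(t)$ of negative $t$-order. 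Since $\theta_{g}$ fixes $x$, the image branch $\theta_{g}(\gamma)$ of $C_{2}$ corresponds to the parameterisation $(x(t),z'(t))$ with $z'(t)=g(x(t),z(t))$, and its centre is the projective limit of this parameterisation as $t\to 0$.

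The computation then splits into the two cases named in the statement. If $c(\alpha)\neq 0$, dividing numerator and denominator of $z'(t)$ by $z(t)$ yields
\[
z'(t)\;=\;\frac{a(x(t))+b(x(t))/z(t)}{c(x(t))+d(x(t))/z(t)}\;\longrightarrow\;\frac{a(\alpha)}{c(\alpha)},
\]
using $x(t)\to\alpha$, $1/z(t)\to 0$, and the nonvanishing of the limit denominator. Hence $\theta_{g}(\gamma)$ is centred at the finite point $(\alpha,a(\alpha)/c(\alpha))\in A^{2}_{2}$. If instead $c(\alpha)=0$ and $a(\alpha)\neq 0$, the same manipulation applied to the reciprocal gives
\[
\frac{1}{z'(t)}\;=\;\frac{c(x(t))+d(x(t))/z(t)}{a(x(t))+b(x(t))/z(t)}\;\longrightarrow\;\frac{c(\alpha)}{a(\alpha)}\;=\;0,
\]
so $z'(t)$ has negative $t$-order, and in the chart $(u',v')=(1/z',x/z')$ around $Q_{1}$ we have $(u'(t),v'(t))\to(0,0)$; this shows that $\theta_{g}(\gamma)$ is centred at $Q_{1}$.

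The main obstacle is not in any of these limit computations, which are routine once the parameterisation is set up, but in justifying cleanly that the formal substitution $(x(t),g(x(t),z(t)))$ genuinely represents the branch $\theta_{g}(\gamma)$ on the projective closure $C_{2}$. This amounts to the standard correspondence between places of a function field and branches of a (possibly singular) model: since $\theta_{g}$ is birational it induces an isomorphism $L(C_{1})\cong L(C_{2})$ and hence a bijection of places, and because $\theta_{g}$ is given by the rational functions $(x,g(x,z))$, a parameterisation of a place of $L(C_{1})$ by power series in $t$ transports by substitution to a parameterisation of the image place of $L(C_{2})$. Once this identification is in place, the two limit computations above give the claimed centres.
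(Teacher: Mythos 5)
Your proposal is correct and follows essentially the same route as the paper: parameterise the branch locally, push the parameterisation through the M\"obius transformation $z\mapsto g(x,z)$, and read off the limit point in the two cases $c(\alpha)\neq 0$ and $c(\alpha)=0,\ a(\alpha)\neq 0$, with the two displayed limit computations matching the paper's Case 2 and Case 1 respectively. The only cosmetic difference is that you parameterise by a uniformiser in the chart at $Q_{1}$ (ordinary power series), whereas the paper parameterises by $x=\alpha+\epsilon$ and invokes Puiseux-type expansions $z(\epsilon)=u(\epsilon)/\epsilon^{i}$ together with the infinitesimal/specialisation formalism; your choice arguably avoids the fractional exponents, but the substance of the argument is identical.
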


\begin{proof}

As $x=\alpha$ is tangent to the branch $\gamma$, and not equal to $l_{\infty}$, by results of \cite{depiro6}, given an infinitesimal $\epsilon\in\mathcal{V}_{0}$, there exists $z(\epsilon)$ such that;\\

$(\alpha+\epsilon,z(\epsilon))\in({\mathcal V}_{Q_{1}}\cap C_{1}\cap \gamma)$\\

By the explicit construction of specialisations, given in \cite{depiro3}, we may assume that $z(\epsilon)={u(\epsilon)\over\epsilon^{i}}$, for a unit $u(x)\in L[[x^{1\over r}]]$, $(r\geq 1)$, and $i>0$, $i\in{\mathcal Q}$, (\footnote{The use of Puiseux series, here, is necessary, due to the existence of blue branches (Definition 3.11). We will find a more effective method of understanding the geometry of such branches, later in the paper.})\\

If $\theta_{g}(\gamma)$ is centred at $P$, then, using results of \cite{depiro6};\\

 $\theta_{g}((\alpha+\epsilon,z(\epsilon)))\in ({\mathcal V}_{P}\cap C_{2}\cap \theta_{g}(\gamma))$\\

We explicitly determine $P$ by specialisation in the following cases;\\

Case 1. $c(\alpha)=0$ and $a(\alpha)\neq 0$\\

We have that;\\

$g(\alpha+\epsilon,z(\epsilon))={a(\alpha+\epsilon)z(\epsilon)+b(\alpha+\epsilon)\over c(\alpha+\epsilon)z(\epsilon)+d(\alpha+\epsilon)}={a(\alpha+\epsilon)u(\epsilon)+b(\alpha+\epsilon)\epsilon^{i}\over c(\alpha+\epsilon)u(\epsilon)+d(\alpha+\epsilon)\epsilon^{i}}$\\

hence, the projective coordinates of $\theta_{g}(\epsilon,z(\epsilon))$, see Definition 3.1, are given by;\\

$X=(\alpha+\epsilon)(c(\alpha+\epsilon)u(\epsilon)+d(\alpha+\epsilon)\epsilon^{i})$\\

$Z=a(\alpha+\epsilon)u(\epsilon)+b(\alpha+\epsilon)\epsilon^{i}$\\

$W=c(\alpha+\epsilon)u(\epsilon)+d(\alpha+\epsilon)\epsilon^{i}$\\

Using the fact that $c(\alpha)=0$ and $a(\alpha)\neq 0$, the specialised coordinates are given by;\\

$X=0$,\indent $Z=1$,\indent $W=0$\\

hence, the result follows.\\

Case 2. $c(\alpha)\neq 0$.\\

Writing $z(\epsilon)={1\over t(\epsilon)}$, where $t(\epsilon)=\epsilon^{i}u(\epsilon)$, we have that;\\

$\theta_{g}(\alpha+\epsilon,z(\epsilon))=(\alpha+\epsilon, {a(\alpha+\epsilon)+b(\alpha+\epsilon)t(\epsilon)\over c(\alpha+\epsilon)+d(\alpha+\epsilon)t(\epsilon)})$\\

which, clearly specialises to $(\alpha,{a(\alpha)\over c(\alpha)})$, using the fact that $c(\alpha)\neq 0$.\\

The lemma is then shown.

\end{proof}

\begin{lemma}{Effect on parabolic branches at $Q_{1}$}\\

Let $C_{1},C_{2}$ and $\theta_{g}$ satisfy the conditions of the previous lemma and suppose that $\gamma$ is a parabolic branch of $C_{1}$, centred at $Q_{1}$. Then the corresponding branch $\theta_{g}(\gamma)$ is centred at;\\

$[1:0:0]$ if $max\{ord(a(x)),ord(b(x))\}<<min\{ord(c(x)),ord(d(x))\}$\\

$[0:1:0]$ if $max\{ord(c(x)),ord(d(x))\}<<min\{ord(a(x)),ord(b(x))\}$\\

\end{lemma}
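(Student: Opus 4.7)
The plan is to adapt the explicit specialisation argument of Lemma 3.5 to the parabolic setting. First I would parametrise the branch $\gamma$ at $Q_{1}=[0:1:0]$ in $[X:Z:W]$-coordinates on $P_{2}^{2}$. Normalising $Z=1$, both $X$ and $W$ vanish at the branch point, and the hypothesis that the tangent is $l_{\infty}=(W=0)$ forces $\mathrm{ord}_{\epsilon}(X)<\mathrm{ord}_{\epsilon}(W)$ in the local parameter $\epsilon$ on the smooth model of $C_{1}$. Allowing Puiseux series in $\epsilon^{1/r}$, as in Footnote 3 of Lemma 3.5, this translates to affine parametrisations $x(\epsilon)=u_{1}(\epsilon)/\epsilon^{j}$ and $z(\epsilon)=u_{2}(\epsilon)/\epsilon^{i}$, with $u_{1},u_{2}$ units and $i>j>0$, where $i=\mathrm{ord}_{\epsilon}(W)$ and $j=\mathrm{ord}_{\epsilon}(W)-\mathrm{ord}_{\epsilon}(X)$.

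Next I would substitute into $g(x,z)=(a(x)z+b(x))/(c(x)z+d(x))$ and track $\epsilon$-orders, reading $\mathrm{ord}(f(x))$ as the degree of the polynomial $f\in L[x]$ (equivalently, the pole order at $x=\infty$). Since $x(\epsilon)$ has pole of order $j$, the composition $f(x(\epsilon))$ has pole of order $j\cdot\mathrm{ord}(f)$, so the numerator $a(x)z+b(x)$ acquires pole order $\max(j\,\mathrm{ord}(a)+i,\,j\,\mathrm{ord}(b))$ and the denominator acquires pole order $\max(j\,\mathrm{ord}(c)+i,\,j\,\mathrm{ord}(d))$. These bounds are attained without cancellation, because the $z$-term and the $z$-free term are distinct monomials whose leading Puiseux contributions involve different powers of $u_{1},u_{2}$; the borderline cases in which their $\epsilon$-orders coincidentally align can be absorbed into the margin built into $\ll$.

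In Case 1, the hypothesis $\max(\mathrm{ord}(a),\mathrm{ord}(b))\ll \min(\mathrm{ord}(c),\mathrm{ord}(d))$ makes the gap between denominator and numerator pole orders at least $j\bigl[\min(\mathrm{ord}(c),\mathrm{ord}(d))-\max(\mathrm{ord}(a),\mathrm{ord}(b))\bigr]$, which by $\ll$ strictly exceeds $j$. Hence $g(x(\epsilon),z(\epsilon))$ vanishes to $\epsilon$-order greater than $j$; writing the image in projective coordinates as $[x(\epsilon):g(x(\epsilon),z(\epsilon)):1]$ and multiplying through by $\epsilon^{j}$ one finds the specialisation $[u_{1}(0):0:0]=[1:0:0]$. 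Case 2 is the mirror image: the order gap now favours the numerator, so $g$ acquires a pole in $\epsilon$ exceeding that of $x(\epsilon)$ by more than $j$, and renormalising by the pole of $g$ gives the specialisation $[0:1:0]=Q_{1}$.

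The main obstacle is essentially bookkeeping: one must ensure that the $\ll$ margin genuinely swamps the contributions of $i$ and $j$ coming from the branch parametrisation, and that the leading Puiseux coefficients inside the numerator and denominator do not conspire to cancel. The former is precisely why the hypothesis is phrased as $\ll$ rather than mere strict inequality; the latter is either automatic generically or handled by a small perturbation of the branch, exactly as in Lemma 3.5.
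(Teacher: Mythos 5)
Your proof follows the same route as the paper's: parametrise the parabolic branch at $Q_{1}$ by Puiseux series $(u_{1}(\epsilon)\epsilon^{-j},u_{2}(\epsilon)\epsilon^{-i})$ with $i>j>0$ (the paper normalises $j=1$, $u_{1}=1$), substitute into the M\"obius transformation, compare the pole orders of numerator and denominator using the $\ll$ hypothesis, and read off the specialisation of the rescaled projective coordinates. The order bookkeeping is correct, and your explicit attention to possible cancellation of leading terms is, if anything, more careful than the paper's one-line ``by inspection of $q(\epsilon)$''.
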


\begin{proof}

As $\gamma$ is parabolic and centred at $Q_{1}$, given an infinitesimal $\epsilon\in{\mathcal V}_{0}$, there exists $z(\epsilon)$ such that $({1\over\epsilon},z(\epsilon))\in({\mathcal V}_{Q_{1}}\cap C_{1}\cap\gamma)$. By a straightforward calculation, we can assume that $z(\epsilon)$ is of the form ${u(\epsilon)\over\epsilon^{i}}$, for a unit $u(x)\in L[[x^{1\over r}]]$, $r\geq 1$, and $1<i\leq N$, $i\in{\mathcal Q}$, where $N$ depends only on the degree of $C_{1}$. We have that;\\

$g({1\over\epsilon},z(\epsilon))={a({1\over\epsilon})u(\epsilon)+b({1\over\epsilon})\epsilon^{i}\over c({1\over\epsilon})u(\epsilon)+d({1\over\epsilon})\epsilon^{i}}=q(\epsilon)$\\

and\\

$\theta_{g}({1\over\epsilon},z(\epsilon))=[{1\over\epsilon}:q(\epsilon):1]=[1:\epsilon q(\epsilon):\epsilon]$, $(\star)$\\

In the case that $ord(q(\epsilon))>-1$, the specialisation, determined from $(\star)$, is given by $[1:0:0]$, and, in the case that $ord(q(\epsilon))< -1$, the specialisation is given by $[0:1:0]$. By inspection of $q(\epsilon)$, these conditions are clearly met, given the corresponding conditions of the lemma.

\end{proof}

It is convenient, in what follows, to consider the effect of the following transformation;\\

\begin{lemma}

Let $C_{1}$ satisfy the conditions of the previous lemma, and let $\alpha\neq 0$ be chosen so that $x=\alpha$ does not correspond to the tangent line of any hyperbolic branch of $C_{1}$, centred at $Q_{1}$. Then the transformation;\\

$\theta_{g}:(x,z)\mapsto (x,(x-\alpha)z)$\\

has the following properties;\\

(i). $\theta_{g}$ defines a birational map $\theta_{g}:C_{1}\leftrightsquigarrow C_{2}$, with $C_{2}$ satisfying the conditions of Lemma 3.3.\\

(ii). If $\gamma$ is a branch of $C_{1}$, centred along $({l_{\infty}\setminus [1:0:0]})$, then $\theta_{g}(\gamma)$ is centred at $Q_{1}\in C_{2}$.\\

Moreover,\\

(iii). If $\gamma$ is a branch of $C_{1}$, centred at $[1:0:0]$, then, after finitely many repetitions of transformations, having the form $\theta_{g}$, we obtain a birational map $\theta:C_{1}\leftrightsquigarrow C_{2}$, such that $\theta(\gamma)$ is centred at $Q_{1}\in C_{2}$. \\

(iv). In particular, $C_{2}$ intersects the line $l_{\infty}$ only at $Q_{1}$.\\

\end{lemma}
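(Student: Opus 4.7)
My plan treats the four parts sequentially, with (i) immediate from Lemma 3.4, (ii) a case analysis using Lemmas 3.5 and 3.6, (iii) an iterative procedure tracking infinitesimal orders, and (iv) a formal consequence of the previous parts together with birationality. For (i), the proposed transformation corresponds to the choice $a(x) = x - \alpha$, $b(x) = 0$, $c(x) = 0$, $d(x) = 1$ in the framework of Lemma 3.4. The assumption $\alpha \neq 0$ gives $b(0) = 0$ and $a(0)d(0) = -\alpha \neq 0$, so the hypotheses of Lemma 3.4 are met, and yield the birational map and the conditions on $C_2$ required by Lemma 3.3.

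For (ii), I would partition the branches of $C_1$ centred on $l_{\infty} \setminus \{[1:0:0]\}$ into three subcases. If $\gamma$ is a hyperbolic branch at $Q_1$ with tangent $x = \alpha'$, then by the hypothesis on $\alpha$ we have $\alpha' \neq \alpha$, so $c(\alpha') = 0$ and $a(\alpha') = \alpha' - \alpha \neq 0$, and Case 1 of Lemma 3.5 sends $\gamma$ to $Q_1$. If $\gamma$ is parabolic at $Q_1$, I would take the parametrisation $(1/\epsilon, u(\epsilon)/\epsilon^i)$ from the proof of Lemma 3.6 and compute directly: the transformed $z$-coordinate $(1/\epsilon - \alpha)u(\epsilon)/\epsilon^i$ has order $-(i+1) < -1$ in $\epsilon$, and the projective normalisation of $[1/\epsilon : (x-\alpha)z : 1]$ collapses to $[0:1:0] = Q_1$. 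For a branch at $[1:m:0]$ with $m \neq 0$, I would parametrise $x = 1/\epsilon$, $z = m/\epsilon + v(\epsilon)$ with $\mathrm{ord}(v) > -1$, observe that $(x-\alpha)z$ has dominant term $m/\epsilon^2$ of order $-2$, and normalise projectively to obtain $Q_1$ once more.

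For (iii), I would parametrise $\gamma$ at $[1:0:0]$ as $x = 1/\epsilon$, $z = w(\epsilon)$ with $w$ a Puiseux series of rational order $k > -1$ — the exact condition for the projective limit to equal $[1:0:0]$. Each admissible $\theta_{g_{\alpha_i}}$, with $\alpha_i$ chosen generically to avoid the finitely many tangent abscissae of hyperbolic branches at $Q_1$ of the current intermediate curve so that (i) applies at each stage, multiplies the $z$-component by $(1/\epsilon - \alpha_i)$, a series of order $-1$, hence strictly decreases the order of the $z$-component by exactly one. After $N > k + 1$ iterations the order drops strictly below $-1$, and the normalisation argument from (ii) places the resulting branch at $Q_1$. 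The main technical nuisance I anticipate is the borderline case in which an intermediate order equals $-1$ exactly, so that the branch lands momentarily at $[1:c:0]$ with $c \neq 0$ rather than at $Q_1$; however, one further iteration drops the order below $-1$ and resolves this, preserving the finiteness of the procedure.

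Part (iv) is then formal. Each $\theta_{g_{\alpha_i}}$ preserves the $x$-coordinate and is everywhere regular on the affine plane, so it maps finite branches of the current curve to finite branches; hence the composite $\theta$ does as well. By birationality, branches of $C_2$ are in bijection with branches of $C_1$ via the common normalisation, so every branch of $C_2$ on $l_{\infty}$ must arise from an infinite branch of $C_1$. Parts (ii) and (iii) then ensure that all such images lie over $Q_1$, which gives $C_2 \cap l_{\infty} = \{Q_1\}$.
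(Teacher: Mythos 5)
Your proposal is correct and follows essentially the same route as the paper: (i) by specialising Lemma 3.4 to $a(x)=x-\alpha$, $b=c=0$, $d=1$; (ii) by the same three-way case split using Lemma 3.5 for hyperbolic branches at $Q_{1}$ and explicit infinitesimal parametrisations for parabolic branches and for points $[1:m:0]$ (where the paper instead reads the answer off the projective formula $[X:Z:W]\mapsto[XW:XZ-\alpha WZ:W^{2}]$); (iii) by the same iteration, your "order decreases by one" being the paper's $i\mapsto i+1$, including the borderline landing on $(l_{\infty}\setminus[1:0:0])$ handled by one further application; and (iv) as the same formal consequence. No gaps.
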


\begin{proof}

The property $(i)$ follows immediately from Lemma 3.4 and the choice of $\alpha\neq 0$. By explicit calculation, the projective formulation of $\theta_{g}$ is given by;\\

$\theta_{g}:[X:Z:W]\mapsto [XW:XZ-\alpha WZ:W^{2}]$ $(\star\star)$\\

Any point $P$, lying on $(l_{\infty}\setminus (Q_{1}\cup [1:0:0])$, satisfies $W=0$ and $XZ\neq 0$, hence, by $(\star\star)$, is
mapped to $Q_{1}$. In particular, if $\gamma$ is a branch of $C_{1}$, centred along $(l_{\infty}\setminus (Q_{1}\cup [1:0:0])$, then $\theta_{g}(\gamma)$ is centred at $Q_{1}$. By the choice of $\alpha$ in relation to tangent lines, and Lemma 3.5, the hyperbolic branches of $C_{1}$, centred at $Q_{1}$, are fixed. One finds the effect on the parabolic branches of $C_{1}$, centred at $Q_{1}$, by explicit calculation. Using the notation of Lemma 3.6, for $z(\epsilon)={u(\epsilon)\over \epsilon^{i}}$, with $i>1$, we have that;\\

$\theta_{g}({1\over\epsilon}, z(\epsilon))=[\epsilon^{i}:u(\epsilon)-\alpha u(\epsilon)\epsilon:\epsilon^{i+1}]$, $(\star\star\star)$\\

Clearly, the specialisation of this point is $Q_{1}=[0:1:0]$. Hence, the parabolic branches of $C_{1}$, centred at $Q_{1}$, are also fixed. $(ii)$ is then shown.\\

Again, we find the effect on a branch $\gamma$, centred at $[1:0:0]$, by explicit calculation. Using the notation of Lemma 3.6, for $z(\epsilon)={u(\epsilon)\over \epsilon^{i}}$, with $i<1$, by considering $(\star\star\star)$, we have that $\theta_{g}(\gamma)$ is centred along $(l_{\infty}\setminus [1:0:0])$ iff $0\leq i<1$. Otherwise, we obtain a new $z_{1}(\epsilon)={u_{1}(\epsilon)\over \epsilon^{i_{1}}}$, with $i_{1}=i+1$. Repeating the calculation, for new transformations of the form $\theta_{g}$, satisfying the previous conditions of the lemma, we are guaranteed that this process eventually produces a birational map $\theta:C_{1}\leftrightsquigarrow C_{2}$, with $\theta(\gamma)$ centred along $(l_{\infty}\setminus [1:0:0])$. Using the previous properties $(i)$ and $(ii)$, we can assume that    $\theta(\gamma)$ is centred at $Q_{1}$. Hence, $(iii)$ is shown.\\

The final statement follows straightforwardly from $(i)$, $(ii)$ and $(iii)$.\\

\end{proof}

We now consider the following method of moving branches in finite position, not situated along the axis $x=0$, to $Q_{1}$;\\

\begin{lemma}
Let $C_{1},C_{2},g$ and $\theta_{g}$ satisfy the conditions of Lemma 3.4, with the additional property that, for a given $\alpha\neq 0$, $c(\alpha)=d(\alpha)=0$, $a(\alpha)\neq 0$ and $a(\alpha)\beta+b(\alpha)\neq 0$, for any of the finitely many branches of $C_{1}$ in finite position, centred along the axis $x=\alpha$, with coordinates $(\alpha,\beta)$. Then, if $\gamma$ is a branch of $C_{1}$, centred in finite position along the axis $x=\alpha$, $\theta_{g}(\gamma)$ is centred at $Q_{1}\in C_{2}$, and, any hyperbolic branch of $C_{1}$, centred at $Q_{1}$ with tangent line $x=\alpha$, is fixed.
\end{lemma}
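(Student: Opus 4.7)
The plan is to adapt the specialisation techniques of Lemmas 3.5 and 3.6 to both situations appearing in the statement, namely finite branches centred along the axis $x=\alpha$ and hyperbolic branches at $Q_{1}$ with tangent line $x=\alpha$. For each type of branch, I parametrise it by an infinitesimal $\epsilon\in\mathcal{V}_{0}$, compute the projective coordinates of the image under $\theta_{g}$, and then specialise, exploiting the vanishing hypotheses $c(\alpha)=d(\alpha)=0$ together with $a(\alpha)\neq 0$ and $a(\alpha)\beta+b(\alpha)\neq 0$.

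For the first claim, I consider a finite branch $\gamma$ of $C_{1}$ centred at $(\alpha,\beta)$. By results of \cite{depiro6}, $\gamma$ admits a specialisation by a pair $(x(\epsilon),z(\epsilon))\in\mathcal{V}_{(\alpha,\beta)}\cap C_{1}\cap\gamma$ with $(x(\epsilon),z(\epsilon))\to(\alpha,\beta)$. Writing the projective coordinates $[X:Z:W]$ of $\theta_{g}(x(\epsilon),z(\epsilon))$ explicitly, the factor $c(x(\epsilon))z(\epsilon)+d(x(\epsilon))$ appearing in both $X$ and $W$ specialises to $c(\alpha)\beta+d(\alpha)=0$, whereas $Z=a(x(\epsilon))z(\epsilon)+b(x(\epsilon))$ specialises to $a(\alpha)\beta+b(\alpha)\neq 0$. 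Normalising by $Z$ and specialising, both $X/Z$ and $W/Z$ tend to zero, so the image point specialises to $[0:1:0]=Q_{1}$, as required.

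For the second claim, the conditions $c(\alpha)=0$ and $a(\alpha)\neq 0$ place us precisely in Case 1 of Lemma 3.5 (the additional vanishing $d(\alpha)=0$ is harmless, since it only reinforces the vanishing in the denominator), so that lemma directly yields $\theta_{g}(\gamma)$ centred at $Q_{1}\in C_{2}$. To identify the tangent line, I invoke the convention $pr\circ\theta_{g}=pr$ from the footnote to Lemma 3.4: the $x$-coordinate of every point of $\gamma$ is preserved by $\theta_{g}$, so the parametrisation of $\gamma$ at $Q_{1}$ given in Lemma 3.5 (with $x(\epsilon)=\alpha+\epsilon$) also parametrises $\theta_{g}(\gamma)$ at $Q_{1}$, yielding the same tangent line $x=\alpha$.

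I do not expect any substantial obstacle, since both parts reduce to direct specialisation calculations of the same character as in Lemmas 3.5 and 3.6. The only minor care needed is in the finite branch case: one must be sure that the specialisation argument works regardless of whether $\gamma$ has a vertical tangent at $(\alpha,\beta)$. This is immediate from the fact that the vanishing of $W$ is driven by the specialisation of $z(\epsilon)$ to $\beta$ combined with $c(\alpha)=d(\alpha)=0$, and does not depend on any order-of-vanishing estimate for $x(\epsilon)-\alpha$.
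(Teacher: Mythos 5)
Your proposal is correct and follows essentially the same route as the paper: the paper disposes of the finite-position case with the words ``follows by a direct calculation'' and of the hyperbolic case by citing Lemma 3.5, and your specialisation of the projective coordinates $[x(c(x)z+d(x)):a(x)z+b(x):c(x)z+d(x)]$ using $c(\alpha)=d(\alpha)=0$ and $a(\alpha)\beta+b(\alpha)\neq 0$ is exactly the omitted calculation. Your extra observation that $pr\circ\theta_{g}=pr$ preserves the tangent direction $x=\alpha$ of the hyperbolic branch is a useful clarification of what ``fixed'' means, and nothing in your argument is at odds with the paper.
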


\begin{proof}

The case when $\gamma$ is centred in finite position, with coordinates $(\alpha,\beta)$, such that $a(\alpha)\beta+b(\alpha)\neq 0$ follows by a direct calculation. The statement, concerning hyperbolic branches, follows immediately from Lemma 3.5.
\end{proof}

As a consequence of the previous arguments, we have the following;\\

\begin{lemma}

There exists an irreducible plane projective curve $C'\subset P^{2}$, satisfying the conditions of Lemma 3.3, with the additional properties;\\

(i). $(C'\cap l_{\infty})=Q_{1}$.\\

(ii). All the singularities of $C'$ lie along the axis $x=0$.\\

\end{lemma}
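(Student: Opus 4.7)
The plan is to build $C'$ by applying to the curve provided by Lemma 3.3 a finite sequence of the birational moves of Lemmas 3.7 and 3.8. The key observation is that $Q_{1}=[0:1:0]$ lies on the projective closure of the axis $x=0$ (i.e.\ on $X=0$), so any singularity pushed onto $Q_{1}$ is already compatible with condition (ii); hence it suffices to pair a procedure that clears $l_{\infty}$ of branches away from $Q_{1}$ (Lemma 3.7) with one that sweeps finite singular branches off the non-trivial vertical fibres $x=\alpha$, $\alpha\neq 0$, into $Q_{1}$ (Lemma 3.8).

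Concretely, I first apply Lemma 3.7 to the output of Lemma 3.3, obtaining an intermediate curve $C_{0}$ that satisfies the four conditions of Lemma 3.3 together with condition (i), namely $C_{0}\cap l_{\infty}=\{Q_{1}\}$. Next, I enumerate the finite set $A=\{\alpha_{1},\ldots,\alpha_{k}\}\subset L\setminus\{0\}$ of nonzero first coordinates of singular points of $C_{0}$ in the affine chart $A_{2}^{2}$. For each $\alpha_{i}\in A$ I construct a Mobius map $g_{i}(x,z)=(a_{i}(x)z+b_{i}(x))/(c_{i}(x)z+d_{i}(x))$ meeting the hypotheses of Lemmas 3.4 and 3.8 at $\alpha_{i}$; a concrete workable template is $a_{i}(x)=1$, $b_{i}(x)=b_{0}\,x$, $c_{i}(x)=x-\alpha_{i}$, $d_{i}(x)=\delta_{0}(x-\alpha_{i})$ with $\delta_{0}\neq 0$ arbitrary and $b_{0}$ chosen generically to arrange $\beta+b_{0}\alpha_{i}\neq 0$ for each of the finitely many branch ordinates $(\alpha_{i},\beta)$ of $C_{0}$ over $x=\alpha_{i}$; one verifies directly that $b_{i}(0)=0$, $a_{i}(0)d_{i}(0)\neq 0$, $c_{i}(\alpha_{i})=d_{i}(\alpha_{i})=0$, and $a_{i}(\alpha_{i})\neq 0$. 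By Lemma 3.8, the resulting $\theta_{g_{i}}$ carries all finite branches on $x=\alpha_{i}$ to $Q_{1}$ while fixing the hyperbolic branches at $Q_{1}$ whose tangent is $x=\alpha_{i}$; because $\theta_{g_{i}}$ is an isomorphism on the open set $\{c_{i}(x)z+d_{i}(x)\neq 0\}$, singular branches over $x=\alpha_{j}$ with $j\neq i$ survive unchanged under a sufficiently generic choice of $\delta_{0}$.

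The main difficulty is that, via Lemmas 3.5 and 3.6, a given $\theta_{g_{i}}$ may drag a parabolic branch at $Q_{1}$ off to $[1:0:0]\in l_{\infty}\setminus\{Q_{1}\}$, or transport some other branch onto a new finite fibre $x=\alpha'\neq 0$, temporarily destroying property (i) or enlarging the set $A$. I would address this by alternating the blocks of $\theta_{g_{i}}$-moves with further applications of Lemma 3.7, each of which recollects stray branches on $l_{\infty}\setminus\{Q_{1}\}$ back into $Q_{1}$ while fixing branches already supported on the axis $x=0$. The delicate step is termination: one must verify that after a cleanup-Mobius-cleanup cycle the number of nonzero $\alpha$ carrying a singularity strictly decreases. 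This can be arranged by a sufficiently generic choice of the parameters $b_{0},\delta_{0}$ at each round, and, together with the control already supplied by Lemma 3.7 over branches at $l_{\infty}$, gives after finitely many iterations a curve $C'$ on which every singularity lies over $x=0$ or at $Q_{1}\in\{X=0\}$, while $C'\cap l_{\infty}=\{Q_{1}\}$, verifying (i) and (ii) together with the conditions of Lemma 3.3 at $(0,0)$.
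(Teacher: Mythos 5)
Your overall skeleton is the paper's: first invoke Lemma 3.7 to arrange $C_{1}\cap l_{\infty}=\{Q_{1}\}$, then use a Mobius transformation satisfying the hypotheses of Lemmas 3.4 and 3.8 to sweep the finite singularities off the axes $x=\alpha\neq 0$ into $Q_{1}$. But the paper does the second step with a \emph{single} transformation, and this is not a cosmetic difference: the coefficient polynomials are chosen so that $c$ and $d$ vanish simultaneously at \emph{every} axis $\alpha_{1},\ldots,\alpha_{r}$ carrying a finite singularity, $c$ additionally vanishes at every axis $\alpha_{r+1},\ldots,\alpha_{s}$ occurring as a vertical tangent of a hyperbolic branch at $Q_{1}$ (the set $B$ in the paper's proof, which your argument omits entirely), and the order condition $\max\{ord(c),ord(d)\}\ll\min\{ord(a),ord(b)\}$ pins the parabolic branches at $Q_{1}$ via Lemma 3.6. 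One pass then suffices and no termination argument is needed.

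Your one-axis-at-a-time iteration has a genuine gap precisely where you flag "the delicate step is termination," and the proposed fix by genericity of $b_{0},\delta_{0}$ does not work. With your template $a_{i}=1$, $c_{i}(x)=x-\alpha_{i}$, we have $c_{i}(\alpha_{j})=\alpha_{j}-\alpha_{i}\neq 0$ for $j\neq i$, so Lemma 3.5 drags \emph{every} hyperbolic branch of $C_{0}$ centred at $Q_{1}$ with tangent line $x=\alpha_{j}$ down to the single finite point $(\alpha_{j},\,a_{i}(\alpha_{j})/c_{i}(\alpha_{j}))=(\alpha_{j},\,1/(\alpha_{j}-\alpha_{i}))$. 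If two or more such branches share that tangent direction they collide there, creating a fresh singularity on the nonzero axis $x=\alpha_{j}$; the landing point depends only on $a_{i}$ and $c_{i}$, not on $b_{0}$ or $\delta_{0}$, so no genericity of those parameters prevents it, and the claimed strict decrease of the set of bad axes fails. (Separately, your template satisfies neither order inequality of Lemma 3.6 -- $a_{i}$ has degree $0$ and $b_{i},c_{i},d_{i}$ degree $1$ -- so the parabolic branches at $Q_{1}$ are genuinely uncontrolled rather than merely displaced to $[1:0:0]$.) The repair is exactly the paper's device: enumerate both the singular axes and the vertical-tangent axes of hyperbolic branches at $Q_{1}$ in advance, and build one $g$ whose coefficients satisfy the vanishing and order conditions (a)--(f) at all of them at once.
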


\begin{proof}

By Lemma 3.7(ii), we can assume there exists a curve $C_{1}$, satisfying the conditions of Lemma 3.3, with the additional property that $(C_{1}\cap l_{\infty})=\{Q_{1}\}$. In particular, all the singularities of $({C_{1}\setminus\{Q_{1}\}})$ are in finite position. Let $A=\{\alpha_{1},\ldots,\alpha_{j},\ldots,\alpha_{r}\}$ enumerate the axes $x=\alpha_{j}$, containing the singularities in finite position, away from $x=0$. Let  $B=\{\alpha_{r+1},\ldots,\alpha_{k},\ldots,\alpha_{s}\}$ enumerate the axes $x=\alpha_{k}$, corresponding to tangent lines of hyperbolic branches, centred at $Q_{1}$, away from $x=0$, not contained in $A$. Let $C_{j}=\{\beta_{j1},\ldots,\beta_{ji},\ldots,\beta_{jm}\}$, for $1\leq j\leq r$, $1\leq i\leq m$, where $deg(C_{1})=m$,  enumerate the $z$-coordinates of the intersections in finite position of $C_{1}$ with the axis $x=\alpha_{j}$.  It is an elementary exercise to show, using the fact that $0\notin (A\cup B)$, that there exist polynomials $\{a(x),b(x),c(x),d(x)\}\in L[x]$, with the properties;\\

(a). $a(\alpha_{1})\ldots a(\alpha_{r})\ldots a(\alpha_{s})\neq 0$.\\

(b). $c(\alpha_{1})=\ldots=c(\alpha_{r})=\ldots=c(\alpha_{s})=0$.\\

(c). $d(\alpha_{1})=\ldots=d(\alpha_{r})=0$.\\

(d). $b(\alpha_{j})\neq -a(\alpha_{j})\beta_{ji}$, $1\leq j\leq r$, $1\leq i\leq m$.\\

(e). $a(0)d(0)\neq 0$ and $b(0)=0$.\\

(f). $max\{ord(c(x)),ord(d(x))\}<<min\{ord(a(x)),ord(b(x))\}$.\\

By (e) and Lemma 3.4, these polynomials determine a birational morphism $\theta_{g}:C_{1}\leftrightsquigarrow C_{2}$, such that $C_{2}$ also satisfies the conditions of Lemma 3.3. By (a),(b),(c),(d) and Lemma 3.8, all the singularities of $C_{1}$, in finite position, away from the axis $x=0$, are moved to $Q_{1}\in C_{2}$. By (a),(b) and Lemma 3.5, the position of all the hyperbolic branches of $C_{1}$ at $Q_{1}$ is fixed. By (f) and Lemma 3.6, the position of all the parabolic branches of $C_{1}$ at $Q_{1}$ is fixed. Denoting $C_{2}$ by $C'$, it is then clear that conditions $(i)$ and $(ii)$ of the lemma are satisfied for $C'$.

\end{proof}

We recall from \cite{depiro5} that the notion of a branch is invariant under birational transformations. The following result allows us to move any finite number of branches away from the critical point $Q_{1}$.\\

\begin{lemma}

Let $C_{1}$ satisfy the conditions of Lemma 3.3 and let $\{\gamma_{1},\ldots,\gamma_{t}\}$ enumerate a finite subset of its branches. Then there exists a birational map $\theta_{g}:C_{1}\leftrightsquigarrow C_{2}$, with $C_{2}$ also satisfying the conditions of Lemma 3.3, such that the corresponding branches $\{\theta_{g}(\gamma_{1}),\ldots,\theta_{g}(\gamma_{t})\}$ are either in finite position or centred at the point $[1:0:0]$ along the axis $l_{\infty}$. In particular, none of the branches are centred at the point $Q_{1}$.

\end{lemma}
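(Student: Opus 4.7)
First, I would partition $\{\gamma_{1},\ldots,\gamma_{t}\}$ by the location of the centre: (A) finite position, (B) at $[1:0:0]$, (C) hyperbolic at $Q_{1}$ with tangent $x=\alpha_{k}$ for some $\alpha_{k}$, and (D) parabolic at $Q_{1}$. Types (A) and (B) are already off $Q_{1}$ and must only be kept off, while (C) and (D) must be displaced. The strategy, mirroring Lemma 3.9, is to produce a single $g(x,z)=(a(x)z+b(x))/(c(x)z+d(x))$ satisfying the hypotheses of Lemma 3.4 and then use Lemmas 3.5 and 3.6 to read off the fate of each branch under $\theta_{g}$.

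For the construction, I would take $a(x)=1$ and $b(x)=0$ (so $a(0)\ne 0$ and $b(0)=0$ automatically) and choose $c(x),d(x)\in L[x]$ subject to the following finitely many open conditions: (i) $d(0)\ne 0$, as required by Lemma 3.4; (ii) $c(\alpha_{k})\ne 0$ for each type (C) tangent axis $\alpha_{k}$, so that Lemma 3.5 Case 2 sends the branch to the finite point $(\alpha_{k},1/c(\alpha_{k}))$; (iii) $c(\alpha)\beta+d(\alpha)\ne 0$ for each type (A) branch $(\alpha,\beta)$, so that the direct formula of Lemma 3.4 keeps it in finite position at $(\alpha,\beta/(c(\alpha)\beta+d(\alpha)))$ rather than sending it to $Q_{1}$; and (iv) $\min\{\deg c,\deg d\}$ chosen large relative to $\max\{\deg a,\deg b\}=0$, so that the inequality of Lemma 3.6 is triggered and each type (D) parabolic branch at $Q_{1}$ is sent to $[1:0:0]$. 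Conditions (i)--(iii) define a Zariski-open condition on the coefficients of $c,d$, so once the degree bound (iv) is fixed there is no obstruction to choosing suitable polynomials.

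The one scenario not directly covered by Lemmas 3.5 and 3.6 is the type (B) branches centred at $[1:0:0]$, and this is what I expect to be the main obstacle. I would treat it by the same order-of-vanishing calculation used in Lemma 3.6: write such a branch locally as $(x,z)=(1/\epsilon,\epsilon^{j}v(\epsilon))$ with $j>0$ and $v(0)\ne 0$, substitute into $g$, and observe that with $\min\{\deg c,\deg d\}$ sufficiently large (the requisite bound depending only on $\deg C_{1}$ through the bound on $j$), the numerator $\epsilon^{j}v(\epsilon)$ has positive $\epsilon$-order while the denominator has order $-\max\{\deg c-j,\deg d\}$, so $g(1/\epsilon,z(\epsilon))$ has strictly positive order and the projective image $[1:\epsilon g(1/\epsilon,z(\epsilon)):\epsilon]$ specialises to $[1:0:0]$. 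Folding the resulting lower bound on $\min\{\deg c,\deg d\}$ into (iv) yields a single uniform degree bound valid for types (D) and (B) simultaneously; with this choice in place, Lemma 3.4 delivers $\theta_{g}:C_{1}\leftrightsquigarrow C_{2}$ with $C_{2}$ satisfying Lemma 3.3, and the branch-tracking lemmas give the stated conclusion.
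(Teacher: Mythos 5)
Your strategy is essentially the paper's: a single M\"obius substitution $g(x,z)=(a(x)z+b(x))/(c(x)z+d(x))$ whose coefficients satisfy finitely many open conditions, with Lemma 3.4 supplying the birational map $\theta_{g}$ and Lemmas 3.5 and 3.6 tracking the branches. The conditions you impose for the finite, hyperbolic-at-$Q_{1}$ and parabolic-at-$Q_{1}$ branches match the paper's conditions (a)--(d) almost exactly (the paper additionally normalises $a(\alpha_{j})\beta_{j}+b(\alpha_{j})=\beta_{j}$, $c(\alpha_{j})\beta_{j}+d(\alpha_{j})=1$ so that finite branches are actually fixed, but your weaker non-vanishing condition suffices for the stated conclusion).

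The one genuine gap is in your case analysis. The paper opens with ``by birationality, we can assume that $C_{1}$ satisfies the conditions of Lemma 3.9,'' i.e.\ $(C_{1}\cap l_{\infty})=\{Q_{1}\}$; after that reduction the only possible centres are finite points, $Q_{1}$ (hyperbolic) and $Q_{1}$ (parabolic), so the three-way split is exhaustive. You skip this normalisation, and your partition (A)--(D) then omits branches centred at points $[1:\lambda:0]$ of $l_{\infty}$ with $\lambda\neq 0$, which a curve satisfying only Lemma 3.3 may well possess; the lemma requires these, too, to land in finite position or at $[1:0:0]$. As it happens your transformation does send them to $[1:0:0]$ (the same order computation as your type (B) case, now with $ord(z(\epsilon))=-1$), but you neither raise the case nor prove it. Relatedly, your local form $z=\epsilon^{j}v(\epsilon)$ with $j>0$ for a branch at $[1:0:0]$ is too restrictive: the defining condition is only $\epsilon z(\epsilon)\rightarrow 0$, i.e.\ $ord(z(\epsilon))>-1$, possibly fractional, so the numerator need not have positive order --- the argument survives because the denominator's order is forced far below $-1$, but the claim as written is not quite what you need. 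Either patch the two infinite cases by the corrected order estimate, or simply prepend the Lemma 3.9 reduction as the paper does, after which your types (B) and the missing case disappear entirely.
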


\begin{proof}

By birationality, we can assume that $C_{1}$ satisfies the conditions of Lemma 3.9. Given the above enumeration, let;\\

(i). $\{\gamma_{1},\ldots,\gamma_{r}\}$ consist of those in finite position.\\

(ii). $\{\gamma_{r+1},\ldots,\gamma_{s}\}$ consist of the hyperbolic branches, centred at $Q_{1}$.\\

(iii). $\{\gamma_{s+1},\ldots,\gamma_{t}\}$ consist of the parabolic branches, centred at $Q_{1}$.\\

Let $\{(\alpha_{1},\beta_{1}),\ldots,(\alpha_{j},\beta_{j}),\ldots,(\alpha_{r},\beta_{r})\}$ enumerate the coordinates of the branches given in $(i)$. Let $\{\delta_{r+1},\ldots,\delta_{k},\ldots,\delta_{s}\}$ enumerate the axes $x=\delta_{k}$, appearing as tangent lines to the branches given in $(ii)$. It is an elementary exercise to show that there exist polynomials $\{a(x),b(x),c(x),d(x)\}\in L[x]$, with the properties;\\

(a). $c(\alpha_{1})\ldots c(\alpha_{j})\ldots c(\alpha_{r})c(\delta_{r+1})\ldots c(\delta_{k})\ldots c(\delta_{s})\neq 0$.\\

(b). $a(\alpha_{j})\beta_{j}+b(\alpha_{j})=\beta_{j}$, $c(\alpha_{j})\beta_{j}+d(\alpha_{j})=1$, $(1\leq j\leq r)$\\

(c). $a(0)d(0)\neq 0$ and $b(0)=0$.\\

(d).$max\{ord(a(x)),ord(b(x))\}<<min\{ord(c(x)),ord(d(x))\}$.\\

By $(c)$ and Lemma 3.4, there exists a corresponding birational map $\theta_{g}:C_{1}\leftrightsquigarrow C_{2}$, with $C_{2}$ also satisfying the conditions of Lemma 3.3. By $(b)$, the branches $\{\theta_{g}(\gamma_{1}),\ldots,\theta_{g}(\gamma_{r})\}$ are all centred at the same points as $\{\gamma_{1},\ldots,\gamma_{r}\}$ respectively. In particular, the branches from $(i)$ remain in finite position. By $(a)$ and Lemma 3.5, the branches $\{\theta_{g}(\gamma_{r+1}),\ldots,\theta_{g}(\gamma_{s})\}$ are centred at the points in finite position;\\

$\{(\delta_{r+1},{a(\delta_{r+1})\over c(\delta_{r+1})}),\ldots,(\delta_{s},{a(\delta_{s})\over c(\delta_{s})})\}$\\

\noindent respectively. In particular, the branches from $(ii)$ are moved to finite position. By $(d)$ and Lemma 3.6, the branches $\{\theta_{g}(\gamma_{s+1}),\ldots,\theta_{g}(\gamma_{t})\}$ are centred at the point $[1:0:0]$. Hence, the lemma is shown.

\end{proof}

We now make the following definition;\\

\begin{defn}
Fixing an enumeration of branches for some curve $C'$, satisfying the conditions of Lemma 3.3, we define a branch $\gamma$ to be blue, if there exists a birational map $\theta:C\leftrightsquigarrow C_{1}$, with the curve $C_{1}$ also satisfying the conditions of Lemma 3.3, such that;\\

Either\\

(i). $\theta(\gamma)$ is centred in finite position $(a,b)$, with $I_{\gamma}(C_{1},x=a)\geq 2$.\\

Or\\

(ii). $\theta(\gamma)$ is centred at $[1:0:0]$, with $I_{\gamma}(C_{1},l_{\infty})\geq 2$.\\

where $I_{\gamma}$ is defined as in \cite{depiro6}. We define a branch $\gamma$ to be silver otherwise.

\end{defn}

We observe the following;\\

\begin{rmk}

(a). The choice of colours is partly motivated by consideration of the optical spectrum of visible light, as well as theoretical views of the author on chemical processes that occur in the production of light by stars. I hope to explain my views on the connections between geometry and a 4-fold model of the theory of light in a forthcoming paper.\\

(b). On a geometrical level, the above construction is well-defined. For suppose that $C_{1}$ and $C_{2}$ are both curves, for which Definition 3.11 is made, relative to $C'$. Then there exists a birational map $\theta_{1}:C_{1}\leftrightsquigarrow C_{2}$, for which $(pr_{1}\circ\theta_{1})=pr_{1}$, $(\star)$. It is a simple exercise to check, by $(\star)$, the definition of $I_{\gamma}$ using infinitesimals, see \cite{depiro6}, and birationality of the map $\theta_{1}$, that, if $\gamma$ is a branch of $C_{1}$, centred at $(a,b)$, with $\theta(\gamma)$ centred at $(a,b')$, then $I_{\gamma}(C_{1},x=a)=I_{\theta(\gamma)}(C_{2},x=a)$. Similarly, if $\gamma$ is a branch of $C_{1}$, centred at $[1:0:0]$, with $\theta(\gamma)$ centred at $[1:0:0]$, then $I_{\gamma}(C_{1},l_{\infty})=I_{\theta(\gamma)}(C_{2},l_{\infty})$.\\

(c). Using Lemmas 3.5, 3.6 and 3.9, any branch $\gamma$ of $C'$ may be centred in finite position or at $[1:0:0]$, by an appropriate birational map $\theta$. Hence, we can give an equivalent characterisation of a silver branch as in Definition 3.11, replacing the either/or clause by;\\

Either\\

(i). $\theta(\gamma)$ is centred in finite position $(a,b)$, with $I_{\gamma}(C_{1},x=a)=1$.\\

Or\\

(ii). $\theta(\gamma)$ is centred at $[1:0:0]$, with $I_{\gamma}(C_{1},l_{\infty})=1$.\\

On an intuitive level, this means that a silver branch is characterised by the condition that it is non-singular and transverse to a vertical axis or to the line $l_{\infty}$, when centred away from $Q_{1}$. Similarly, a blue branch is characterised by the condition that it is either singular or tangent to a vertical axis/the line $l_{\infty}$, when centred away from $Q_{1}$.\\

(d). There exist infinitely many silver branches and finitely many blue branches belonging to $C'$. This follows immediately from the above characterisation and the fact that, there exists a unique non-singular branch, transverse to the axis $x=a$, for a generic point $(a,b)$ belonging to $C'$. Clearly, there exist finitely many silver branches and finitely many blue branches, belonging to $C'$, centred along the axis $x=0$.\\

\end{rmk}

We now show the following;\\

\begin{lemma}{Uniformity of branches}\\

Let $C'$ satisfy the conditions of Lemma 3.3. Then the distribution of silver and blue branches along any axis is uniform. More precisely;\\

(i). If $x=\alpha$ is a vertical axis, then, either, every branch centred in finite position along $x=\alpha$ is silver, or, every branch centred in finite position along $x=\alpha$ is blue.\\

(ii). Either, every branch centred along ${l_{\infty}\setminus\{Q_{1}\}}$ is silver, or, every branch centred along
${l_{\infty}\setminus\{Q_{1}\}}$ is blue.\\

(iii). If $x=\alpha$ is a vertical axis, then, either, every hyperbolic branch, centred at $Q_{1}$, tangent to the line $x=\alpha$, is silver, or, every hyperbolic branch, centred at $Q_{1}$, tangent to the line $x=\alpha$, is blue.\\

(iv). Either, every parabolic branch, centred at $Q_{1}$, is silver, or, every parabolic branch, centred at $Q_{1}$, is blue.\\

\end{lemma}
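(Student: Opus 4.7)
The plan is to exploit the Galois structure established in Lemma 3.3(iv). Let $\pi:\overline{C'}\to P^{1}$ denote the extension of $pr$ to the normalisation $\overline{C'}$ of $C'$. Since $L(C')/L(x)$ is Galois, $\pi$ is a Galois cover of smooth projective curves with group $G=\mathrm{Gal}(L(C')/L(x))$, and branches of $C'$ correspond bijectively to points of $\overline{C'}$, each carrying a well-defined ramification index $e_{P}$ of $\pi$ over $\pi(P)\in P^{1}$.

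First I would sort the four sets of branches in (i)--(iv) into $\pi$-fibres. Unwinding Definition 3.1, a branch of $C'$ in finite position on the axis $x=\alpha$, and a hyperbolic branch at $Q_{1}$ tangent to $x=\alpha$, both lie in the fibre $\pi^{-1}(\alpha)$, since along either sort of branch one has $x\to\alpha$. Similarly, a branch centred along $l_{\infty}\setminus\{Q_{1}\}$ and a parabolic branch at $Q_{1}$ both lie in $\pi^{-1}(\infty)$, since along either one has $x\to\infty$.

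The central step is the identification: $\gamma$ is blue iff $e_{P_{\gamma}}\geq 2$, and silver iff $e_{P_{\gamma}}=1$. For a branch centred in finite position $(\alpha,\beta)$, this is immediate, since $x-\alpha$ is a local parameter of $P^{1}$ at $\alpha$, so $I_{\gamma}(C',x=\alpha)=v_{P_{\gamma}}(x-\alpha)=e_{P_{\gamma}}$. For a branch centred at $Q_{1}$ or at $[1:0:0]$, Lemmas 3.8 and 3.10 furnish a birational map $\theta:C'\leftrightsquigarrow C_{1}$ satisfying the convention of Footnote 3 (so $pr\circ\theta=pr$) which moves $\gamma$ to finite position or to $[1:0:0]$; since $\theta$ induces an $L(x)$-isomorphism of function fields, it preserves valuations, and hence ramification indices of $\pi$. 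The case of $[1:0:0]$ is handled in the local chart $u=W/X$, in which $u$ is a local parameter on $P^{1}$ at $\infty$ and $I_{\gamma}(C_{1},l_{\infty})=v_{P_{\gamma}}(u)=e_{P_{\gamma}}$.

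The conclusion then follows from the standard fact that in a Galois cover of smooth projective curves the ramification index is constant on each fibre: this is a consequence of the transitivity of $G$ on fibres together with the equality $\sigma^{*}v_{P}=v_{\sigma(P)}$, so that all inertia groups over a given point are conjugate and equipotent. Applied over $\alpha\in A^{1}$ this yields (i) and (iii) simultaneously (in fact with the same colour for the same $\alpha$), and applied over $\infty\in P^{1}$ it yields (ii) and (iv). The main obstacle is the identification step, particularly the careful verification that the birational normalisations used in Definition 3.11, governed by Footnote 3, are compatible with the intrinsic valuation theoretic invariants of the function field $L(C')/L(x)$; once this compatibility is secured, uniformity is a direct consequence of Galois theory.
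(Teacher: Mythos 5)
Your proof is correct, and it rests on the same engine as the paper's --- the Galois property of $L(C')/L(x)$ from Lemma 3.3(iv) and the transitivity of $G$ on fibres of $pr$ --- but it packages the argument differently. The paper stays on the singular plane model: it picks an infinitesimally nearby generic fibre $x=\alpha+\epsilon$, uses transitivity there to produce a $\theta_{h}$ with $\theta_{h}(\gamma_{1})=\gamma_{2}$, and then invokes the invariance of $I_{\gamma}(C',x=\alpha)$ under such maps (Remarks 3.12(b)) to get a contradiction; parts (iii) and (iv) are then handled by first transporting the branches at $Q_{1}$ into finite position or to $[1:0:0]$ via Lemmas 3.5 and 3.6. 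You instead pass to the normalisation and observe that the blue/silver dichotomy of Definition 3.11 is exactly the dichotomy $e_{P_{\gamma}}\geq 2$ versus $e_{P_{\gamma}}=1$ for the induced cover $\pi:\overline{C'}\rightarrow P^{1}$ (the compatibility you flag --- that the maps of Footnote 3 induce $L(x)$-isomorphisms of function fields and hence preserve ramification indices --- is precisely the content of Remarks 3.12(b),(c), so it is available), after which constancy of the ramification index on Galois fibres finishes everything at once. What your route buys is a uniform treatment of all four cases without the infinitesimal bookkeeping, together with the sharper observation that the colour in (i) and (iii) agrees for a fixed $\alpha$ (and likewise for (ii) and (iv)), a fact consistent with, and anticipating, Lemma 3.36; what the paper's route buys is that it never leaves the plane models and the infinitesimal calculus in which the rest of the section is carried out.
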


\begin{proof}

It is elementary that, for generic $\delta$, the curve $C'$ intersects the axis $x=\delta$ transversely at $t$ distinct points in finite position, where $t=deg(L(C')/L(x))$. As, by Lemma 3.3(iv), the extension $L(C')/L(x)$ is Galois, we have that $|G|=t$, where $G=Gal(L(C')/L(x))$, and $G$ acts transitively on the fibre ${(C'\cap (x=\alpha))\setminus\{Q_{1}\} }$, $(\star)$. As this condition is definable, we can find an open subset $U\subset A^{1}$, with the property that, for any $\delta\in U$, $(\star)$ holds for the fibre $x=\delta$. Suppose that there exists a silver branch $\gamma_{1}$ and a blue branch $\gamma_{2}$ situated on the axis $x=\alpha$, centred at the coordinates $(\alpha,\beta_{1})$ and $(\alpha,\beta_{2})$ respectively. Choose an infinitesimal $\epsilon\in{\mathcal V}_{0}$, such that $\alpha+\epsilon\in U$, and $(\alpha+\epsilon,\beta_{1}')\in\gamma_{1},(\alpha+\epsilon,\beta_{2}')\in\gamma_{2}$. As $(\star)$ holds for the axis $x=\alpha+\epsilon$, we can find a birational morphism $\theta_{h}:C'\leftrightsquigarrow C'$, representing the action of $h\in G$, with $pr\circ\theta_{h}=pr$, and $\theta_{h}(\alpha+\epsilon,\beta_{1}')=(\alpha+\epsilon,\beta_{2}')$. It follows, by construction, that $\theta_{h}(\gamma_{1})=\gamma_{2}$. However, by previous remarks 3.12(b), we have that $I_{\theta_{h}(\gamma_{1})}(C',x=\alpha)=I_{\gamma_{1}}(C',x=\alpha)=1$, contradicting the fact that $\theta_{h}(\gamma_{1})=\gamma_{2}$ is a blue branch. Hence, $(i)$ is shown. In order to show $(ii)$, one should employ a similar argument, using the fact that, for $\epsilon\in{\mathcal V}_{0}$, with $\epsilon\neq 0$, $x={1\over\epsilon}$ lies in $U$. For $(iii)$ and $(iv)$, one can use Lemmas 3.5, Lemmas 3.6 and Definition 3.11, to reduce to the cases $(i)$ and $(ii)$ respectively.\\

\end{proof}

\begin{lemma}{Separation of silver branches}\\

Let $C'$ satisfy the conditions of Lemma 3.3, then any silver branch $\gamma$ of $C'$, not centred along the axis $x=0$, (\footnote{We follow the convention that a branch, centred along $x=0$, includes hyperbolic branches, centred at $Q_{1}$, with tangent line $x=0$.}), can be isolated away from $Q_{1}$. That is, there exists a birational map $\theta:C'\leftrightsquigarrow C_{1}$, with $C_{1}$ also satisfying the conditions of Lemma 3.3, such that;\\

Either (i). $\theta(\gamma)$ is centred at a point $(\alpha,\beta)$ in finite position, with $\alpha\neq 0$, and is the unique branch of $C_{1}$ to be centred at this point.\\

Or (ii). $\theta(\gamma)$ is centred at a point of $(l_{\infty}\setminus\{Q_{1}\})$, and is the unique branch of $C_{1}$ to be centred at this point.\\

Moreover, given any finite set $\{\gamma_{1},\ldots,\gamma_{r}\}$ of silver branches, centred at $Q_{1}$, with distinct tangent directions, not including $x=0$, there exists a birational map $\theta:C'\leftrightsquigarrow C_{1}$, with $C_{1}$ also satifying the conditions of Lemma 3.3, such that $\{\theta(\gamma_{1}),\ldots,\theta(\gamma_{r})\}$ are all isolated away from $Q_{1}$.

\end{lemma}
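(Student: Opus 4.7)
(Plan.) The strategy is to reduce to the case of finite position via Lemma 3.10 (with an auxiliary application of Lemma 3.7 if $\theta(\gamma)$ would otherwise land at $[1:0:0]$), and then to separate $\gamma$ from any cohabiting branches by an algebraic blow-up of the form allowed by Lemma 3.4. Concretely, after Lemma 3.10 one may assume $\theta(\gamma)$ is centred at $(\alpha,\beta)$ with $\alpha\neq 0$ (the inequality holds because $pr\circ\theta=pr$ and $\gamma$ is not on $x=0$). If $\theta(\gamma)$ is already the unique branch there we are done; otherwise the remaining branches $\gamma_1,\ldots,\gamma_s$ at $(\alpha,\beta)$ are silver by Lemma 3.13 and, by Remark 3.12(c), are non-singular and transverse to $x=\alpha$, so the tangent slopes $m_\gamma,m_1,\ldots,m_s$ at $(\alpha,\beta)$ are finite and pairwise distinct.

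The separation is then carried out by composing two Mobius transformations supplied by Lemma 3.4: first the shift $g_1(x,z)=z-\beta x/\alpha$ (with $a=1,\,b=-\beta x/\alpha,\,c=0,\,d=1$), which translates every branch at $(\alpha,\beta)$ to $(\alpha,0)$ without changing slopes; and then the blow-up $g_2(x,z)=z/(x-\alpha)$ (with $a=1,\,b=0,\,c=0,\,d=x-\alpha$, giving $b(0)=0$ and $a(0)d(0)=-\alpha\neq 0$). A direct local calculation shows that a branch parametrised near $(\alpha,0)$ as $(\alpha+t,\,mt+O(t^2))$ is sent by $g_2$ to a branch parametrised as $(\alpha+t,\,m+O(t))$, centred at $(\alpha,m)$. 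Hence the branches formerly at $(\alpha,\beta)$ spread over the distinct points $(\alpha,m_\gamma),(\alpha,m_1),\ldots,(\alpha,m_s)$, and $\theta(\gamma)$ sits alone at $(\alpha,m_\gamma)$.

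Verifying that no other branch of the curve is carried onto $(\alpha,m_\gamma)$ is a straightforward case analysis: branches at $(\alpha,z_0)$ with $z_0\neq\beta$ are sent by $g_2\circ g_1$ to $Q_1$ (the denominator $x-\alpha$ vanishes while the numerator does not); branches at $(x_0,z_0)$ with $x_0\neq\alpha$ remain on $x=x_0$ because $pr$ is preserved; and branches at $Q_1$ or $[1:0:0]$ are shown by explicit specialisation (in the style of Lemmas 3.5 and 3.6) to stay on $l_\infty$. For the moreover statement, Lemma 3.10, used with polynomials satisfying $c(\delta_i)\neq 0$ for all $i$, simultaneously sends $\gamma_1,\ldots,\gamma_r$ to $r$ distinct finite points $(\delta_i,a(\delta_i)/c(\delta_i))$ by Lemma 3.5; iterating the two-step isolation at each $\delta_i$ in turn completes the argument. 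The principal obstacle anticipated is checking that the isolation at stage $j$ does not disrupt a previously achieved isolation at stage $i<j$: this reduces to the observation that the isolation transformation at $\delta_j$ acts on the line $x=\delta_i$ as a single affine map $z\mapsto(z-c_j)/(\delta_i-\delta_j)$ applied uniformly to every point of that line, so uniqueness of the position of $\gamma_i$ on $x=\delta_i$ is preserved.
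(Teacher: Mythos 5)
Your overall strategy -- reduce to finite position, then separate by the M\"obius map $(x,z)\mapsto\bigl((\alpha z-\beta x)/(\alpha(x-\alpha))\bigr)$ supplied by Lemma 3.4 -- is exactly the paper's, and your composite $g_{2}\circ g_{1}$ is literally the transformation $g(x,z)=\frac{z-\beta}{x-\alpha}-\frac{\beta}{\alpha}$ used there. But there is a genuine gap at the crucial step: you assert that, because the branches $\gamma,\gamma_{1},\ldots,\gamma_{s}$ at $(\alpha,\beta)$ are silver (hence non-singular and transverse to $x=\alpha$), their tangent slopes are \emph{pairwise distinct}. That does not follow. Silverness controls each branch's intersection number with the vertical line individually; it says nothing about the relative position of two branches sharing a centre. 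Two distinct silver branches can be parametrised by $(\alpha+t,\beta+\eta(t))$ and $(\alpha+t,\beta+\eta_{i}(t))$ with $\eta$ and $\eta_{i}$ agreeing to arbitrarily high (finite) order -- a tacnode-type configuration -- and nothing in the construction of $C'$ (the Galois-closure curve of Lemma 3.3) excludes this away from $x=0$. In that case your single blow-up sends both branches to the \emph{same} point $(\alpha,c_{1})$, where $c_{1}$ is the common first coefficient, and the separation fails. The paper's proof anticipates exactly this: after one application of $\theta_{g}$ the branches that remain with $\theta_{g}(\gamma)$ are precisely those with $c_{i1}=c_{1}$, and one must \emph{iterate} the construction, peeling off one power-series coefficient per step; termination is guaranteed because the power series $\eta,\eta_{1},\ldots,\eta_{r}$ are distinct. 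Your argument needs this finite induction inserted where you claim the slopes separate everything in one pass.

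A secondary, smaller issue: your treatment of the case where the branch ends up at $[1:0:0]$ is too thin. Lemma 3.7 moves branches \emph{toward} $Q_{1}$, so "an auxiliary application of Lemma 3.7" does not by itself bring a branch at $[1:0:0]$ into finite position nor isolate it there; the paper's Case 2 uses a rotation $\nu$ about $Q_{1}$ (interchanging $[1:0:0]$ with a finite vertical axis), isolates in finite position, rotates back, and only then applies a Lemma 3.7--type map to clear the remaining branches off $[1:0:0]$. Your verification that other branches do not land on the target point, and your inductive argument for the simultaneous ("moreover") statement, are in line with the paper once the iteration above is in place.
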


\begin{proof}
By Lemmas 3.5 and 3.6, we can assume that the silver branch $\gamma$ is centred either in finite position or at $[1:0:0]$.\\

 Case 1. Assume that $\gamma$ is centred at $(\alpha,\beta)$, with $\alpha\neq 0$.\\

By Lemma 3.13, all the branches situated along the axis $x=\alpha$ are silver. In particular, all the branches centred at $(\alpha,\beta)$ are silver. Let $\{\gamma,\gamma_{1},\ldots,\gamma_{i},\ldots,\gamma_{r}\}$ be an enumeration of these branches. By results of \cite{depiro6}, see also Remarks 2.6, we can find parametrisations of the form;\\

$\{(\alpha+x,\beta+\eta(x)),(\alpha+x,\beta+\eta_{1}(x)),\ldots,(\alpha+x,\beta+\eta_{r}(x))\}$\\

for this enumeration, where $\{\eta(x),\eta_{1}(x),\ldots,\eta_{i}(x),\ldots,\eta_{r}(x)\}$ are distinct power series, belonging to $L[[x]]$, of the form;\\

$\eta(x)=\sum_{j=1}^{\infty}c_{j}x^{j}$, $\eta_{i}(x)=\sum_{j=1}^{\infty}c_{ij}x^{j}$, $1\leq i\leq r$\\

Let $g(x,z)$ be defined by;\\

 $g(x,z)={(z-\beta)\over (x-\alpha)}-{\beta\over\alpha}={(\alpha z-\beta x)\over \alpha(x-\alpha)}$, $(\star)$\\

We have that $\alpha.\alpha(x-\alpha)|_{(0,0)}=-\alpha^{3}\neq 0$ and $(-\beta x)|_{(0,0)}=0$, hence, by Lemma 3.4, $g(x,z)$ defines a birational map $\theta_{g}:C'\leftrightsquigarrow C''$, with $C''$ also satisfying the conditions of Lemma 3.3. Using Lemma 3.8, if $\gamma'$ is a branch of $C'$ in finite position, along the axis $x=\alpha$, not at $(\alpha,\beta)$, then $\theta_{g}(\gamma)$ is centred at the point $Q_{1}\in C''$. By Lemma 3.5, if $\gamma'$ is a hyperbolic branch of $C'$, centred at $Q_{1}$, with tangent line $x=\alpha$, then $\theta(\gamma')$ is also centred at $Q_{1}\in C''$. It follows that the only branches, in finite position along the axis $x=\alpha$ of $C''$, are $\{\theta_{g}(\gamma),\theta_{g}(\gamma_{1}),\ldots,\theta_{g}(\gamma_{r})\}$. We now consider the positions of these branches;\\

By results of \cite{depiro6} and the definition of $g(x,z)$, the branches\\ $\{\theta_{g}(\gamma),\theta_{g}(\gamma_{1}),\ldots,\theta_{g}(\gamma_{r})\}$ are parametrised by;\\

 $\{(\alpha+x,\delta(x)-{\beta\over\alpha}),(\alpha+x,\delta_{1}(x)-{\beta\over\alpha}),\ldots,(\alpha+x,\delta_{r}(x)-{\beta\over\alpha})\}$\\

where $\eta(x)=\alpha+x\delta(x)$ and $\eta_{i}(x)=\alpha+x\delta_{i}(x)$, for $1\leq i\leq r$. It follows that $\theta_{g}(\gamma)$ is centred in finite position, at $(\alpha,\beta_{1})$, where $\beta_{1}$ depends only on the first coefficient $c_{1}$ of the power series $\eta(x)$. In particularly, a branch $\theta_{g}(\gamma_{i})$, for $1\leq i\leq r$, is also centred at $(\alpha,\beta_{1})$, if and only if, $c_{i1}=c_{1}$. We then repeat the above construction for the new centre $(\alpha,\beta_{1})$, obtaining a new birational map $\theta_{g_{1}}$. The only branches remaining in finite position along the axis $x=\alpha$ after the composition $(\theta_{g_{1}}\circ \theta_{g})$, being those for which $c_{i1}=c_{1}$, and, the only branches whose centre coincides with that of $(\theta_{g_{1}}\circ \theta_{g})(\gamma)$, being those for which $c_{i1}=c_{1}$ and $c_{i2}=c_{2}$. As the power series $\eta(x)$ is distinct from $\{\eta_{1}(x),\ldots,\eta_{r}(x)\}$, clearly the construction terminates after a finite number of steps, and, we obtain a curve $C_{1}$, satisfying the conditions of Lemma 3.3, together with a birational map $\theta:C\leftrightsquigarrow C_{1}$, such that $\theta(\gamma)$ is the only branch, centred in finite position, along the axis $x=\alpha$. Hence, (i) is shown for this case.\\

Case 2. Assume that $\gamma$ is centred at $[1:0:0]$.\\

Choosing $\alpha\neq 0$, we consider the rotation $\nu$ about $Q_{1}$,  defined by;\\

$\nu:(x,z)\mapsto ({1\over x+\alpha}-{1\over\alpha},z)=({-x\over \alpha(x+\alpha)},z)$\\

By construction, we have that $\nu(0,0)=(0,0)$ and $\nu$ fixes the axis $x=0$. It is clear that $\nu$ determines a birational morphism $\theta_{\nu}:C'\leftrightsquigarrow C''$, which is etale in an open neighborhood of $(0,0)$, (\footnote{This is not a morphism in the sense of Lemma 3.4 and footnote 3, as, clearly, $(pr\circ\theta)\neq pr$}).
It is a straightforward exercise, using infinitesimals, and the fact that $\nu$ permutes the vertical axes, to check that $\theta_{\nu}$ interchanges;\\

 (a). The silver branches of $C'$, centred along $({l_{\infty}\setminus([1:0:0]\cup Q_{1})})$, and the silver parabolic branches of $C'$, centred at $Q_{1}$, with the silver hyperbolic branches of $C''$, centred at $Q_{1}$, with tangent line $x={-{1\over\alpha}}$.\\

 (b). The silver branches of $C'$, centred at $[1:0:0]$, with the silver branches of $C''$, centred in finite position along the axis $x=-{1\over\alpha}$. $(\star\star)$\\

We now apply Case 1, to obtain a birational morphism $\theta_{1}:C''\leftrightsquigarrow C'''$, in the stronger sense of Lemma 3.4, such that the silver branch $\theta_{\nu}(\gamma)$ is isolated at $(-{1\over\alpha},0)$, that is $(\theta_{1}\circ\theta_{\nu})(\gamma)$ is the unique branch of $C'''$ to be centred at $(-{1\over\alpha},0)$, $(\star\star\star)$.\\

 Then, we apply the inverse rotation ${\nu}^{-1}$ about $Q_{1}$, defined by;\\

 ${\nu}^{-1}:(x,z)\mapsto ({1\over x+{1\over \alpha}}-\alpha,z)=({-\alpha^{2}x\over\alpha x+1},z)$\\

which determines a birational map $\theta_{{\nu}^{-1}}:C'''\leftrightsquigarrow C''''$. It is simple to check that the composition $(\theta_{{\nu}^{-1}}\circ\theta_{1}\circ\theta_{\nu}):C'\leftrightsquigarrow C''''$ is a birational map, in the stronger sense of Lemma 3.4. Using $(\star\star)$, we have that $(\theta_{\nu^{-1}}\circ\theta_{1}\circ\theta_{\nu})(\gamma)$ is centred at $[1:0:0]$. If $({-1\over\alpha}+x,\delta(x))$, with $\delta(x)\in L[[x]]$ and $ord(\delta(x))\geq 1$, is a parametrisation of the silver branch $\theta_{1}\circ\theta_{\nu})(\gamma)$, then the corresponding parametrisation of $(\theta_{\nu^{-1}}\circ\theta_{1}\circ\theta_{\nu})(\gamma)$ is given by $({1\over x}-\alpha,\delta(x))$. Moreover, using $(\star\star)(b)$ and $(\star\star\star)$, if $({1\over x}-\alpha,\delta_{1}(x))$ is a parametrisation of any other silver branch of $C''''$, centred at $[1:0:0]$, then we must have that $ord(\delta_{1}(x))=0$, $(\star\star\star\star)$. We, finally, apply a transformation of the form $\theta_{g}:C''''\leftrightsquigarrow C'''''$, considered in Lemma 3.7. By $(\star\star\star\star)$, it is elementary to check, using infinitesimals, that the branch $(\theta_{g}\circ\theta_{\nu^{-1}}\circ\theta_{1}\circ\theta_{\nu})(\gamma)$ is fixed at $[1:0:0]$ and the other branches of $C''''$, centred at $[1:0:0]$, are moved to positions along the axis $(l_{\infty}\setminus{(Q_{1}\cup[1:0:0])})$, see also the proof of Lemma 3.7. By the properties of $\theta_{g}$, considered in Lemma 3.7, $(\theta_{g}\circ\theta_{\nu^{-1}}\circ\theta_{1}\circ\theta_{\nu})(\gamma)$ is the unique branch of $C'''''$ to be centred at $[1:0:0]$. Letting $\theta=(\theta_{g}\circ\theta_{\nu^{-1}}\circ\theta_{1}\circ\theta_{\nu})$ and $C_{1}=C'''''$, we have that $(ii)$ is shown for this case.\\

For the final part of the lemma, assume, first, that all of the branches $\{\gamma_{1},\ldots,\gamma_{r}\}$ are hyperbolic. By Lemma 3.10, we can assume that they are all in finite position. Let $\{\alpha_{1},\ldots,\alpha_{r}\}$ enumerate the distinct $x$-coordinates of these branches. Using Case 1 above, we can isolate the branch $\gamma_{1}$, away from $Q_{1}$, by applying finitely many transformations of the form, given in $(\star)$, with $\alpha=\alpha_{1}$. Clearly, the branches $\{\gamma_{2},\ldots,\gamma_{r}\}$ remain in finite position after these transformations. Suppose, inductively, that we have isolated the branches $\{\gamma_{1},\ldots,\gamma_{i}\}$, with $1\leq i<r$, away from $Q_{1}$. We can then apply finitely many transformations of the form, given in $(\star)$, with $\alpha=\alpha_{i+1}$, in order to isolate the branch $\gamma_{i+1}$, away from $Q_{1}$. As the restriction of these transformations to the axes $\{x=\alpha_{1},\ldots,x=\alpha_{i}\}$ is of the form $a(\alpha_{j})z+b(\alpha_{j})$, with $a(\alpha_{j})=(\alpha_{i+1}-\alpha_{j})^{-1}\neq 0$, for $1\leq j\leq i$, the branches $\{\gamma_{1},\ldots,\gamma_{i}\}$ also remain isolated away from $Q_{1}$. Hence, we have isolated the branches $\{\gamma_{1},\ldots,\gamma_{i},\gamma_{i+1}\}$. By induction, we can, therefore, isolate all the branches $\{\gamma_{1},\ldots,\gamma_{r}\}$, away from $Q_{1}$, as required, $(\star\star\star\star\star)$. Assume, now, that $\{\gamma_{1},\ldots,\gamma_{r}\}$ are hyperbolic branches and $\gamma_{r+1}$ is parabolic. By Lemma 3.10, we can assume that $\{\gamma_{1},\ldots,\gamma_{r}\}$ are all centred in finite position, and $\gamma_{r+1}$ is centred at $[1:0:0]$. By $(\star\star\star\star\star)$, we can isolate the branches $\{\gamma_{1},\ldots,\gamma_{r}\}$, away from $Q_{1}$, using finitely many transformations of the type considered in $(\star)$. By a simple projective calculation, such transformations fix the point $[1:0:0]$, in particular the branch $\gamma_{r+1}$ remains centred at $[1:0:0]$. We now follow the method of Case 2, in order to isolate the branch $\gamma_{r+1}$, away from $Q_{1}$, while, retaining the property $(\star\star\star\star\star\star)$, that the remaining branches $\{\gamma_{1},\ldots,\gamma_{r}\}$ are isolated, away from $Q_{1}$, in finite position. This amounts to showing that $(\star\star\star\star\star\star)$ is preserved in the following steps from Case 2;\\

(1). For the rotation $\nu$, with $\alpha\neq\alpha_{i}$, for $1\leq i\leq r$.\\

(2). For the construction of the birational map $\theta_{1}$, with the additional use of the previous inductive proof.\\

(3). For the inverse rotation $\nu^{-1}$, with $\alpha\neq\alpha_{i}$, for $1\leq i\leq r$.\\

(4). For transformations of the form $\theta_{g}$, considered in Lemma 3.7, with $\alpha\neq\alpha_{i}$, for $1\leq i\leq r$.\\

Hence, the final part of the lemma is shown.

\end{proof}

\begin{rmk}
This seems to be a reasonable geometrical property of a theory of light, motivated by theoretical considerations of the author on the chemical production of energy by fission in stars, see also Remarks 3.12(a). The increased level of  radioactivity along the axis $x=0$ is, in fact, an observable phenomenon of pulsars, a type of neutron star, see \cite{GG}.

\end{rmk}

The following result is straightforward, see the previous footnote 3;\\

\begin{lemma}
Let $C_{1}$ and $C_{2}$ satisfy the conditions of Lemma 3.3, then there exists a birational map $\theta:C_{1}\leftrightsquigarrow C_{2}$, which is an isomorphism in the etale topology with $(0,0)$
as labelled points, and $(pr\circ\theta)=\theta$.
\end{lemma}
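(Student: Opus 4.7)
The plan is to produce $\theta$ directly from the common function-field presentation guaranteed by Lemma 3.3, and then upgrade the resulting birational map to an etale isomorphism at the distinguished point $(0,0)$ using the etale local-ring identification described at the start of the proof of Lemma 3.3.

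First, by property (iii) of Lemma 3.3 applied to both $C_1$ and $C_2$, the function fields $L(C_1)$ and $L(C_2)$ are each generated over $L(x)$ by the \emph{same} tuple of algebraic power series $\{\delta_1(x),\ldots,\delta_m(x)\} \subset L[[x]] \cap L(x)^{alg}$ (these are fixed elements of a given algebraic closure, not merely abstract generators). This yields a canonical $L(x)$-algebra isomorphism $L(C_1) \cong L(C_2)$, and hence a birational map $\theta : C_1 \leftrightsquigarrow C_2$ with $pr \circ \theta = pr$ on its domain of definition. Note that the choice of identification matters here: there are $|Gal(L(C_1)/L(x))|$ birational maps commuting with $pr$, one for each Galois automorphism (cf.\ footnote 3), and we are singling out the one that fixes each $\delta_j$.

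Second, by property (i) of Lemma 3.3, the projection $pr$ is etale at $(0,0)$ on each curve, so by Theorem 1.3 of \cite{depiro5} there are canonical identifications
\[
\mathcal{O}^{et}_{C_i,(0,0)} \;\cong\; \mathcal{O}^{et}_{A^1,0} \;\cong\; L[[x]] \cap L(x)^{alg}, \qquad i=1,2.
\]
Under these identifications, property (ii) of Lemma 3.3 tells us that on both sides the element $\delta_j \in R(V'_i)$ corresponds to the same power series $\delta_j(x)$. Consequently, the $L(x)$-algebra isomorphism from the first step sends the etale germ of each $\delta_j$ at $(0,0)$ on $C_1$ to its counterpart at $(0,0)$ on $C_2$, and therefore extends to an isomorphism of etale local rings at the labelled points.

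Third, applying the local presentation of etale morphisms (Fact 1.5 of \cite{depiro5}) and the spreading-out principle for morphisms between nonsingular varieties (as invoked in the proof of Lemma 3.3), this etale-local isomorphism extends to an honest isomorphism between open etale neighborhoods $U_1 \subset C_1$ and $U_2 \subset C_2$ of $(0,0)$, with $(0,0) \mapsto (0,0)$ and $pr \circ \theta = pr$ preserved throughout. The main --- and really the only --- subtlety is ensuring that the birational map from step one selects the correct element of the Galois group $Gal(L(V')/L(x))$, so that it sends the specified point $(0,0) \in C_1$ to $(0,0) \in C_2$ rather than to a Galois conjugate lying in the same fibre; this is secured by the $\delta_j$-compatible etale local ring identification of step two.
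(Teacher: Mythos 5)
Your proof is correct and rests on the same core mechanism as the paper's: both function fields are generated over $L(x)$ by the same concrete power series $\delta_{j}(x)$ sitting inside ${\mathcal O}^{et}_{(A^{1},0)}\cong L[[x]]\cap L(x)^{alg}$, this yields an $L(x)$-isomorphism $L(C_{2})\cong L(C_{1})$ commuting with $pr^{*}$, and etaleness at the labelled points is then checked at the end. The one place you genuinely diverge is in pinning down the marked point. The paper builds the birational map via a common etale neighbourhood $(C_{3},*)$ with two etale maps pulling the $\delta_{i}$ back to the same elements, observes only that the image of the unique silver branch of $C_{1}$ at the origin lands somewhere along the axis $x=0$ of $C_{2}$, and then composes with a Galois automorphism $\theta_{h}$ (Lemma 3.3(iv) together with the transitivity method of Lemma 3.13) to move that branch to $(0,0)$. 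You instead single out, from the outset, the unique $L(x)$-isomorphism fixing each $\delta_{j}$ as a power series; since $(0,0)$ is a nonsingular point of each $C_{i}$ (being etale over $(A^{1},0)$), the local ring ${\mathcal O}_{C_{i},(0,0)}$ is exactly the valuation ring of the $x$-adic order pulled back through the germ embedding of Lemma 3.3(ii), and as these embeddings agree, your canonical isomorphism automatically matches local rings and hence marked points, making the Galois correction unnecessary. Your route is slightly more economical, and you correctly flag that the whole content of the lemma lies in selecting the right element among the $|Gal(L(C')/L(x))|$ birational maps over $pr$; the paper's version is more defensive, not relying on the compatibility of the function-field isomorphism with the local embeddings but restoring the marked point by hand afterwards. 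In both versions the final etaleness claim follows because both projections are etale at the respective points and $\theta$ commutes with them.
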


\begin{proof}
By the definition of the etale topology, there exists an irreducible projective algebraic curve $(C_{3},*)$, together with an open subset $U_{3}\subset C_{3}$, containing the marked point $*$, and open subsets $U_{1}\subset C_{1}$, $U_{2}\subset C_{2}$, containing $(0,0)$, with etale morphisms;\\

$i_{13}:(C_{3},U_{3},*)\rightarrow (C_{1},U_{1},(0,0))$\\

$i_{23}:(C_{3},U_{3},*)\rightarrow (C_{2},U_{2},(0,0))$\\

such that $i_{13}^{*}(\delta_{i}(x))=i_{23}^{*}(\delta_{i}(x))$, for $1\leq i\leq m$, $(*)$, and $(pr\circ i_{13})=(pr\circ i_{23})$, $(**)$. Using $(**)$, the morphisms $i_{13}$ and $i_{23}$ induce inclusions;\\

$i_{13}^{*}:L(C_{1})\rightarrow L(C_{3})$\\

$i_{23}^{*}:L(C_{2})\rightarrow L(C_{3})$\\

with $(i_{13}^{*}\circ pr^{*})=(i_{23}^{*}\circ pr^{*}):L(x)\rightarrow L(C_{3})$, $(***)$. By $(*)$, $(***)$,
and Lemma 3.3(iii), we can define an isomorphism $\phi^{*}:L(C_{2})\rightarrow L(C_{1})$, such that $(\phi^{*}\circ pr^{*})=pr^{*}:L(x)\rightarrow L(C_{1})$. Hence, there exists a birational map $\phi:C_{1}\leftrightsquigarrow C_{2}$, with $(pr\circ\phi)=pr$, $(****)$. By Lemma 3.3(i), there exists a unique silver branch $\gamma_{1}$ of $C_{1}$, centred at the origin $(0,0)$. Using $(****)$, it is straightforward that $\theta(\gamma_{1})$ is a silver branch of $C_{2}$, centred along the axis $x=0$. Using Lemma 3.3(iv), and the method of Lemma 3.13, we can find a birational map $\theta_{h}:C_{2}\leftrightsquigarrow C_{2}$, with $(pr\circ\theta_{h})=pr$, $(*****)$ representing $h\in Gal(L(C_{2})/L(x))$, such that $(\theta_{h}\circ\phi)(\gamma_{1})=\gamma_{2}$, where $\gamma_{2}$ is the unique silver branch of $C_{2}$, centred at $(0,0)$. It is a simple calculation, using infinitesimals, Theorems (6.10,6.11) of \cite{depiro4} and the fact that the branches $\{\gamma_{1},\gamma_{2}\}$ are nonsingular, to show that $(\theta_{h}\circ\phi)$ is etale in an open neighborhood of $C_{1}$, containing $(0,0)$. Similar considerations apply to the morphism $(\theta_{h}\circ\phi)^{-1}$, hence, $(\theta_{h}\circ\theta)$ induces an isomorphism in the etale topology, with $(0,0)$ as labelled points. Letting $\theta=(\theta_{h}\circ\phi)$, the fact that $(pr\circ\theta)=pr$ follows immediately from $(****)$ and $(*****)$. Hence, the lemma is shown.

\end{proof}

\begin{rmk}
It is a straightforward exercise, to show that a morphism satisfying the conditions of Lemma 3.17 is unique.
Hence, we can unambigiously identify the branches of any two curves, satisfying the conditions of Lemma 3.3. The previous Lemmas 3.5 to 3.15, support the intuition that the orbits of each branch resemble silver or blue flashes of light, emanating from and towards a central point $Q_{1}$. Given the high gravitational field of a neutron star, this is, possibly, a good description of the behaviour of light in certain situations.
\end{rmk}

\begin{rmk}
Rose windows are another geometrically interesting source of examples, that exhibit both radial and rotational symmetry, as well as intricate light effects. Some of the best can be found in the Rayonnant designs of French medieval cathedrals. An excellent account of the historical development of rose windows is given in \cite{painton}.
For the reader, interested in making rose windows, one should look at \cite{helga}. A number of pictures of rose windows should be available soon on the website, http://www.magneticstrix.net, where you can find some good examples of neutron star style windows in the Rayonnant Rose of Saint Denis, and the North and South Roses of Notres Dame cathedral.

\end{rmk}

We now make the following refinement of the branch terminology that we have used;\\

\begin{defn}

Let conditions be as in Definition 3.11, then we define a branch $\gamma$ to be violet, if;\\

$\gamma$ is a non-singular blue branch.\\

\noindent and we define a branch $\gamma$ to be green, if;\\

$\gamma$ is a silver branch, centred along the axis $x=0$, (see also footnote 5), or
$\gamma$ is a blue branch, but not a violet branch.\\

\end{defn}

\begin{rmk}

In order to see that this is a good definition,

\end{rmk}

We then have the following geometric interpretation of Lemma 2.5.\\

\begin{lemma}
Let hypotheses be as in Lemma 2.5 and notation as in Definition 3.1, then we can find an irreducible closed algebraic surface $S\subset P^{3}$, such that the restriction of $pr_{1}$ to $(S\setminus Q_{1})$ defines a quasi-finite morphism, $pr_{1}:S\rightarrow P_{1}^{2}$, $(\heartsuit)$, with the following further additional properties;\\

(i). There exist open subsets $U\subset A_{1}^{2}\subset P_{1}^{2}$ and $V\subset S\cap A^{3}$ such that;\\

$pr_{1}:(V,(00)^{lift})\rightarrow (U,(00))$ is etale.\\

(ii). There exist distinct irreducible closed algebraic curves $\{C_{1},\ldots,C_{m}\}$ contained in $S$, such that;\\

a. $pr_{1}:(C_{j}\setminus\{Q_{1}\})\rightarrow C$ is a finite cover, for $1\leq j\leq m$, (\footnote{We use the term cover, in the sense that $pr_{1}$ defines a dominant morphism from $(C_{j}\setminus\{Q_{1}\})$ onto an open subset of $C$. The projection naturally extends to define a surjective map from the branches of $C_{j}$, including those centred at $Q_{1}$, to the branches of $C$. The reader should look at the paper \cite{depiro6}})\\

b. $\overline{(pr_{1}^{-1}(C)\cap S)}=\cup_{1\leq j\leq m}C_{j}$.\\

c. The coordinate ring $R(V)$ embeds in $L[[x]][y]\cap L(x,y)^{alg}$ and the defining equations of $C_{j}\cap V$ are given by;\\

 $y-\eta_{j}(x)=0$, for $1\leq j\leq m$.\\

(iii). There exists an irreducible algebraic curve, $C'\subset P_{2}^{2}$, satisfying the conditions of Lemma 3.3, with respect to the coordinate system $(x,z)$, such that;\\

a. $pr_{2}:C_{j}\rightarrow C'$ is a finite cover, for $1\leq j\leq m$.\\

b. $pr_{2}:C_{j}\leftrightsquigarrow C'$ is a birational map, with inverse $\eta_{j}:C'\leftrightsquigarrow C_{j}$\\

and an open subset $V'\subset C'$, such that;\\

c. $pr_{1}:(V',(00))\rightarrow (A^{1},0)$ is a Galois cover, with the extension $L(V')/L(x)$ equal to the Galois closure of the extension $L(C)/L(x)$.\\

\end{lemma}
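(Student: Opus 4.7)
The plan is to realise the factorisation of Lemma 2.5 geometrically in $P^{3}$, using the nonsingular model $C'\subset P_{2}^{2}$ supplied by Lemma 3.3 as the base of a cone surface whose graphs of the individual $\eta_{j}$ cut out the curves $C_{j}$.

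First I would apply Lemma 2.5 to obtain $F(x,y)=\prod_{j=1}^{m}(y-\eta_{j}(x))$ with $\eta_{j}=a_{j}+\delta_{j}\in L[[x]]\cap L(x)^{alg}$, and then Lemma 3.3 to produce $C'\subset P_{2}^{2}$ together with the etale neighbourhood $(V',(0,0))\to(A^{1},0)$ on which each $\delta_{j}$ lies in $R(V')$. Let $G(X,Z,W)$ be a homogeneous equation of $C'$. Regarding $G$ as an element of $L[X,Y,Z,W]$ independent of $Y$, set $S=V(G)\subset P^{3}$; this is the cone over $C'$ from the vertex $Q_{2}$, hence irreducible, and affinely $S\cap A^{3}\cong C'\times A^{1}_{y}$. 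For (i), I would take $V=(V'\cap A_{2}^{2})\times A^{1}_{y}\subset S\cap A^{3}$ with lift $(0,0)^{lift}=(0,0,0)$: the projection $pr_{1}|_{V}$ factors as $pr|_{V'}\times\mbox{id}_{A^{1}_{y}}$, which is etale at $(0,0,0)$ by Lemma 3.3(i). Quasi-finiteness of $pr_{1}|_{S\setminus Q_{1}}$ on the affine part follows immediately, because the fibre over $(x_{0},y_{0})$ is $\{(x_{0},y_{0},z):(x_{0},z)\in C'\}$.

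For (ii), each $C_{j}$ is obtained as the Zariski closure of $\Phi_{j}(C')$, where $\Phi_{j}:C'\dashrightarrow S$ is defined on $V'$ by $p\mapsto(x(p),\eta_{j}(p),z(p))$; this is well-defined since $\eta_{j}\in R(V')$. These $C_{j}$ are irreducible as images of the irreducible $C'$ under a dominant map, and pairwise distinct because the constant terms $\eta_{j}(0)=a_{j}$ are distinct by Lemma 2.5. Property (ii)(a) holds because the identity $F(x,\eta_{j}(x))=0$ forces $pr_{1}\circ\Phi_{j}(C')\subset C$, yielding a finite cover $C_{j}\to C$; property (ii)(b) because any $(x_{0},y_{0},z_{0})\in pr_{1}^{-1}(C)\cap S$ satisfies $F(x_{0},y_{0})=0$ and $(x_{0},z_{0})\in C'$, so $y_{0}$ coincides with $\eta_{j}(p)$ at $p=(x_{0},z_{0})$ for some $j$, placing the point in $C_{j}$; property (ii)(c) because $R(V)=R(V')[y]$, the inclusion $R(V')\hookrightarrow L[[x]]\cap L(x)^{alg}$ of Lemma 3.3(ii) extends to $R(V)\hookrightarrow L[[x]][y]\cap L(x,y)^{alg}$, and $C_{j}\cap V$ is cut out there by $y-\eta_{j}=0$. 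Part (iii) is then immediate: $pr_{2}\circ\Phi_{j}=\mbox{id}_{C'}$ makes $pr_{2}:C_{j}\leftrightsquigarrow C'$ a birational isomorphism with inverse $\Phi_{j}$ (the map denoted $\eta_{j}$ in the statement), giving (iii)(a),(b); and (iii)(c) is a verbatim restatement of Lemma 3.3(iv).

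The main obstacle I anticipate is ensuring that $pr_{1}|_{S\setminus Q_{1}}$ is quasi-finite on \emph{all} of $S$ rather than merely on $S\cap A^{3}$: the unique possibly 1-dimensional fibre lies over $Q_{2}\in P_{1}^{2}$, and arises precisely when the line $\overline{Q_{1}Q_{2}}$ sits inside $S$, i.e. when the pure $Z^{\deg G}$ coefficient of $G$ vanishes, equivalently when $Q_{1}\in P_{2}^{2}$ lies on $C'$. This is avoidable by choosing $C'$ in Lemma 3.3 so that the $z^{\deg G}$ coefficient of $G$ is nonzero -- a condition one can arrange within the flexibility of the Lemma 3.3 construction, or rectify after the fact by a small birational adjustment using Lemmas 3.4--3.9 to move $C'$ away from the bad configuration at infinity. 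Beyond this point, every remaining verification is a mechanical transport of the factorisation of Lemma 2.5 along the birational identifications granted by Lemma 3.3.
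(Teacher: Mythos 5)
Your construction coincides with the paper's: $S$ is the cone over the Lemma 3.3 curve $C'$ with vertex $Q_{2}$, the $C_{j}$ are the closures of the graphs of the $\eta_{j}$ over $V'$, (i) is obtained by base--changing the etale map $V'\rightarrow U'$ along $U'\times A^{1}\rightarrow U'$, and (iii) comes from $pr_{2}\circ\Phi_{j}=\mbox{id}_{V'}$ together with (iii)(c) being a restatement of Lemma 3.3(iv). Your worry about $(\heartsuit)$ is a fair catch rather than a defect of your own argument: the paper merely asserts that the fibres of $pr_{1}|_{S\setminus Q_{1}}$ are finite, but if the point $Q_{1}$ of $P_{2}^{2}$ lies on $C'$ (which the raw output of Lemma 3.3 does not preclude, and which later lemmas even arrange deliberately), then the line $X=W=0$ lies on the cone $S$ and the fibre over $Q_{2}\in P_{1}^{2}$ is infinite; moving the branches of $C'$ off $Q_{1}$, as Lemma 3.10 permits, repairs this.

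The one genuine gap is in your treatment of (ii)(b). The pointwise step ``$y_{0}$ coincides with $\eta_{j}(p)$ for some $j$'' only makes sense for $p=(x_{0},z_{0})\in V'$: the $\eta_{j}$ are regular functions on $V'$, and the factorisation $F=\prod_{j}(y-\eta_{j})$ is an identity in $L[[x]][y]$, not an identity of functions on all of $C'$. To conclude $\overline{pr_{1}^{-1}(C)\cap S}=\bigcup_{j}C_{j}$ you must also rule out irreducible components of $\overline{pr_{1}^{-1}(C)\cap S}$ that avoid $V$ entirely: components contained in $W=0$ (excluded because then $pr_{1}(C_{0})=C$ would lie in $l_{1,\infty}$, against Lemma 1.1(i)), and components lying over the finitely many points of $C'\setminus V'$ (excluded because such a component would have to be a vertical line $\{p\}\times A^{1}$, forcing $C$ to contain a vertical line, again against Lemma 1.1). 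The paper devotes a paragraph to exactly this case analysis; once every component is known to meet $V$, your argument closes the proof.
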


\begin{proof}

From the previous construction of Lemma 2.5, the power series $\{\delta_{1}(x),\ldots,\delta_{m}(x)\}$ belong to $L[[x]]\cap L(x)^{alg}$. By Theorem 1.3 and remarks at the beginning of Section 3 of \cite{depiro5}, we have that $L[[x]]\cap L(x)^{alg}$ is isomorphic to the local ring for the etale topology ${\mathcal O}^{et}_{(A^{1},0)}$. By the construction in Lemma 3.3, we can find an irreducible plane projective curve $C'\subset P^{2}$, such that;\\

(i). There exists an open subset $U_{1}\subset C'$, such that $(0,0)\in U_{1}$ and;\\

$pr:(U_{1},(0,0))\rightarrow (A^{1},0)$\\

is an etale morphism, $(\dag)$\\

(ii). $\{\delta_{1}(x),\ldots,\delta_{m}(x)\}$ all belong to $R(U_{1})$, for a given embedding of $R(U_{1})$ in $L[[x]]\cap L(x)^{alg}$, and $\delta_{1}(0,0)=\ldots=\delta_{m}(0,0)=0$, $(\dag')$\\

(iii). The function field $L(U_{1})$ is generated over $L(x)$ by the power series $\{\delta_{1}(x),\ldots,\delta_{m}(x)\}$, (\footnote{In non-zero characteristic, one should take $L(U_{1})$ to be the seperable closure of $L(x,\delta_{1}(x),\ldots,\delta_{m}(x))$}). $(\dag'')$.\\

By Fact 1.5 of \cite{depiro5}, we can find an open subset $U'\subset A^{1}$, containing $(0)$, and a monic polynomial $F(z)\in R(U')[z]$ such that the etale morphism $pr$ can be presented, for an open neighborhood $V'\subset U_{1}\subset C'$ of $(0)^{lift}=(0,0)$ in the form;\\

$Spec(({R(U')[z]\over F(z)})_{d})\rightarrow Spec(R(U'))$ $(*)$\\

with $F'(z)$ invertible in $({R(U')[z]\over F(z)})_{d}$. As $(V',(0,0))$ is a localisation of $(U_{1},(0,0))$, the coordinate ring $R(U_{1})\subset R(V')\subset L[[x]]\cap L(x)^{alg}$ and the conditions $(\dag)$, $(\dag')$ and $(\dag'')$ are preserved, replacing $U_{1}$ by $V'$.\\

Let $G(x,z)$ define the projective curve $C'$, restricted to $A_{2}^{2}$, see Definition 3.1, which we also denote by $C'$. The restriction of $C'$ to $V'=(pr_{1}^{-1}(U')\cap(G=0)\cap (d\neq 0))$ corresponds to the cover defined in $(*)$. We also consider $G(x,z)$ as defining an irreducible algebraic surface $C'\times A^{1}$ in $A^{3}$, using the coordinates $(x,y,z)$. We now projectivize the affine polynomial $G(x,z)$, by making the substitutions $\{x={X\over W},z={Z\over W}\}$, and obtain an irreducible homogeneous polynomial $R(X,Z,W)$. This defines an irreducible algebraic surface $S\subset P^{3}$. Observe that, as $C'$ passes through $(0,0)$, the surface $S$ contains the point $Q_{2}$. It is clear from the construction that $pr_{2}(S\setminus \{Q_{2}\})=C'$ and $pr_{1}(S\setminus Q_{1})\subset P_{1}^{2}$. It is straightforward to see that the fibres of $pr_{1}$ restricted to $(S\setminus Q_{1})$ are all finite, showing $(\heartsuit)$ in the statement of the lemma.\\

We now set $U=U'\times A^{1}\subset A_{1}^{2}$ and $V=V'\times A^{1}\subset S\cap A^{3}$. By the fact that $R(V')$ embeds in $L[[x]]\cap L(x)^{alg}$, we have that $R(V)$ embeds in $L[[x]][y]\cap L(x,y)^{alg}$, $(**)$. The etale morphism $pr_{1}:(V',(00))\rightarrow (U',(0))$ may be presented in the form;\\

$R(U')\rightarrow {R(U')[z,w]\over <F(z),w\delta(z)-1>}$ with ${\partial(F(z),w\delta(z)-1)\over\partial (z,w)}|_{p}\neq 0$, for $p\in V'$\\

The morphism $pr_{1}:(V,(000)\rightarrow (U,(00))$ may then be presented in the same form, replacing the ring $R(U')$ by the ring $R(U)=R(U')[y]$. By the definition of an etale morphism in Definition 1.1 of \cite{depiro5}, we obtain immediately that this morphism is etale, hence (i) of the Lemma is shown.\\

By $(\dag')$ and $(**)$, we now have that the algebraic power series\\ $\{y-\delta_{1}(x),\ldots,y-\delta_{m}(x)\}$ define  irreducible algebraic curves $\{D_{1},\ldots,D_{m}\}$ on $V$, passing through $(00)^{lift}=(000)$. By the construction of $V$, we have that the algebraic power series $\{y-a_{1}-\delta_{1}(x),\ldots,y-a_{m}-\delta_{m}(x)\}$ define distinct irreducible algebraic curves $\{D_{1}',\ldots,D_{j}',\ldots,D_{m}'\}$ on $V\subset S$, passing through $(0,a_{j},0)$, for $1\leq j\leq m$. We let $C_{j}=\overline{D_{j}'}$, for $1\leq j\leq m$. By elementary facts on Zariski closure, each $C_{j}\subset S$ is irreducible and $(ii)(c)$ of the lemma holds, by the definition of $\eta_{j}(x)=a_{j}+\delta_{j}(x)$ in Lemma 2.5, and the fact that $D_{j}'=C_{j}\cap V$. Again, by Lemma 2.5, $(pr_{1}^{-1}(C)\cap V)=\cup_{1\leq j\leq m}(C_{j}\cap V)$, $(***)$. It follows immediately, from elementary facts about quasi-finite morphisms, that $(ii)(a)$ of the lemma holds and $\cup_{1\leq j\leq m}C_{j}\subseteq \overline{(pr_{1}^{-1}(C)\cap S)}$. Now suppose that $C_{0}\subset S$ is an irreducible component of $\overline{(pr_{1}^{-1}(C)\cap S)}$. By elementary dimension theory, $C_{0}$ is an algebraic curve and must define a finite cover of $C$. We may suppose that $(C_{0}\cap A^{3})\neq\emptyset$, otherwise $C_{0}$ is contained in the plane $(W=0)$ and, therefore, $pr_{1}(C_{0})=C$ is contained in the line $l_{1,\infty}$, contradicting the presentation of $C$ in Lemma 1.1. Suppose that $(C_{0}\cap V)=\emptyset$, then $C_{0}\cap A^{3}$ must be a line of the form $\{p\}\times A^{1}$, where $p=(p_{1},p_{2})$ is one of the finitely many exceptional points on the affine curve $C'\subset A_{2}^{2}$, obtained by removing $V'$. It follows that $pr_{1}(C_{0})=C$ is contained in the closure of the line $x=p_{1}$, in the coordinate system $(x,y)$ of $A_{1}^{2}$, again contradicting the presentation of $C$ in Lemma 1.1. Hence, we may assume that $(C_{0}\cap V)\neq\emptyset$. It follows that $(C_{0}\cap V)$ defines an irreducible component curve of $(pr^{-1}(C)\cap V)$, hence, by $(***)$, coincides with $(C_{j}\cap V)$, for some $1\leq j\leq m$. It follows, by elementary facts on Zariski closure, that $C_{0}=C_{j}$. Therefore, $(ii)(b)$ of the lemma holds and $(ii)$ of the lemma is shown.\\

By the presentation given in Lemma 1.1, the point $Q_{2}$ does \emph{not} lie on $C$. By (ii)(a) of the lemma, and the fact that $Q_{2}$ is \emph{fixed} by the projection $pr_{1}$,  $Q_{2}$ cannot lie on $C_{j}$ for any $1\leq j\leq m$ either. It follows that $pr_{2}$ is defined on all of $C_{j}$, for $1\leq j\leq m$. As we observed earlier, $pr_{2}(S\setminus \{Q_{2}\})=C'$, hence, as each curve $C_{j}$ belongs to $(S\setminus\{Q_{2}\})$, and, using the argument above to exclude the exceptional case that $pr_{2}(C_{j})$ is a point, we obtain $(iii)(a)$ of the lemma. Now, observe that the algebraic power series $\{\eta_{1}(x),\ldots,\eta_{m}(x)\}$ define rational functions in $L(C')$, and belong to the coordinate ring $R(V')$, for the open subset $V'\subset C'$. We may, therefore, define a morphism, $\theta_{j}:V'\rightarrow V$\\

$\theta_{j}(x,z)=(x,\eta_{j}(x,z),z)$, for $(x,z)\in V'$\\

The defining equation of the image of this morphism in $R(V)$ is given by $y-\eta_{j}(x,z)=0$, hence, by (ii)(c) of the lemma, corresponds to $(C_{j}\cap V)$. It follows that $\theta_{j}:V'\rightarrow C_{j}$ must define a morphism as well. By the explicit definition of $pr_{2}$ in the affine coordinates $(x,y,z)$, we have that $pr_{2}\circ\theta_{j}=Id_{V'}$. It follows immediately that $pr_{2}$ defines an isomorphism between the open subsets $(C_{j}\cap V)\subset C_{j}$ and $V'\subset C'$. Hence, (iii)(b) of the lemma is shown. The property (iii)(c) follows immediately from the construction of $V'$, by observing that $L(V')=L(x,\delta_{1}(x),\ldots,\delta_{m}(x))=L(x,\eta_{1}(x),\ldots,\eta_{m}(x))$ is a splitting field for the polynomial $F(x,y)$ over $L(x)$, hence, the cover defined in $(iii)(c)$ is Galois and the extension $L(V')/L(x)$ is equal to the Galois closure of $L(C)/L(x)$, see also footnote 1. Therefore, (iii) of the lemma, is shown. This completes the proof.

\end{proof}

\begin{defn}
We define the set of curves $\{C_{1},\ldots,C_{j},\ldots,C_{m}\}$, found in the previous Lemma 3.21, as flashes of the original algebraic curve $C$, (\footnote{More accurately, a flash $C_{j}$ should be considered as representing the class of curves, corresponding to each power series $\eta_{j}(x)$. That is, given two curves $\{C^{1},C^{2}\}$, satisfying the conditions of Lemma 3.3, and a birational map $\alpha_{12}:C^{1}\leftrightsquigarrow C^{2}$, as in Lemma 3.16, if $\{C_{1}^{1},\ldots,C_{j}^{1},\ldots,C_{m}^{1}\}$ and $\{C_{1}^{2},\ldots,C_{j}^{2},\ldots,C_{m}^{2}\}$ denote the sets of curves obtained in Lemma 3.21, then we obtain naturally defined birational maps $\theta_{j}=(\eta_{j}\circ\alpha_{12}\circ(pr_{2})^{-1}):C_{j}^{1}\leftrightsquigarrow C_{j}^{2}$. We can then define a flash as the equivalence class of a given curve $C_{j}$ under these birational maps; by Lemma 3.16, this accounts for all such curves.})

\end{defn}

We then have;\\

\begin{lemma}

There exists a finite group $G$ and an algebraically definable action of $G$ on an open subvariety $W'\subset V'$, with $pr_{1}:W'\rightarrow A^{1}$ a quasi-finite morphism, such that;\\

(i). For generic $x\in A^{1}$, $G$ acts sharply transitively on the fibre $pr_{1}^{-1}(x)$.\\

(ii). $G$ induces an algebraically definable, transitive action on the set of flashes $\{C_{1},\ldots,C_{m}\}$.\\

\end{lemma}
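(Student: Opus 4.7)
The plan is to take $G = Gal(L(V')/L(x))$, which by Lemma 3.21(iii)(c) is the Galois group of the Galois closure of $L(C)/L(x)$ and hence finite. Each $g\in G$ is a field automorphism of $L(V')$ fixing $L(x)$, and since $V'$ is a smooth irreducible curve, $g$ extends, after passing to the smooth projective completion $\overline{V'}$ and then restricting, to a regular $A^{1}$-automorphism $\theta_{g}$ on an open cofinite subvariety (by the valuative criterion for smooth curves). I would take $W'\subset V'$ to be the intersection of the open loci of regularity of the $\theta_{g}$ and the $\theta_{g}^{-1}$ for the finitely many $g\in G$; then $G$ acts on $W'$ by regular $A^{1}$-automorphisms, only finitely many points of $V'$ have been deleted, and $pr_{1}\colon W'\to A^{1}$ remains quasi-finite.

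For part (i), the extension $L(V')/L(x)$ is Galois of degree $|G|$ and $pr_{1}$ is generically \'etale by Lemma 3.21(iii)(c). Restricting to the open subset of $A^{1}$ over which $pr_{1}|_{W'}$ is finite \'etale of degree $|G|$, each fibre $pr_{1}^{-1}(x)\cap W'$ consists of exactly $|G|$ points, and the $G$-action on the fibre is sharply transitive, which is the standard geometric incarnation of the Galois action for a Galois cover of smooth curves.

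For part (ii), I would use the fact, from Lemma 2.5, that $\{\eta_{1}(x),\ldots,\eta_{m}(x)\}$ are the $m$ distinct roots of the irreducible polynomial $F(x,y)\in L(x)[y]$ defining $C$, all lying in $L(V')$ by the construction of Lemma 3.21. Since $F$ is irreducible over $L(x)$, $G$ permutes these roots transitively, giving a homomorphism $\sigma\colon G\to S_{m}$ with transitive image, and I would then define $g\cdot C_{j}=C_{\sigma(g)(j)}$. The geometric content is that, viewed through the birational maps $\eta_{j}\colon V'\leftrightsquigarrow C_{j}$ of Lemma 3.21(iii)(b), one has $\eta_{j}\circ\theta_{g^{-1}}=\eta_{\sigma(g)(j)}$ as birational maps $V'\leftrightsquigarrow C_{\sigma(g)(j)}$, since both send a generic point $(x,z)$ of $V'$ to $(x,\eta_{\sigma(g)(j)}(x),z)$ in $S$, using Lemma 3.21(ii)(c). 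Algebraic definability of the permutation action on flashes then reduces to algebraic definability of $\sigma(g)$, which in turn is determined by the algebraically defined action of $\theta_{g}$ on the regular functions $\eta_{j}\in R(W')$.

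The crux of the argument will be the careful descent from the abstract Galois action on the function field $L(V')$ to a genuinely algebraically definable geometric action on an explicit open subvariety $W'\subset V'$. This rests on two standard but not quite trivial facts for smooth curves: the valuative criterion, ensuring that $W'$ with the required $G$-invariance and cofiniteness properties exists; and the compatibility of composition on function fields with composition of morphisms on $W'$, so that $\theta_{gh}=\theta_{g}\circ\theta_{h}$ as morphisms. Once these have been set up, both the sharp transitivity on generic fibres and the transitive action on flashes follow almost formally from the transitivity of the Galois action on the roots $\eta_{j}(x)$.
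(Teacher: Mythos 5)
Your proposal is correct and follows essentially the same route as the paper: take $G=Gal(L(V')/L(x))=Gal(L(C')/L(x))$, realise each $g$ as a fibre-preserving birational automorphism regular on a cofinite open $W'$, get sharp transitivity on generic fibres from the degree count $|G|=\deg(L(W')/L(x))$, and get the action on flashes from the transitive permutation of the roots $\{\eta_{1},\ldots,\eta_{m}\}$ of $F(x,y)$ in the splitting field $L(V')$. The only cosmetic difference is that you invoke the valuative criterion where the paper writes the $\theta_{g}$ explicitly as $(x,z)\mapsto(x,p_{g}/q_{g})$ and deletes the vanishing locus of $q_{g}$ by hand.
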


\begin{proof}

We let $G=Gal(L(C')/L(x))$. By standard considerations, a given $g\in G$ induces a birational morphism $\theta_{g}:C'\leftrightsquigarrow C'$, with the property that $pr_{1}\circ\theta_{g}=pr_{1}:C'\rightsquigarrow P^{1}$, as rational maps. More specifically, we can find polynomials $\{p_{g}(x,z),q_{g}(x,z)\}$ with the property that $\theta_{g}$ is defined, in affine coordinates, by;\\

$\theta_{g}:(x,z)\mapsto (x,{p_{g}(x,z)\over q_{g}(x,z)})=(x,g\centerdot z)$ $(*)$\\

Letting $G(x,z)$ define the curve $C'$, restricted to the coordinate system $(x,z)$, set $R_{g}=((G=0)\cap (q_{g}=0))$, and $S_{g}=pr_{1}(R_{g})$. Let $C''$ define the curve obtained by removing the finitely many coordinate lines $\{x=s:s\in S_{g}\}$, from the curve $C'$, realised by $G(x,z)$. It is straightforward to check that $C''$ is invariant under the action of $\theta_{g}$. By repeating the construction, we can choose $C''$ to be invariant under the action of $\theta_{g}$, for any $g\in G$, $(**)$. If $g\in G$, then $C''$ is invariant under the action of both $\theta_{g}$ and $\theta_{g^{-1}}=(\theta_{g})^{-1}$, it follows that $\theta_{g}$ maps $C''$ isomorphically onto itself, $(***)$. By a similar argument, removing also coordinate lines of the form $\{x=t:t\in T_{g}\}$, from $V'$, where;\\

 $T_{g}=\{t\in A^{1}:Card(V'(t))<Card(V'(x)),\ for\ generic\ x\in A^{1}\}$\\

 we can find an open subvariety $W'\subset V'$, with the same properties $(**)$ and $(***)$. We consider the action of $G$ on a generic fibre $W'(x)$. Assuming $deg_{z}G(x,z)=l$, this consists of $l$ distinct points, in finite position. It is also straightforward to check that $deg(L(W')/L(x))=l$. As the extension $L(W')/L(x)$ is Galois, $Card(G)=l$. If the action fails to be transitive, then, by a simple counting argument, we can find a non-trivial $g_{0}\in G$ and $x_{0}\in W'(x)$, with $g_{0}\centerdot x_{0}=\theta_{g_{0}}(x_{0})=x_{0}$. As the morphisms $\theta_{g_{0}}$ and $\theta_{Id}=Id$ are etale, it follows that $\theta_{g_{0}}=Id$, see Proposition 3.15 of \cite{M}, which is a contradiction. Sharp transitivity is obtained by a similar argument. This proves $(i)$.\\

By Lemma 2.5, we have that the power series $\{\eta_{1}(x),\ldots,\eta_{m}(x)\}$ constitute a complete set of distinct roots for the polynomial $F(x,y)$, considered as belonging to $L(x)[y]$.  By $(\dag)$ of Lemma 3.3, and the later remark there, the power series $\{\eta_{1}(x),\ldots,\eta_{m}(x)\}$ also belong to the coordinate ring $R(V')\subset L(C')$. In particular, we have that the function field $L(C')$ is a splitting field for the polynomial $F(x,y)$, over the subfield $L(x)$, and $G$ acts transitively on the power series $\{\eta_{1}(x),\ldots,\eta_{m}(x)\}\subset R(V')\subset R(W')$, $(\sharp)$. We can naturally extend the action of $G$ to the coordinate ring $R(W)=R(W')[y]$. By $(\sharp)$, this defines a transitive action on the irreducible curves $\{D_{1}',\ldots,D_{m}'\}$, which extends to an action on the flashes $\{C_{1},\ldots,C_{m}\}$, using elementary facts on Zariski closure and the definition of $C_{j}=\overline{D_{j}'}$, for $1\leq j\leq m$. (This action clearly also respects the equivalence relation, defined in footnote 9.) Hence, $(ii)$ is shown. Algebraic definability of the group action follows easily, by enumerating the parameters involved in the action defined by $(*)$, and restricting to the variety $W'$.
\end{proof}

We now give a more refined version of Lemma 3.3, by analysing the intersections and singularities of the flashes $\{C_{j}:1\leq j\leq m\}$.\\

We first observe the following, we recall the notion of $val_{\gamma}$, for a birational morphism, from \cite{depiro7}.\\

\begin{lemma}
Let $C'$ satisfy the conditions of Lemmas 3.3 and 3.21, then, if a branch $\gamma$ of $C'$ is centred in finite position along the axis $x=0$, the values of the birational morphisms $val_{\gamma}(\eta_{j})$ are distinct, belonging to the set $\{a_{1},\ldots,a_{m}\}$.

\end{lemma}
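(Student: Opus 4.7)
The key observation is that each $\eta_j$ is really a function of $x$ alone, pulled back to $C'$ via the projection $pr$. More precisely, by Lemma 3.3(ii), reiterated in the proof of Lemma 3.21, the rational function $\eta_j = a_j + \delta_j$ lies in $R(V') \subset L(C')$, and under the embedding $R(V') \hookrightarrow L[[x]] \cap L(x)^{alg}$ it is identified with the algebraic power series $a_j + \delta_j(x)$, where $\delta_j(0) = 0$. Since the embedding factors through $pr^{*} : R(U') \hookrightarrow R(V')$ (here $U' \subset A^{1}$ is the base of the \'etale presentation of Lemma 3.3), we may write $\eta_j = pr^{*}(a_j + \delta_j(x))$ as rational functions on $C'$.

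With this reduction in place, the plan is simply to compute the birational value along the branch. I would parametrize the branch $\gamma$, centered at a point $(0,\beta)$ in finite position, by a Newton--Puiseux parametrization $(x(\tau),z(\tau))$ with $x(\tau), z(\tau) \in L[[\tau^{1/r}]]$, $x(0) = 0$, $z(0) = \beta$. Applying the definition of $val_{\gamma}$ for a birational morphism from \cite{depiro7}, and using compatibility of $val_{\gamma}$ with $pr^{*}$, one obtains
\[
val_{\gamma}(\eta_j) \;=\; val_{pr(\gamma)}(a_j + \delta_j(x)) \;=\; \bigl(a_j + \delta_j(x(\tau))\bigr)\big|_{\tau = 0} \;=\; a_j + \delta_j(0) \;=\; a_j.
\]
Since by Lemma 2.5 the constants $\{a_1,\ldots,a_m\}$ are distinct, the values $\{val_{\gamma}(\eta_1),\ldots,val_{\gamma}(\eta_m)\}$ are distinct and coincide with $\{a_1,\ldots,a_m\}$.

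The only subtlety is to verify that the substitution $\delta_j(x(\tau))$ is meaningful, given that $\delta_j$ is a priori only a regular function on the special open subset $V' \subset C'$, and the branch $\gamma$ need not be centered at a point of $V'$. However, because $\delta_j$ depends only on $x$ through an algebraic power series with vanishing constant term, and because $x(\tau)$ is itself a Puiseux series with $x(0) = 0$ coming from a branch in finite position, the composition is a well-defined element of $L[[\tau^{1/r}]]$ that specializes to $0$ at $\tau = 0$. I expect this verification, essentially a matter of carefully tracking the embedding into $L[[x]] \cap L(x)^{alg}$, to be the only nontrivial point; the arithmetic content of the lemma is then immediate.
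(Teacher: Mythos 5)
Your reduction rests on the claim that $\eta_{j}=pr^{*}(a_{j}+\delta_{j}(x))$ as rational functions on $C'$, i.e.\ that $\eta_{j}$ is the pullback of a function of $x$ alone. This is false whenever $deg(L(C')/L(x))>1$, which is the generic situation: the ring $L[[x]]\cap L(x)^{alg}$ into which $R(V')$ embeds is the \emph{\'etale} local ring of $(A^{1},0)$, and it strictly contains the image of $pr^{*}$; the $\delta_{j}$ are algebraic over $L(x)$ but do not lie in $L(x)$ (if they all did, $L(C')=L(x,\delta_{1},\ldots,\delta_{m})$ would equal $L(x)$). The identification of $\delta_{j}\in L(C')$ with the particular power series $\delta_{j}(x)$, $\delta_{j}(0)=0$, is an expansion along the one distinguished branch of $C'$ at $(0,0)$ determined by the \'etale section of $pr$ there. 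At any other branch $\gamma$ centred at a point $(0,c)$ along the axis $x=0$, the restriction of $\eta_{j}$ is a Galois conjugate, and $val_{\gamma}(\eta_{j})$ is $a_{\sigma(j)}$ for some permutation $\sigma$ depending on $\gamma$ (the paper explicitly flags this permutation phenomenon in a later footnote). So your computation $val_{\gamma}(\eta_{j})=a_{j}+\delta_{j}(0)=a_{j}$ is only meaningful at the distinguished branch, and the "only subtlety" you identify --- substituting a Puiseux parametrisation into $\delta_{j}(x)$ at an arbitrary branch --- is not a technicality but precisely where the argument breaks: an algebraic power series is multivalued in $x$, and which determination $\eta_{j}$ restricts to depends on the branch.

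Because of this, distinctness of the values at a general branch along $x=0$ is not automatic and is the real content of the lemma. The paper proves it geometrically: membership in $\{a_{1},\ldots,a_{m}\}$ follows because the point $(0,val_{\gamma}(\eta_{j}),c)$ lies on the flash $C_{j}$ and projects under $pr_{1}$ to a point of $C\cap(x=0)$, which by Lemma 1.1(iii) is exactly $\{(0,a_{1}),\ldots,(0,a_{m})\}$; and if $val_{\gamma}(\eta_{j_{0}})=val_{\gamma}(\eta_{j_{1}})$ for $j_{0}\neq j_{1}$, then since $\eta_{j_{0}}\neq\eta_{j_{1}}$ one produces two distinct points of $C$ in ${\mathcal V}_{(0,a_{j})}$ lying over an infinitesimal $x$, forcing $I_{(0,a_{j})}(C,x=0)\geq 2$ and contradicting the transversality in Lemma 1.1(iii). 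You need an argument of this kind (or some other use of the hypothesis that $x=0$ cuts $C$ transversely in $m$ distinct points); the pullback reduction cannot supply it.
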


\begin{proof}
 Suppose that $\gamma$ is a branch, centred along $x=0$. If $b_{j}$ denotes the value $val_{\gamma}(\eta_{j})$, and $\gamma$ is centred at $(0,c)$, then $(0,b_{j},c)$ belongs to the curve $C_{j}$. By $(ii)(a)$ of Lemma 3.21, the projection $(0,b_{j})$ belongs to $C$, hence, $b_{j}$ belongs to the set $\{a_{1},\ldots,a_{m}\}$. Suppose that $val_{\gamma}(\eta_{j_{0}})=val_{\gamma}(\eta_{j_{1}})=a_{j}$, for $j_{0}\neq j_{1}$.
The birational morphisms $\{\eta_{j_{0}},\eta_{j_{1}}\}$ are distinct, by Lemma 3.21, hence, there exists an open subset $U\subset C'$, for which $(\eta_{j_{0}}-\eta_{j_{1}})$ is non-zero. Let $(x,z)\in(\gamma\cap U)$, then the values $\eta_{j_{0}}(x,z)$ and $\eta_{j_{1}}(x,z)$ are distinct and the points $\{(x,\eta_{j_{0}}(x,z),z),(x,\eta_{j_{1}}(x,z),z)\}$ belong to the curves $(C_{j_{0}}\cap{\mathcal V}_{(0,a_{j},c)})$ and $(C_{j_{1}}\cap{\mathcal V}_{(0,a_{j},c)})$ respectively. Applying Lemma 3.21, the projected points $\{(x,\eta_{j_{0}}(x,z)),(x,\eta_{j_{0}}(x,z))\}$ belong to $C\cap{\mathcal V}_{(0,a_{j})}$. As $x\in{\mathcal V}_{0}$, it follows that $I_{(0,a_{j})}(C,x=0)\geq 2$. This clearly contradicts the presentation of $C$, given in Lemma 1.1(iii).

\end{proof}

We recall that $C$ is a nodal curve, satisfying the assumptions of Lemma 1.1.

\begin{defn}

 Let $\wp=\{\rho_{1},\ldots,\rho_{i},\ldots,\rho_{r}\}$ enumerate the branches of $C$ with the following properties, relative to the coordinate system $(x,y)$;\\

Either (i). The tangent line of $\rho_{i}$ to $C$ is vertical.\\

Or (ii). $\rho_{i}$ is centred at a node, belonging to $C$.\\

By Lemma 1.1, all such branches are in finite position, and the conditions are mutually exclusive.

\end{defn}

 In the terminology of Lemma 3.21, we observe that the projection $pr_{1}$ defines a quasi finite map from $(C_{j}\setminus\{Q_{1}\})$ to $C$, which extends to give a well-defined map from the branches $\gamma$ of $C_{j}$ to $C$, and that the branches of $C_{j}$ can be naturally identified with the branches of $C'$, using the birational map $\eta_{j}$. This motivates the following definition;\\

\begin{defn}
For the branches in Definition 3.25, we define,\\
 $\Gamma_{j}=\{\gamma_{1,j},\ldots,\gamma_{t,j}\}$ to be the lifting of these branches to the flash $C_{j}$, in Lemma 3.21, and also of the projective curve $C'$, given in Lemma 3.24. We define $\Gamma=\{\gamma_{1},\ldots,\gamma_{t}\}=\bigcup_{1\leq j\leq m}\Gamma_{j}$.
\end{defn}

We have the following birational invariance;\\

\begin{lemma}
Let $\alpha_{12}:C^{1}\leftrightsquigarrow C^{2}$ satisfy the conditions of Lemma 3.3 and Lemma 3.16, with equivalent flashes $\theta_{j}:C_{j}^{1}\leftrightsquigarrow C_{j}^{2}$, defined by Lemma 3.21, see also footnote 9. Then the branches $\{\gamma,\eta_{j}(\gamma)\}$ of $\{C^{1},C_{j}^{1}\}$ lift the same branch $\rho$ of $C$, as $\{\alpha_{12}(\gamma),\theta_{j}(\eta_{j}(\gamma))\}$. In particular, the sets $\Gamma_{j}$ and $\Gamma$ are birationally invariant, and $val_{\gamma}(\eta_{j})=val_{\alpha_{12}(\gamma)}(\eta_{j})$.

\end{lemma}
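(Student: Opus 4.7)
The plan is to reduce all three assertions to the single claim that the power series $\eta_j$, regarded as a rational function on $C^i$ via the embedding $R(V'^{(i)}) \subset L[[x]] \cap L(x)^{alg}$, is preserved by the pullback $\alpha_{12}^*: L(C^2) \to L(C^1)$. To this end, I would first record the key diagram whose rows are the two projection compositions $C^i \xrightarrow{\eta_j} C_j^i \xrightarrow{pr_1} C$ and whose columns are $\alpha_{12}$, $\theta_j$, and $\mathrm{id}_C$. The left square commutes essentially by definition: by footnote 9, $\theta_j = \eta_j \circ \alpha_{12} \circ pr_2^{-1}$, and since $pr_2 \circ \eta_j = \mathrm{id}_{C^1}$ (Lemma 3.21(iii)(b)), we have $\theta_j \circ \eta_j = \eta_j \circ \alpha_{12}$.

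The content of the proof is commutativity of the right square, i.e.\ that $pr_1 \circ \theta_j = pr_1$ as rational maps $C_j^1 \to C$. Using the explicit coordinate description from Lemma 3.21, $pr_1 \circ \eta_j$ on $C^i$ is the morphism $(x,z) \mapsto (x,\eta_j(x,z))$, where $\eta_j$ is the image of the power series in $R(V'^{(i)})$. Thus the right square reduces to the identity $\alpha_{12}^*(\eta_j) = \eta_j$ in $L(C^1)$. By Lemma 3.16, $\alpha_{12}$ is an isomorphism in the etale topology with $(0,0)$ as labelled points and $pr \circ \alpha_{12} = pr$; hence it induces an isomorphism of etale local rings over $(A^1,0)$, and under the canonical identification of both $\mathcal{O}^{et}_{(V'^{(i)},(0,0))}$ with $\mathcal{O}^{et}_{(A^1,0)} \cong L[[x]] \cap L(x)^{alg}$ (Theorem 1.3 of \cite{depiro5}, applied as in the proof of Lemma 3.3), the two incarnations of $\eta_j$ are the same distinguished element.

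Granted commutativity of the full rectangle, the first assertion is immediate: both $\gamma$ and $\eta_j(\gamma)$ map to the branch $\rho = (pr_1 \circ \eta_j)(\gamma)$ of $C$, and the same rectangle, applied to $\alpha_{12}(\gamma)$ and $\theta_j(\eta_j(\gamma)) = \eta_j(\alpha_{12}(\gamma))$, shows that both land on the same $\rho$. Birational invariance of $\Gamma_j$ and $\Gamma = \bigcup_j \Gamma_j$ is then purely a matter of bookkeeping: Definition 3.25 selects branches of $C$ intrinsically (vertical tangent, or node), and Definition 3.26 takes all branches of $C_j^i$ lifting these via $pr_1$; the bijection between branches of $C_j^1$ and $C_j^2$ induced by $\theta_j$ commutes with $pr_1$, so it restricts to a bijection $\Gamma_j^{(1)} \leftrightarrow \Gamma_j^{(2)}$.

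Finally, $val_\gamma(\eta_j) = val_{\alpha_{12}(\gamma)}(\eta_j)$ follows from the same identity $\alpha_{12}^*\eta_j = \eta_j$: under any birational map, the value of a rational function at a branch equals the value of its pullback at the corresponding branch (a standard property from \cite{depiro7}), so $val_\gamma(\eta_j) = val_\gamma(\alpha_{12}^*\eta_j) = val_{\alpha_{12}(\gamma)}(\eta_j)$. The main obstacle is the etale-local step in the second paragraph, where one must argue cleanly that the canonical identification of etale local rings with $L[[x]] \cap L(x)^{alg}$ is compatible with the chosen embeddings of the coordinate rings $R(V'^{(i)})$ on both curves; once this is spelled out, everything else is diagram-chasing.
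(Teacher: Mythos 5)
Your proposal is correct and follows essentially the same route as the paper: the whole lemma reduces to the identity $\alpha_{12}^{*}(\eta_{j})=\eta_{j}$ (the paper writes this as $g\circ\alpha_{12}=f$ for the two representatives and says it holds ``by the definition of $\alpha_{12}$''), from which $(pr_{1}\circ\eta_{j})=(pr_{1}\circ\eta_{j}\circ\alpha_{12})$ follows by the same coordinate computation you sketch, and the claims about $\Gamma_{j}$, $\Gamma$ and $val_{\gamma}$ are then bookkeeping. Your extra paragraph justifying the key identity via the compatible identifications of the etale local rings with $L[[x]]\cap L(x)^{alg}$ is, if anything, more explicit than the paper, which leaves that point to the construction in Lemma 3.16.
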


\begin{proof}
We denote the representatives of the power series $\eta_{j}$ by the rational functions $f\in L(C^{1})$ and $g\in L(C^{2})$. By the definition of $\alpha_{12}$, we have that $(g\circ\alpha_{12})=f$, and $(pr_{1}\circ\alpha_{12})=pr_{1}$, as rational functions on $C^{1}$. It is sufficient to show that $(pr_{1}\circ\eta_{j})=(pr_{1}\circ\eta_{j}\circ\alpha_{12}):C^{1}\rightarrow C$, as rational maps. For a generic point $(x,z)\in C^{1}$, we have that;\\

$(pr_{1}\circ\eta_{j})(x,z)=pr_{1}(x,f(x,z),z)=(x,f(x,z))$\\

$(pr_{1}\circ\eta_{j}\circ\alpha_{12})(x,z)=(pr_{1}\circ\eta_{j})(x,(pr\circ \alpha_{12})(x,z))$\\
\indent \ \ \ \ \ \ \ \ \ \ \ \ \ \ \ \ \ \ \ \ \ \ \ \ \ \ \ $=pr_{1}(x,g(x,(pr\circ \alpha_{12})(x,z)),z)$\\
\indent \ \ \ \ \ \ \ \ \ \ \ \ \ \ \ \ \ \ \ \ \ \ \ \ \ \ \ $=pr_{1}(x,(g\circ\alpha_{12})(x,z),z)$\\
\indent \ \ \ \ \ \ \ \ \ \ \ \ \ \ \ \ \ \ \ \ \ \ \ \ \ \ \ $=(x,f(x,z))$ (\footnote{We have temporarily used the notation $pr$ for the projection onto the $z$-coordinate.})\\

Hence, the result is shown.
\end{proof}

We make the following definition;\\

\begin{defn}
Let $C'$ satisfy the conditions of Lemmas 3.3 and 3.21, then, we define a branch to be geometric, if it is either finite, or, hyperbolic and centred at $Q_{1}$. We define a branch to be perpendicular, if it is infinite, and also parabolic, if centred at $Q_{1}$. See Definition 2.1 for relevant terminology.
\end{defn}

\begin{rmk}
A branch is geometric if and only if it is not perpendicular. This follows immediately from Definition 2.1. The notions of geometric and perpendicular are also birationally invariant, in the sense of Lemma 3.16.

\end{rmk}

\begin{lemma}
Let $C'$ satisfy the conditions of Lemmas 3.3 and 3.21, then, for a geometric branch $\gamma$ of $C'$, we have that;\\

$val_{\gamma}(\eta_{j_{0}})=val_{\gamma}(\eta_{j_{1}})$, for $j_{0}\neq j_{1}$, iff $\gamma\in(\Gamma_{j_{0}}\cap \Gamma_{j_{1}})$\\

There does not exist a geometric branch $\gamma$ of $C'$, on which $\{\eta_{j_{0}},\eta_{j_{1}},\eta_{j_{2}}\}$ take the same $val_{\gamma}$,  for a distinct triple $(j_{0},j_{1},j_{2})$\\

\end{lemma}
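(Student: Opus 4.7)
The strategy is to exploit the fundamental factorisation from Lemma 2.5, namely $F(x,y)=\prod_{j=1}^{m}(y-\eta_{j})$, viewed as an identity in $L(C')[y]$. Specialising at any point $(x_{0},z_{0})\in C'$ yields $F(x_{0},y)=\prod_{j=1}^{m}(y-\eta_{j}(x_{0},z_{0}))$ in $L[y]$; consequently the multiset $\{\eta_{j}(x_{0},z_{0}):1\leq j\leq m\}$ coincides with the multiset of roots of $F(x_{0},y)$ counted with multiplicity. This is the main tool. By Lemmas 3.5 and 3.6, together with the birational invariance of $val_{\gamma}$ recorded in Lemma 3.28, it suffices to handle the case where $\gamma$ is centred in finite position at some $(x_{0},z_{0})$, and then transfer the conclusion.

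For the forward direction of the first statement, assume $val_{\gamma}(\eta_{j_{0}})=val_{\gamma}(\eta_{j_{1}})=y_{0}$ with $j_{0}\neq j_{1}$. Then the branches $pr_{1}(\eta_{j_{0}}(\gamma))$ and $pr_{1}(\eta_{j_{1}}(\gamma))$ of $C$ are both centred at $(x_{0},y_{0})$. Suppose for contradiction that $(x_{0},y_{0})$ is a smooth point of $C$ at which $C$ has non-vertical tangent; then $y_{0}$ is a simple root of $F(x_{0},y)$, and the implicit function theorem produces a unique formal series $\phi\in L[[x-x_{0}]]$ with $\phi(x_{0})=y_{0}$ and $F(x,\phi(x))\equiv 0$, giving the unique branch of $C$ at $(x_{0},y_{0})$. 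Parametrising $\gamma$ by a local parameter $s$, both $\eta_{j_{0}}|_{\gamma}(s)$ and $\eta_{j_{1}}|_{\gamma}(s)$ must equal $\phi(x|_{\gamma}(s))$, since each provides a parametrisation of this unique branch through $\gamma$ and the reparametrisation on the $x$-coordinate is the same for both. Hence $\eta_{j_{0}}-\eta_{j_{1}}$ vanishes identically on the formal branch $\gamma$ and therefore as a rational function on the irreducible curve $C'$, contradicting the distinctness of the power series from Lemma 2.5. Thus $(x_{0},y_{0})$ must be either a node of $C$ or a smooth vertical-tangent point, so both $pr_{1}(\eta_{j_{0}}(\gamma))$ and $pr_{1}(\eta_{j_{1}}(\gamma))$ lie in $\wp$, i.e.\ $\gamma\in\Gamma_{j_{0}}\cap\Gamma_{j_{1}}$.

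The reverse direction is essentially Lemma 1.1(iv): if $\gamma\in\Gamma_{j_{0}}\cap\Gamma_{j_{1}}$, the two projected branches $pr_{1}(\eta_{j_{k}}(\gamma))$ lie in $\wp$ and share the $x$-coordinate $x_{0}$ of their centres, so Lemma 1.1(iv) forces them to have the same centre, and $val_{\gamma}(\eta_{j_{0}})=val_{\gamma}(\eta_{j_{1}})$. For the second statement, suppose for contradiction that three distinct indices $j_{0},j_{1},j_{2}$ satisfy $val_{\gamma}(\eta_{j_{k}})=y_{0}$. By the first part $(x_{0},y_{0})$ is a node or smooth vertical-tangent point, and in either case $y_{0}$ has multiplicity exactly $2$ as a root of $F(x_{0},y)$: at a node each of the two transverse branches meets the line $x=x_{0}$ once, while at a smooth vertical-tangent point this is the $2$-fold contact of Lemma 1.1(ii). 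The multiset identity above then shows that $y_{0}$ appears exactly twice among $\{\eta_{j}(x_{0},z_{0})\}_{j=1}^{m}$, so at most two indices can satisfy $\eta_{j}(x_{0},z_{0})=y_{0}$, contradicting the assumption.

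The main obstacle is the forward implication of the first statement: one must argue that at a smooth non-vertical-tangent point of $C$ the two distinct flashes cannot produce the same $val_{\gamma}$. The delicate feature is that the argument \emph{fails} precisely at vertical tangent points, where the branch of $C$ is parametrised by $y$ rather than $x$ and coinciding $\eta_{j}$'s do genuinely occur (as the ellipse computation in Remark 2.6 illustrates); it is therefore essential that the argument invoke the parametrisation of the branch by $x$, which is available exactly outside $\wp$. Everything else is then a consequence of the root-multiplicity structure of $F(x_{0},y)$ at the special points permitted by Lemma 1.1.
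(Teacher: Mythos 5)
Your proof is correct, and its skeleton (reduce to a branch in finite position by birational invariance, then translate equality of the $val_{\gamma}(\eta_{j})$ into a statement about the intersection multiplicity of $C$ with the vertical line $x=x_{0}$, and finish with Lemma 1.1) is the same as the paper's. The mechanism for the key counting step is genuinely different, though. The paper argues from below: since $\eta_{j_{0}}\neq\eta_{j_{1}}$ as rational functions on the irreducible $C'$, one can pick $(x,z)\in\mathcal{V}_{(x_{0},z_{0})}$ with $\eta_{j_{0}}(x,z)\neq\eta_{j_{1}}(x,z)$, producing two (or, for the triple, three) distinct infinitely near points of $C$ over the same $x$-value and hence $I_{(x_{0},y_{0})}(C,x=x_{0})\geq 2$ (resp.\ $\geq 3$), which Lemma 1.1(ii),(iv) then forbids unless the projected branches lie in $\wp$. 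You argue from above: taking $val_{\gamma}$ of the identity $F(x,y)=\prod_{j}(y-\eta_{j})$ in $L(C')[y]$ identifies the multiset $\{val_{\gamma}(\eta_{j})\}$ with the roots of $F(x_{0},y)$ counted with multiplicity, and since that multiplicity is exactly $2$ at a node or $2$-fold vertical contact point, at most two indices can coincide. Your version buys a slightly sharper statement (exactly two, not just at most two, of the $\eta_{j}$ share each such value) and would in fact have delivered the forward implication of the first claim directly, without the separate implicit-function-theorem detour you take; the paper's version stays entirely inside its infinitesimal calculus and needs no appeal to the global factorisation. The reverse implication via Lemma 1.1(iv) is identical in both.

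Two loose ends worth tightening. First, your multiset identity presupposes that every $val_{\gamma}(\eta_{j})$ is finite; the paper disposes of the infinite case explicitly (a pole would place $[0:1:0]$ on $C$, contradicting the presentation of Lemma 1.1), whereas in your setup one should note that the $\eta_{j}$ are roots of a polynomial monic in $y$ with coefficients in $L[x]$, hence integral over the valuation ring of $\gamma$ once $x$ is finite there, so no pole can occur. Second, the birational invariance of $val_{\gamma}$ that you invoke is Lemma 3.27 (together with Remarks 3.29 and Lemma 3.10 for moving a geometric branch into finite position), not Lemma 3.28, which is a definition.
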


\begin{proof}
By Lemma 3.10 and Remarks 3.29, we can assume that $\gamma$ is in finite position. If $val_{\gamma}(\eta_{j})=\infty$, then the point $[0:1:0:0]$ belongs to the flash $C_{j}$, hence, the point $[0:1:0]$ belongs to the curve $C$. This contradicts the presentation of $C$ in Lemma 1.1(iii). Suppose $\gamma$ is centred at $(x_{0},z_{0})$, with $val_{\gamma}(\eta_{j_{0}})=val_{\gamma}(\eta_{j_{1}})=y_{0}<\infty$. Then the point $(x_{0},y_{0},z_{0})$ belongs to the intersection $C_{j_{0}}\cap C_{j_{1}}$, and projects to a point $(x_{0},y_{0})$ on $C$. We can choose $(x,z)\in{\mathcal V}_{(x_{0},z_{0})}$, such that $\eta_{j_{0}}(x,z)\neq\eta_{j_{1}}(x,z)$, and, therefore, find distinct points $\{y,y'\}$, with $(x,y,z)\in ({\mathcal V}_{(x_{0},y_{0},z_{0})}\cap C_{j_{0}})$ and $(x,y',z)\in ({\mathcal V}_{(x_{0},y_{0},z_{0})}\cap C_{j_{1}})$. The distinct projected points $(x,y)$ and $(x,y')$ belong to $({\mathcal V}_{(x_{0},y_{0})}\cap C)$. It follows that $I_{(x_{0},y_{0})}(C,x=x_{0})\geq 2$. By the presentation of $C$ in Lemma 1.1, this can only occur if the branch(es), centred at $(x_{0},y_{0})$, belong to $\wp$. It follows, from Definition 3.26, that the branch $\gamma\in (\Gamma_{j_{0}}\cap \Gamma_{j_{1}})$. Conversely, suppose that $\gamma\in(\Gamma_{j_{0}}\cap \Gamma_{j_{1}})$, so the projected branches $pr_{1}(\eta_{j_{0}}(\gamma))$ and $pr_{1}(\eta_{j_{1}}(\gamma))$, $(*)$, belongs to $\wp$. Supposing that $val_{\gamma}(\eta_{j_{0}})\neq val_{\gamma}(\eta_{j_{1}})$, then the projections, $(*)$, are centred at distinct points $(x_{0},y_{0})$, $(x_{0},y_{1})$ of $C$. This contradicts Lemma 1.1(iv). Finally, suppose that $val_{\gamma}(\eta_{j_{0}})=val_{\gamma}(\eta_{j_{1}})=val_{\gamma}(\eta_{j_{2}})=y_{0}<\infty$. By a similar argument to the above, we would find a point $(x_{0},y_{0})\in C$, with $I_{(x_{0},y_{0})}(C,x=x_{0})\geq 3$. This contradicts Lemma 1.1(ii).

\end{proof}

As a consequence of the preceding lemma, we obtain;\\

\begin{lemma}
Let $C'$ satisfy the conditions of Lemma 3.30, then we have that;\\

$\Gamma=\bigsqcup_{1\leq j_{0}<j_{1}\leq m}(\Gamma_{j_{0}}\cap \Gamma_{j_{1}})$\\

is a disjoint union of pairwise intersections.

\end{lemma}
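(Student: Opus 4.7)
The plan is to verify two inclusions. First, every branch $\gamma\in\Gamma$ lifts a branch $\rho\in\wp$, and by Lemma 1.1 such $\rho$ is in finite position, so by finiteness of the cover $pr_{1}:C_{j}\setminus\{Q_{1}\}\to C$ its lift $\gamma$ is also in finite position. In particular $\gamma$ is geometric (Definition 3.28), hence Lemma 3.30 applies to every $\gamma\in\Gamma$.

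For the pairwise disjointness, suppose a branch $\gamma$ lay in both $\Gamma_{j_{0}}\cap\Gamma_{j_{1}}$ and $\Gamma_{j_{0}'}\cap\Gamma_{j_{1}'}$ for two distinct unordered pairs. Then $\gamma$ would belong to at least three of the sets $\Gamma_{j}$, and the first assertion of Lemma 3.30 would force the three corresponding values $val_{\gamma}(\eta_{j})$ all to coincide, contradicting the second assertion of that same lemma.

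For the covering inclusion, fix $\gamma\in\Gamma_{j_{0}}$, set $\rho=pr_{1}\circ\eta_{j_{0}}(\gamma)\in\wp$, centred at a finite point $(x_{0},y_{0})$, and let $(x_{0},z_{0})\in C'$ denote the centre of $\gamma$, so that $val_{\gamma}(\eta_{j_{0}})=\eta_{j_{0}}(x_{0},z_{0})=y_{0}$. The key step is to exploit the factorisation of Lemma 2.5 read as an identity
\[F(x,y)=\prod_{i=1}^{m}(y-\eta_{i})\qquad\text{in }L(C')[y];\]
this is legitimate because, by Lemma 3.23, the Galois group $G=\mathrm{Gal}(L(C')/L(x))$ permutes the $\eta_{i}$, so their elementary symmetric functions are $G$-invariant, lie in $L(x)$, and agree with the coefficients of $F$ after comparison of power-series expansions at the distinguished point. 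Specialising at $(x_{0},z_{0})$, which is valid because every $\eta_{i}$ is regular at this finite centre (the proof of Lemma 3.30 establishes $val_{\gamma}(\eta_{i})<\infty$ for a geometric $\gamma$), yields
\[F(x_{0},y)=\prod_{i=1}^{m}\bigl(y-\eta_{i}(x_{0},z_{0})\bigr).\]
Hence the multiplicity of $y_{0}$ as a root of $F(x_{0},y)$ counts exactly those indices $i$ for which $val_{\gamma}(\eta_{i})=y_{0}$. On the other hand, this multiplicity equals $I_{(x_{0},y_{0})}(C,x=x_{0})$, which, for $\rho\in\wp$, is precisely $2$: for a vertical tangent branch by the $2$-fold contact of Lemma 1.1(ii); for a node branch because at an ordinary double point both branches are transverse to $x=x_{0}$ (vertical tangents being confined to smooth points by Lemma 1.1(ii)), with Lemma 1.1(iv) ruling out any further contribution along $x=x_{0}$. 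Therefore there is exactly one further index $j_{1}\neq j_{0}$ with $val_{\gamma}(\eta_{j_{1}})=y_{0}=val_{\gamma}(\eta_{j_{0}})$, and the first assertion of Lemma 3.30 then places $\gamma\in\Gamma_{j_{0}}\cap\Gamma_{j_{1}}$, completing the argument.

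The step I expect to be the principal obstacle is making the descent of the Newton factorisation of Lemma 2.5 from the formal ring $L[[x]][y]$ to the rational function ring $L(C')[y]$, and then justifying specialisation at $(x_{0},z_{0})$; both should follow from the Galois-theoretic structure supplied by Lemma 3.23 together with the finiteness statement $val_{\gamma}(\eta_{i})<\infty$ for geometric $\gamma$ extracted from the proof of Lemma 3.30.
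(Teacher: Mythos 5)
Your disjointness argument is exactly the paper's: a branch lying in two distinct pairwise intersections would lie in three of the $\Gamma_{j}$, forcing a distinct triple of indices with equal values of $val_{\gamma}$ by the first assertion of Lemma 3.30, against the second assertion. Your covering argument, however, is a genuinely different route. The paper fixes the centre $(x_{0},z_{0})$, supposes for contradiction that $val_{\gamma}(\eta_{j})\neq val_{\gamma}(\eta_{j_{0}})$ for all $j\neq j_{0}$, and splits into cases: if those values are pairwise distinct one obtains $m$ separate points of $C$ on the axis $x=x_{0}$, which Bezout forbids because the point under $\rho_{0}\in\wp$ already absorbs intersection multiplicity $2$; if two of them coincide, Lemma 3.30 manufactures a second branch of $\wp$ on the same axis, contradicting Lemma 1.1(iv). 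You instead specialise the Newton factorisation $F(x,y)=\prod_{i}(y-\eta_{i})$ along the branch and read off that the number of indices with $val_{\gamma}(\eta_{i})=y_{0}$ equals the multiplicity of $y_{0}$ as a root of $F(x_{0},y)$, namely $I_{(x_{0},y_{0})}(C,x=x_{0})=2$. This is cleaner: it yields the partner index $j_{1}$ in one stroke, dispenses with the case split and with Lemma 1.1(iv) at this point, and the two justifications you supply (the $\eta_{i}$ lie in $R(V')\hookrightarrow L[[x]]$ so the identity descends to $L(C')[y]$, and all $val_{\gamma}(\eta_{i})$ are finite for geometric $\gamma$ by the opening of the proof of Lemma 3.30) are the right ones.

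The genuine gap is in your first step. The inference ``$\rho$ is in finite position and the cover $pr_{1}$ is finite, hence the lift $\gamma$ is in finite position'' is a non sequitur: a quasi-finite cover can carry branches centred at infinity onto branches in finite position, and the branch map of footnote 8 explicitly includes branches of $C_{j}$ centred at $Q_{1}$, whose images under $pr_{1}$ need not lie on $l_{\infty}$. Two separate facts have to be established before your multiplicity count can run. First, that $\gamma$ is geometric: the paper proves this by centring a putative perpendicular branch at $[1:0:0]$ and computing, with the infinitesimal calculation of footnote 13, that $pr_{1}(\eta_{j_{0}}(\gamma))$ is then centred on $l_{\infty}$, which is incompatible with $\rho\in\wp$ by Lemma 1.1(i),(ii); finiteness of the fibres gives you none of this. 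Second, that $\gamma$ may actually be placed in finite position: a geometric branch can still be a hyperbolic branch centred at $Q_{1}$, so one must first apply Lemma 3.10 together with the birational invariance of $\Gamma_{j}$ and of $val_{\gamma}(\eta_{j})$ from Lemma 3.27 before specialising at a finite centre $(x_{0},z_{0})$. Both repairs are available in the paper, but neither follows from the finiteness of the cover as you assert.
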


\begin{proof}
Suppose that $\gamma\in\Gamma$, therefore, by Definition 3.26, belongs to $\Gamma_{j_{0}}$, for some $1\leq j_{0}\leq m$. We claim that $\gamma$ is geometric. For suppose that $\gamma$ is perpendicular, then, by Lemma 3.10, it can be centred at $[1:0:0]$. One can verify, that the projected branch $pr_{1}(\eta_{j_{0}}(\gamma))$ is centred on the line $l_{\infty}$, (\footnote{The case that $val_{\gamma}(\eta_{j_{0}})$ is infinite, being the more difficult calculation. We approximate the point $[1:0:0]$, by $[{1\over\epsilon}:1:1]$ and obtain the approximation $[{1\over\epsilon}:{1\over\epsilon^{j}}u(\epsilon):1:1]$ for the centre of the branch $\eta_{j_{0}}(\gamma)$. This specialises to $[1:u(0):0:0]$ and projects to $[1:u(0):0]$, on $l_{\infty}$. See similar calculations, previously in the paper.}). However, $(\wp\cap l_{\infty})=\emptyset$, by Lemma 1.1(i),(ii). By Lemma 3.10 and Remarks 3.29, the branch $\gamma$ can be centred in finite position, $(x_{0},z_{0})$, and the projected branch $pr_{1}(\eta_{j_{0}}(\gamma))$ is centred, in finite position, along the axis $x=x_{0}$. Suppose that the values $val_{\gamma}(\eta_{j})$, $(*)$, are distinct from $val_{\gamma}(\eta_{j_{0}})$, for $j\neq j_{0}$. If these values, $(*)$, are, themselves, different, then, using the observation that the values are finite, we clearly obtain $m$ seperate points of $C$, along the axis $x=x_{0}$. However, the projected branch $\rho_{0}=pr_{1}(\eta_{j_{0}}(\gamma))$ belongs to $\wp$, and, therefore, $I_{\rho_{0}}(C,x=x_{0})=2$. It follows, by Bezout's theorem, that there can only exist $m-1$ different points along the axis $x=x_{0}$. It follows that there must exist $\{\eta_{j_{1}},\eta_{j_{2}}\}$, with $(j_{0},j_{1},j_{2})$ a distinct triple, such that $val_{\gamma}(\eta_{j_{1}})=val_{\gamma}(\eta_{j_{2}})$. Now, by Lemma 3.30, the projected branch $\rho_{1}=pr_{1}(\eta_{j_{1}}(\gamma))$ belongs to $\wp$. It follows that we obtain two different branches $\{\rho_{0},\rho_{1}\}\subset\wp$, centred at seperate points along the axis $x=x_{0}$. This contradicts Lemma 1.1(iv). Hence, there must exist $j_{0}\neq j_{1}$, with $val_{\gamma}(\eta_{j_{0}})=val_{\gamma}(\eta_{j_{1}})$. In particular, $pr_{1}(\eta_{j_{1}}(\gamma))$ belongs to $\wp$. Therefore, $\gamma\in(\Gamma_{j_{0}}\cap \Gamma_{j_{1}})$. If the intersection of two of the sets in $\bigcup_{1\leq j_{0}<j_{1}\leq m}(\Gamma_{j_{0}}\cap \Gamma_{j_{1}})$, $(**)$, were non-empty, then, using the first part of Lemma 3.30, we would obtain a distinct triple $(j_{0},j_{1},j_{2})$, with $val_{\gamma}(\eta_{j_{0}})=val_{\gamma}(\eta_{j_{1}})=val_{\gamma}(\eta_{j_{2}})$. This contradicts the second part of Lemma 3.30. Hence, the union $(**)$ is disjoint.
\end{proof}

We make the following definition;\\

\begin{defn}
Let $\gamma$ be a blue branch of $C'$, in finite position, centred at $(x_{0},z_{0})$. We define a function $h\in L(C')$ to be symmetric at $\gamma$, if;\\

For generic $x_{1}\in{\mathcal V}_{x_{0}}$, and $\{(x_{1},y_{1}),\ldots,(x_{1},y_{t})\}=(\gamma\cap x=x_{1})$, we have that;\\

 $h(x_{1},y_{1})=\ldots=h(x_{1},y_{t})$, $(*)$.

\end{defn}

\begin{rmk}
This is a good definition. Without loss of generality, assume that $val_{\gamma}(h)=y_{0}$, and let $\gamma'$ be the corresponding branch, centred at $(x_{0},y_{0},z_{0})$, of the image curve $C'_{h}\subset P^{3}$, with $h:C'\leftrightsquigarrow C'_{h}$, and $\gamma''$ the corresponding branch, centred at $(x_{0},y_{0})$, of the projected curve $pr_{1}(C'_{h})$, with $pr_{1}:C'_{h}\rightsquigarrow pr_{1}(C'_{h})$. We are then able to observe that\\
 $Card(\gamma''\cap (x=x_{1}))=1$, witnessed by the projections;\\

 $(x_{1},h(x_{1},y_{1}))=\ldots=(x_{1},h(x_{1},y_{t}))$\\

 It follows that $I_{\gamma''}(pr_{1}(C'_{h}),x=x_{0})=1$, by definitions in \cite{depiro6}. In particular, the branch $\gamma''$ is non-singular, and cuts the axis $x=x_{0}$ transversely. By reversing the argument, we deduce easily that, for \emph{any} $x_{1}\in{\mathcal V}_{x_{0}}$, the property $(*)$ of Definition 3.32 holds.

\end{rmk}
We observe the following symmetry property;\\

\begin{lemma}

Let $C'$ satisfy the above conditions and let $\gamma$ be a blue branch of $C'$ in finite position. Then, it is not the case that every function $\{\eta_{1}(x),\ldots,\eta_{m}(x)\}$ is symmetric at $\gamma$.
\end{lemma}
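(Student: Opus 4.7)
The plan is to argue by contradiction, exploiting the fact that $L(C')$ is generated over $L(x)$ by $\eta_1,\ldots,\eta_m$ (Lemma 3.3(iii), reaffirmed in Lemma 3.21(iii)(c)) together with the observation that symmetry at $\gamma$ is preserved by field operations. Suppose for contradiction that every $\eta_j$ is symmetric at the blue branch $\gamma$, centred at $(x_0,z_0)$ in finite position.

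First I would verify that, for a fixed generic $x_1\in\mathcal{V}_{x_0}$ (chosen outside the pole loci of whichever finitely many rational functions we are manipulating), the set of $h\in L(C')$ that are symmetric at $\gamma$ is a subfield of $L(C')$. This is immediate from the fact that if $h_1,h_2$ take the constant values $c_1,c_2$ respectively on $\gamma\cap(x=x_1)$, then $h_1+h_2,\ h_1 h_2,\ h_1^{-1}$ take the constant values $c_1+c_2,\ c_1 c_2,\ c_1^{-1}$ there, and each of these properties holds generically in $x_1$ by a union-of-generic-sets argument.

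Next, observe that $L(x)$ is contained in this subfield of symmetric functions: the coordinate $x$ takes the single value $x_1$ on every point of $\gamma\cap(x=x_1)$, so $x$, and hence every element of $L(x)$, is symmetric at $\gamma$. Combining with the hypothesis that each $\eta_j$ is symmetric at $\gamma$ and using Lemma 3.3(iii), the subfield of symmetric functions contains $L(x,\eta_1,\ldots,\eta_m)=L(C')$, forcing \emph{every} rational function on $C'$ to be symmetric at $\gamma$.

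Finally, I would apply this to the affine coordinate $z\in L(C')$ itself. Symmetry of $z$ at $\gamma$ would mean $z_1=z_2=\cdots=z_t$ where $\{(x_1,z_1),\ldots,(x_1,z_t)\}=\gamma\cap(x=x_1)$ for generic $x_1\in\mathcal{V}_{x_0}$. But since $\gamma$ is blue, $t=I_\gamma(C',x=x_0)\geq 2$ by Remark 3.12(c) (and its reformulation in Definition 3.11), so a generic $x_1$ in a punctured neighborhood of $x_0$ has at least two distinct preimages on $\gamma$, whose $z$-coordinates are distinct (they share the same $x$-coordinate). This contradicts symmetry of $z$, completing the argument.

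The only mild obstacle is the bookkeeping in the first step, namely ensuring that the generic choice of $x_1$ can be made to simultaneously avoid the poles and indeterminacies of all the rational functions we need to evaluate; this is routine, since only finitely many rational expressions are involved and each has a pole locus of codimension at least one in $A^1$. Everything else reduces to the observation that $z\notin L(x,\eta_1,\ldots,\eta_m)^{\mathrm{sym}}$ when $\gamma$ is blue, which is essentially a repackaging of the characterisation of silver versus blue branches via transversality to the vertical axis.
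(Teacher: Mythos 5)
Your proof is correct, and it takes a genuinely different route from the paper's. The paper argues via the Galois action: invoking Lemma 3.23(i), it produces a non-identity automorphism $\theta_{h}$ of $C'$ over $L(x)$ carrying one point of the fibre $\gamma\cap(x=x_{1})$ to another; this automorphism permutes the roots non-trivially (say $\eta_{1}\mapsto\eta_{2}$), preserves the branch $\gamma$, and symmetry then forces $\eta_{2}=\eta_{1}$ on $\gamma$ and hence identically, a contradiction. You instead observe that the functions symmetric at $\gamma$ form a subfield of $L(C')$ containing $L(x)$ and, by hypothesis, all the $\eta_{j}$, hence all of $L(C')=L(x,\eta_{1},\ldots,\eta_{m})$ by Lemma 3.3(iii); applying this to the coordinate $z$, which visibly separates the $t\geq 2$ distinct points of $\gamma\cap(x=x_{1})$, gives the contradiction directly. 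Your argument is more elementary in that it bypasses the transitivity of the Galois action entirely and leans only on the generation statement 3.3(iii) (which the paper's proof also uses, but only to see that the induced permutation of the roots is non-trivial); the paper's version has the advantage that its Galois-theoretic template is what gets transported to the degenerate fibres in Lemma 4.10 via uniformity of the $G$-action (Lemma 4.5), whereas your subfield argument would require re-establishing that the reduced functions still generate the relevant function field there. The two points you flag as bookkeeping --- choosing $x_{1}$ generic in $\mathcal{V}_{x_{0}}$ avoiding the finitely many poles and zero loci involved, and the fact that a blue branch in finite position genuinely has $I_{\gamma}(C',x=x_{0})\geq 2$ on $C'$ itself (which follows from Remark 3.12(b) since a $pr$-preserving birational map cannot move a finite-position branch off its vertical axis, so the witness in Definition 3.11 must be clause (i) with $a=x_{0}$) --- are indeed routine and consistent with how the paper handles the same issues.
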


\begin{proof}
Suppose, for contradiction, that each function $\{\eta_{1}(x),\ldots,\eta_{m}(x)\}$ is symmetric at $\gamma$. Let $\gamma$ be centred at $(x_{0},y_{0})$, and choose $x_{1}\in{\mathcal V}_{x_{0}}$ generically, with $\{(x_{1},y_{1}),\ldots,(x_{1},y_{t})\}=(\gamma\cap (x=x_{1}))$. As the branch $\gamma$ is blue, we have that $t\geq 2$. By Lemma 3.23(i), we can find a birational morphism $\theta_{g}:C'\leftrightsquigarrow C'$, with $\theta_{g}(x_{1},y_{1})=(x_{1},y_{2})$. $\theta_{g}^{*}$ defines an isomorphism of the function field $L(C')/L(x)$, in particular it permutes the set of roots $\{\eta_{1}(x),\ldots,\eta_{m}(x)\}$. The permutation is non-trivial, as $L(C')$ is generated by these roots, over $L(x)$, and $\theta_{g}^{*}\neq Id$. Without loss of generality, we can, therefore, assume that $\theta_{g}^{*}(\eta_{1})=\eta_{2}$. As both $(x_{1},y_{1})$ and $\theta_{g}(x_{1},y_{1})$ belong to $\gamma$, we must have that $\gamma$ is invariant under the action of $\theta_{g}$. In particular, for \emph{any} $x_{1}\in{\mathcal V}_{x_{0}}$, with $x_{1}\neq x_{0}$, $\theta_{g}$ defines a non trivial permutation of the fibre $\gamma\cap(x=x_{1})$ Now, for \emph{any} point $(x_{1},y_{1})$ on the branch $\gamma$, distinct from $(x_{0},y_{0})$, using the assumption of symmetry, we have that;\\

$\eta_{2}(x_{1},y_{1})=\theta_{g}^{*}(\eta_{1})(x_{1},y_{1})=\eta_{1}(\theta_{g}(x_{1},y_{1}))=\eta_{1}(x_{1},y_{1})$\\

This implies that the functions $\eta_{1}$ and $\eta_{2}$ are identical, which is not the case. Hence, one of the functions $\{\eta_{1}(x),\ldots,\eta_{m}(x)\}$ is not symmetric at $\gamma$.

\end{proof}

\begin{rmk}
Definition 3.32 can be easily extended to include the case of a blue branch centred at $[1:0:0]$. Namely, we define a function $h\in L(C')$ to be symmetric at $\gamma$, if $(*)$ holds, with the requirement that $x_{1}\in{\mathcal V}_{\infty}$ is generic, (in which case the intersections $\gamma\cap x=x_{1}$ are all in finite position). It is a straightforward exercise, left to the reader, to check that Remarks 3.33, and Lemma 3.34 hold for this definition.

\end{rmk}
We can now specify the positions of the blue and silver branches of $C'$;\\

\begin{lemma}
Suppose that the $r$ vertical tangents of $C$ are situated along the axes $\{x=a_{1},\ldots,x=a_{r}\}$, in the coordinate system $(x,y)$, then the geometric blue branches of $C'$, up to birationality, see Definitions 3.11 and 3.28, are situated exactly along the same axes, in the coordinate system $(x,z)$, and the geometric silver branches of $C'$, up to birationality, are situated exactly along the complementary axes, that is axes of the form $x=b$, for $b$ distinct from $\{a_{1},\ldots,a_{r}\}$. In particular, the geometric branches, which, up to birationality, are situated along the line $x=0$ are silver.
\end{lemma}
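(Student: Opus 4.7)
The plan is to exploit Lemma 3.13 to reduce the problem to classifying vertical axes $x=a$ by a single representative branch, and Lemmas 3.5, 3.6 and 3.10 to move every geometric branch to finite position along the same axis through a transformation in the class of Lemma 3.4. After this reduction, it suffices to show that a representative finite-position branch $\gamma$ at $(a,z_{0})$ is blue precisely when $a\in\{a_{1},\ldots,a_{r}\}$.

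For the direction $a=a_{i}\Rightarrow\gamma$ blue, I first invoke Lemma 1.1(ii) and (iv) to see that $C\cap(x=a_{i})$ consists of $m-2$ transverse intersections with distinct $y$-coordinates together with a unique branch $\rho\in\wp$ at some $(a_{i},b_{i})$ satisfying $I_{\rho}(C,x=a_{i})=2$. The multiset $\{val_{\gamma}(\eta_{j}):1\leq j\leq m\}$ records these same $y$-coordinates with multiplicities, so there is exactly one pair $j_{0}\neq j_{1}$ with $val_{\gamma}(\eta_{j_{0}})=val_{\gamma}(\eta_{j_{1}})=b_{i}$. Lemma 3.30 then forces $\gamma\in\Gamma_{j_{0}}\cap\Gamma_{j_{1}}$, so that $pr_{1}\circ\eta_{j_{0}}$ sends $\gamma$ onto $\rho$. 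Since this composite preserves the $x$-coordinate, the standard ramification identity applied to the pullback of $(x-a_{i})$ yields $I_{\gamma}(C',x=a_{i})=e\cdot I_{\rho}(C,x=a_{i})\geq 2$, so $\gamma$ is blue.

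For the converse direction, Lemma 3.34 supplies some $\eta_{j_{0}}$ which is not symmetric at $\gamma$, producing distinct points $(x_{1},z_{k}),(x_{1},z_{l})\in\gamma\cap(x=x_{1})$ for generic $x_{1}\in\mathcal{V}_{a}$ with $\eta_{j_{0}}(x_{1},z_{k})\neq\eta_{j_{0}}(x_{1},z_{l})$; projecting through $pr_{1}\circ\eta_{j_{0}}$ yields two distinct points $p_{k},p_{l}$ of $C$ above $x_{1}$, both specialising to $(a,b)$ with $b=val_{\gamma}(\eta_{j_{0}})$. Let $\sigma$ denote the image branch of $C$ at $(a,b)$; the induced map of branches $\gamma\to\sigma$ has some local degree $d\geq 1$, giving the identity $I_{\gamma}(C',x=a)=d\cdot I_{\sigma}(C,x=a)$. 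The main obstacle I anticipate is ruling out the scenario in which $(a,b)$ is a node of $C$ rather than a vertical tangent: if it were a node then $\sigma$ would have to coincide with one of the two transverse branches there, each with $I=1$, which would force $d=I_{\gamma}$ and therefore $p_{k}=p_{l}$, a contradiction. Thus $I_{\sigma}\geq 2$, so $\sigma$ has vertical tangent at $(a,b)$ and $a\in\{a_{1},\ldots,a_{r}\}$. The concluding ``in particular'' statement is then immediate from Lemma 1.1(iii), which places $0$ outside $\{a_{1},\ldots,a_{r}\}$.
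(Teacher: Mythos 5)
Your proof is correct and follows essentially the same route as the paper: the converse direction is the paper's argument (Lemma 3.34 supplies a non-symmetric $\eta_{j_{0}}$, whose two distinct values over $x_{1}\in{\mathcal V}_{a}$ force $I_{\sigma}(C,x=a)\geq 2$ on the image branch, which for a nodal curve means a vertical tangent), while the forward direction fleshes out what the paper dismisses as ``a simple calculation'' using the same Bezout/pigeonhole count that appears in Lemma 3.31 together with Lemma 3.30 and a ramification inequality. Your reductions via Lemmas 3.13, 3.5, 3.6 and 3.10, and the appeal to Lemma 1.1(iii) for the final clause, also match the paper's proof.
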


\begin{proof}
Suppose, for contradiction, that a blue branch $\gamma$, in finite position, is situated along a complementary axis $x=b$. By Lemma 3.34, we can assume that the function $\eta_{1}(x)$ is not symmetric at $\gamma$. As we saw in Lemma 3.30, $val_{\gamma}(\eta_{1})=d<\infty$. We, therefore, obtain, by the standard method, a corresponding branch $\gamma'$ of $C$, centred at $(b,d)$. As $\eta_{1}$ is not symmetric, it is a straightforward calculation, to show that $I_{\gamma'}(C,x=b)\geq 2$. This implies that $\gamma'$ is a vertical tangent of $C$, contradicting the supposition of the lemma. Any geometric blue branch can be centred in finite position, using Lemma 3.10 and Definition 3.11. If $\gamma'$ defines a vertical tangent of $C$, centred along $x=a_{j}$, for a given $1\leq j\leq r$, then, using Lemma 3.30, we can find a corresponding branch $\gamma$, up to birationality, in finite position, along the same axis $x=a_{j}$. It is a simple calculation, to show that $\gamma$ must be a blue branch. This show the first part of the lemma. The second part follows immediately from Lemma 3.13. The final part follows from Lemma 1.1(iii).
\end{proof}

\begin{lemma}
All the perpendicular branches of $C'$ are silver.
\end{lemma}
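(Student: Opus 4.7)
The plan is to mimic the strategy of Lemma 3.36, with Lemma 1.1(i) now playing the role that the analysis of vertical tangents played there. Concretely, I would suppose for contradiction that some perpendicular branch of $C'$ is blue. By the uniformity statements of Lemma 3.13(ii) and (iv), it suffices to derive a contradiction from the existence of a single blue perpendicular branch $\gamma$. Since a perpendicular branch is either genuinely infinite or a parabolic branch at $Q_{1}$, Lemmas 3.6, 3.7 and 3.10 allow us, via a birational map $\theta:C'\leftrightsquigarrow C_{1}$ preserving the conditions of Lemma 3.3, to assume that $\theta(\gamma)$ is centred at $[1:0:0]$ and satisfies $I_{\theta(\gamma)}(C_{1},l_{\infty})\geq 2$, this being the second clause of Definition 3.11.

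Next I would invoke Lemma 3.34 in the extended form of Remarks 3.35: since blueness rules out all of $\eta_{1},\dots,\eta_{m}$ being symmetric at $\theta(\gamma)$, there is some $\eta_{j_{0}}$ which is not symmetric. Parametrising $\theta(\gamma)$ near $[1:0:0]$ by $x=1/\epsilon$ with $z=z(\epsilon)$ for an appropriate Puiseux series, and examining the $t\geq 2$ sheets of $\theta(\gamma)\cap(x=x_{1})$ for generic $x_{1}\in\mathcal{V}_{\infty}$, non-symmetry produces at least two distinct values of $\eta_{j_{0}}$ along these sheets. Lifting through $\eta_{j_{0}}$ and projecting by $pr_{1}$, an infinitesimal computation in homogeneous coordinates, parallel to that of Remarks 3.33 but carried out at infinity, shows that the projected branch $\gamma'=pr_{1}(\eta_{j_{0}}(\theta(\gamma)))$ of $C$ satisfies $I_{\gamma'}(C,l_{\infty})\geq 2$.

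This contradicts Lemma 1.1(i), which guarantees that $l_{\infty}$ cuts $C$ transversely in $m$ distinct non-singular points, so every branch of $C$ centred on $l_{\infty}$ meets $l_{\infty}$ with intersection multiplicity exactly $1$. Hence no perpendicular branch of $C'$ can be blue, and by the uniformity of Lemma 3.13 all perpendicular branches are silver.

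The main technical obstacle is the careful translation of ``$\eta_{j_{0}}$ is not symmetric at $\theta(\gamma)$'' into ``$I_{\gamma'}(C,l_{\infty})\geq 2$'' in projective coordinates around $[1:0:0]$. One has to repeat the kind of scaling computation that appears in the proofs of Lemmas 3.6 and 3.7, using the homogenisation $[X:Y:W]$ with $W$ of order $\epsilon^{i+1}$ and comparing the specialisations of the distinct sheet-values of $\eta_{j_{0}}$; symmetry precisely encodes that these specialisations collapse to a single branch of $C$ transverse to $l_{\infty}$, while non-symmetry forces the projected branch to be tangent to, or multiply intersect, $l_{\infty}$. Once this infinitesimal bookkeeping is in place the contradiction with Lemma 1.1(i) is immediate.
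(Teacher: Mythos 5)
Your proposal is correct and follows essentially the same route as the paper: reduce to a blue perpendicular branch centred at $[1:0:0]$ with $I_{\gamma}(C_{1},l_{\infty})\geq 2$, apply Lemma 3.34 via Remarks 3.35 to extract a non-symmetric $\eta_{j_{0}}$, project two distinct sheet-values to get $I_{\gamma'}(C,l_{\infty})\geq 2$, and contradict Lemma 1.1(i). The appeal to Lemma 3.13 is harmless but unnecessary, since a single offending branch already yields the contradiction.
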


\begin{proof}
Suppose, for contradiction, that $\gamma$ is a perpendicular blue branch. By Definition 3.28, Lemma 3.10 and Definition 3.11, we can assume that $\gamma$ is centred at $[1:0:0]$. As $I_{\gamma}(C',l_{\infty})\geq 2$, by Definition 3.11, and, without loss of generality, the function $\eta_{1}(x)$ is not symmetric at $\gamma$, by Remarks 3.35, we can find distinct points $[{1\over\epsilon}:z_{1}:1]$ and $[{1\over\epsilon}:z_{2}:1]$, belonging to $\gamma\cap x={1\over\epsilon}$, with $\eta_{1}({1\over\epsilon},z_{1})\neq \eta_{1}({1\over\epsilon},z_{2})$. The distinct points $\{[{1\over\epsilon}:\eta_{1}({1\over\epsilon},z_{1}),:1], [{1\over\epsilon}:\eta_{1}({1\over\epsilon},z_{1}),:1]\}$ belong to the projected image branch $\gamma'$ of $C'$, centred along the line $l_{\infty}$. It follows that $I_{\gamma'}(C',l_{\infty})\geq 2$. This contradicts Lemma 1.1(i).
\end{proof}

We can use the above specification to simplify the analysis of the nodes of $C$. We make the following refinement of terminology;\\

\begin{defn}
Let $\wp_{\nu}$ enumerate the branches of $C$, satisfying Definition 3.25(ii), $\Gamma_{j,\nu}$ the lifting of these branches to $C_{j}$ and $C'$, as in Definition 3.26, and $\Gamma_{\nu}=\bigcup_{1\leq j\leq m}\Gamma_{j,\nu}$.
\end{defn}

It is a straightforward exercise to refine Lemma 3.27 and Lemma 3.31, namely;\\

\begin{lemma}
The sets $\Gamma_{j,\nu}$ and $\Gamma_{\nu}$ are birationally invariant, and $\Gamma_{\nu}=\bigsqcup_{1\leq j_{0}<j_{1}\leq m}(\Gamma_{j_{0},\nu}\cap \Gamma_{j_{1},\nu})$ is a disjoint union of pairwise intersections.\\

\end{lemma}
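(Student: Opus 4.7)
The plan is to derive Lemma 3.39 as a direct refinement of Lemmas 3.27 and 3.31, using the observation that the sub-enumeration $\wp_{\nu}\subseteq\wp$ is characterised purely by the \emph{position} of the base point of the associated branch of $C$ (it sits at a node), rather than by any property intrinsic to the branch itself. Since two lifts $\eta_{j_{0}}(\gamma)$ and $\eta_{j_{1}}(\gamma)$ which share a common value $\mathrm{val}_{\gamma}$ automatically project to branches of $C$ centred at the \emph{same} point, the refinement $\wp\rightsquigarrow\wp_{\nu}$ propagates through the arguments of the earlier lemmas without any new input.\\

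First, for birational invariance, I would simply feed Lemma 3.27 through Definition 3.38. Given $\alpha_{12}:C^{1}\leftrightsquigarrow C^{2}$ and $\theta_{j}:C_{j}^{1}\leftrightsquigarrow C_{j}^{2}$ as in Lemma 3.16 and footnote 9, Lemma 3.27 already establishes that $\gamma$ and $\alpha_{12}(\gamma)$ lift the \emph{same} branch $\rho$ of $C$, and similarly for $\eta_{j}(\gamma)$ and $\theta_{j}(\eta_{j}(\gamma))$. Since membership of $\rho$ in $\wp_{\nu}$ depends only on $\rho$, the equalities $\alpha_{12}(\Gamma_{j,\nu}^{1})=\Gamma_{j,\nu}^{2}$ and $\alpha_{12}(\Gamma_{\nu}^{1})=\Gamma_{\nu}^{2}$ follow immediately.\\

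Next, for the disjoint decomposition, I would take $\gamma\in\Gamma_{\nu}$, so $\gamma\in\Gamma_{j_{0},\nu}$ for some $j_{0}$ and $pr_{1}(\eta_{j_{0}}(\gamma))$ is centred at a node $q=(x_{0},y_{0})$ of $C$. Since $\wp_{\nu}\subseteq\wp$, also $\gamma\in\Gamma$, so by Lemma 3.31 there exists $j_{1}\neq j_{0}$ with $\gamma\in\Gamma_{j_{0}}\cap\Gamma_{j_{1}}$; Lemma 3.30 then gives $\mathrm{val}_{\gamma}(\eta_{j_{1}})=\mathrm{val}_{\gamma}(\eta_{j_{0}})=y_{0}$. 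Since $pr_{1}$ preserves the $x$-coordinate and $\gamma$ sits over $x=x_{0}$, the second projected branch $pr_{1}(\eta_{j_{1}}(\gamma))$ lands at $(x_{0},y_{0})=q$, still a node, hence lies in $\wp_{\nu}$, so $\gamma\in\Gamma_{j_{1},\nu}$. This yields $\Gamma_{\nu}\subseteq\bigcup_{1\leq j_{0}<j_{1}\leq m}(\Gamma_{j_{0},\nu}\cap\Gamma_{j_{1},\nu})$; the reverse containment is immediate from $\Gamma_{j,\nu}\subseteq\Gamma_{\nu}$, and disjointness follows at once from the second clause of Lemma 3.30, verbatim as in the proof of Lemma 3.31.\\

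The hard part, if there is one, will be confirming that the coincidence $\mathrm{val}_{\gamma}(\eta_{j_{0}})=\mathrm{val}_{\gamma}(\eta_{j_{1}})$ actually forces the two projected branches to share a point of $C$ (rather than merely a $y$-coordinate); but this is automatic, because $pr_{1}$ preserves the $x$-coordinate and the branch $\gamma$ of $C'$ fixes $x_{0}$. One should also note, in order to apply Lemma 3.30 cleanly, that every $\gamma\in\Gamma_{\nu}$ is geometric in the sense of Definition 3.28: nodes of $C$ lie in finite position by Lemma 1.1, so the lifts $\eta_{j}(\gamma)$ are finite, hence Lemma 3.30 is available without obstruction.
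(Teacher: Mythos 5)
Your proposal is correct and follows exactly the route the paper intends: the paper states this lemma as "a straightforward exercise to refine Lemma 3.27 and Lemma 3.31" and supplies no proof, and your argument is precisely that refinement — birational invariance via Lemma 3.27 plus the observation that membership in $\wp_{\nu}$ depends only on the underlying branch of $C$, and the disjoint decomposition by rerunning Lemma 3.31 and noting that the coincidence of $val_{\gamma}$ forces both projected branches to be centred at the same node, hence both lie in $\wp_{\nu}$. The closing remarks on why $pr_{1}$ preserving the $x$-coordinate makes the shared-point step automatic, and on geometricity of branches in $\Gamma_{\nu}$ (nodes being in finite position by Lemma 1.1), correctly address the only places where a gap could have arisen.
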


\begin{defn}
If $\gamma\in\Gamma_{\nu}$, we say that $\gamma$ is associated to a node $\nu_{0}$ of $C$, if,  $pr_{1}(\eta_{j_{0}}(\gamma))$ and $pr_{1}(\eta_{j_{1}}(\gamma))$, determined uniquely by Lemma 3.39, form the branches of
$\nu_{0}$.

\end{defn}
We can define an equivalence relation on $\Gamma_{\nu}$ as follows;\\

\begin{defn}
Let $\gamma_{1}$ and $\gamma_{2}$ belong to $\Gamma_{\nu}$. We define $\gamma_{1}\sim\gamma_{2}$ if they are associated to the same node of $C$.
\end{defn}

\begin{rmk}
It is an interesting exercise to show that, if the $s$ nodes of $C$ are situated along the axes $\{x=b_{1},\ldots,x=b_{s}\}$, in the coordinate system $(x,y)$, then any $2$ branches of $C'$, which, up to birationality, can be centred along a given axis $x=b_{j}$, for some $1\leq j\leq s$, in the coordinate system $(x,z)$, belong to $\Gamma_{\nu}$ and are equivalent. It is also clear, from the above, that the equivalence relation on $\Gamma_{\nu}$ has exactly $s$ classes.
\end{rmk}

With these definitions, we have;\\

\begin{lemma}
There exists $C'$, satisfying the conditions of Lemma 3.3 and Lemma 3.21, with $s$ branches $\{\gamma_{1},\ldots,\gamma_{s}\}$ belonging to $\Gamma_{\nu}$, representing each equivalence classes in Definition 3.41, such that each branch is isolated and in finite position.
\end{lemma}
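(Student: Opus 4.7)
The plan is to pick representatives $\gamma_{1},\ldots,\gamma_{s}\in\Gamma_{\nu}$, one from each of the $s$ equivalence classes in Definition 3.41, and then iteratively isolate them in finite position using Lemma 3.14. By Remarks 3.42, the equivalence classes correspond bijectively to the $s$ nodes of $C$, which by Lemma 1.1(iv) lie on distinct axes $x=b_{1},\ldots,x=b_{s}$; moreover, up to birationality each $\gamma_{i}$ can be centred along the corresponding axis $x=b_{i}$. By Lemma 3.36, these axes, being disjoint from the finitely many vertical tangent axes of $C$ (Lemma 1.1(ii),(iv)), carry only silver geometric branches, so each $\gamma_{i}$ is silver. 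I may therefore begin with a curve $C'$ satisfying Lemmas 3.3 and 3.21 in which each representative $\gamma_{i}$ is centred in finite position along its axis $x=b_{i}$.

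I would then proceed by induction on $k$. At the $(k+1)$-th stage, assume $\gamma_{1},\ldots,\gamma_{k}$ are already isolated in finite position at coordinates $(b_{i},\beta_{i})$ for $i\leq k$. Apply Case 1 of Lemma 3.14 to the silver branch $\gamma_{k+1}$ along axis $x=b_{k+1}$; this yields a birational map, realized as a finite composition of transformations of the form $g(x,z)=(\alpha z-\beta x)/(\alpha(x-\alpha))$ with $\alpha=b_{k+1}$, producing a new curve $C''$, which still satisfies Lemmas 3.3 and 3.21 by Lemma 3.4 and in which $\gamma_{k+1}$ is isolated in finite position along $x=b_{k+1}$.

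The crux is to verify that $\gamma_{1},\ldots,\gamma_{k}$ remain isolated after this stage. The restriction of each such $g$ to an axis $x=b_{i}$ with $i\leq k$ takes the affine bijective form $z\mapsto z/(b_{i}-b_{k+1})-\beta/(\alpha(b_{i}-b_{k+1}))$ in $z$. Hence distinct branches along $x=b_{i}$ are sent to distinct branches along the same axis; in particular $\gamma_{i}$, previously the unique branch at its coordinate, remains the unique branch at its new coordinate $(b_{i},\beta_{i}')$. Colour is preserved by Remarks 3.12(b), and membership in the same equivalence class of $\Gamma_{\nu}$ is preserved by Lemma 3.39, so each $\gamma_{i}$ continues to represent its original class. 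Iterating through all $s$ representatives yields the desired $C'$.

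I expect the main obstacle to lie precisely in this preservation step: one must simultaneously track both the affine restriction of $g$ along each already-isolated axis and the global bijectivity of $g$ on the surrounding open set, to ensure no other branch of the transformed curve slides onto the image of some $\gamma_{i}$ from a distinct source point. This is handled by the invertibility of $g$ as a M\"obius transformation in $z$ (Lemma 3.4), together with the observation that the $x$-coordinate is preserved throughout, so no migration between axes can occur.
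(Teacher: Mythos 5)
Your proposal is correct and follows essentially the same route as the paper: silverness of the $\Gamma_{\nu}$ representatives via Lemma 3.36 together with Lemma 1.1(iv), then simultaneous isolation of one representative per equivalence class along the distinct node axes $x=b_{1},\ldots,x=b_{s}$. The only difference is that you re-derive the simultaneous-isolation clause by hand --- your inductive step, using the affine bijective restriction of $g$ to the already-treated axes, is exactly the argument the paper places inside the final part of Lemma 3.14 --- whereas the paper's proof simply cites that lemma.
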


\begin{proof}
By Lemma 3.36, and Lemma 1.1(iv), all the branches belonging to $\Gamma_{\nu}$ are silver. We can represent each equivalence class by $s$ distinct silver branches, which, up to birationality, can be centred along the distinct coordinate axes $\{x=b_{1},\ldots,x=b_{s}\}$. By Lemma 3.14, we can simultaneously isolate these branches in finite position.
\end{proof}

The following result is geometrically clarifying;\\

\begin{lemma}{Flash Intersections}\\

Let hypotheses and notation be as in the preceding lemmas, then;\\

(i). All the branches of the flashes $\{C_{1},\ldots,C_{m}\}$ are non-singular.\\

(ii). No three flashes intersect in a point. That is there is no point $p\in S$ and distinct flashes $\{C_{j_{1}},C_{j_{2}},C_{j_{3}}\}$, with $j_{1}<j_{2}<j_{3}$, such that $p\in (C_{j_{1}}\cap C_{j_{2}}\cap C_{j_{3}})$.\\

(iii). There exists a mapping from the intersections of flashes to the vertical tangent points and nodes of $C$. That is;\\

a. For every point $p\in S$ and distinct flashes $\{C_{j_{1}},C_{j_{2}}\}$, with $j_{1}<j_{2}$, such that $p\in (C_{j_{1}}\cap C_{j_{2}})$, we have that $pr_{1}(p)$ defines a vertical tangent point or a node of $C$ in the sense of Lemma 1.1(ii).\\

b. For every $p_{1}\in C$, defining a vertical tangent point or a node, there exists a $p\in S$ and distinct flashes $\{C_{j_{1}},C_{j_{2}}\}$, with $j_{1}<j_{2}$, such that $p\in (C_{j_{1}}\cap C_{j_{2}})$ and $pr_{1}(p)=p_{1}$.\\

(iv). There exists a bijection between the intersections of flashes and the spirit lines. That is;\\

a. Every intersection between two distinct flashes $\{C_{j_{1}},C_{j_{2}}\}$, with $j_{1}<j_{2}$, lies on a unique spirit line $l\subset S$.\\

b. For every spirit line $l\subset S$, there exists a unique point $p\in S$ and distinct flashes $\{C_{j_{1}},C_{j_{2}}\}$, with $j_{1}<j_{2}$, such that $p\in (l\cap C_{j_{1}}\cap C_{j_{2}})$.\\

(v). In particular, there exists a mapping from the spirit lines to the vertical tangent lines of $C$.\\

\end{lemma}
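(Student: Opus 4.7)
The plan is to assemble (i)--(v) out of the structural results developed for $\Gamma$ and $\Gamma_{\nu}$ in Lemmas 3.27--3.43, exploiting the birational equivalence $pr_{2}:C_{j}\leftrightsquigarrow C'$ (Lemma 3.21(iii)(b)) to pass between the $P^{3}$ picture of flashes and the $P^{2}$ picture of $C'$. For (i), I would use Lemma 3.43 and Lemma 3.14 to fix a model of $C'$ in which the finitely many branches of $\Gamma$ are isolated in finite position, and then parametrize any branch of $C_{j}$ explicitly as $(x(t),\eta_{j}(x(t),z(t)),z(t))$, where $(x(t),z(t))$ is a parametrization of the corresponding branch on $C'$. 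Because $\eta_{j}$ arises as a separable root of $F(x,y)$ (Footnote 2) and by Lemma 3.30 distinct $\eta_{j}$'s separate the multiple sheets of $C'$ that would otherwise produce a singularity, the added $y$-coordinate supplies a direction independent of $(x,z)$ and the embedded branch in $P^{3}$ acquires a regular parametrization. The perpendicular branches are handled by the projective-approximation technique used earlier (e.g.\ the $[1:0:0]$ calculation of Lemma 3.6 and Footnote 10), with Lemma 3.37 guaranteeing that perpendicular branches are silver, hence harmless.

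For (ii), I would argue by contradiction: if $p\in C_{j_{1}}\cap C_{j_{2}}\cap C_{j_{3}}$, then $pr_{2}(p)=(x_{0},z_{0})\in C'$ lies on a branch $\gamma$ along which $\eta_{j_{1}},\eta_{j_{2}},\eta_{j_{3}}$ all have the common value $y_{0}$, contradicting the second assertion of Lemma 3.30 in the geometric case; the perpendicular case is excluded by Lemma 3.37 together with a direct projective computation. For (iii), the disjoint decomposition of Lemma 3.31 does almost all the work: given $p\in C_{j_{1}}\cap C_{j_{2}}$, the preimage under $pr_{2}$ meets a branch in $\Gamma_{j_{1}}\cap\Gamma_{j_{2}}$, whose projection under $pr_{1}\circ\eta_{j_{1}}$ lies in $\wp$ (Definition 3.25) and is therefore a vertical tangent point or node of $C$; conversely, every element of $\wp$ lifts (by Definition 3.26) to an element of $\Gamma$, which by Lemma 3.31 sits in a unique $\Gamma_{j_{1}}\cap\Gamma_{j_{2}}$ and hence gives the required flash intersection over the prescribed point of $C$.

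For (iv) and (v), I would read ``spirit line'' as a fibre of $pr_{2}:S\setminus\{Q_{2}\}\to C'$, i.e.\ a line in $S$ parallel to the $y$-axis through a point of $C'$, since the defining polynomial $R(X,Z,W)$ of $S$ (from Lemma 3.21) is independent of $Y$ so that $S$ is a cylinder over $C'$. Each flash intersection $p$ lies on exactly one such fibre (the one through $pr_{2}(p)$), proving (iv)(a); conversely, the fibres which carry a flash intersection are precisely those over the finitely many branches in $\Gamma$, and Lemma 3.31 pairs these bijectively with flash intersections as in part (iii), giving (iv)(b). Composing the bijection of (iv) with the projection furnished by (iii) yields (v). The main obstacle is part (i): ensuring non-singularity of every flash branch---especially the perpendicular ones and those over nodes and vertical tangents---requires a careful choice of birational model using Lemmas 3.14, 3.36, 3.37 and 3.43 in concert, together with an analysis of the parametrization $(x(t),\eta_{j}(x(t),z(t)),z(t))$ in the various centring regimes. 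A secondary subtlety is pinning down which vertical lines in $S$ qualify as spirit lines so that (iv)(b) is genuinely a bijection, but this is forced by the correspondence already established in (iii).
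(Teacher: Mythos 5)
Your parts (ii) and (iii) are essentially the paper's argument: (ii) is proved there by exactly the infinitesimal computation you describe (three nearby points on a vertical line forcing $I_{(a,b)}(C,x=a)=3$ against Lemma 1.1(ii), which is the content of the second assertion of Lemma 3.30), and (iii)--(v) are explicitly left in the paper as a translation of Lemmas 3.30 and 3.31, which is what you do. Since the paper never defines ``spirit line,'' your reading of (iv) as the $pr_{2}$-fibres of the cylinder $S$ over $C'$ is a defensible reconstruction, though as you note (iv)(b) forces the term to mean only those vertical lines lying over the centres of branches in $\Gamma$, not arbitrary fibres.

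The genuine gap is in (i). You argue that a branch of $C_{j}$, parametrised as $(x(t),\eta_{j}(x(t),z(t)),z(t))$ over a branch of $C'$, is regular because ``the added $y$-coordinate supplies a direction independent of $(x,z)$,'' citing Lemma 3.30 and separability. But Lemma 3.30 separates the \emph{values} of distinct $\eta_{j_{0}},\eta_{j_{1}}$ on a single branch; it says nothing about the \emph{order} of one $\eta_{j}$ along a branch, which is what regularity of the parametrisation requires. The curve $C'$ is allowed to carry singular branches (the green branches of Definition 3.20 are precisely the singular blue ones), and if $x(t),z(t)$ both have order $\geq 2$ while $\eta_{j}$ is regular at the centre, then $y(t)=\eta_{j}(x(t),z(t))$ also has order $\geq 2$ and the space branch stays singular; whether the $y$-coordinate desingularises depends on the local behaviour of $\eta_{j}$ at that centre, which none of your cited lemmas control. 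Isolating branches via Lemmas 3.14 and 3.43 does not help, since isolation separates distinct branches rather than regularising an individual one. The paper's proof runs in the opposite direction: it shows that a singular branch $\gamma$ of $C_{j}$ would project under $pr_{1}$ to a singular branch of $C$ (by choosing a plane $P_{l}$ through the centre and the projection point $Q_{1}$, perturbing it to meet $\gamma$ in two distinct infinitely close points, and projecting), and then concludes from the fact that a nodal curve has only non-singular (linear) branches. You should replace your argument for (i) with this projection argument, or else supply a separate proof that $val_{\gamma}(\eta_{j})$ has order exactly one along every singular branch $\gamma$ of $C'$.
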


\begin{proof}
We adopt the coordinate of Definition 3.1. The coordinate projection $pr_{1}$ defines a quasi finite map from $(C_{i}\setminus\{Q_{1}\})$ to $C$, which extends to give a well-defined map from the branches $\gamma$ of $C_{i}$ to $C$. This follows from the observation that there exists a unique morphism $\phi$ from the desingularisation $C_{i}^{ns}$ to $C^{ns}$, for which $(pr_{1}\circ\theta_{i})=(\theta\circ\phi)$, where $\{\theta,\theta_{i}\}$ are the desingularization morphisms of $\{C,C_{i}\}$ respectively. We claim that if $\gamma$ is a singular branch of $C_{i}$, then $pr_{1}(\gamma)$ is a singular branch of $C$, $(*)$. Let $pr_{1}(\gamma)$ be centred at $A\in P^{2}$, and let $\gamma$ be centred at $B\in P^{3}$. Let $l$ be a line passing through $A$. Let $P_{l}$ be the plane spanned by the lines $\{l,l_{AQ_{1}}\}$, which must pass through the point $B$. If $\gamma$ is singular, then $I_{italian,\gamma}(C_{i},P_{l})\geq 2$, hence, we can choose a plane $P_{l}'$, infinitely close to $P_{l}$, with $C_{i}\cap P_{l}'\cap{\mathcal V}_{B}$, containing two distinct points $\{B_{1},B_{2}\}$. By an appropriate choice of $P_{l}'$, we can assume that $B_{2}\notin l_{B_{1}Q_{1}}$. Let $l'=P^{2}\cap P_{l}'$, then $\{pr_{1}(B_{1}),pr_{1}(B_{2})\}$ are distinct points belonging to $C\cap l'\cap{\mathcal V}_{A}$. It follows that $pr_{1}(\gamma)$ must be singular, showing $(*)$. By the hypothesis that $C$ is non-singular, it follows that all the branches of $C_{i}$ are non-singular, giving $(i)$. Suppose, for contradiction, that three flashes intersect in a point $p\in S$. Without loss of generality, assume that $q=pr_{2}(p)\in C'$ is in finite position. Choosing generic $q'\in{\mathcal V}_{q}$, we can assume that the functions $\{\eta_{j_{1}},\eta_{j_{2}},\eta_{j_{3}}\}$ are defined at $q'$, with $\{\eta_{j_{1}}(q'),\eta_{j_{2}}(q'),\eta_{j_{3}}(q')\}\subset{\mathcal V}_{p}$. As the flashes $\{C_{j_{1}}, C_{j_{2}},C_{j_{3}}\}$ are distinct, and therefore intersect in finitely many points, by assuming $q'$ is generic, we must have that the $y$-coordinates $\{y_{j_{l}},y_{j_{2}},y_{j_{3}}\}$ of the triple $\{\eta_{j_{1}}(q'),\eta_{j_{2}}(q'),\eta_{j_{3}}(q')\}$ are also distinct, $(\dag)$. Suppose that $pr_{1}(p)\in C$ has coordinates $(a,b)$ in the system $(x,y)$, then, the coordinates of $\{pr_{1}(\eta_{j_{1}}(q')),pr_{1}(\eta_{j_{2}}(q')),pr_{1}(\eta_{j_{3}}(q'))\}\subset C$ are of the form $\{(a'y_{j_{l}}),(a',y_{j_{2}}),(a',y_{j_{3}})\}$, with $a'\in{\mathcal V}_{a}$, and $\{y_{j_{l}},y_{j_{2}},y_{j_{3}}\}\subset\mathcal{V}_{b}$. It follows that $I_{(a,b)}(C,x=a)=3$, contradicting non-singularity of $C$ or Lemma 1.1(ii). This shows $(ii)$. The remaining cases $(iii)$ and $(iv)$ are left as an exercise for the reader, It is mainly a question of converting the results of Lemmas 3.30 and Lemma 3.31. One should be careful, to exclude exceptional cases, arising from the images of perpendicular branches.

\end{proof}

\end{section}

\begin{section}{Asymptotic Degenerations}

The flashes model of a non-singular algebraic curve $C$ is useful for studying the problem of degenerations of curves to lines in general position, as, such degenerations can be viewed as a continuous family of flashes, $(\sharp)$. (\footnote{The modern catastrophe theory approach to the problem seems, at least from an aesthetic and practical point of view, to have certain deficiencies.}) This is made clearer by the following;\\

\begin{defn}{Asymptotic Degenerations}\\

Let $\{C_{t}\}_{t\in P^{1}}$ be a family of plane projective algebraic curves of degree $m\geq 2$, with the following properties, relative to a fixed coordinate system $\{x,y\}$;\\

(i). The intersection of each curve $C_{t}$ in the family with the axis $x=0$ consists of a fixed set of $m$ distinct points $\{(0,a_{1}),\ldots,(0,a_{j}),\ldots,(0,a_{m})\}$ in finite position, distinct from $(0,0)$. (\footnote{It is, on occasion, useful to impose the following additional assumptions;\\

(a). Using projective coordinates $\{X,Y,W\}$, the intersection of each curve $C_{t}$ in the family, with the axis $W=0$, consists of a fixed set of $m$ different points, distinct from $[1:0:0]$.\\
(b). The tangent line to one of the branches, centred at a point in $(b)$, is fixed, and distinct from the axis $W=0$.\\

This is technically convenient in terms of later terminology, and is required to exclude extremely exceptional cases, which will be mentioned later.}).\\

(ii). For generic ${t\in P^{1}}$, $C_{t}$ is irreducible.\\

(iii). Each curve $C_{t}$ has $m$ distinct non-singular branches, centred at the points $\{(0,a_{1}),\ldots,(0,a_{m})\}$, and, the tangent lines $\{l_{a_{1}},\ldots,l_{a_{m}}\}$ at these branches are all fixed and distinct from the axis $x=0$.\\

We call such a family of curves an asymptotic degeneration.

\end{defn}

\begin{lemma}{Existence of Asymptotic Degenerations}\\

Any nodal curve $C$ is part of a non-trivial asymptotic degeneration.

\end{lemma}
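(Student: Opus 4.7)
The plan is to exhibit $C$ as a member of an explicit pencil that shares the behaviour of $C$ along the axis $x=0$ to second order. I begin by applying Lemma 1.1 to place $C$ in coordinates $(x,y)$ for which $x=0$ meets $C$ transversely in $m$ distinct non-singular branches based at finite points $(0,a_1),\ldots,(0,a_m)$; a further generic translation in $y$, which preserves the remaining conclusions of Lemma 1.1, lets me assume every $a_j\neq 0$. Lemma 2.5 then yields the factorisation
\[
F(x,y)=\prod_{j=1}^{m}\bigl(y-\eta_j(x)\bigr)
\]
in $L[[x]][y]$ with distinct constant terms $\eta_j(0)=a_j$; setting $a_j'=\eta_j'(0)$, the tangent line to $C$ at $(0,a_j)$ is $\ell_j:y=a_j+a_j'x$, and these lines are pairwise distinct and distinct from the axis $x=0$.

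Next I introduce $D(x,y)=\prod_{j=1}^{m}(y-a_j-a_j'x)$, the product of the $m$ tangent lines, and set $E=F-D$. Normalising $F$ to be monic in $y$ of degree $m$, both $F$ and $D$ are monic of $y$-degree $m$, so $\deg_y E\le m-1$. Since $F$ and $D$ share the same value and the same gradient at every $(0,a_j)$, the polynomial $E$ vanishes there to order at least two; in particular $E(0,a_j)=0$, so the polynomial $E(0,y)$ of $y$-degree at most $m-1$ has the $m$ roots $a_j$ and must vanish identically, whence $x\mid E$. Writing $E=xE_1$, the condition $\partial_xE(0,a_j)=0$ becomes $E_1(0,a_j)=0$, and the same degree bound forces $E_1(0,y)\equiv 0$, so $x^2\mid E$. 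Writing $E=x^2E_2$ and homogenising, I consider the pencil
\[
C_t:\;F(x,y)+t\,x^2E_2(x,y)=0,\qquad t\in P^1,
\]
of degree-$m$ plane projective curves, with $C_0=C$.

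For each $t$, restricting to $x=0$ leaves $F(0,y)=\prod(y-a_j)=0$, giving $C_t\cap\{x=0\}=\{(0,a_j)\}$ independently of $t$ and away from $(0,0)$, verifying (i). The partial derivatives of $x^2E_2$ vanish identically along $x=0$, so $\nabla(F+tx^2E_2)|_{(0,a_j)}=\nabla F|_{(0,a_j)}$; each $(0,a_j)$ is therefore a non-singular point of $C_t$ with tangent line $\ell_j$ independent of $t$, verifying (iii). For (ii), by Gauss's lemma any nontrivial factorisation of $F+tx^2E_2$ in $L(t)[x,y]$ comes from one in $L[t][x,y]$; writing it as $P_1P_2$ with $\deg_{(x,y)}P_i\ge 1$ and specialising to $t=0$ forces one factor to be a unit by irreducibility of $F$, hence the other to have $(x,y)$-degree at least $m$, contradicting $\deg_{(x,y)}(F+tx^2E_2)\le m$ together with $\deg_{(x,y)}P_1\ge 1$. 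Non-triviality holds because $F\neq D$ (an irreducible curve of degree $m\ge 2$ cannot coincide with a union of $m$ lines), so $E_2\not\equiv 0$. The main obstacle is the algebraic identity $x^2\mid F-D$, which is the precise manifestation of the tangency of $D$ to $C$ along $x=0$; once it is in place, (i)--(iii) follow by direct computation.
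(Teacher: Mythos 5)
Your route is genuinely different from the paper's. The paper argues non‑constructively: it counts dimensions in the Severi variety of nodal curves of degree $m$ with a fixed number of nodes ($3m-1+k$ parameters), notes that the asymptotic conditions impose at most $m+1$ of them, and then argues that the fixed‑tangent condition 4.1(iii) is forced by 4.1(i),(ii). You instead write down an explicit pencil joining $F$ to the product $D$ of the tangent lines of $C$ at its intersections with $x=0$, the whole construction resting on the divisibility $x^{2}\mid F-D$, which is verified cleanly from Lemma 2.5. This is more concrete, bypasses the Severi‑variety input entirely, and has the pleasant by‑product that the pencil contains the totally degenerate member $D$ at $t=-1$. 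Your verifications of (i) and (iii) are correct for every finite $t$. One small lacuna: Gauss's lemma plus specialisation at $t=0$ gives irreducibility of $F+tx^{2}E_{2}$ over $L(t)$, which does not by itself give geometric irreducibility of $C_{t}$ for generic $t\in L$ (compare $y^{2}-tx^{2}$). The cleanest repair is to note that the locus of reducible or non‑reduced degree‑$m$ forms in $P^{N}$ is closed (a finite union of images of the proper multiplication maps), so $\{t:C_{t}\ \mbox{reducible}\}$ is a closed subset of $P^{1}$ missing $t=0$, hence finite.

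The genuine gap is the fibre at $t=\infty$. Completed over $P^{1}$, your family is the pencil $\lambda F+\mu x^{2}E_{2}$, whose member at $[0:1]$ is $x^{2}E_{2}=0$: it contains the axis $x=0$ as a double line, so conditions (i) and (iii) fail there, and Definition 4.1 — as the paper's own proof of this very lemma makes explicit when it invokes compactness of $P^{1}$ and must separately exclude members containing $x=0$ as a component — demands (i) at \emph{every} $t\in P^{1}$. You assert (i) and (iii) ``independently of $t$'' without examining this fibre, and it cannot be reparametrised away, since any non‑constant morphism $P^{1}\rightarrow P^{1}$ is surjective. It is worth saying that the obstruction is intrinsic to the definition rather than to your pencil: for any non‑constant algebraic family over $P^{1}$ satisfying (i) at every fibre, the restriction of the defining form to $X=0$ must equal $c(\lambda,\mu)\prod_{j}(Y-a_{j}W)$ with $c$ the coefficient binary form of $Y^{m}$, of positive degree; $c$ vanishes somewhere on $P^{1}$, so some member contains the axis. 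Hence either Definition 4.1 is read as tolerating finitely many exceptional fibres (as the paper itself does elsewhere, e.g.\ the weakening (iii)$'$ in Section 5 and the footnotes excluding degenerate fibres containing a line), in which case your pencil works with the single exception $t=\infty$ and you should say so explicitly; or no construction can satisfy the literal statement. Either way the fibre at infinity has to be confronted rather than passed over.
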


\begin{proof}

Given a plane nodal curve $C$, the existence of asymptotic degeneration of $C$, follows from;\\

\noindent (i). Lemma 1.1.\\
 (ii). Dimension calculations for the space of such curves, with fixed degree and number of nodes.\\

 The reader should look at \cite{Sernesi} and \cite{Zariski} for more details about the calculations in (ii). Any such curve $C$ is part of an irreducible space of dimension $3m-1+k$, where $m=deg(C)$, and $k$ is a non-negative integer depending on the number of nodes. The condition on asymptotes imposes at most $m+1\leq 3m-1\leq 3m-1+k$ conditions on such a space, $(*)$. This follows from the observation that the condition on tangent lines in 3.28(iii) is a consequence, $(**)$, of 3.28(i), 3.28(ii), Lemma 1.1 and the extra single condition that the tangent line to one branch, centred along the axis $x=0$ is fixed. The proof of $(**)$ is straightforward, suppose that we have a family of curves $\{C_{t}\}_{t\in P^{1}}$, satisfying the conditions 3.28(i),(ii), with the branches of the generic curve, centred at $\{(0,a_{1}),\ldots,(0,a_{m})\}$, transverse to $x=0$. Using an analytic representation of a given branch, at $(0,a_{j})$, see \cite{depiro6}, the gradient of the tangent line at $(0,a_{j})$ is given by the rational function $\theta_{j}(t)=-{f_{x}(x,y;t)\over f_{y}(x,y;t)}|_{(0,a_{j})}$. If the condition on tangent lines in 3.28(iii) fails, then $\theta_{j_{0}}(t)$ is non-constant, for some $1\leq j_{0}\leq m$. In this case, by compactness of the parameter space $P^{1}$, it attains the value $\infty$, for finitely many $\{t_{1},\ldots,t_{r}\}$. This implies that $f_{y}(x,y;t_{j})=0$, for $1\leq j\leq r$, and the branch, at $(0,a_{j_{0}})$ is either tangent to the axis $x=0$, or singular for the curve $C_{t_{j}}$. By Bezout's theorem, it follows that $C_{t_{j}}$ either has at most $m-1$ distinct points of intersection with the axis $x=0$, contradicting the condition 3.28(i), or contains the axis $x=0$ as a component. This last case is excluded by the condition that the tangent line to one branch, centred along the axis $x=0$ is fixed. The stipulation that the curve does not contain the point $(0,0)$ is simply achieved by a translation.(\footnote{To obtain the extra exceptional requirements in footnote 11, one needs to impose a further $m+1$ conditions. Lemma 1.1 allows one to find two distinct axes, which cut the given curve transversely. The existence of such \emph{strongly asymptotic} degenerations follows again from the fact that $2(m+1)\leq 3m-1\leq 3m-1+k$, if $m\geq 2$. The case $m=1$ is just a line.})

\end{proof}

\begin{lemma}{Uniformity of Asymptotic Degenerations}\\

Let $\{C_{t}\}_{t\in P^{1}}$ be an asymptotic degeneration, of fixed degree, then, there exists a corresponding algebraic family of plane curves, $\{D_{r}: r\in R \}$, in a fixed coordinate system $\{x,z\}$ with $R\rightarrow P^{1}$ a finite cover, such that;\\

(i). For generic $r\in R$, $D_{r}$ is irreducible, $(0,0)\in D_{r}$, and $(D_{r},(0,0))$ is an etale cover of $(A^{1},(0,0))$.\\

(ii). For non-generic $r_{\infty}\in R$, corresponding to $t_{\infty}$, with $C_{t_{\infty}}$ having components $\{C_{t_{\infty}}^{1},\ldots,C_{t_{\infty}}^{k}\}$,  $D_{r_{\infty}}$ is a union of component curves $\{D_{r_{\infty}}^{1},\ldots,D_{r_{\infty}}^{l}\}$, with $l\geq k$.\\

There exists an algebraic family of parameters, $s\in S$, with $S\rightarrow R\rightarrow P^{1}$ a finite cover, and a sequence of morphisms $\{\eta_{1,s},\ldots,\eta_{j,s},\ldots,\eta_{m,s}:s\in S\}$, with the properties;\\

(iii). For corresponding generic $s\in S$, $r\in R$, $t\in P^{1}$;\\

(a). $\eta_{j,s}|D_{r}$ is birational, of fixed degree, and $\eta_{j,s}(0,0)=(0,a_{j},0)$, relative to the coordinate systems $(x,z)\subset (x,y,z)$.\\

(b). The sequence of image curves $\{C_{1,r,s},\ldots,C_{j,r,s},\ldots,C_{m,r,s}\}$ are distinct irreducible covers of $C_{t}$, using the projection $pr_{1}$ from the coordinate system $\{x,y,z\}$ to $\{x,y\}$.\\

(iv). For corresponding non-generic $(s_{\infty},r_{\infty},\infty)\in S\times R\times P^{1}$, the reduced, (\footnote{By which we mean the functions contain no factors, vanishing or taking the value $\infty$, entirely on a component of $D_{r_{\infty}}$; removing such factors may lower their degree.}), functions $\{\eta_{1,s_{\infty}},\ldots,\eta_{j,s_{\infty}},\ldots,\eta_{m,s_{\infty}}\}$ restrict to morphisms on each component $\{D^{1}_{r_{\infty}},\ldots,D^{i}_{r_{\infty}},\ldots,D^{t}_{r_{\infty}}\}$ of $D_{r_{\infty}}$, with the properties that;\\

(a). $\eta_{j,s_{\infty}}|D^{i}_{r_{\infty}}$ is birational.\\

(b). Each image curve $C_{i,j,\infty}$ is a cover of an irreducible component of $C_{\infty}$.\\

(c). The intersections $(C_{i_{0},j_{0},\infty}\cap C_{i_{1},j_{1},\infty})$ are finite for $(i_{0},j_{0})\neq (i_{1},j_{1})$.\\

(d). For any given $j$, every irreducible component of $C_{\infty}$ is covered by an image curve $C_{i,j,\infty}$.\\

(v). For $1\leq j\leq m$, the family of image curves $\{C_{j,r,s}\}$, including the non-generic limit curves $\{C_{i,j,\infty}\}$, form an irreducible closed variety over a $1$-parameter space $S_{j}$, with $S\rightarrow S_{j}\rightarrow R\rightarrow P^{1}$ finite covers.
\end{lemma}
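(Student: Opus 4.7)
The plan is to apply the constructions of Section 3, chiefly Lemma 3.3 and Lemma 3.21, relatively over the parameter space $P^{1}$, treating $t$ as an extra variable. First I would invoke Lemma 2.5 in family, using that the branches centred at the fixed points $(0,a_{j})$ are non-singular and transverse to $x=0$ for every $t$ (by condition (iii) of Definition 4.1), to produce power series $\eta_{j}(x;t)$, algebraic over $L(x,t)$, with $F(x,\eta_{j}(x;t);t)=0$ and $\eta_{j}(0;t)=a_{j}$. These power series become single-valued algebraic functions of $(x,t)$ only after passing to a finite cover; this is the source of the cover $R\to P^{1}$ in the statement. The variety $D_{r}$ is then obtained by running the construction of Lemma 3.3 fibrewise: the splitting field of $F(x,y;t)$ over $L(t)(x)$ produces an etale presentation over $(A^{1},0)$, and projectivising and closing off gives a two-dimensional scheme fibred over $R$ whose generic fibre is the required $D_{r}$, equipped with the marked point $(0,0)$ and the etale structure of (i).

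Next I would construct the morphisms $\eta_{j,s}$ of (iii). The power series $\eta_{j}(x;t)$ are only well-defined up to the Galois action on the generic fibre of $D_{r}\to A^{1}$, so picking a consistent global labelling forces a further finite cover $S\to R$; this is the role of $S$. Once $S$ is fixed, each $\eta_{j,s}:D_{r}\leftrightsquigarrow C_{j,r,s}\subset P^{3}$ is constructed exactly as in Lemma 3.21(iii), and conditions (iii)(a)--(b) follow from the corresponding assertions of Lemma 3.21 applied fibrewise.

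The main obstacle is the analysis of the non-generic fibre in (ii) and (iv). When $C_{t_{\infty}}$ splits into components $C_{t_{\infty}}^{1},\ldots,C_{t_{\infty}}^{k}$, the polynomial $F(x,y;t_{\infty})$ factors, so its splitting field over $L(x)$ may itself decompose, yielding the decomposition $D_{r_{\infty}}=\bigcup_{i}D_{r_{\infty}}^{i}$ with $l\geq k$ as asserted in (ii). For (iv), reducing the rational functions $\eta_{j,s_{\infty}}$ (removing factors vanishing identically on a component) and restricting to each $D_{r_{\infty}}^{i}$ gives morphisms whose images lie on components of $C_{t_{\infty}}$, by continuity of the projection $pr_{1}$ along the flat family. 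Birationality in (iv)(a) is preserved under specialisation away from the loci on which the reduction lowers the degree, while coverage of every component of $C_{t_{\infty}}$ by some $C_{i,j,\infty}$, for each fixed $j$, follows from the surjectivity of the total system of roots $\bigcup_{j}\eta_{j,t}$ onto the solutions of $F(x,y;t)=0$ in the generic fibre. Finiteness of the intersections in (iv)(c) is an upper semi-continuity consequence of the finiteness of intersections of distinct generic image curves.

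For (v), the natural approach is to take the closure inside an appropriate Hilbert or Chow variety of $P^{3}$ and argue that certain Galois data parameterised by $S$ act trivially on the individual curve $C_{j,r,s}$: only the cosets distinguishing the $j$-th root matter, giving the intermediate 1-parameter quotient $S_{j}$ with $S\to S_{j}\to R\to P^{1}$. Irreducibility of the resulting family then follows from irreducibility of its generic fibre (which is $C_{j,r,s}$ itself, by (iii)(b) combined with the irreducibility of $D_{r}$) together with the standard fact that a flat family of curves with irreducible generic fibre over an irreducible base has irreducible total space.
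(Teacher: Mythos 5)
Your overall architecture (run Lemmas 2.5, 3.3 and 3.21 fibrewise over $P^{1}$, with finite covers $S\to R\to P^{1}$ absorbing the choices of splitting field and of labelling of the roots, then take Zariski closures to define the degenerate fibres) matches the paper's, which implements this with explicit first-order formulas $\Phi,\Psi,\Theta$ cutting out constructible sets in the parameter spaces $P^{N}\times P^{2N_{1}}\times\cdots\times P^{2N_{m}}$, a Bertini argument to reduce $S$ to dimension one, and the irreducible incidence varieties $W_{j}\subset T_{j}$ whose special fibres are the limit curves. Up through (i), (iii) and the set-up of (iv) your sketch is a fair compression of that.

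The genuine gap is in (iv)(c), and it is not a small one: you assert that finiteness of the intersections $(C_{i_{0},j_{0},\infty}\cap C_{i_{1},j_{1},\infty})$ is ``an upper semi-continuity consequence'' of finiteness for generic fibres. Semi-continuity runs in the wrong direction here --- the dimension of the intersection of two members of a family can only jump \emph{up} at special parameters, so two image curves meeting in finitely many points generically may perfectly well come to share an entire component in the limit. Ruling this out for $j_{0}\neq j_{1}$ on a common component $D^{i_{0}}_{r_{\infty}}$ is precisely the hard core of the paper's proof: one must show the reduced limit functions $\eta_{j_{0},s_{\infty}}$ and $\eta_{j_{1},s_{\infty}}$ remain distinct as rational functions on that component. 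The paper does this by exploiting the asymptotic hypothesis --- at a branch $\gamma$ of $D_{t,r}$ along $x=0$ the values $val_{\gamma}(\eta_{j_{0},s})$ and $val_{\gamma}(\eta_{j_{1},s})$ lie in the \emph{fixed} set $\{a_{1},\ldots,a_{m}\}$ and are distinct, and a commuting-specialisation computation with independent infinitesimals $\epsilon,\delta$ shows $\pi_{1}(\eta_{j_{0},s_{\infty}}-\eta_{j_{1},s_{\infty}})=a_{j_{0}}-a_{j_{1}}\neq 0$. Without an argument of this kind the lemma is simply false for non-asymptotic families, which is the whole point of the definition. A related, smaller gap: ``birationality is preserved under specialisation'' in (iv)(a) is also not automatic (the limit of a family of birational maps can be constant or of higher degree on a fibre); the paper handles this by passing to the reduced functions, proving the limit of the function values equals the reduced function at the specialised point, and noting that a constant reduced $\eta_{j,\infty}$ would force $C_{\infty}$ into a line, contradicting $m\geq 2$. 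You need to supply both of these arguments for the proof to close.
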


\begin{proof}
Let $P^{N}$ denote the parameter space for plane curves of degree $n$, in the coordinate system $\{X,Z,W\}$, where $N={n(n+3)\over 2}$. We, first, observe that there exists a constructible set $V\subset P^{N}$, with the properties that, for $\overline{r}\in V$;\\

 (i).   The corresponding curve $D_{\overline{r}}$ is irreducible.\\

 (ii).  $D_{\overline{r}}$ passes through $(0,0)$, in the coordinate system $(x,z)$, where $\{x={X\over W}, z={Z\over W}\}$.\\

 (iii). $(D_{\overline{r}},(0,0))$ defines an etale cover of $(A^{1},0)$.  $(*)$\\

Let $F_{n}(X,Z,W,\overline{r})=\sum_{i+j+k=n}r_{ijk}X^{i}Y^{j}W^{k}$ enumerate all plane curves of degree $n$, and let $F_{p}$ be similar polynomials, enumerating all plane curves of degree $1\leq p<n$. Let\\

$\Phi_{1}(\overline r)\equiv\neg\exists_{1\leq p<n}\overline x_{1}\ldots\overline x_{p}\ldots\overline x_{n-1}\exists_{1\leq q<n}\overline y_{1}\ldots\overline y_{q}\ldots\overline y_{n-1}$\\

\indent \ \ \ \ \ \ \ \ \ \ \ $[\bigvee_{p+q=n}F_{n}(x,z,\overline r)=F_{p}(x,z,\overline x_{p})F_{q}(x,z,\overline y_{q})]$\\

By standard properties, $\Phi_{1}$ defines a constructible subset $V_{1}\subset P^{N}$, such that, $(i)$ of (*), holds for ${\overline r}\in V_{1}$. Let\\

$\Phi_{2}(\overline r)\equiv [\Phi_{1}(\overline r)\wedge F_{n}(0,0,\overline r)=0]$\\

Then $\Phi_{2}$ defines a constructible set $V_{2}\subset V_{1}\subset P^{N}$, for which $(ii)$ of (*), holds as well. Let\\

$\Phi(\overline r)\equiv [\Phi_{2}(\overline r)\wedge{{\partial}F\over {\partial}Z}(X,Z,W,\overline r)|_{[0:0:1]}\neq 0]$\\

It is an easy exercise, using the definition of an etale morphism, to see that $\Phi$ defines a constructible set $V\subset V_{2}$, for which all the conditions of (*) are satisfied. We adopt the coordinate notation of Definition 3.1. Let $P^{2N_{j}}$ be a parameter space for the set of rational functions of degree $n_{j}$ on $P^{2}$, where $N_{j}={n_{j}(n_{j}+3)\over 2}$. We observe, that, for a pair $(t,\overline{r})\in (P^{1}\times V)$, there exists at most one rational function $\eta_{j}(x,z)$, of fixed degree $n_{j}$,(\footnote{Having a uniquely determined parameter $\overline{s_{j}}\in P^{2N_{j}}$}), with the properties that;\\

(i). $\eta_{j}(x,z)$ is defined at $(0,0)$ and $\eta_{j}(0,0)=a_{j}$\\

(ii). The image curve $C_{j,t,\overline{r}}$, defined as the projective closure of;\\

$\{(x,y,z):y=\eta_{j}(x,z), (x,z)\in D_{\overline{r}}\}$\\

is a cover of $C_{t}$, using the projection $pr_{1}$, (**), (\footnote{The conic projection on $P^{3}$ is defined, excluding the exceptional point $[0:0:1:0]$, in the case that a curve passes through this point, one still obtains a well defined projected curve, see the paper \cite{depiro6} for the explicit construction.})\\

For a pair $(x,z)$ belonging to the infinitesimal neighborhood $({\mathcal V}_{(0,0)}\cap D_{\overline{r}})$, there can clearly exist at most one possible point $(x,y,z)$, for which $(x,y)$ belongs to $({\mathcal V}_{(0,a_{j})}\cap C_{t})$, by the definition 3.28(iii). Hence, $\eta_{j}$ is uniquely determined on this neighborhood, and, therefore, on the curve $D_{\overline{r}}$, showing (**). By Lemma 3.21, for generic $t\in P^{1}$, $(\dag)$, and appropriate $\{n,n_{j}\}$, there exists $\overline{r}\in V$ and $\overline{s_{j}}\in P^{2N_{i}}$, such that (**) is satisfied for the triple $(t,\overline{r},\overline{s_{j}})$. Let $G_{n_{j}}(X,Z,W,\overline{z_{i}})$ enumerate all rational functions of degree $n_{j}$, and let $\{r,s_{1},\ldots,s_{j},\ldots,s_{m}\}$ denote new parameters, with each $s_{j}$ abbreviating $\overline{s_{j}}$, and $r$ abbreviating $\overline{r}$. Let\\

$\Psi_{j}(t,r,s_{j})\equiv[\Phi(r)\wedge G_{n_{j}}(0,0,s_{j})=a_{j}\wedge((F_{n}(x,z,r)=0)\rightarrow$\\

\indent \indent \indent \indent \indent \indent \indent $C_{t}(x,G_{n_{j}}(x,z,s_{j})))]$\\

and;\\

$\Psi(t,r,s_{1},\ldots,s_{j},\ldots s_{m})\equiv\wedge_{1\leq j\leq m}\Psi_{j}(t,r,s_{j})$\\

By elementary considerations, $\Psi$ defines a constructible subset $U$ of $(P^{1}\times V\times P^{2N_{1}}\times\ldots P^{2N_{i}}\times\ldots P^{2N_{m}})$. We let\\

$\Theta(t)\equiv\exists r\exists s_{1}\ldots\exists s_{j}\ldots \exists s_{m}\Psi(t,r,s_{1},\ldots,s_{j},\ldots,s_{m})$.\\

By $(\dag)$, $\Theta$ defines an open subset of $P^{1}$. We define $S={\overline U}$ to be the closure of $U$ in $(P^{1}\times V\times P^{2N_{1}}\times\ldots P^{2N_{j}}\times\ldots P^{2N_{m}})$, and also $R\subset (P^{1}\times \overline V)$ to be the projection $pr(S)$. Without loss of generality, we can assume that $S$ and $R$ are irreducible, with the properties;\\

(i). $pr:S\rightarrow R\rightarrow P^{1}$ is onto.\\

(ii). For a generic pair $(t,r)\in R$, there exists a unique corresponding triple $(t,r,\overline{s})\in S$\\

(iii).For a generic pair $(t,r)\in R$, $r\in V$. (***)\\

where $\overline s=(s_{1},\ldots,s_{j},\ldots,s_{m})$. This follows from elementary considerations on dimension, the above mentioned property of $\Theta$, and $(**)$. We assume that $dim(S)\geq 2$. We then make the simple observation that, for a suitable embedding of $S$ in $P^{L}$, it is possible to find a hyperplane $\mathcal{H}\subset P^{L}$, with the properties that;\\

(a). $(\mathcal{H}\cap S)$ is irreducible and $(dim(\mathcal{H})\cap S)=dim(S)-1$.\\

(b). $pr(\mathcal{H}\cap S)=P^{1}$\\

(c). $\mathcal{H}$ passes through a generic triple $(t,r,\overline{s})\in S$.\\

This follows from a generalised version of Bertini's Theorem, see \cite{Ha} or \cite{K}. It follows, by induction, that we can obtain the statement (***), with the additional property that $dim(S)=1$. We then rephrase the property $(iii)$ of $(*)$;\\

(iii)'. For generic $t\in P^{1}$, the fibre $R(t)\subset V$.\\

The statements $(i)$ and $(iii)(a),(b)$ of the theorem now follow immediately from (***)(i),(ii),(iii), the distinctness claim following immediately from the fact that the functions $\eta_{j,s}$ are different on the curve $D_{r}$, by $(**)(i)$. We now consider the non-generic cases in the theorem. We abbreviate the tuple $\overline{s}$ to $s$ and let;\\

$S_{j}(t,r,s_{j})\equiv\exists s_{1}\ldots\exists s_{i\neq j}\ldots\exists s_{m}S(t,r,s_{1},\ldots,s_{i},\ldots,s_{m})$\\

and $U_{j}=pr_{j}(U)$, so that $U_{j}\subseteq S_{j}$. We consider the following varieties $W_{j},T_{j}\subseteq S_{j}\times P^{3}$ defined by;\\

$W_{j}(t,r,s_{j},x,y,z)\equiv U_{j}(t,r,s_{j})\wedge D_{r}(x,z)=0\wedge g_{s_{j}^{2}}(x,z)\neq 0\\
\indent \ \ \ \ \ \ \ \ \ \ \ \ \ \ \ \ \ \ \ \ \ \ \ \ \ \ \ \wedge {g_{s_{j}^{1}}(x,z)\over g_{s_{j}^{2}}(x,z)}=y$\\

$T_{j}(t,r,s_{j},x,y,z)\equiv U_{j}(t,r,s_{j})\wedge g_{s_{j}^{2}}(x,z)\neq 0\wedge{g_{s_{j}^{1}}(x,z)\over g_{s_{j}^{2}}(x,z)}=y$\\

where $\{g_{s_{j}^{1}},g_{s_{j}^{2}}\}$ are homogeneous polynomials, of degree $n_{j}$, represented by the parameter $s_{j}$. $T_{j}$ is irreducible, as $U_{j}\times A^{2}$ is irreducible and a generic fibre $T_{j}(t,r,s_{j},x,z)$ consists of a single point. $W_{j}$ is irreducible, as $U_{j}$ is irreducible, and a generic fibre $W_{j}(t,r,s_{j})$ corresponds to the irreducible image curve $C_{j,r,s}$ considered in case $(iii)(b)$. It follows that the Zariski closures $\overline{W_{j}},\overline{T_{j}}$, which we again denote by $W_{j},T_{j}\subseteq S_{j}\times P^{3}$, are also irreducible. We clearly have that $W_{j}\subset T_{j}$. It follows, by dimension considerations, that, for non-generic $(t_{\infty},r_{\infty},s_{j,\infty})$, the fibre $W_{j}(t_{\infty},r_{\infty},s_{j,\infty})$ defines a closed, possibly not irreducible, variety of dimension $1$ $C_{j,t_{\infty},r_{\infty}}\subset P^{3}$, (\footnote{It is conceivable that the variety could have extraneous points, that is irreducible components of dimension $0$. We will refer to the variety as a curve in the course of the proof.}). The aim of case $(iv)$ in the theorem, is to describe these limit curves $C_{j,t_{\infty},r_{\infty}}$ more explicitly, this will incidentally exclude the scenario of footnote 18.\\

We now examine the behaviour of the morphism $\eta_{j,\infty}(x,z)$, determined by the parameter $s_{j,\infty}$, on the curve $D_{t_{\infty},r_{\infty}}$, and aim to show that the curve $C_{j,t_{\infty},r_{\infty}}$ is obtained as the image of $D_{t_{\infty},r_{\infty}}$, after making certain reductions in the factorisation of $\eta_{j,\infty}$, see footnote 15, (\footnote{Until $(!)$, we make the assumption that $D_{t_{\infty},r_{\infty}}$ does not contain the line $W=0$ as a component in the projective coordinate system $\{X,Z,W\}$}).\\

We consider the closed variety $V_{j}\subset(S_{j}\times P^{2})$ defined by;\\

$V_{j}(t,r,s_{j},x,z)\equiv S_{j}(t,r,s_{j})\wedge D_{r}(x,z)$\\

$V_{j}$ is equidimensional, as the space $Q_{n}$ of degree $n$ curves is irreducible, a proof is given in the paper \cite{depiro5}, the variety $(S_{j}\times P^{2})$ is irreducible, and $V_{j}$ is obtained as the intersection of these varieties within a non-singular total projective space. Irreducibility follows easily from the fact that the generic curve $D_{r}$ is irreducible, see the similar argument in the just quoted proof. Now suppose that $(x_{0},z_{0})$ belongs to some component of the curve $D_{t_{\infty},r_{\infty}}$, then we can find $s_{j,\infty}$ with $V_{j}(t_{\infty},r_{\infty},s_{j,\infty},x_{0},z_{0})$. By Lemma 3.2 of \cite{depiro4}, we can find a generic $(t,r,s_{j},x_{1},z_{1})$, belonging to $V_{j}$, specialising to $(t_{\infty},r_{\infty},s_{j,\infty},x_{0},z_{0})$. As $dim(V_{j})=2$, $dim(x_{1},z_{1}/t,r,s_{j})=1$, in particular, we may assume that $(x_{1},z_{1})$ does not belong to the curve defined by $g_{s_{j}^{2}}(x,z)=0$, using also the fact that $g_{s_{j}^{2}}$ cannot vanish identically on the irreducible curve $D_{t,r}$, by the description $(iii)(a)$ of the theorem. It follows that we can find $y_{1}$, with $W_{j}(t,r,s_{j},x_{1},y_{1},z_{1})$, and, by specialisation, there exists $y_{0}$, with $W_{j}(t_{\infty},r_{\infty},s_{j,\infty},x_{0},y_{0},z_{0})$. This shows that the "projection" of the curve $C_{j,t_{\infty},r_{\infty}}$ contains the curve $D_{t_{\infty},r_{\infty}}$, (****).\\

As the variety $T_{j}$ is irreducible and $W_{j}\subset T_{j}$, by Lemma 3.2 of \cite{depiro4}, if a point $(x_{0},y_{0},z_{0})$ lies on the limit curve $C_{j,t_{\infty},r_{\infty}}$, there exists a generic, (*****), $(t,r,s_{j},x_{1},y_{1},z_{1})$ of $T_{j}$, specialising, (******), to $(t_{\infty},r_{\infty},s_{j,\infty},x_{0},y_{0},z_{0})$. We assume that the field $<t_{\infty},r_{\infty},s_{j,\infty},x_{0},y_{0},z_{0}>$ is contained in the base field $L$. By (****), the facts that $dim(S_{j})=1$ and $dim(T_{j})=3$, we can assume that $dim(t,r,s_{j}/L)=1$, and $dim(x_{1},z_{1}/L)=dim(x_{1},z_{1}/L<t,r,s_{j}>)=2$. We pick independent infinitesimals $\{\epsilon,\delta_{1},\delta_{2}\}$, such that $\{t,r,s_{j}\}\subset L(\epsilon)^{alg}$, $\{x_{1},z_{1}\}\subset L(\delta_{1},\delta_{2})^{alg}$, and the fields $L(\epsilon)^{alg},L(\delta_{1},\delta_{2})^{alg}$ are algebraically disjoint over $L$. By results of \cite{depiro3} or \cite{depiro4}, we can construct a sequence of specialisations;\\

 $\pi_{1}:P(L(\epsilon,\delta_{1},\delta_{2})^{alg})=P(K(\epsilon)^{alg})\rightarrow P(L(\delta_{1},\delta_{2})^{alg})=P(K)$\\

 $\pi_{2}:P(L(\delta_{1},\delta_{2})^{alg})\rightarrow P(L)$\\

 with $\pi_{1}:P(L(\epsilon)^{alg})\rightarrow P(L)$\\

 By the construction of a universal specialisation, see \cite{depiro3}, we can take the composition $\pi=\pi_{2}\circ\pi_{1}$ to satisfy (******). By the definition of $T_{j}$, we have that ${g_{s_{j}^{1}}(x_{1},z_{1})\over g_{s_{j}^{2}}(x_{1},z_{1})}=z_{1}$, we wish to compute the specialisation $\pi(z_{1})$. We first show that;\\

 $\pi_{1}({g_{s_{j}^{1}}(x_{1},z_{1})\over g_{s_{j}^{2}}(x_{1},z_{1})})=\pi_{1}({g_{s_{j,\infty}^{1}}(x_{1},z_{1})\over g_{s_{j,\infty}^{2}}(x_{1},z_{1})})$ (*******)\\

 By a straightforward computation, abbreviating the values\\ $\{g_{s_{j,\infty}^{1}}(x_{1},z_{1}),g_{s_{j,\infty}^{2}}(x_{1},z_{1})\}$ by $\{k_{1},k_{2}\}\subset K$, with $k_{2}\neq 0$, we have;\\

${g_{s_{j}^{1}}(x_{1},z_{1})\over g_{s_{j}^{2}}(x_{1},z_{1})}-{g_{s_{j,\infty}^{1}}(x_{1},z_{1})\over g_{s_{j,\infty}^{2}}(x_{1},z_{1})}={k_{1}+\epsilon_{1}\over k_{2}+\epsilon_{2}}-{k_{1}\over k_{2}}={k_{2}\epsilon_{1}-k_{1}\epsilon_{2}\over k_{2}^{2}+\epsilon_{2}k_{2}}$\\

where $\{\epsilon_{1},\epsilon_{2}\}$ are infinitesimals in $K(\epsilon)^{alg}$. Clearly, the last term in the above expression specialises to zero, showing (*******). We now show that;\\

$\pi_{2}({g_{s_{j,\infty}^{1}}(x_{1},z_{1})\over g_{s_{j,\infty}^{2}}(x_{1},z_{1})})={g_{s_{j,\infty}^{1,red}}(x_{0},z_{0})\over g_{s_{j,\infty}^{2,red}}(x_{0},z_{0})}$ (********)\\

where the $red$ terminology is introduced to indicate that the homogeneous polynomials $\{g_{s_{j,\infty}^{1}},g_{s_{j,\infty}^{2}}\}$ are $reduced$ by any homogeneous factors, vanishing entirely on a component of the projective curve $D_{t_{\infty},r_{\infty}}$. By (****), we can pick $(x_{0},z_{0})$ to lie on a unique component of $D_{t_{\infty},r_{\infty}}$. Let this component be defined by the homogeneous polynomial $h$, suppose that $g_{s_{j,\infty}^{1}}=h^{n}w_{s_{j,\infty}^{1}}$ and $g_{s_{j,\infty}^{2}}=h^{m}w_{s_{j,\infty}^{2}}$, for some integers $m,n\geq 0$. We can also assume that $(x_{0},z_{0})$ does not lie on the intersection $D_{t_{\infty},r_{\infty}}\cap (w_{s_{j,\infty}^{2}}=0)$, as this consists of finitely many points on the given component. By the above construction, $(x_{1},z_{1})$ does not lie on $h=0$, as it does not lie on the curve $D_{t_{\infty},r_{\infty}}$. It must, therefore, follow that;\\

$\pi_{2}({g_{s_{j,\infty}^{1}}(x_{1},z_{1})\over g_{s_{j,\infty}^{2}}(x_{1},z_{1})})=\pi_{2}[h^{n-m}(x_{1},z_{1})]\pi_{2}({w_{s_{j,\infty}^{1}}(x_{1},z_{1})\over w_{s_{j,\infty}^{2}}(x_{1},z_{1})})={w_{s_{j,\infty}^{1}}(x_{0},z_{0})\over w_{s_{j,\infty}^{2}}(x_{0},z_{0})}$\\

where, we have used the fact that $n=m$ and, therefore, $h^{n-m}(x_{1},z_{1})=1$; otherwise, by a simple calculation, using the property $(iii)(b)$, the specialised curve $C_{t_{\infty}}$ would contain either one of the points $[0:0:1]$\\
 or $[0:1:0]$, contradicting Defn 3.28(i). As $(x_{0},z_{0})$ does not lie on any other component of $D_{t_{\infty},r_{\infty}}$, we have that;\\

${w_{s_{j,\infty}^{1}}(x_{0},z_{0})\over w_{s_{j,\infty}^{2}}(x_{0},z_{0})}={r(x_{0},z_{0})g_{s_{j,\infty}^{1,red}}(x_{0},z_{0})\over r(x_{0},z_{0})g_{s_{j,\infty}^{2,red}}(x_{0},z_{0})}={g_{s_{j,\infty}^{1,red}}(x_{0},z_{0})\over g_{s_{j,\infty}^{2,red}}(x_{0},z_{0})}$\\

where $r$ is the maximal homogeneous factor, of both $g_{s_{j,\infty}^{1}}$ and $g_{s_{j,\infty}^{2}}$, vanishing entirely on the remaining components of $D_{t_{\infty},r_{\infty}}$. Hence, (********) is shown. Combining (*******) and (********), we then obtain;\\

$\pi({g_{s_{j}^{1}}(x_{1},z_{1})\over g_{s_{j}^{2}}(x_{1},z_{1})})={g_{s_{j,\infty}^{1,red}}(x_{0},z_{0})\over g_{s_{j,\infty}^{2,red}}(x_{0},z_{0})}$\\

Therefore, the limit curve $C_{j,t_{\infty},r_{\infty}}$ is contained in the union of image curves, defined in $(iv)(b)$. By (****), it consists exactly of the union of these image curves. This show $(iv)(a)$ and $(v)$.\\

Now, suppose that $(x_{0},y_{0},z_{0})$ belongs to the image curve $C_{i,j,\infty}$, therefore to the limit curve $C_{j,t_{\infty},r_{\infty}}$, this means that we can find a generic $(t,r,s_{j},x_{1},y_{1},z_{1})\in W_{j}$, specialising to
$(t_{\infty},r_{\infty},s_{j,\infty},x_{0},y_{0},z_{0})$. As $(x_{1},y_{1},z_{1})$ belongs to $C_{j,r,s}$, by $(iii)(b)$, the projection $pr_{1}(x_{1},y_{1},z_{1})=(x_{1},y_{1})$ belongs to the generic curve $C_{t}$. We consider the following closed variety $Z_{j}\subset(S_{j}\times P^{2})$ defined by;\\

$Z_{j}(t,r,s_{j},x,y)\equiv C_{t}(x,y)$\\

We have that $Z_{j}(t,r,s_{j},x_{1},y_{1})$, hence, by specialisation, we must have $Z_{j}(t_{\infty},r_{\infty},s_{j,\infty},x_{0},y_{0})$, that is, $(x_{0},y_{0})$ belongs to the degenerate curve $C_{\infty}$. It follows that the projection,  $pr_{1}$ of the image curve $C_{i,j,\infty}$ is contained in $C_{\infty}$, (*********).\\

 Conversely, let us suppose that $(x_{0},y_{0})$ belongs to the degenerate curve $C_{\infty}$, then, as $Z_{j}$ is irreducible, by a similar argument to the above, we can find a generic $(t,r,s_{j},x_{1},y_{1})\in Z_{j}$, specialising to $(t_{\infty},r_{\infty},s_{j,\infty},x_{0},y_{0})$. As $(x_{1},y_{1})$ belongs to a generic curve $C_{t}$, by $(iii)(b)$, we can find a lifting $(x_{1},y_{1},z_{1})$, belonging to $C_{j,r,s}$. It follows that $W_{j}(t,r,s_{j},x_{1},y_{1},z_{1})$, and, by specialisation, we must have  $W_{j}(t_{\infty},r_{\infty},s_{j,\infty},x_{0},y_{0},z_{0})$, that is $(x_{0},y_{0},z_{0})$ belongs to the limit curve $C_{j,t_{\infty},r_{\infty}}$. It follows that $C_{\infty}$ is covered by the limit curve $C_{j,t_{\infty},r_{\infty}}$, (**********). We can assume that the reduced function $\eta_{j,\infty}$ is non-constant, in which case, we have, by (**********), that the degenerate curve $C_{\infty}$ is contained in a line, contradicting the assumption on degree in Definition 3.28. It follows that the projection $pr_{1}$ of an image curve $C_{i,j,\infty}$ has dimension $1$. As $C_{i,j,\infty}$ is irreducible, then, using (*********), we obtain $(iv)(b)$. By similar reasoning to the previous argument , using (**********), we obtain $(iv)(d)$, (\footnote{(!) The exceptional case when the limit curve $D_{t_{\infty},r_{\infty}}$ contains the line $W=0$ needs to be considered separately. For a general point $[1:\lambda:0]$ on this line, we can find infinitesimals $\{\epsilon_{1},\epsilon_{2},\epsilon_{3}\}$, with $[1+\epsilon_{1}:\lambda+\epsilon_{2}:\epsilon_{3}]$, defining a general point on the generic curve $D_{t,r}$. The image of this point $[1+\epsilon_{1}:\epsilon_{3}\eta_{j,s}({1+\epsilon_{1}\over \epsilon_{3}}, {\lambda+\epsilon_{2}\over \epsilon_{3}}):\lambda+\epsilon_{2}:\epsilon_{3}]$, on $C_{j,r,s}$, projects to $[1+\epsilon_{1}:\epsilon_{3}\eta_{j,s}({1+\epsilon_{1}\over \epsilon_{3}},{\lambda+\epsilon_{2}\over \epsilon_{3}}):\epsilon_{3}]$, on the generic curve $C_{t}$, and specialises to $[1:0:0]$ on the limit curve $C_{t_{\infty}}$. We can avoid this degenerate scenario, by imposing the extra conditions specified in footnote 12.}).\\

 Now, suppose that $C_{i_{0},j_{0},\infty}$ and $C_{i_{1},j_{1},\infty}$ are distinct image curves, we aim to show $(iv)(c)$. Suppose that $i_{0}\neq i_{1}$. Then, there exist \emph{distinct} components $\{D_{r_{\infty}}^{i_{0}}, D_{r_{\infty}}^{i_{1}}\}$ of the curve $D_{r_{\infty}}$, and (reduced) birational maps;\\

 $\eta_{j_{0},s_{\infty}}:D_{r_{\infty}}^{i_{0}}\leftrightsquigarrow C_{i_{0},j_{0},\infty}$\\

 $\eta_{j_{1},s_{\infty}}:D_{r_{\infty}}^{i_{1}}\leftrightsquigarrow C_{i_{1},j_{1},\infty}$ $(\sharp)$\\

If $(xyz)\in(C_{i_{0},j_{0},\infty}\cap C_{i_{1},j_{1},\infty})$, then, in particularly, by $(\sharp)$, $(xz)\in (D_{r_{\infty}}^{i_{0}}\cap D_{r_{\infty}}^{i_{1}})$. We, therefore, obtain a map;\\

$pr_{2}:(C_{i_{0},j_{0},\infty}\cap C_{i_{1},j_{1},\infty}\cap A^{3})\rightarrow (D_{r_{\infty}}^{i_{0}}\cap D_{r_{\infty}}^{i_{1}}\cap A^{2})$ $(\sharp\sharp)$\\

The fibres of $pr_{2}$ are finite, by the birationality claim in $(\sharp)$, and the image of $pr_{2}$ is finite, by the distinctness of components claim in $(\sharp)$. The result $(iv)(c)$, then follows, by observing, (using footnote 20 to exclude the degenerate case), that the intersection of the curve $C_{i_{0},j_{0},\infty}$ with the projective plane $W=0$, is finite.\\

Now, suppose that $j_{0}\neq j_{1}$, it is sufficient to show that the intersection of the image curves $\{C_{i_{0},j_{0},\infty},C_{i_{0},j_{1},\infty}\}$ is finite. Equivalently, that the reduced functions $\eta_{j_{0},s_{\infty}}$ and $\eta_{j_{1},s_{\infty}}$ define distinct birational morphisms on the component $D_{r_{\infty}}^{i_{0}}$. Suppose that $D_{r_{\infty}}^{i_{0}}$ intersects the axis $x=0$ at a point $(0,c)$. Let $U\subset D_{r_{\infty}}^{i_{0}}$ denote an open subset on which the reduced factors $\{g_{s_{j_{0},\infty}}^{1,red}, g_{s_{j_{0},\infty}}^{2,red},g_{s_{j_{1},\infty}}^{1,red},g_{s_{j_{1},\infty}}^{2,red}\}$ are non-vanishing. As $D_{r_{\infty}}^{i_{0}}$ is irreducible, we can find $(x_{0},z_{0})\in (U\cap{\mathcal V}_{(0,c)})$. We can assume that $dim(x_{0},z_{0}/L)=1$, and find an infinitesimal $\epsilon$ such that $\{x_{0},z_{0}\}\subset L(\epsilon)^{alg}$. In particular, there exist infinitesimals $\{\epsilon_{1},\epsilon_{2}\}\subset L(\epsilon)^{alg}$, with $(x_{0},z_{0})=(\epsilon_{1},c+\epsilon_{2})$. It is sufficient to show that the unique values of the reduced functions $\{\eta_{j_{0},s_{\infty}},\eta_{j_{1},s_{\infty}}\}$ are distinct at $(x_{0},z_{0})$. We consider the irreducible variety $H\subset (S_{j_{0},j_{1}}\times P^{2})$ defined by;\\

$H(t,r,s_{j_{0}},s_{j_{1}},x,z)\equiv S_{j_{0},j_{1}}(t,r,s_{j_{1}})\wedge D_{r}(x,z)$\\

where $S_{j_{0},j_{1}}$ is the irreducible variety, defined by projection of $S$, similarly to the construction of $S_{j}$. The intersection $H_{\epsilon_{1}}\subset (S_{j_{0},j_{1}}\times P^{2})$ is defined by;\\

$H_{\epsilon_{1}}(t,r,s_{j_{0}},s_{j_{1}},x,z)\equiv H(t,r,s_{j_{0}},s_{j_{1}},x,z)\wedge x=\epsilon_{1}$\\

We have that $H_{\epsilon_{1}}(t_{\infty},r_{\infty},s_{j_{0},\infty},s_{j_{1},\infty},x_{0},z_{0})$ and claim that, for\\ $(t,r,s_{j_{0}},s_{j_{1}})\in (S_{j_{0},j_{1}}\cap{\mathcal V}_{(t_{\infty},r_{\infty},s_{j_{0},\infty},s_{j_{1},\infty})})$, there exists $(x_{1},z_{1})\in{\mathcal P}^{2}$, with $(x_{1},z_{1})\in{\mathcal V}_{(x_{0},z_{0})}$ and $H_{\epsilon_{1}}(t,r,s_{j_{0}},s_{j_{1}},x_{1},z_{1})$, $(\sharp\sharp\sharp)$. In particular, it follows that $H_{\epsilon_{1}}$ consists of a finite union $\{H_{\epsilon_{1}}^{1},\ldots,H_{\epsilon_{1}}^{k}\}$ of irreducible curves, each of which projects onto $S_{j_{0},j_{1}}$.\\

The proof of $(\sharp\sharp\sharp)$ is straightforward, using methods from the paper \cite{depiro5}. The tuple $(\overline{r}_{\infty},(1,0,-\epsilon_{1}))$ belongs to the smooth projective space, parametrising intersections between degree $n$ and degree $1$ curves. As the intersection $(D_{r_{\infty}}\cap (x=\epsilon_{1}))$ is finite, we can apply Theorem 2.4 of \cite{depiro5}.\\

We choose $(t,r,s_{j_{0}},s_{j_{1}},x_{1},z_{1})$ generically in $H_{\epsilon_{1}}$. In particular, we have that $dim(t,r,s_{j_{0}},s_{j_{1}}/L(\epsilon)^{alg})=1$. It follows that we can find an infinitesimal $\delta$, with $\{t,r,s_{j_{0}},s_{j_{1}}\}\subset L(\delta)^{alg}$ and $dim(\delta/L(\epsilon)^{alg})=dim(\epsilon/L(\delta)^{alg})=1$. We employ the notation above, to produce the commuting specialisation maps;\\

$(\pi_{1}\circ\pi_{2}):P(L(\epsilon,\delta)^{alg})\rightarrow P(L(\epsilon)^{alg})\rightarrow P(L)$.\\

$(\pi_{2}\circ\pi_{1}):P(L(\epsilon,\delta)^{alg})\rightarrow P(L(\delta)^{alg})\rightarrow P(L)$.\\

We have that $\pi_{2}(x_{1},z_{1})=(x_{0},z_{0})$ and $\pi_{1}(x_{0},z_{0})=(0,c)$. Hence, $(x_{1},z_{1})=(\epsilon_{1},c+\nu_{1})$, where $\nu_{1}$ is an infinitesimal in $L(\epsilon,\delta)^{alg}$. As the specialisations commute, we have that $(\pi_{2}\circ\pi_{1})(x_{1},z_{1})=(\pi_{1}\circ\pi_{2})(x_{1},z_{1})=(0,c)$. It follows that $\pi_{1}(x_{1},z_{1})=(0,c+\delta_{1})\in D_{t,r}$, where $\delta_{1}$ is an infinitesimal in $L(\delta)^{alg}$. Let $\pi_{1}$ define an infinitesimal neighborhood ${\mathcal V}_{(0,c+\delta_{1})}$, and let $(x_{1},z_{1})$ belong to the branch $\gamma$, centred at $(0,c+\delta_{1})$, relative to this neighborhood. Using the condition $(iii)(a)$ and the property of Definition 3.30(iii), by a straightforward adaptation of Lemma 3.27, we have that the values $val_{\gamma}(\eta_{j_{0},s})$ and $val_{\gamma}(\eta_{j_{1},s})$ are distinct, belonging to the fixed set $\{a_{1},\ldots,a_{m}\}\subset L$. For convenience of notation, we will denote them by $a_{j_{0}}$ and $a_{j_{1}}$, (\footnote{Although it is possible that some permutation of values can occur, for a given function $\eta_{j,s}$, at different branches along the axis $x=0$.}).\\

We have that;\\

$\eta_{j_{0},s}(x_{1},z_{1})-\eta_{j_{1},s}(x_{1},z_{1})=(\eta_{j_{0},s}^{\gamma}(0,c+\delta_{1})-\eta_{j_{1},s}^{\gamma}(0,c+\delta_{1}))$\\

\indent \ \ \ \ \ \ \ \ \ \ \ \ \ \ \ \ \ \ \ \ \ \ \ \ \ \ \ \ \ \ \ \ \ \ \ $+(\eta_{j_{0},s}(x_{1},z_{1})-\eta_{j_{0},s}^{\gamma}(0,c+\delta_{1}))$\\

\indent \ \ \ \ \ \ \ \ \ \ \ \ \ \ \ \ \ \ \ \ \ \ \ \ \ \ \ \ \ \ \ \ \ \ \
$-(\eta_{j_{1},s}(x_{1},z_{1})-\eta_{j_{1},s}^{\gamma}(0,c+\delta_{1}))$\\

By a simple calculation, the terms $(\eta_{j_{0},s}(x_{1},z_{1})-\eta_{j_{0},s}^{\gamma}(0,c+\delta_{1}))$ and $(\eta_{j_{1},s}(x_{1},z_{1})-\eta_{j_{1},s}^{\gamma}(0,c+\delta_{1}))$ belong to $Ker(\pi_{1})\cap L(\epsilon,\delta)^{alg}$. Hence;\\

$\pi_{1}(\eta_{j_{0},s}(x_{1},z_{1})-\eta_{j_{1},s}(x_{1},z_{1}))=a_{j_{0}}-a_{j_{1}}$\\

and\\

$\eta_{j_{0},s}(x_{1},z_{1})-\eta_{j_{1},s}(x_{1},z_{1})\ \ \ \ \ \ =a_{j_{0}}-a_{j_{1}}+\nu$\\

with $\nu$ an infinitesimal, belonging to $L(\epsilon,\delta)^{alg}$.\\

It follows, using the limit calculation above, that;\\

$\pi_{2}(\eta_{j_{0},s}(x_{1},z_{1})-\eta_{j_{1},s}(x_{1},z_{1}))=\eta_{j_{0},s_{\infty}}(x_{0},z_{0})-\eta_{j_{1},s_{\infty}}(x_{0},z_{0})$\\

$\indent \ \ \ \ \ \ \ \ \ \ \ \ \ \ \ \ \ \ \ \ \ \ \ \ \ \ \ \ \ \ \ \ \ \ \ =a_{j_{0}}-a_{j_{1}}+\epsilon_{3}$\\

where $\epsilon_{3}$ is an infinitesimal, belonging to $L(\epsilon)^{alg}$. Therefore;\\

$\pi_{1}(\eta_{j_{0},s_{\infty}}-\eta_{j_{1},s_{\infty}})=a_{j_{0}}-a_{j_{1}}\neq 0$\\

In particular, it follows that the reduced functions $\eta_{j_{0},s_{\infty}}$ and $\eta_{j_{1},s_{\infty}}$ take distinct values at the point $(x_{0},z_{0})$. This shows $(iv)(c)$. The proof of $(ii)$ follows immediately from $(iv)(b)(d)$.
\end{proof}

We now show the following fundamental properties of asymptotic degenerations;\\

\begin{lemma}
Let $G$ be the group obtained in the last section. Then the action of $G$ on a generic curve $D_{r}$, with $r\in R$, extends uniformly to almost all curves of the form $D_{r'}$ in the family, with $r'\in R$.
\end{lemma}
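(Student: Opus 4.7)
The plan is to realize the $G$-action as the restriction of a single algebraic family of birational morphisms on the total space of the family $\{D_r\}_{r\in R}$, and then excise the proper closed subset of $R$ where this family fails to specialize to a bona fide automorphism. Let $\mathcal{D}\subset R\times P^2$ denote the total space; by Lemma 4.3(v) applied to the $\eta_{j,s}$ and the irreducibility claim in 4.3(i), $\mathcal{D}$ is irreducible, and $pr_1:\mathcal{D}\to R\times A^1$ is a dominant quasi-finite morphism. In particular, the function field $L(\mathcal{D})$ is a finite extension of $L(R,x)$.

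First I would identify the Galois group of $L(\mathcal{D})/L(R,x)$ with $G$. Restricting $\eta_{j,s}$ to a generic fibre recovers the power series $\eta_j$ of Lemma 2.5, which by Lemma 3.21(iii)(c) generate the splitting field of the defining polynomial $F(x,y)$ of $C_t$ over $L(x)$; hence the generic fibre $L(D_r)/L(x)$ is Galois with group naturally isomorphic to $G=\mathrm{Gal}(L(C')/L(x))$. Because the $\eta_{j,s}$ arise from a single algebraic family, the extension $L(\mathcal{D})/L(R,x)$ is itself Galois with the same group $G$ acting, a statement that can be obtained by primitive element together with the irreducibility of the generic splitting data, using Lemma 3.23(i) generically in $r$.

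Given this, each $g\in G$ acts as a field automorphism of $L(\mathcal{D})$ fixing $L(R,x)$, and hence induces a birational morphism $\Theta_g:\mathcal{D}\leftrightsquigarrow\mathcal{D}$ over $R\times A^1$. Concretely $\Theta_g$ is defined in affine coordinates by $(r,x,z)\mapsto (r,x,P_g(r,x,z)/Q_g(r,x,z))$ for polynomials $P_g,Q_g\in L[R][x,z]$, uniformly in $r$. Repeating the construction of Lemma 3.23, one forms $R_g=(F=0)\cap(Q_g=0)\subset\mathcal{D}$ and its image $S_g\subset R$, deletes finitely many coordinate values $T_g\subset R$ corresponding to fibres on which $\deg_z$ drops, and then sets $R'=R\setminus\bigcup_{g\in G}(S_g\cup T_g)$. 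Over $R'$ the family $\{\Theta_g\}_{g\in G}$ restricts fibrewise to birational automorphisms of each $D_{r'}$ preserving $pr_1$, producing the desired uniform extension.

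The main obstacle will be the identification of $G$ with the Galois group of $L(\mathcal{D})/L(R,x)$, since the Galois group could \emph{a priori} be larger in the family or jump on special components. I would handle this by combining the irreducibility of $\mathcal{D}$ (Lemma 4.3(i)) with $(iii)(b)$ of Lemma 4.3, which exhibits the $\eta_{j,s}$ as the full set of distinct roots generically, together with a specialization argument in the style of the specialisations $\pi_1,\pi_2$ employed in the proof of Lemma 4.3: a non-trivial element of the generic Galois group specializes to a non-trivial element at a generic fibre, and conversely the $G$-action on the generic fibre extends uniquely to the generic point of $R$ because the defining rational functions $p_g/q_g$ have coefficients algebraic over the base. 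Once this identification is pinned down, the remaining verifications — algebraic definability of the family action and the fact that $R\setminus R'$ is proper closed — follow by standard constructibility, exactly as in the proof of Lemma 3.23.
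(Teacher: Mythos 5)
Your proposal is correct, but it reaches the conclusion by a different mechanism than the paper. The paper's proof never passes to the function field of the total space: it enumerates the tuple of rational-function parameters $(\ddot{g}_{1}^{1},\ddot{g}_{1}^{2},\ldots,\ddot{g}_{l}^{1},\ddot{g}_{l}^{2})$ defining the $G$-action on the generic fibre as a point of $P^{2Nl}$, writes the group axioms (closure under $\theta_{h_{i}}:D_{r}\leftrightsquigarrow D_{r}$, compatibility with the multiplication table of $G$, identity only for $i=1$, inverses) as a first-order formula $\Psi_{G}(t,r,\bar h)$, observes via Lemma 3.23 that $\Psi_{G}$ holds at the generic tuple, and then spreads out by taking an irreducible closed variety $E\subset R\times P^{2Nl}$, \emph{finite over} $R$, on an open subset of which $\Psi_{G}$ persists; generic transitivity on the new fibres is recovered from conditions (c) and (d) by the counting argument of Lemma 3.23. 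You instead derive the uniform family $\{\Theta_{g}\}$ directly from $\mathrm{Gal}(L(\mathcal{D})/L(R,x))$ acting on the total space and then excise the bad locus. Your route is conceptually cleaner and makes the compatibility with specialisation transparent, but it carries an extra burden that the paper's route avoids: you must identify $\mathrm{Gal}(L(\mathcal{D})/L(R,x))$ with $G$ (you correctly flag this as the main obstacle), and your assertion that $P_{g},Q_{g}$ have coefficients in $L[R]$ is slightly too strong in general --- the action may only be defined over a finite extension of $L(R)$, i.e.\ over the generic point of a finite cover of $R$, which is precisely why the paper introduces $E\rightarrow R$ rather than working over $R$ itself. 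Since a finite cover still surjects onto a dense open subset of $R$, this does not affect the ``almost all $r'$'' conclusion, so the discrepancy is repairable; the definability approach of the paper simply builds the fix in from the start by needing only the fibrewise statement of Lemma 3.23 plus constructibility, at the cost of a less intrinsic description of the extended action.
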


\begin{proof}

We can consider $G$ as acting on the curve $D_{r}$, for generic $r\in R$, replacing  $C'$ in Lemma 3.23. We enumerate the parameters needed to define the action in Lemma 3.23(*), as a tuple;\\

 $(\ddot{g}_{1}^{1},\ddot{g}_{1}^{2},\ldots,\ddot{g}_{i}^{1},\ddot{g}_{i}^{2},\ldots,\ddot{g}_{l}^{1},\ddot{g}_{l}^{2})\subset P^{2Nl}$\\

where $Card(G)=l$, which we abbreviate by $\overline{\ddot{g}}=(\ddot{g}_{1},\ldots,\ddot{g}_{i},\ldots,\ddot{g}_{l})$, (\footnote{Having obtained a set of $2l$ rational functions by the standard projective method, we can equalise their degrees, by, for example, multiplying through with factors of the form $(x^{s}g(x,z)+1)$, where $g(x,z)$ is a polynomial vanishing on the curve $D_{r}$. This is mainly technically convenient in terms of notation.}). Let\\

$\Psi_{G}(t,r,\bar h)\equiv$  (a).$\theta_{h_{i}}:D_{r}\leftrightsquigarrow D_{r}, 1\leq i\leq l$\\
\indent \ \ \ \ \ \ \ \ \ \ \ \ \ \ \ \ \ \ (b).$(\theta_{h_{i}}\circ\theta_{h_{j}})=\theta_{h_{i*j}}, 1\leq i,j\leq l$\\
\indent \ \ \ \ \ \ \ \ \ \ \ \ \ \ \ \ \ \ (c).$\theta_{h_{i}}=Id\indent iff\indent  i=1,1\leq i\leq l$\\
\indent \ \ \ \ \ \ \ \ \ \ \ \ \ \ \ \ \ \ (d).$(\theta_{h_{i}})^{-1}=\theta_{h_{i^{-1}}}, 1\leq i\leq l$\\

with $*$ and $-1$ copying the group multiplication and inversion of $G$ onto the labelled set $\{1,\ldots,l\}$, and $\bar h$ being a new variable, substituting $\overline{\ddot{g}}$. The conditions $(a)$ and $(d)$ are formulated simultaneously as follows;\\

$\noindent \exists x_{1}\ldots x_{p}\exists z_{1}\ldots z_{p}\forall x\forall z[\bigwedge_{i=1}^{p}(x\neq x_{i})\wedge \bigwedge_{i=1}^{p}(z\neq z_{i})\wedge D_{t,r}(x,z)\rightarrow \indent \ \ \ \ \ \ \ \ \ \ \ \ \ \ \ \ \ \ \ \ \ \ \ \ \ \ \ \ \ \bigwedge_{i=1}^{w}D_{t,r}(\theta_{h_{i}}(x,z))\wedge\bigwedge_{i=1}^{w}(\theta_{h_{i^{-1}}}\circ\theta_{h_{i}})(x,z)=(x,z)]$\\

The formulation of conditions $(b)$ and $(c)$is a similar exercise, which we leave to the reader. By Lemma 3.23, the condition $\psi_{G}$ holds for the tuple $(t,r,\overline{\ddot{g}})$. Imitating the construction above of the variety $S$, in Lemma 4.3, we can find an irreducible closed variety $E\subset R\times P^{2Nl}$, with $E\rightarrow R$ a finite cover, and open subsets $U_{G}\subset E$ and $V_{G}\subset D$, for which the formula $\Psi_{G}$ holds and $pr^{-1}(V_{G})\subset U_{G}$ respectively. If $(t',r')\in V_{G}$, then the above construction provides a Galois group action on the curve $D_{r'}$. By imitating the proof of Lemma 3.23, and using conditions $(c)$ and $(d)$, one can assume that such an action has the generic transitivity property.
\end{proof}

As before, we consider the non-generic case, adopting the terminology from Lemma 4.4;\\

\begin{lemma}
Let $(t_{\infty},r_{\infty},\bar h_{\infty})$ be a non-generic element of $E$, then, the reduced functions $\{\theta^{red}_{h_{1,\infty}},\ldots,\theta^{red}_{h_{l,\infty}}\}$ restrict to morphisms from the curve $D_{t_{\infty},r_{\infty}}$ to itself, satisfying conditions $(a)-(d)$ of the previous lemma, and the generic transitivity property.
\end{lemma}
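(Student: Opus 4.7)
The plan is to imitate the specialisation analysis of Lemma 4.4, replacing the image curves $C_{j,r,s}$ by the graphs of the Galois morphisms $\theta_{h_i}$. For each $1\leq i\leq l$, I would introduce the irreducible closed variety $W_i\subset E\times P^2\times P^2$ whose generic fibre over $(t,r,\bar h)\in E$ coincides with the graph of $\theta_{h_i}:D_r\leftrightsquigarrow D_r$; irreducibility follows from irreducibility of the generic curve $D_r$ together with birationality of $\theta_{h_i}$. Specialising $W_i$ to $(t_\infty,r_\infty,\bar h_\infty)$, together with a fibre-dimension count analogous to that following (****) in Lemma 4.4, would then exhibit the limit fibre as an equidimensional curve, each of whose components is the graph of a birational map between (possibly equal) irreducible components of $D_{t_\infty,r_\infty}$, settling the morphism claim.

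The central calculation, parallel to (*******)--(********) of Lemma 4.4, is that for a generic $(x_0,z_0)$ on a fixed component $D^j_{r_\infty}$ and a suitable lifting $(x_1,z_1)\in{\mathcal V}_{(x_0,z_0)}\cap D_{t,r}$ produced by an infinitesimal argument as in (******), the specialisation $\pi(\theta_{h_i}(x_1,z_1))$ equals $\theta^{red}_{h_{i,\infty}}(x_0,z_0)$. With this commutation of specialisation and reduction in hand, conditions (a)--(d) of Lemma~4.4 transfer from the generic curve by applying $\pi$ to the corresponding algebraic identities, chosen at generic test points on each component so as to avoid the finite loci where reduction could alter values.

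For the generic transitivity property, I would adapt the argument of Lemma~3.23. Etaleness of $pr_1$ on the generic curve $D_r$ persists on an open subset of each component $D^j_{r_\infty}$ by an application of Theorem~2.4 of \cite{depiro5} analogous to the proof of $(\sharp\sharp\sharp)$ in Lemma~4.4. Combined with the specialisation of sharp transitivity from Lemma~3.23 and Proposition~3.15 of \cite{M}, this forces the induced action of $G$ to remain sharply transitive on a generic fibre of $pr_1$ restricted to each component.

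The main obstacle is the compatibility of reduction with composition: the naive composition $\theta^{red}_{h_i}\circ\theta^{red}_{h_j}$ may a priori differ from the reduction of $\theta_{h_i}\circ\theta_{h_j}$, and similarly for the identity and inversion relations in (b), (c), (d). The subtlety, parallel to the treatment of the factor $h^{n-m}(x_1,z_1)=1$ in Lemma~4.4, is to track the orders of vanishing of the relevant homogeneous factors on the limiting components; in particular, one must rule out the degeneration of a non-trivial $\theta_{h_i}$ to the identity on every component of $D_{t_\infty,r_\infty}$, which would violate condition (c), and this is to be handled by using the extended etale presentation on each component together with the infinitesimal distinctness argument employed after $(\sharp\sharp\sharp)$ in Lemma~4.4.
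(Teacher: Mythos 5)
Your overall strategy matches the paper's: transfer the group axioms from the generic curve to the degenerate one by the specialisation/limit calculation of Lemma 4.3, with the two genuine difficulties correctly identified (compatibility of reduction with composition, and ruling out degeneration of a non-trivial element to the identity). The graph varieties $W_i$ you introduce play essentially the role of the paper's varieties $J$ and $\overline{J_G}$, and your central commutation claim $\pi(\theta_{h_i}(x_1,z_1))=\theta^{red}_{h_{i,\infty}}(x_0,z_0)$ is exactly how the paper verifies the morphism claim and conditions $(a)$, $(b)$, $(d)$.

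There are, however, two places where your sketch omits the mechanism that actually closes the argument. First, for the converse direction of condition $(c)$: the distinctness argument after $(\sharp\sharp\sharp)$ in Lemma 4.3 tells you that the reduced flashes $\eta^{red}_{1,s_\infty}$ and $\eta^{red}_{2,s_\infty}$ remain distinct in the limit, but by itself this says nothing about the Galois maps. The missing bridge is the intertwining relation: for a non-trivial $g_2$ with $g_2\centerdot C_{1,r,s}=C_{2,r,s}$ one has $(\theta_{h_2})^{*}(\eta_{2,s_2})=\eta_{1,s_1}$ on the generic curve, and this relation persists under specialisation because the two irreducible varieties $T_1$ and $T_{2,g_2}$ coincide, so their closures have the same degenerate fibre. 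Only then does $\theta^{red}_{h_{2,\infty}}=Id$ force $\eta^{red}_{2,s_\infty}=\eta^{red}_{1,s_\infty}$ and yield the contradiction; the ``extended etale presentation'' you invoke will not supply this step. Second, for generic transitivity: ``specialisation of sharp transitivity'' is not automatic, because the cardinality of a generic fibre $D_{t_\infty,r_\infty}\cap(x=a)$ in finite position could drop below $l=Card(G)$ if the intersection multiplicity at $Q_1$ jumps in the limit. The paper rules this out by a Bezout computation: setting $k=mult_{(t_\infty,r_\infty,a,Q_1)}(F/(R\times P^1))$, a fibre with $l-k+1<l$ finite points would, by the counting argument of Lemma 3.23, force some non-trivial $\theta^{red}_{h_{i,\infty}}$ to fix a point and hence (by the etale argument, Proposition 3.15 of Milne) to be the identity, contradicting the already-established condition $(c)$; hence $k=1$ and transitivity follows. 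Note also that the correct statement is sharp transitivity on a generic fibre of the whole degenerate curve, not of each component separately, since $G$ may permute the components.
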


\begin{proof}
We consider the closed variety $J\subset (E\times P^{2})$, defined by;\\

$J(t,r,\bar h,x,z)\equiv E(t,r,\bar h)\wedge D_{t,r}(x,z)$\\

By similar arguments to the above, $J$ is irreducible. We also consider the variety $J_{G}\subset(E\times P^{2})$, defined by;\\

$J_{G}(t,r,\bar h,x,z)\equiv V_{G}(t,r)\wedge J(t,r,\bar h,x,z)\wedge\exists z_{1}\ldots z_{i}\ldots z_{l}(z=h_{i}\centerdot z_{i})$ (\footnote{The reader should supply the extra conditions necessary to exclude the finite sets of points for which the action $\centerdot$ of $G$ is undefined, a similar argument being used in Lemma 4.3})\\

We let ${\overline J_{G}}$ be the Zariski closure of $J_{G}$ in $(E\times P^{2})$. By the construction of $V_{G}$, a generic fibre ${\overline J_{G}}(t,r,\bar h)$ is exactly the curve $D_{t,r}\subset P^{2}$. As $V_{G}$ and $D_{t,r}$ are both irreducible, it follows that $J_{G}$, and, therefore, ${\overline J_{G}}$ are both irreducible. Hence, $J={\overline J_{G}}$, $(*)$. Now suppose, for contradiction, that the reduced function $\theta^{red}_{h_{1,\infty}}$ fails to define a morphism from the curve $D_{t_{\infty},r_{\infty}}$ to itself. Then the intersection $D_{t_{\infty},r_{\infty}}\cap \overline{\theta^{red}_{h_{1,\infty}}(D_{t_{\infty},r_{\infty}})}$ defines a proper closed subset of $D_{t_{\infty},r_{\infty}}$. We can choose $(x_{0},z_{0})$ from $D_{t_{\infty},r_{\infty}}$, not lying on this intersection, $(**)$. By $(*)$, we have that   $\overline{J_{G}}(t_{\infty},r_{\infty},\bar h_{\infty},x_{0},z_{0})$. As $\overline{J_{G}}$ is irreducible, we can find a generic $(t,r,\bar h,x_{1},z_{1})$, belonging to $\overline{J_{G}}$, specialising to $(t_{\infty},r_{\infty},\bar h_{\infty},x_{0},z_{0})$. By the definition of $V_{G}$, we can find $(x_{1},z_{2})$, belonging to $D_{t,r}$, with $h_{1}\centerdot z_{2}=z_{1}$. We can assume that $(x_{1},z_{2})$ does not specialise to a point belonging to $D_{t_{\infty},r_{\infty}}\cap q^{red}_{h_{1,\infty}}=0$, $(***)$, where  $q^{red}_{h_{1,\infty}}$ is the reduced denominator term, used in the definition of $\theta^{red}_{h_{1,\infty}}$, see Lemma 3.23. This follows by observing that there are only a finite number of points on the curve $D_{t_{\infty},r_{\infty}}$, for which the reduced denominator term is zero. We can choose the coordinate $x_{0}$ so that the line $x=x_{0}$ has an empty intersection with this finite number of points. Therefore, we may assume $(***)$. It follows, by the limit calculation used in Lemma 4.3, that the specialisation $(x_{0},z_{0})$ of $\theta_{h_{1}}(x_{1},z_{2})$ is $\theta^{red}_{h_{1,\infty}}(x_{0},z_{3})$, with $(x_{0},z_{3})\in D_{t_{\infty,r_{\infty}}}$. This contradicts $(**)$. Hence, the first part of the lemma is shown. We now check conditions $(a)-(d)$. Condition $(a)$ follows from condition $(d)$ and the fact that the set of reduced functions $\{\theta^{red}_{h_{1,\infty}},\ldots,\theta^{red}_{h_{l,\infty}}\}$ define morphisms on the curve $D_{t_{\infty},r_{\infty}}$. Conditions $(b)$ and $(d)$ amount to checking that for a triple $(t,r,\bar g)$, specialising to $(t_{\infty},r_{\infty},\bar g_{\infty})$, for a generic point $(x_{1},z_{1})$ of $D_{t,r}$, specialising to $(x_{0},z_{0})$ of $D_{t_{\infty},r_{\infty}}$, we have that;\\

$sp(\theta_{h_{i}}\circ\theta_{h_{j}}(x_{1},z_{1}))=\theta^{red}_{h_{i,\infty}}\circ\theta^{red}_{h_{j,\infty}}(x_{0},z_{0})$ $(****)$\\

We assume that the three points $\{(x_{1},z_{1}),\theta_{h_{j}}(x_{1},z_{1}),\theta_{h_{i}}\circ\theta_{h_{j}}(x_{1},z_{1})\}$ satisfy $(***)$, by excluding a finite number of the coordinates $x_{0}$. In which case, we have, by repeating the limit calculation in Lemma 4.3, that;\\

$sp(\theta_{h_{i}}\circ\theta_{h_{j}}(x_{1},z_{1}))=\theta^{red}_{h_{i,\infty}}(sp(\theta_{h_{j}}(x_{1},z_{1})))\\
\indent \ \ \ \ \ \ \ \ \ \ \ \ \ \ \ \ \ \ \ \ \ \ \ \ \ =\theta^{red}_
{h_{i,\infty}}(\theta^{red}_{h_{j,\infty}}(sp(x_{1},z_{1})))\\
\indent \ \ \ \ \ \ \ \ \ \ \ \ \ \ \ \ \ \ \ \ \ \ \ \ \ =\theta^{red}_{h_{i,\infty}}\circ\theta^{red}_{h_{j,\infty}}(x_{0},z_{0})$\\

as required in $(****)$. One direction of condition $(c)$ is straightforward and left to the reader. We show the converse direction, $(*****)$. We let $E_{i}$, for $1\leq i\leq l$, be the projections of $E$ onto the $l$ factors of $G$. As before, the covers $E_{i}\rightarrow R$ are irreducible and finite. We also introduce the combined parameter spaces;\\

$S_{j,k}(t,r,s_{j},s_{k})\equiv S_{j}(t,r,s_{j})\wedge S_{k}(t,r,s_{k})$, for $1\leq j,k\leq m$\\

$F_{i,j,k}(t,r,h_{i},s_{j},s_{k})\equiv E_{i}(t,r,h_{i})\wedge S_{j,k}(t,r,s_{j},s_{k})$, for $1\leq i\leq l$,\\
\indent \ \ \ \ \ \ \ \ \ \ \ \ \ \ \ \ \ \ \ \ \ \ \ \ \ \ \ \ \ \ \ \ \ \ \ \ \ \ \ \ \ \ \ \ \ \ \ \ \ \ \ \ \ \ \ \ \ \ \ \ \ \ \ \ \  $1\leq j,k\leq m$\\

Again, the covers $S_{j,k}\rightarrow R$ and $F_{i,j,k}\rightarrow R$ are irreducible and finite. We recall, from Lemma 3.23, that, for a generic parameter $(t,r)\in R$, any $g\in G$, acting on the curve $D_{t,r}$, induces an action on the associated set of flashes $\{C_{1,r,s},\ldots, C_{j,r,s},\ldots C_{m,r,s}\}$. If $\theta_{g}\neq Id$, then, by definition of the Galois group, this action is non-trivial. By the enumeration in the previous lemma, we denote $g$ by $g_{2}\neq g_{1}$, and ,without loss of generality, we can suppose that $(g_{2}\centerdot C_{1,r,s})=C_{2,r,s}$. We recall the definition of the variety $T_{1}\subset F_{2,1,2}\times P^{3}$, considered in Lemma 4.3;\\

$T_{1}(t,r,h_{2},s_{1},s_{2},x,y,z)\equiv F_{2,1,2}(t,r,h_{2},s_{1},s_{2})\wedge D_{t,r}(x,z)\wedge y=\eta_{1,s_{1}}(x,z)$\\

\noindent and define the following variety $T_{2,g_{2}}\subset F_{2,1,2}\times P^{3}$;\\

$T_{2,g_{2}}(t,r,h_{2},s_{1},s_{2},x,y,z)\equiv F_{2,1,2}(t,r,h_{2},s_{1},s_{2})\wedge D_{t,r}(x,z)\\
\indent \ \ \ \ \ \ \ \ \ \ \ \ \ \ \ \ \ \ \ \ \ \ \ \ \ \ \ \ \ \ \ \ \ \ \ \ \wedge y=(\eta_{2,s_{2}}\circ\theta_{2,h_{2}})(x,z)$ (\footnote{It is necessary to supply extra conditions, in order to ensure both varieties are well defined. This was done in Lemma 4.3, for the variety $T_{1}$. For the variety $T_{2,g_{2}}$, one should restrict the parameter $x$, by adding the conditions that $x\notin Disc_{x'}(D_{t,r}(x',z),q_{h_{2}^{2}}(x',z)=0)$ and $x\notin Disc_{x'}(D_{t,r}(x',z),g_{s_{2}^{2}}(x',z)=0)$})\\

As we showed in Lemma 4.3, the Zariski closure $\overline{T_{1}}$ is irreducible. By the condition that $(\theta_{2,h_{2}})^{*}(\eta_{2,s_{2}})=\eta_{1,s_{1}}$ on the curve $D_{t,r}$, for a generic choice of $(t,r,h_{2},s_{1},s_{2})$, we clearly have that $T_{1}=T_{2,g_{2}}$, and the closure $\overline{T_{2,g_{2}}}$ is irreducible. By results in Lemma 4.3, a degenerate fibre  $\overline{T_{1}}(t_{\infty},r_{\infty},h_{2,\infty},s_{1,\infty},s_{2,\infty})$ is exactly the limit curve, $C_{1,t_{\infty,r_{\infty}}}$. Suppose $(x_{0},y_{0},z_{0})\in C_{1,t_{\infty,r_{\infty}}}$, then we can choose a generic point $(t,r,h_{2},s_{1},s_{2},x_{1},y_{1},z_{1})$ on the variety $\overline{T_{2,g_{2}}}$, specialising to the tuple\\ $(t_{\infty},r_{\infty},h_{2,\infty},s_{1,\infty},s_{2,\infty},x_{0},y_{0},z_{0})$. By the definition of $T_{2,g_{2}}$, we have that $y_{1}=(\eta_{2,s_{2}}\circ\theta_{2,h_{2}})(x_{1},z_{1})$. Repeating the above calculation;\\

$y_{0}=sp(y_{1})=sp((\eta_{2,s_{2}}\circ\theta_{2,h_{2}})(x_{1},z_{1}))\\
\indent \ \ \ \ \ \ \ \ \ \ \ \ \ \ \ =\eta_{2,s_{2,\infty}}^{red}(sp(\theta_{2,h_{2}}(x_{1},z_{1})))\\
\indent \ \ \ \ \ \ \ \ \ \ \ \ \ \ \ =\eta_{2,s_{2,\infty}}^{red}(\theta_{2,h_{2,\infty}}^{red}(sp(x_{1},z_{1})))\\
\indent \ \ \ \ \ \ \ \ \ \ \ \ \ \ \ =(\eta_{2,s_{2,\infty}}^{red}\circ\theta_{2,h_{2,\infty}}^{red})(x_{0},z_{0})$\\

where, we take the usual precaution, of restricting the range of the coordinate $x_{0}$, in order to apply the calculation of Lemma 4.3. The result implies that, in the limit, $(\theta_{2,h_{2,\infty}}^{red})^{*}(\eta_{2,s_{2,\infty}})=\eta_{1,s_{1,\infty}}$ on the curve $D_{t_{\infty},r_{\infty}}$. By Lemma 4.3(c), we have that the rational functions $\{\eta_{1,s_{1,\infty}}, \eta_{2,s_{2,\infty}}\}$ are distinct. Hence, we conclude, that $\theta_{2,h_{2,\infty}}^{red}\neq Id$. This shows $(*****)$.\\

We finally show the generic transitivity property, for the curve $D_{t_{\infty},r_{\infty}}$. Consider the variety $F\subset R\times P^{1}\times P^{2}$;\\

$F(t,r,x_{0},x_{1},X,Z,W)\equiv D_{t,r}(X,Z,W)\wedge x_{0}X=x_{1}W$\\

\noindent which defines a finite cover $F\rightarrow (R\times P^{1})$. Let $Q_{1}=[0:1:0]$, and let $k=mult_{(t_{\infty},r_{\infty},a,Q_{1})}(F/(R\times P^{1})$, where $a$ is generic over the parameter $(t_{\infty},r_{\infty})$, and the parameters defining $R$. For generic $(t,r,a)\in (R\times P^{1})$, we have that $I_{Q_{1}}(D_{t,r},x=a)=n-l$, where $Card(G)=l$ and $deg(D_{t,r})=n$. This follows, from a simple application of Bezout's theorem, and the previously observed fact, that the curve $D_{t,r}$ has $l$ points in finite position, along the axis $x=a$. It follows that $I_{Q_{1}}(D_{t_{\infty},r_{\infty}},x=a)=n-l+(k-1)=n-(l-k+1)$. This follows from summability of specialisation, see \cite{depiro5}. Applying Bezout's theorem again, the curve $D_{t_{\infty},r_{\infty}}$ has exactly $(l-k+1)$ points in finite position along the generic axis $x=a$. If $k>1$, we obtain $l_{1}<l$ points in finite position along a generic axis. By the counting argument, used in Lemma 3.23, properties $(a)-(d)$ shown above, and the fact that $Card(G)=l$, we can find
a reduced function $\theta^{red}_{h_{i,\infty}}$, for some $(i>2)$, acting as the identity on $D_{t_{\infty},r_{\infty}}$, this contradicts property $(c)$. Hence, $k=1$, and the curve $D_{t_{\infty},r_{\infty}}$ has exactly $l$ points in finite position along a generic axis. Transitivity of the $G$-action, then follows by applying the methods of Lemma 3.23, (\footnote{For a general $1$-dimensional family of curves of fixed projective degree, all of which are irreducible, it is not necessarily true, that $deg(C_{t_{0}})=deg(C_{t_{1}})$, for a pair $(t_{1},t_{2})$, where $deg(C)$ denotes $deg(L(C)/L(x))$. It is an interesting question to determine the conditions for which this property holds. The family parametrised by $R$ is close to having this property, depending on whether the degenerate fibres can be chosen as irreducible.})
\end{proof}

We now observe the following uniformity in results from the previous section.

\begin{lemma}
Let $g(x,z)$ be chosen, satisfying the conditions of Lemma 3.4, with corresponding $\theta_{g}$, then $\theta_{g}$ defines a birational morphism on each curve in the family $D_{r}$, for $r\in R$. Moreover, we can find a new space of parameters $R'$, satisfying conditions 4.3(i),(ii), $S'$ satisfying conditions 4.3(iii),(iv),(v), such that the algebraic family $D_{r'}$, for $r'\in R$, consists of the images of $\theta_{g}:D_{r}\leftrightsquigarrow D_{r'}$, the functions $\eta_{j,r',s'}=(\theta_{g}^{-1})^{*}(\eta_{j,r,s})$, for generic $(r,s)$, $\eta_{j,r'_{\infty},s'_{\infty}}^{red}=(\theta_{g}^{-1})^{*}(\eta_{j,r_{\infty},s_{\infty}}^{red})$, for degenerate $(r_{\infty},s_{\infty})$ and $E'$, satisfying Lemma 4.4, with $\theta_{r',\bar h'}=(\theta_{g}^{-1})^{*}(\theta_{r,\bar h})$, for generic $(r,\bar h)$, and $\theta_{r'_{\infty},\bar h'_{\infty}}^{red}=(\theta_{g}^{-1})^{*}(\theta_{r_{\infty},\bar h_{\infty}}^{red})$, for degenerate $(r_{\infty},\bar h_{\infty})$. There exists a naturally defined total morphism from the unprimed to the primed parameter spaces, $\Gamma_{g}:(R,S,E)\rightarrow (R',S',E')$, such that all relevant diagrams commute.
\end{lemma}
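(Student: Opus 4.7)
The plan is to mimic the construction of the triple $(R,S,E)$ given in Lemma 4.3 and Lemma 4.4, but applied to the image family, using $\theta_g$ as the transport device. Since $g(x,z)$ is a fixed rational function of the form prescribed by Lemma 3.4, it determines an automorphism $\theta_g$ of an open subset of $P^2$, and we can apply it fiber-by-fiber to the family $\{D_r\}_{r \in R}$. For generic $r \in R$, Lemma 3.4 directly yields that $\theta_g : D_r \leftrightsquigarrow D_{r'}$ is birational with $D_{r'}$ again satisfying the conditions of Lemma 3.3; what requires justification is that this passes through the degenerate fibers in the family, and that the group action and the functions $\eta_{j,s}$ transport coherently.

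First I would set up a closed variety $\Phi \subset (R \times P^2 \times P^2)$ defined by $\{(r,(x,z),(x',z')) : D_r(x,z) \wedge \theta_g(x,z)=(x',z')\}$, and let $R'$ be the image, under the projection to the $(x',z')$ coordinates followed by closure, of the irreducible closure of $\Phi$ in $R \times P^N$ where $P^N$ parametrizes degree $n$ plane curves. The key point is that the generic fiber of $R' \to R$ is a single point (the irreducible curve $\theta_g(D_r)$), so we obtain a birational, in fact finite, cover $R' \to R$. The conditions 4.3(i) and (ii) for $R'$ follow because $\theta_g$ preserves the point $(0,0)$ (since $b(0)=0$) and preserves the etaleness of $pr$ at $(0,0)$ by the argument in the proof of Lemma 3.4(i), and because for non-generic $r_\infty$ the image $\theta_g(D_{r_\infty})$ decomposes into at least as many components as $D_{r_\infty}$, using that $\theta_g$ is birational between the open loci where it is defined.

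Next I would construct $S'$ and its functions $\eta_{j,s'}$ by pulling back along $\theta_g^{-1}$. Concretely, at the level of parameter spaces, let $\Gamma_g : R \to R'$ be the natural morphism sending $D_r$ to $\theta_g(D_r)$; then define $S' = \Gamma_g(S)$ via the rule $(t,r,\bar s) \mapsto (t,\Gamma_g(r), \overline{(\theta_g^{-1})^*(s)})$, where $(\theta_g^{-1})^*$ acts on the coefficients of $\eta_{j,s}$. Since $\theta_g$ is given by polynomials in a single fixed tuple $(a,b,c,d)$, this is an algebraic morphism of parameter spaces, and the image is an irreducible finite cover of $R'$. On the generic fiber, conditions 4.3(iii)(a),(b) for $\eta_{j,r',s'} = (\theta_g^{-1})^*(\eta_{j,r,s})$ are immediate from the fact that $\theta_g$ is birational and covers of $C_t$ pull back to covers of $C_t$. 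For the non-generic fibers, the essential computation is that $(\theta_g^{-1})^*$ commutes with the reduction procedure and with specialization; this is exactly the limit calculation performed in the proof of Lemma 4.3 (equations marked (*******) and (********) there), applied once more with the polynomial $g$ in place of the original parameters. Here I would invoke that the denominator $q_g$ of $\theta_g$ vanishes on a proper closed subset of each component of $D_{r_\infty}$, so we may choose generic points avoiding it; the specialization argument then carries through verbatim. Conditions 4.3(iv)(a)--(d) for the primed family follow because the $\eta_{j,s'_\infty}^{red}$ are obtained from the $\eta_{j,s_\infty}^{red}$ by composition with a birational map respecting the projection $pr_1$.

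Finally I would construct $E'$ by the same pullback recipe: for each $h_i \in \bar h$, replace $\theta_{r,h_i}$ by $\theta_g \circ \theta_{r,h_i} \circ \theta_g^{-1}$, which again is given by a fixed algebraic formula in the coefficients. The conditions (a)--(d) of Lemma 4.4 are preserved under conjugation, and the generic transitivity of the $G$-action is preserved because $\theta_g$ is birational. The uniformity argument of Lemma 4.5, using the specialization calculation and properties $(a)$--$(d)$, then applies without modification to show that the reduced $\theta_{r'_\infty,\bar h'_\infty}^{red}$ restrict to morphisms on $D_{r'_\infty}$ with the required properties. The total morphism $\Gamma_g : (R,S,E) \to (R',S',E')$ is assembled from these three component maps, and commutativity of the relevant diagrams --- projections to $P^1$, projections $S \to R \to P^1$, $E \to R$ --- is built into the construction because $\theta_g$ has no $t$-dependence and $\Gamma_g$ covers the identity on $P^1$. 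The main obstacle I anticipate is verifying the compatibility of the reduction operation $(\cdot)^{red}$ with the conjugation action on non-generic fibers, particularly ruling out pathological specializations where a component of $D_{r_\infty}$ is swallowed by the indeterminacy locus of $\theta_g$; this is handled by the same genericity-in-the-base-of-the-specialization trick used in Lemma 4.3, picking points on each component away from the (finite) intersection of the component with the discriminant locus of $g$.
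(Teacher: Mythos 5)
Your proposal follows the same overall strategy as the paper -- transport everything through $\theta_{g}$, pull the functions $\eta_{j,s}$ back along $(\theta_{g}^{-1})^{*}$ acting on coefficients, conjugate the Galois action to build $E'$, and handle the degenerate fibres by re-running the specialisation/reduction calculation of Lemma 4.3 -- so in substance you and the paper agree. The one genuine divergence is in how $R'$ is produced. You define it as the closure of the image of an incidence variety $\Phi\subset R\times P^{2}\times P^{2}$, and then need (but do not supply) an argument that the assignment $r\mapsto[\theta_{g}(D_{r})]\in P^{N'}$ is actually a \emph{morphism} of parameter spaces; projecting $\Phi$ to the $(x',z')$-coordinates only gives a family of curves set-theoretically, not a map into the projective space of coefficients. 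The paper avoids this by brute force: substituting $z=g(x,z')$ into $\sum r_{ij}x^{i}z^{j}$ and clearing the denominator $[c(x)z'+d(x)]^{n}$ exhibits each new coefficient $r'_{ij}$ as a \emph{linear} form in the $r_{ij}$ (with coefficients built from $a,b,c,d$), so $\Gamma':R\rightarrow R'$ is a linear projective morphism. That explicit linearity is what makes the ``naturally defined total morphism $\Gamma_{g}$'' and the commutativity of the diagrams immediate, and it also records the degree change $n\mapsto n'$ caused by clearing denominators, which your version passes over. Your construction can certainly be completed (the linearity argument is exactly how one would complete it), so I would not call this a gap so much as a place where you should replace the abstract image-closure by the explicit coefficient computation; your treatment of the degenerate fibres, the reduction operation, and $E'$ is if anything more detailed than what the paper writes down.
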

\begin{proof}
The proof is straightforward but lengthy. That $\theta_{g}$ defines a birational morphism on each curve in the family $D_{r}$, for $r\in R$, follows easily from the proof of Lemma 3.4, (\footnote{One can easily exclude the exceptional case that $\theta_{g}$ is not uniformly and birationally defined, namely that a degenerate curve $D_{r_{\infty}}$ has a component $l_{\infty}$, by imposing the extra conditions in footnote 12}). In order to construct the parameter space $R'$, observe that we can write the equation of a general member of the family $D_{r}$, in the form;\\

$\sum_{0\leq i+j\leq n}r_{ij}x^{i}z^{j}=0$ $(*)$\\

Now make the substitution $z=g(x,z')$ in $(*)$ and clear the denominator term, to obtain the expression;\\

$\sum_{0\leq i+j\leq n}r_{ij}x^{i}[a(x)z'+b(x)]^{j}[c(x)z'+d(x)]^{n-j}=0$ $(**)$\\

We can then expand $(**)$, obtaining an expression of the form;\\

$\sum_{0\leq i+j\leq n'}r_{ij}'x^{i}z'^{j}$ $(***)$\\

in which each $r_{ij}'$ term is a linear combination of terms $r_{ij}$, with coefficients depending on the those of the polynomials $\{a(x),b(x),c(x),d(x)\}$. Comparing $(*)$ and $(***)$, we obtain a linear projective morphism $\Gamma'$ from $R\subset P^{N}$ to $R'\subset P^{N'}$, which parametrises an irreducible $1$-dimensional family of curves, with degree $n'$. By construction, we have that $\theta_{g}:D_{r}\leftrightsquigarrow D_{r'}$. The family $R'$ satisfies condition 4.3(i), by the proof of Lemma 3.4. The construction of $S'$ is similar, by substitution of the polynomial $g^{-1}(x,z')$, defining the inverse transformation $\theta_{g}^{-1}$, for $z$, in the rational functions $\{g_{s_{j}^{1}}(x,z),g_{s_{j}^{2}}(x,z)\}$. The statement concerning the transformation of $\eta_{j,r,s}$, for generic $(r,s)$ follows immediately from this method, while the statement concerning the transformation of $\eta_{j_{\infty},r_{\infty},s_{\infty}}^{red}$, for degenerate $(r_{\infty},s_{\infty})$, is a limit calculation, of a form we have considered above. We leave the construction of $E'$ and the associated transformations of the Galois action to the reader. It is clear how the total morphism $\Gamma_{g}$ is obtained from the partial linear projective morphisms on the spaces $\{R,S,E\}$.
\end{proof}

\begin{rmk}
The above construction can be repeated for any abstract birational map $\theta_{g}$, which is uniformly birational on the curves in the family $D_{r}$, for $r\in R$, and preserves the conditions of Lemma 3.3, for a generic curve, see also footnote 3. When referring to birational maps on a degenerate fibre, we will assume that these extra conditions are satisfied. In particular, we always have that $(pr\circ\theta_{g})=pr$ uniformly.

\end{rmk}
As a consequence of the lemma, we are able to extend Definition 3.11, to the case of a degenerate curve $D_{t_{\infty},r_{\infty}}$;\\

\begin{defn}
We define a branch $\gamma$ of $D_{t_{\infty},r_{\infty}}$ to be blue, if there exists a birational map $\theta:D_{t_{\infty},r_{\infty}}\leftrightsquigarrow D_{t_{\infty},r_{\infty}'}$, see previous remark, such that the either/or clause of Definition 3.11 holds, for $D_{t_{\infty},r_{\infty}'}$ and $\theta(\gamma)$. We define a branch $\gamma$ to be silver otherwise.
\end{defn}

\begin{rmk}
This is a good definition, using the same argument as in Remarks 3.12(b). We can also give an equivalent definition of a silver branch, using Remarks 3.12(c).
\end{rmk}

We can now use uniformity of the $G$-action, to simplify the geometry of the degenerate curve.\\

\begin{lemma}
Lemma 3.13 holds for $D_{t_{\infty},r_{\infty}}$, that is the distribution of silver and blue branches along any axis is uniform.
\end{lemma}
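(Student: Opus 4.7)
The plan is to adapt the proof of Lemma 3.13 to the degenerate setting, using the uniform Galois action on $D_{t_{\infty},r_{\infty}}$ established in Lemma 4.5 as the principal new ingredient. The other tools available are the birationally-invariant notion of silver/blue branch given in Definition 4.7 and Remarks 4.8, together with the preservation of intersection multiplicities under the morphisms of Remark 4.7 which satisfy $(pr\circ\theta_{g})=pr$, exactly as in Remarks 3.12(b).

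First I would treat case (i). Suppose, for contradiction, that a silver branch $\gamma_{1}$ and a blue branch $\gamma_{2}$ are both centred in finite position along the axis $x=\alpha$, at coordinates $(\alpha,\beta_{1})$ and $(\alpha,\beta_{2})$ respectively. By Lemma 4.5, the reduced functions $\{\theta^{red}_{h_{1,\infty}},\ldots,\theta^{red}_{h_{l,\infty}}\}$ define morphisms $D_{t_{\infty},r_{\infty}}\rightarrow D_{t_{\infty},r_{\infty}}$ satisfying (a)--(d), and the generic transitivity property of the induced $G$-action on fibres $pr^{-1}(a)$ for generic $a\in A^{1}$. Pick an infinitesimal $\epsilon\in\mathcal{V}_{0}$ so that the axis $x=\alpha+\epsilon$ falls in the open set of fibres on which the $G$-action is sharply transitive, and pick points $(\alpha+\epsilon,\beta_{1}')\in\gamma_{1}$ and $(\alpha+\epsilon,\beta_{2}')\in\gamma_{2}$. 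Then there exists $h\in G$ with $\theta^{red}_{h,\infty}(\alpha+\epsilon,\beta_{1}')=(\alpha+\epsilon,\beta_{2}')$, and, by the branch-lifting property, $\theta^{red}_{h,\infty}(\gamma_{1})=\gamma_{2}$. Since $h$ arises from the Galois action over $L(x)$, we have $(pr\circ\theta^{red}_{h,\infty})=pr$, so Remarks 4.8 (the degenerate analogue of Remarks 3.12(b)) gives $I_{\theta^{red}_{h,\infty}(\gamma_{1})}(D_{t_{\infty},r_{\infty}},x=\alpha)=I_{\gamma_{1}}(D_{t_{\infty},r_{\infty}},x=\alpha)=1$, contradicting blueness of $\gamma_{2}$.

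For case (ii), the same argument applies, using the fact that, for $\epsilon\in\mathcal{V}_{0}$ with $\epsilon\neq 0$, the axis $x={1\over\epsilon}$ lies in the open set of generically transitive fibres. For cases (iii) and (iv), I would invoke Lemmas 3.5 and 3.6 to reduce, via a suitable uniform birational transformation of the type considered in Lemma 4.6, to the previously-established cases (i) and (ii); here one uses Definition 4.7 and Remarks 4.8 to transport the silver/blue classification across such a birational map.

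The main obstacle I anticipate is a careful bookkeeping verification that the birational invariance of the silver/blue classification (encoded in Remarks 4.8) genuinely applies to the morphisms $\theta^{red}_{h,\infty}$ produced by Lemma 4.5, rather than only to uniform birational maps on the whole family $\{D_{r}\}_{r\in R}$ constructed via Lemma 4.6. The point is that Definition 4.7 permits any birational self-map of the degenerate fibre (of the form considered in Remark 4.7, i.e.\ satisfying $(pr\circ\theta)=pr$ uniformly), and the conclusions of Lemma 4.5 together with Lemma 4.6 guarantee precisely that $\theta^{red}_{h,\infty}$ fits this framework. Once this compatibility is established, the argument is a verbatim transcription of Lemma 3.13, with Lemma 4.5 replacing the Galois-theoretic input from Lemma 3.3(iv).
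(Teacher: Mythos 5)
Your proposal is correct and follows exactly the route the paper takes: its proof of this lemma is the single sentence that the result follows from the proof of Lemma 3.13 together with uniformity of the $G$-action in Lemma 4.5, which is precisely the adaptation you carry out in detail (transitivity on generic fibres via the reduced morphisms $\theta^{red}_{h,\infty}$, birational invariance of the silver/blue classification, and reduction of the cases at $Q_{1}$ to the finite-position and $l_{\infty}$ cases). Your elaboration of the compatibility of $\theta^{red}_{h,\infty}$ with the framework of Remark 4.7 and Definition 4.8 is a useful filling-in of what the paper leaves implicit.
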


\begin{proof}
The result follows immediately from the proof of Lemma 3.13 and uniformity of the $G$-action, Lemma 4.5.

\end{proof}

and to exclude symmetry properties;\\

\begin{lemma}
Lemma 3.34 holds for $D_{t_{\infty},r_{\infty}}$. That is, for a blue branch $\gamma$ in finite position, it is not the case that every function $\{\eta_{1,t_{\infty},r_{\infty}}^{red},\ldots,\eta_{m,t_{\infty},r_{\infty}}^{red}\}$ is symmetric at $\gamma$.
\end{lemma}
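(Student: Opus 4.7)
The plan is to mirror the argument of Lemma 3.34, substituting the generic Galois action on $C'$ with the degenerate Galois action on $D_{t_{\infty},r_{\infty}}$ provided by Lemmas 4.4 and 4.5. The main ingredients are: (a) generic transitivity of the action of $\{\theta_{h_{1,\infty}}^{red},\ldots,\theta_{h_{l,\infty}}^{red}\}$ on $D_{t_{\infty},r_{\infty}}$; (b) the non-trivial permutation property $(*****)$ from Lemma 4.5, which says that if $\theta_{h_{i,\infty}}^{red}\neq Id$, then it induces a non-trivial permutation of the reduced roots $\{\eta_{1,t_{\infty},r_{\infty}}^{red},\ldots,\eta_{m,t_{\infty},r_{\infty}}^{red}\}$, the distinctness of which follows from Lemma 4.3$(iv)(c)$; and (c) the observation of Remarks 3.33 applied to the degenerate branch $\gamma$, which is justified by Definition 4.8.

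First, I suppose for contradiction that every reduced function $\eta_{j,t_{\infty},r_{\infty}}^{red}$ is symmetric at the blue branch $\gamma$, centred at $(x_{0},y_{0})$, lying on some irreducible component $D^{i_{0}}_{r_{\infty}}$ of $D_{t_{\infty},r_{\infty}}$. Since $\gamma$ is blue, for generic $x_{1}\in{\mathcal V}_{x_{0}}$, the intersection $\gamma\cap (x=x_{1})$ consists of $t\geq 2$ distinct points $\{(x_{1},y_{1}),\ldots,(x_{1},y_{t})\}$. By the generic transitivity property established in the last part of Lemma 4.5, I can find an index $i$ with $\theta_{h_{i,\infty}}^{red}\neq Id$ such that $\theta_{h_{i,\infty}}^{red}(x_{1},y_{1})=(x_{1},y_{2})$. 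Since both endpoints lie on $\gamma$ and $\theta_{h_{i,\infty}}^{red}$ is a morphism preserving the fibration $pr$, the branch $\gamma$ is invariant under $\theta_{h_{i,\infty}}^{red}$.

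Next, by the non-trivial permutation argument $(*****)$ of Lemma 4.5, $\theta_{h_{i,\infty}}^{red*}$ induces a non-identity permutation of the distinct reduced roots, so, without loss of generality, $\theta_{h_{i,\infty}}^{red*}(\eta_{1,t_{\infty},r_{\infty}}^{red})=\eta_{2,t_{\infty},r_{\infty}}^{red}$ with these two reduced functions distinct on the component $D^{i_{0}}_{r_{\infty}}$ containing $\gamma$. For any point $(x_{1},y_{1})$ of $\gamma$ distinct from $(x_{0},y_{0})$, using the symmetry assumption on $\eta_{1,t_{\infty},r_{\infty}}^{red}$ in the form of Remarks 3.33 (applied along the fibre $\gamma\cap(x=x_{1})$, which contains both $(x_{1},y_{1})$ and $\theta_{h_{i,\infty}}^{red}(x_{1},y_{1})$), I obtain
\[
\eta_{2,t_{\infty},r_{\infty}}^{red}(x_{1},y_{1})=\theta_{h_{i,\infty}}^{red*}(\eta_{1,t_{\infty},r_{\infty}}^{red})(x_{1},y_{1})=\eta_{1,t_{\infty},r_{\infty}}^{red}(\theta_{h_{i,\infty}}^{red}(x_{1},y_{1}))=\eta_{1,t_{\infty},r_{\infty}}^{red}(x_{1},y_{1}).
\]
This forces the reduced rational functions $\eta_{1,t_{\infty},r_{\infty}}^{red}$ and $\eta_{2,t_{\infty},r_{\infty}}^{red}$ to agree on a Zariski dense subset of the component $D^{i_{0}}_{r_{\infty}}$, hence to coincide on that component, contradicting their distinctness. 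Therefore at least one reduced function fails to be symmetric at $\gamma$, as required.

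The main technical difficulty I anticipate is the careful invocation of Remarks 3.33 for the degenerate curve: one must verify that the characterisation of symmetry via the transverse projection $pr_{1}(C'_{h})$ survives the reduction procedure for $\eta_{j}$ on the particular component $D^{i_{0}}_{r_{\infty}}$, so that the fibre-wise equality $(*)$ holds throughout $\gamma$ and not merely at the distinguished centre. This is the place where the extra structure supplied by Lemma 4.3$(iv)(a)$ (birationality of $\eta_{j,s_{\infty}}$ on each component) and the fibre-generic choice of $x_{1}$ (outside the discriminant loci introduced in the footnotes of Lemma 4.5) must be combined to legitimately reduce to the non-degenerate argument of Lemma 3.34.
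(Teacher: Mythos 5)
Your proposal is correct and follows exactly the route the paper intends: the paper's proof is simply the citation ``Using Lemma 3.34, Lemma 4.3(iv)(c) and Lemma 4.5'', and you have expanded that citation into the same argument, transporting the Galois-symmetry contradiction of Lemma 3.34 to the degenerate fibre via the reduced group action of Lemma 4.5 and the distinctness of the reduced roots from Lemma 4.3(iv)(c). Nothing further is needed.
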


\begin{proof}
Using Lemma 3.34, Lemma 4.3(iv)(c) and Lemma 4.5.

\end{proof}
\end{section}
\begin{section}{Asymptotic Degenerations to Lines Passing through a Single Point}

We consider a special case of the families considered in the previous section, for which a degenerate fibre $C_{t_{\infty}}$ of $\{C_{t}\}_{t\in P^{1}}$, consists of $m$ lines, passing through a single point $O$. It is convenient to assign the point $O$ with the coordinates $(k,0)$, for $k\neq 0$, in the $(x,y)$ plane. The $m$ distinct lines must then correspond to $l_{[(0,a_{j}),(k,0)]}$, for $1\leq j\leq m$, with equations $(a_{j}-1)x+ky=0$. The analysis of degenerate flash intersections is easier due to the simplified geometry of this configuration. In particular, the image curves $C_{i,j,\infty}$ of Lemma 4.3, are all contained in the union of planes formed by the $m$ distinct lines and the point $Q_{1}\in P^{3}$. We make the following definition;\\

\begin{defn}
Let $\ell=\{\breve{l}_{1},\ldots,\breve{l}_{m}\}$ enumerate the branches of $C_{t_{\infty}}$, which are centred at $O$.
\end{defn}

\begin{defn}
For the branches in Definition 5.1, we define $\Upsilon_{j}=\{\breve{\gamma}_{1,j},\ldots,\breve{\gamma}_{s,j}\}$ to be the lifting of these branches to the flash $C_{j,\infty}=\bigcup_{1\leq i\leq t}C_{i,j,\infty}$, and also to the curve $D_{t_{\infty},r_{\infty}}$. We define $\Upsilon=\bigcup_{1\leq j\leq m}\Upsilon_{j}$.
\end{defn}

\begin{rmk}
We emphasise the point that, for any $j$, every branch in $\ell$ lifts to a branch in $\Upsilon_{j}$, this follows immediately from Lemma 4.3(iv)(d)
\end{rmk}

We have the same birational invariance of such branches, as in Lemma 3.27;\\

\begin{lemma}
Let $\theta:D_{t_{\infty},r_{\infty}}\leftrightsquigarrow D_{t_{\infty},r'_{\infty}}$ be a birational map, in the sense of Remarks 4.7, then the branches $\{\breve{\gamma},\eta_{j,s_{\infty},t_{\infty}}^{red}(\breve{\gamma})\}$ of $\{D_{t_{\infty},r_{\infty}},C_{j,s_{\infty}}\}$ lift the same branch $\breve{l}$ on $C_{t_{\infty}}$ as $\{\theta(\breve{\gamma}),(\eta_{j,s'_{\infty},t_{\infty}}^{red}\circ\theta\circ pr_{2})(\breve{\gamma})\}$ of $D_{t_{\infty},r'_{\infty}},C_{j,s'_{\infty}}$. In particular, the sets $\Upsilon_{j}$ and $\Upsilon$ are birationally invariant.
\end{lemma}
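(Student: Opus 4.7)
The plan is to mimic the proof of Lemma 3.27, adapted to the degenerate setting via the uniformity established in Section 4. By Remarks 4.7, the map $\theta$ is the restriction (in the limit sense of Lemma 4.6) of a uniformly defined birational map $\theta_{g}$ on $\{D_{r}\}_{r\in R}$, with $(pr\circ\theta_{g})=pr$ uniformly and $\eta_{j,r',s'}=(\theta_{g}^{-1})^{*}(\eta_{j,r,s})$ for generic parameters. Specialising to the degenerate fibre using the limit transport of Lemma 4.6, I obtain the identity
$$\eta_{j,s'_{\infty},t_{\infty}}^{red}\circ\theta=\eta_{j,s_{\infty},t_{\infty}}^{red}$$
as rational functions on the component of $D_{t_{\infty},r_{\infty}}$ containing the centre of $\breve{\gamma}$, together with the unchanged relation $(pr\circ\theta)=pr$.

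Writing $f^{red}$ and $g^{red}$ for $\eta_{j,s_{\infty},t_{\infty}}^{red}$ and $\eta_{j,s'_{\infty},t_{\infty}}^{red}$ respectively, I then evaluate at a generic point $(x,z)$ of the component containing $\breve{\gamma}$:
$$(pr_{1}\circ\eta_{j,s_{\infty},t_{\infty}}^{red})(x,z)=(x,f^{red}(x,z))=(x,g^{red}(\theta(x,z)))=(pr_{1}\circ\eta_{j,s'_{\infty},t_{\infty}}^{red}\circ\theta)(x,z),$$
where the middle equality uses $f^{red}=g^{red}\circ\theta$ together with the fact that $\theta$ fixes the $x$-coordinate. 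This is an equality of rational maps from the relevant component of $D_{t_{\infty},r_{\infty}}$ to $C_{t_{\infty}}$, and (using the fact that $pr_{2}$ is a birational inverse of $\eta_{j}$ on each flash, by Lemma 3.21(iii)(b)) it implies that $\breve{\gamma}$ and $\theta(\breve{\gamma})$ project via the two respective flashes to the same branch $\breve{l}$ of $C_{t_{\infty}}$. This is the first assertion, and birational invariance of $\Upsilon_{j}$ and $\Upsilon$ is then immediate from Definition 5.2.

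The main obstacle is justifying the degenerate transformation formula $\eta_{j,s'_{\infty},t_{\infty}}^{red}\circ\theta=\eta_{j,s_{\infty},t_{\infty}}^{red}$ above, because the reductions may drop or add factors vanishing on specific components of $D_{t_{\infty},r_{\infty}}$, and similarly for the denominator of $\theta$. To handle this I would lift to a generic point $(x_{1},z_{1})$ specialising to a generic $(x_{0},z_{0})$ on the chosen component, introduce independent infinitesimals $\epsilon,\delta$ and apply the two-step specialisation $\pi_{2}\circ\pi_{1}$ to the generic identity $\eta_{j,s'}\circ\theta_{g}=\eta_{j,s}$, exactly as in the proof of Lemma 4.3. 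By restricting the coordinate $x_{0}$ to avoid the finite zero-sets of the reduction factors of $\eta_{j,s_{\infty}}$, $\eta_{j,s'_{\infty}}$ and of the denominator of $\theta$ on the component under consideration, the reduction calculations of Lemma 4.3, in particular the limit step labelled $(\ast\ast\ast\ast\ast\ast\ast\ast)$ there, apply, and commuting specialisation with the compositions delivers the required degenerate identity. The remainder of the argument is then a direct transcription of the Lemma 3.27 computation.
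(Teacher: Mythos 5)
Your proposal is correct and follows essentially the same route as the paper, whose entire proof is the one-line remark that the argument is the same as Lemma 3.27; you reproduce that computation and, in addition, spell out the justification of the degenerate identity $\eta_{j,s'_{\infty},t_{\infty}}^{red}\circ\theta=\eta_{j,s_{\infty},t_{\infty}}^{red}$, which the paper takes as already supplied by the transformation statement of Lemma 4.6 and Remarks 4.7. The extra specialisation argument you give for handling the reduced factors is exactly the limit calculation the paper invokes there, so your write-up is a more detailed version of the intended proof rather than a different one.
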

\begin{proof}
The proof is the same as Lemma 3.27.
\end{proof}

We have a stronger version of Lemmas 3.30 and 3.31;\\

\begin{lemma}
Let $\breve{\gamma}$ be a geometric branch of $D_{t_{\infty},r_{\infty}}$. Then;\\

$val_{\breve{\gamma}}(\eta_{j_{0},s_{\infty},t_{\infty}}^{red})=val_{\breve{\gamma}}(\eta_{j_{1},s_{\infty},t_{\infty}}^{red})$, for $j_{0}\neq j_{1}$, iff $\breve{\gamma}\in(\Upsilon_{j_{0}}\cap\Upsilon_{j_{1}})$\\
 \indent \ \ \ \ \ \ \ \ \ \ \ \ \ \ \ \ \ \ \ \ \ \ \ \ \ \ \ \ \ \ \ \ \ \ \ \ \ \ \ \ \ \ \ \ \ \ \ \ \ \ \ \ \ \ iff $\breve{\gamma}\in\bigcap_{1\leq j\leq m}\Upsilon_{j}$\\

 Moreover, $\Upsilon=\Upsilon_{j}$, for $1\leq j\leq m$.

\end{lemma}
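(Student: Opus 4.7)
The strategy is a direct adaptation of the projection arguments of Lemma 3.30 and Lemma 3.31, combined with the very rigid geometry of the degenerate fibre $C_{t_{\infty}}$. The central observation is that $C_{t_{\infty}}$ is the union of the $m$ distinct lines $l_{j}:(a_{j}-1)x+ky=0$, all meeting at $O=(k,0)$; consequently, for any vertical axis $x=x_{0}$, the intersection $C_{t_{\infty}}\cap\{x=x_{0}\}$ consists of $m$ pairwise distinct finite points $(x_{0},(1-a_{j})x_{0}/k)$ when $x_{0}\neq k$, and collapses to the single point $O$ (with intersection multiplicity $m$) when $x_{0}=k$. In particular, since $k\neq 0$, the point $[0:1:0]$ does not lie on $C_{t_{\infty}}$.

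Using Lemma 4.6, Remarks 4.7 and the reduction arguments of Lemma 3.10 and Remarks 3.29, I would first put $\breve{\gamma}$ in finite position, centred at a point $(x_{0},z_{0})$ of $D_{t_{\infty},r_{\infty}}$. For each $j$, set $y_{j}:=val_{\breve{\gamma}}(\eta_{j,s_{\infty},t_{\infty}}^{red})$, so the image branch $\eta_{j}(\breve{\gamma})$ on the appropriate component of $C_{j,\infty}$ is centred at $(x_{0},y_{j},z_{0})$. Each $y_{j}$ is finite, since otherwise projection via $pr_{1}$ would place $[0:1:0]$ on $C_{t_{\infty}}$, contradicting the opening observation; Lemma 4.3(iv)(b) then gives $(x_{0},y_{j})\in C_{t_{\infty}}$.

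The argument then splits into two cases according to the value of $x_{0}$. If $x_{0}\neq k$, the $m$ points $(x_{0},y_{j})$ have pairwise distinct $y$-coordinates and none coincides with $O$, so $val_{\breve{\gamma}}(\eta_{j_{0}}^{red})\neq val_{\breve{\gamma}}(\eta_{j_{1}}^{red})$ for all $j_{0}\neq j_{1}$ and $\breve{\gamma}\notin\Upsilon_{j}$ for any $j$; all three conditions of the iff-chain fail. If $x_{0}=k$, the collapse of $C_{t_{\infty}}\cap\{x=k\}$ to $O$ forces $y_{j}=0$ for every $j$, so all pairwise equalities $val_{\breve{\gamma}}(\eta_{j_{0}}^{red})=val_{\breve{\gamma}}(\eta_{j_{1}}^{red})=0$ hold, and each $pr_{1}(\eta_{j}(\breve{\gamma}))$ is a branch of $C_{t_{\infty}}$ centred at $O$, i.e.\ a member of $\ell$, giving $\breve{\gamma}\in\Upsilon_{j}$ for every $j$; thus all three conditions hold. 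The $\Rightarrow$-direction of the first iff also needs the converse implication, namely that equality of any two values $y_{j_{0}}=y_{j_{1}}$ already forces $x_{0}=k$. Here the distinctness of $\eta_{j_{0}}^{red}$ and $\eta_{j_{1}}^{red}$ as rational functions on the component of $D_{t_{\infty},r_{\infty}}$ containing $\breve{\gamma}$ (guaranteed by Lemma 4.3(iv)(c), since otherwise the corresponding image curves would coincide) combined with the projection argument of Lemma 3.30 yields $I_{(x_{0},y_{0})}(C_{t_{\infty}},x=x_{0})\geq 2$; the opening observation then pins down $(x_{0},y_{0})=O$.

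The \emph{moreover} statement is an immediate byproduct: the case analysis shows that $\Upsilon_{j}$ coincides with the set of geometric branches of $D_{t_{\infty},r_{\infty}}$ centred along the axis $x=k$, a description independent of $j$; hence $\Upsilon=\bigcup_{1\leq j\leq m}\Upsilon_{j}=\Upsilon_{j}$ for each $j$. The main obstacle I anticipate is ensuring the reduction to finite position can be performed uniformly for all $\eta_{j}^{red}$ at once, but this is provided precisely by Lemma 4.6 and Remarks 4.7, which allow a single birational transformation $\theta_{g}$ to act coherently on all parameter spaces and on every $\eta_{j,s}^{red}$ simultaneously, so the case analysis transfers back to the original $\breve{\gamma}$.
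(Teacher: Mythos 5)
Your proposal is correct and follows essentially the same route as the paper: the first equivalence via the infinitesimal projection argument of Lemma 3.30 (two coinciding values at a branch force $I(C_{t_{\infty}},x=x_{0})\geq 2$ at the projected point), and the second equivalence and the identity $\Upsilon=\Upsilon_{j}$ via the rigidity of the degenerate configuration, namely that the only point of $C_{t_{\infty}}$ on the axis $x=k$ is $O$, so a finite value $a\neq 0$ for some $\eta_{j_{2}}^{red}$ would put the impossible point $(k,a)$ on $C_{t_{\infty}}$. Your packaging as a dichotomy $x_{0}=k$ versus $x_{0}\neq k$ is a slightly more explicit arrangement of the same facts, not a different argument.
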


\begin{proof}
The first two equivalences follow by using the proof of Lemma 3.30, together with the fact that $D_{t_{\infty},r_{\infty}}$ has a unique singularity at $O$. In order to show the third equivalence, suppose that the branch $\breve{\gamma}$ is in finite position along the axis $x=k$, with $val_{\breve{\gamma}}(\eta_{j_{0},s_{\infty},r_{\infty}}^{red})=val_{\breve{\gamma}}(\eta_{j_{1},s_{\infty},r_{\infty}}^{red})=0$. Suppose that $val_{\breve{\gamma}}(\eta_{j_{2},s_{\infty},r_{\infty}}^{red})=a\neq 0$, for some $j_{2}\notin\{j_{0},j_{1}\}$. We can assume that this value is finite, in which case we obtain that $(k,a)$ belongs to the curve $D_{t_{\infty},r_{\infty}}$. This is clearly impossible. Hence, $val_{\breve{\gamma}}(\eta_{j_{2},s_{\infty},r_{\infty}}^{red})=0$, and, $\breve{\gamma}$ belongs to $\Upsilon_{j_{2}}$. For the final statement, simply observe that if, without loss of generality, $\breve{\gamma}$ belongs to $\Upsilon_{1}$, then $\breve{\gamma}$ is centred along the axis $x=k$, and $val_{\breve{\gamma}}(\eta_{1,s_{\infty},r_{\infty}}^{red})=0$. If $val_{\breve{\gamma}}(\eta_{j,s_{\infty},r_{\infty}}^{red})=a\neq 0$, for $j\neq 1$, we again obtain a point $(k,a)$ belonging to the curve  $D_{t_{\infty},r_{\infty}}$, which is a contradiction. Therefore $\breve{\gamma}\in\bigcap_{1\leq j\leq m}\Upsilon_{j}$, and $\Upsilon=\Upsilon_{j}$, for $1\leq j\leq m$.

\end{proof}

We obtain the following characterisation of branches for the degenerate curve;\\

\begin{lemma}
All the branches of $D_{t_{\infty},r_{\infty}}$ are silver.
\end{lemma}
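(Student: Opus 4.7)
The plan is to argue by contradiction, following the template of Lemmas 3.36 and 3.37, but exploiting the very explicit shape of $C_{t_{\infty}}$ as a union of $m$ concurrent, non-vertical lines. Suppose that $\breve{\gamma}$ is a blue branch of $D_{t_{\infty},r_{\infty}}$. Using Lemma 3.10 together with the uniformity furnished by Lemmas 4.6 and 5.4 and Definition 4.8, I would first move $\breve{\gamma}$ into a canonical position, so that either $\breve{\gamma}$ is in finite position $(x_{0},z_{0})$ with $I_{\breve{\gamma}}(D_{t_{\infty},r_{\infty}},x=x_{0})\geq 2$, or $\breve{\gamma}$ is centred at $[1:0:0]$ with $I_{\breve{\gamma}}(D_{t_{\infty},r_{\infty}},l_{\infty})\geq 2$.

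Next, I would appeal to Lemma 4.11 (together with the perpendicular extension of Definition 3.32 indicated in Remarks 3.35) to pick an index $j_{0}$ for which the reduced function $\eta_{j_{0},s_{\infty},t_{\infty}}^{red}$ fails to be symmetric at $\breve{\gamma}$. An infinitesimal calculation modelled on Lemma 3.36 in the finite case, or on Lemma 3.37 in the case at $[1:0:0]$, should then produce a branch $\rho = pr_{1}\circ\eta_{j_{0},s_{\infty},t_{\infty}}^{red}(\breve{\gamma})$ of $C_{t_{\infty}}$ satisfying $I_{\rho}(C_{t_{\infty}},x=x_{0})\geq 2$ in the first case, or $I_{\rho}(C_{t_{\infty}},l_{\infty})\geq 2$ in the second. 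Here I would use Lemma 4.3(iv)(b) to ensure $\rho$ genuinely lies on one of the line components of $C_{t_{\infty}}$.

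The contradiction is then immediate. By the description at the start of Section 5, each irreducible component of $C_{t_{\infty}}$ is one of the lines $l_{[(0,a_{j}),(k,0)]}$; since $a_{j}\neq 0$ and $k\neq 0$, none of these lines is vertical and none coincides with $l_{\infty}$. Consequently each such line meets any given vertical axis in a single point transversely, and likewise meets $l_{\infty}$ in a single point transversely, so every branch $\rho$ of $C_{t_{\infty}}$ satisfies $I_{\rho}(C_{t_{\infty}},x=x_{0})=1$ and $I_{\rho}(C_{t_{\infty}},l_{\infty})=1$. This rules out both of the conclusions above, so no blue branch $\breve{\gamma}$ can exist.

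The step I expect to be the main obstacle is the infinitesimal bookkeeping: one must check that the non-symmetry of the reduced function $\eta_{j_{0},s_{\infty},t_{\infty}}^{red}$ on the possibly reducible limit curve $D_{t_{\infty},r_{\infty}}$ really produces, after projection by $pr_{1}$, a genuine branch of $C_{t_{\infty}}$ with the stated intersection multiplicity, rather than something absorbed into a reduced-away factor or living on a degenerate component. This is precisely where the image-curve description of Lemma 4.3(iv) and the uniformity statements of Lemmas 4.5 and 4.6 must be invoked, so that the specialisation calculations of Lemmas 3.36 and 3.37 transfer intact to the degenerate setting.
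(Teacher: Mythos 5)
Your proposal is correct and follows essentially the same route as the paper, whose proof of this lemma is simply the instruction to reuse the arguments of Lemmas 3.36 and 3.37 in the degenerate setting. You have merely made explicit what the paper leaves implicit: that the non-symmetry supplied by Lemma 4.11 forces a vertical tangent or a tangency with $l_{\infty}$ on $C_{t_{\infty}}$, which is impossible since every component of $C_{t_{\infty}}$ is a non-vertical line distinct from $l_{\infty}$ and hence meets every vertical axis and $l_{\infty}$ transversely.
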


\begin{proof}
Using the proofs of Lemma 3.36 and Lemma 3.37.
\end{proof}

We now show that we can simultaneously isolate silver branches, and refer to the previous section for notation;

\begin{lemma}
Suppose that $\gamma$ is a silver branch, in finite position, of the generic curve $D_{t,r}$, centred at $(k',a')$, specialising to the point $(k,a)$ of the degenerate curve $D_{t_{\infty},r_{\infty}}$. Then, after a sequence of birational transformations, we can assume that $\gamma$ is isolated and there exists a unique silver branch of $D_{t_{\infty},r_{\infty}}$, centred at $(k,a)$.

\end{lemma}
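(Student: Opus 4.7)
The plan is to adapt Case~1 of Lemma 3.14 to the family setting, using the uniformity result Lemma 4.6 to transport transformations defined over the base field $L$ simultaneously across the generic and degenerate fibres. The guiding principle is to anchor all transformations at the specialised centre $(k,a)$ rather than the generic centre $(k',a')$, so that a single iterative procedure separates $\gamma$ on $D_{t,r}$ and $\bar\gamma$ on $D_{t_\infty,r_\infty}$ at the same time.

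First I would set up the specialisation. By the calculus of \cite{depiro3} applied to the family of Lemma 4.3, $\gamma$ specialises to a branch $\bar\gamma$ of $D_{t_\infty,r_\infty}$ centred at $(k,a)$, which by Lemma 5.5 is silver, non-singular, and transverse to $x=k$. Enumerate $\{\bar\gamma=\bar\gamma_0,\bar\gamma_1,\ldots,\bar\gamma_s\}$, the branches of $D_{t_\infty,r_\infty}$ centred at $(k,a)$, with parametrisations $(k+x,\,a+\bar\eta_i(x))$ for pairwise distinct $\bar\eta_i\in L[[x]]$ satisfying $\bar\eta_i(0)=0$. Enumerate similarly $\{\gamma=\gamma_0,\gamma_1,\ldots,\gamma_r\}$, the branches of $D_{t,r}$ centred at $(k',a')$; each of these specialises to some $\bar\gamma_i$. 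After a preliminary horizontal translation, I may assume $k\neq 0$.

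Second, I would apply the Case~1 transformation $g(x,z)=(z-a)/(x-k)-a/k$ from Lemma 3.14, whose coefficients lie in $L$ and which satisfies the hypotheses of Lemma 3.4. By Lemma 4.6, $g$ extends uniformly across the entire family, and by Remarks 4.7 together with Definition 4.8 it defines a birational transformation of the degenerate fibre too. The Case~1 calculation shows that each $\bar\gamma_i$ is sent to a branch centred at $(k,\bar c_{i,1}-a/k)$, where $\bar c_{i,1}$ denotes the leading coefficient of $\bar\eta_i$, so those $\bar\gamma_i$ with $\bar c_{i,1}\neq\bar c_{0,1}$ are pulled off $\bar\gamma$'s centre. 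Iterating with the next coefficients, and exploiting pairwise distinctness of the $\bar\eta_i$, finitely many such transformations isolate $\bar\gamma$ on $D_{t_\infty,r_\infty}$. The same sequence, transported uniformly to the generic fibre by Lemma 4.6, simultaneously isolates $\gamma$: any $\gamma_j$ specialising to $\bar\gamma_i\neq\bar\gamma$ is removed from $\gamma$'s centre at the first iteration where $\bar c_{i,l}$ diverges from $\bar c_{0,l}$; and any $\gamma_j$ specialising to $\bar\gamma$ itself is distinct from $\gamma$ as a branch of $D_{t,r}$, hence has power series at $(k',a')$ differing from that of $\gamma$ in some coefficient, and is separated by finitely many additional iterations. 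Taking the maximum of both termination bounds yields a finite number $N$ of steps after which $\gamma$ is isolated on the transformed $D_{t,r}$ and $\bar\gamma$ is the unique silver branch of the transformed $D_{t_\infty,r_\infty}$ centred at the new point, which we rename $(k,a)$.

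The main obstacle lies in the second kind of competitor: branches $\gamma_j$ whose specialisation coincides with $\bar\gamma$ on the degenerate fibre. Separating $\bar\gamma$ from the other $\bar\gamma_i$ is routine power-series bookkeeping carried out entirely over $L$, but $\gamma$ and such a $\gamma_j$ are infinitesimally close and base-field transformations alone cannot distinguish them. The resolution I have in mind is to interleave the base-field iterations with finitely many additional transformations whose coefficients depend on infinitesimal data of the generic power series at $(k',a')$; a specialisation argument in the style of Lemma 4.3 then verifies that these auxiliary steps act as the identity on $D_{t_\infty,r_\infty}$ and hence preserve the isolation of $\bar\gamma$ already achieved on the degenerate fibre.
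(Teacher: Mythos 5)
Your main construction is the same as the paper's: you anchor the Case~1 transformation of Lemma 3.14 at the specialised data $(k,a)$ and its successive coefficients, use Lemma 4.6 to transport each $\theta_{g}$ uniformly across the family, and iterate until the pairwise-distinct power series of the degenerate branches at $(k,a)$ force all but one of them off the axis $x=k$. This is exactly how the paper isolates $\breve{\gamma}$ on $D_{t_{\infty},r_{\infty}}$ (the paper phrases the bookkeeping slightly differently, deducing at each stage that some degenerate branch must have $b_{j}=a_{j}$, the specialised coefficient of $\gamma$'s series — this is the content of Remarks 5.9 — rather than working from $\breve\gamma$'s own coefficients, but the two are equivalent).

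The gap is in your treatment of the ``main obstacle'', the competitor branches $\gamma_{j}$ of the generic curve whose specialisation coincides with $\breve{\gamma}$. Your proposed fix — interleaving transformations whose coefficients are infinitesimal data of the generic series, claimed to ``act as the identity on $D_{t_{\infty},r_{\infty}}$'' — does not work as stated: a transformation of the form $g(x,z)=(z-c)/(x-k')-c/k'$ with $c,k'$ infinitesimally close to elements of $L$ specialises to the corresponding transformation with specialised coefficients, which is emphatically not the identity on the degenerate fibre; and conversely, any family member that does specialise to the identity is infinitesimally close to the identity on the generic fibre and so cannot separate two branches sharing a centre. The paper's resolution is much simpler and renders the extra transformations unnecessary: if, after the finitely many base-field transformations, $\gamma$ still shared its centre with another branch of the generic curve, that centre would be a singular point of $D_{t,r}$, and by preservation of singularities (Remarks 5.11, i.e.\ the common vanishing of the partials is a closed condition preserved under specialisation) its specialisation would be a singular point of $D_{t_{\infty},r_{\infty}}$. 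But that specialisation is the centre of the now-unique branch $\breve{\gamma}$, which is silver, hence non-singular and transverse to $x=k$, so the point is non-singular — a contradiction. In other words, the obstacle you isolate cannot survive to the final configuration at all; no auxiliary infinitesimal transformations are needed, and introducing them would in any case require a different justification than the one you sketch.
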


\begin{proof}
We follow the proof of Lemma 3.14. Let $(k'+x,a'+\eta_{1}(x))$ be a parametrisation of $\gamma$, where $\eta(x)$ is a power series belonging to $K[[x]]$, with $sp(P(K))=P(L)$, of the form;\\

$\eta(x)=\sum_{j=1}^{\infty}a_{j}'x^{j}$\\

Suppose that the sequence $(a',a_{1}',\ldots,a_{j}',\ldots)$ specialises to the sequence $(a,a_{1},\ldots,a_{j},\ldots)$. We define;\\

$g(x,z)={(z-a)\over (x-k)}-{a\over k}$\\

and apply the birational map $\theta_{g}$, satisfying Lemma 4.6. Let $\{\breve{\gamma_{1}},\ldots,\breve{\gamma_{r}}\}$ enumerate the branches of $D_{t_{\infty},r_{\infty}}$, centred at $(k,a)$, then, using Lemma 5.6, the only remaining branches, in finite position along the axis $x=k$, after applying this transformation, are $\{\theta_{g}(\breve{\gamma_{1}}),\ldots,\theta_{g}(\breve{\gamma_{r}})\}$. Their centres depend only on the first coefficient $b_{1}$ of the power series $\{\eta_{1}(x),\ldots,\eta_{r}(x)\}$ defining these branches. We consider the effect of $\theta_{g}$ on $\gamma$.  The power series representation of $\theta_{g}(\gamma)$ is given by;\\

$(k'+x,{(a'-a)+\eta(x)\over (k'-k)+x}-{a\over k})=(k'+x,{\epsilon_{1}+\eta(x)\over\epsilon_{2}+x}-{a\over k})$\\

so, the centre of $\gamma_{1}=\theta_{g}(\gamma)$, specialises to $(k,a_{1}-{a\over k})$. By elementary properties of specialisations, one of the branches $\{\breve{\gamma_{1}},\ldots,\breve{\gamma_{r}}\}$ must therefore be centred at $(k,a_{1}-{a\over k})=(k,c_{1})$, requiring that $b_{1}=a_{1}$. We repeat the process for the new function;\\

$g_{1}(x,z)={(z-c_{1})\over (x-k)}-{c_{1}\over k}$\\

Now the centre of $\theta_{g_{1}}(\gamma_{1})$, specialises to $(k,a_{2}-{{c_{1}\over k}})$. Again, one of the remaining branches in $\{\breve{\gamma_{1}},\ldots,\breve{\gamma_{r}}\}$ must be centred at $(k,a_{2}-{c_{1}\over k})=(k,c_{2})$, requiring that, the second coefficient of the defining series $b_{2}=a_{2}$. We repeat the process, until we are left with a single silver branch $\breve{\gamma_{1}}$ say, along the axis $x=k$, for the curve $D_{t_{\infty},r_{\infty}}$. The original branch $\gamma$ is also isolated, using, for example, preservation of singularities, see Remarks 5.11. Hence, the lemma is shown

\end{proof}

\begin{rmk}
As an immediate consequence of the proof, we see that for the remaining silver branch $\breve{\gamma}$, the original power series representation is given by;\\

$a+\breve{\eta}(x)$, where $\breve{\eta}(x)=\sum_{j=1}^{\infty}a_{j}x^{j}$\\

By applying the finite number of birational transformations, in the previous lemma, to the series $(a'+\eta(x))$ and $(a+\breve{\eta}(x))$, we obtain, in the final situation, that the defining series of $\{\gamma,\breve{\gamma}\}$ are related by specialisation.

\end{rmk}

We make the following definition;\\

\begin{defn}
By a general nodal degeneration, we mean a family of curves $\{C_{t}:t\in P^{1}\}$, satisfying $(i)-(iv)$ of the conditions at the beginning of Section 1, together with $(v')$. $C_{\infty}$ is a union of $m$ distinct lines $\{l_{1},\ldots,l_{m}\}$.\\
 \end{defn}

A desirable property of general nodal degenerations is the following;\\

\begin{defn}
We say that a general nodal degeneration has good specialisation, if, for any given node of the generic curve $C_{t}$, centred at $(a',b')$, and specialising to a point $(a,b)$, formed by the intersection of two lines $\{l_{1},l_{2}\}$ of the degenerate curve $C_{\infty}$; if $\{c',d'\}$ denote the gradients of the tangent lines to the two branches of the node on $C_{t}$, then the specialisations $\{c,d\}$ are the gradients of the two lines $\{l_{1},l_{2}\}$.

\end{defn}

\begin{rmk}
It is always true that the point $(a',b')$ specialises to a point $(a,b)$ formed by the intersection of two lines. This is simply a consequence of the preservation of singularities. More specifcally, if the family of curves is determined by a polynomial;\\

$F(X,Y,W;t)=\sum_{i+j+k=m}a_{ijk}(t)X^{i}Y^{j}Z^{k}$\\

Then the nodes of $C_{t}$ are determined by the intersections of the varieties ${\partial F\over\partial X}={\partial F\over\partial Y}={\partial F\over\partial Z}=0$. For a point $(a',b')$ belonging to this intersection, the specialisation also belongs to this intersection at $t=\infty$. This is clearly a singular point of the variety $C_{\infty}$, hence, a point formed by the intersection of two lines. However, the stronger property concerning specialisation of gradients is not necessarily true; Zariski gives a counterexample in his paper \cite{Zariski}, refuting Severi's original proof of his conjecture.

\end{rmk}

We show how to correct this error;\\

\begin{theorem}
An asymptotic general nodal degeneration has good specialisation.
\end{theorem}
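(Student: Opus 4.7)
The plan is to identify the two branches of a generic node with two distinct flash indices via the structure of Section 3, then transport the tangent slopes through the degeneration using the uniformity and limit arguments of Section 4.

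First I would fix a generic node of $C_t$ and track its centre $(a'(t), b'(t))$ as a section over $P^1$ (possibly after a finite base change). By Lemma 3.31 together with Definition 3.41, this node corresponds to a unique branch $\gamma_{t,r} \in \Gamma_\nu \cap (\Gamma_{j_1} \cap \Gamma_{j_2})$ of $D_{t,r}$, for indices $j_1 < j_2$ which, by irreducibility of the cover $S \to P^1$, can be taken constant on a Zariski-open parameter set. The two branches of the node are precisely $pr_1(\eta_{j_k, s}(\gamma_{t,r}))$ for $k = 1, 2$. Using the uniform versions of Lemmas 3.14 and 3.42 provided by Lemma 4.6 and Remarks 4.7, I would arrange that $\gamma_{t,r}$ is isolated in finite position at some $(a'(t), c'(t)) \in D_{t,r}$.

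Next I would specialise. As $t \to \infty$, $(a'(t), c'(t))$ specialises to $(a, c) \in D_{t_\infty, r_\infty}$ and $\gamma_{t,r}$ to a branch $\breve\gamma$ on that curve; a uniform variant of Lemma 5.8 shows that $\breve\gamma$ is silver and lies on a unique component $D^i \subset D_{t_\infty, r_\infty}$. By Lemma 4.3(iv)(a,b), the reduced functions $\eta^{red}_{j_k, s_\infty}|_{D^i}$ are birational onto image curves $C_{i, j_k, \infty}$ each covering a line of $C_\infty$, which I rename $l_1$ and $l_2$; since $pr_1 \circ \eta^{red}_{j_k, s_\infty}$ sends $(a, c)$ to $(a, b)$, both lines pass through $(a, b)$. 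Parametrising $\gamma_{t,r}$ near its centre as $(a'(t) + x, c'(t) + \lambda(t) x + O(x^2))$, the tangent slope of the $k$-th branch of the node on $C_t$ is
\[
c'_k(t) \;=\; \partial_x \eta_{j_k, s}(a'(t), c'(t)) \;+\; \lambda(t)\, \partial_z \eta_{j_k, s}(a'(t), c'(t)),
\]
and the slope of $l_k$ is computed by the same formula at $(a, c)$ using $\eta^{red}_{j_k, s_\infty}$ (where the result is constant along the line). I would then invoke the infinitesimal calculations from the proof of Lemma 4.3 to conclude that $c'_k(t)$ specialises to the slope of $l_k$, obtaining $\{c, d\} = \{\mathrm{slope}(l_1), \mathrm{slope}(l_2)\}$ as required.

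The hard part will be ensuring that the two derivative factors above specialise correctly. This is exactly where Zariski's counterexample enters: the reduction step of Lemma 4.3, which strips off factors of $\eta_{j_k, s}$ vanishing on components of $D_{t_\infty, r_\infty}$, can in principle distort derivatives at points of non-transverse contact. The resolution is to observe that, after the isolation step, $(a, c)$ lies on a unique component $D^i$ of the degenerate curve and $\breve\gamma$ is silver on $D^i$ by Lemma 4.10, so $D^i$ is smooth at $(a, c)$ and transverse to $x = a$. Combined with Lemma 4.3(iv)(c), which keeps $\eta^{red}_{j_1, s_\infty}$ and $\eta^{red}_{j_2, s_\infty}$ distinct on $D^i$, this excludes the pathological collapse of tangent directions exploited by Zariski and yields the required commutativity of specialisation with differentiation. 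The asymptotic hypothesis of Definition 4.1, which pins the intersection with $x=0$ and its tangent data, is precisely what guarantees the uniformity needed for this last step.
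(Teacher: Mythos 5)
Your proposal follows essentially the same route as the paper's proof: isolate a silver branch of $D_{t,r}$ that lifts the two branches of the node (via the flash machinery of Lemmas 3.30--3.31, 3.36, 3.43 and the isolation Lemma 5.7), specialise it to the unique silver branch of $D_{t_{\infty},r_{\infty}}$, identify the two limit lines through the value maps of Lemma 5.5 / Lemma 4.3(iv), and transport the tangent slopes by differentiating the $\eta_{j}$'s and commuting the derivative with the specialisation (the point you resolve geometrically via smoothness and transversality of the isolated silver branch is the same one the paper addresses computationally in footnote 27). The only substantive difference is that the paper explicitly restricts to the case of $m$ lines through a single point $O$ and leaves the general configuration as an exercise, whereas your write-up is phrased for the general case while still leaning on Lemmas 5.5--5.7, which are only established in that special setting.
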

\begin{proof}
We will consider only the special case when the degenerate fibre consists of $m$ distinct lines passing through a point $O$. The more general case of $m$ distinct lines is left as an exercise for the reader. Let $\{\rho_{1},\rho_{2}\}$ be branches of a given node of the generic curve $C_{t}$. By Lemmas 3.36 and 3.43, we can find a silver branch $\gamma\in D_{t,r}$, which is isolated in finite position, and such that, without loss of generality, $\eta_{1,r,s}(\gamma)$ and $\eta_{2,r,s}(\gamma)$ lift the branches $\rho_{1}$ and $\rho_{2}$ respectively, with value $\epsilon$. We have, using, for example, Remarks 5.11, that $\gamma$ is centred at a point $(k',a')$, specialising to $(k,a)$. By Lemma 5.7, we can assume that there exists a unique silver branch $\breve{\gamma}$ of the degenerate curve $D_{t_{\infty},r_{\infty}}$, centred at $(k,a)$. The point $(t,r,s,k',\epsilon,a')$ belongs to the closed variety $W_{1,2}$, where $W_{j}$ is defined in Lemma 4.3;\\

$W_{1,2}(t,r,s,x,y,z)\equiv W_{1}(t,r,s,x,y,z)\wedge W_{2}(t,r,s,x,y,z)$\\

Hence, so does the specialisation $(t_{\infty},r_{\infty},s_{\infty},k,0,a)$. It follows from the description in Lemma 4.3(iv), that;\\

$val_{\breve{\gamma}}(\eta_{1,r_{\infty},s_{\infty}}^{red})=val_{\breve{\gamma}}(\eta_{2,r_{\infty},s_{\infty}}^{red})=0$\\

By Lemma 5.5, it follows that $\eta_{1,r_{\infty},s_{\infty}}^{red}(\breve{\gamma})$ and $\eta_{2,r_{\infty},s_{\infty}}^{red}(\breve{\gamma})$ lift the branches $\breve{l_{1}}$ and $\breve{l_{2}}$ respectively, where $\{\breve{l_{1}},\breve{l_{2}}\}$ are the branches formed by the intersection of a pair of lines at the origin $O$.\\

We now compute the slopes of the branches $\rho_{1}$ and $\rho_{2}$, denoted $\{c',d'\}$. Recall that the plane curves $D_{r}(x,z)$ are parametrised by;\\

$F(x,z;r)=\sum_{i+j\leq k}r_{ij}x^{i}z^{j}$\\

and the rational function $\eta_{1,r,s}(x,z)={g_{s_{1}^{1}}(x,z)\over g_{s_{1}^{2}}(x,z)}$. We have that;\\

$c'=D(\eta_{1,r,s})_{(k',a')}\centerdot D(F_{r})_{(k',a')}={\partial\eta_{1,r,s}\over\partial x}{\partial F_{r}\over\partial x}|_{(k',a')}+{\partial\eta_{1,r,s}\over\partial y}{\partial F_{r}\over\partial y}|_{(k',a')}$\\

$d'=D(\eta_{2,r,s})_{(k',a')}\centerdot D(F_{r})_{(k',a')}={\partial\eta_{2,r,s}\over\partial x}{\partial F_{r}\over\partial x}|_{(k',a')}+{\partial\eta_{2,r,s}\over\partial y}{\partial F_{r}\over\partial y}|_{(k',a')}$\\

Using a similar limit calculation to the above, (\footnote{The differential calculation should be checked carefully, one should compute the differential of ${g_{s_{1}^{1}}(x,z)\over g_{s_{1}^{2}}(x,z)}$ by parts, first, and then, as was done above, find the specialisation of this function, without specialising the corresponding point. It is easily checked that this specialisation is the same as the differential of $\eta_{1,r_{\infty},s_{\infty}}^{red}$}), we have;\\

$sp(c')=sp(D(\eta_{1,r,s})_{(k',a')}\centerdot D(F_{r})_{(k',a')})=D(\eta_{1,r_{\infty},s_{\infty}}^{red})_{(k,a)}\centerdot D(F_{r_{\infty}})_{(k,a)}$\\

$sp(d')=sp(D(\eta_{2,r,s})_{(k',a')}\centerdot D(F_{r})_{(k',a')})=D(\eta_{2,r_{\infty},s_{\infty}}^{red})_{(k,a)}\centerdot D(F_{r_{\infty}})_{(k,a)}$\\

By the same reasoning as above, we see that $sp(c')=c$ and $sp(d')=d$ as required.
\end{proof}

It is clearly impossible to obtain an asymptototic degeneration, in the sense of Definition 4.1, to a set of lines passing through a single point $O$, without imposing the additional assumption on $C$, that all of the tangent lines to the branches along the axis $x=0$ pass through the point $O$. However, we can find a weaker version of Definition 4.1(iii), for which the results we have shown still hold, namely;\\

(iii)'. Almost every curve $C_{t}$, has $m$ distinct non-singular branches, centred at the points $\{(0,a_{1}),\ldots,(0,a_{m})\}$. We will also refer to this condition as asymptotic.\\

The specific construction of asymptotic degenerations to lines through a given point, follows using a projective method, generally referred to as Severi's cone construction. We consider a class of degenerations, obtained by projecting a fixed irreducible curve $C\subset P^{3}$, onto a plane $H\subset P^{3}$, from a variable point $Q$. We first show the following lemma;\\

\begin{lemma}

Let $C\subset P^{3}$ be an irreducible projective curve, not contained in a hyperplane section. Let ${\omega,l}$ be a hyperplane  and a line, respectively, with $l\varsubsetneq\omega$, and $(l\cap\omega)=Q$. Let $Par_{m}=P^{N}$ denote the parameter space for plane projective curves of degree $m$, where ${N=m(m+3)\over 2}$. Then, there exist morphisms $\{\theta,\phi\}$;\\

$\theta:P^{1}\rightarrow Par_{t}, C_{t}\ defined\ by\ \theta(t)$\\

$\phi:P^{1}\cong l$, $\phi(t)=Q_{t}$, $Q=Q_{0}$\\

with the property that, for $t\neq 0$, $C_{t}$ is the plane projective curve, obtained as the projection of $C$ from $Q_{t}$ to $\omega$, see the remarks at the beginning of Section 4 of \cite{depiro6}.

\end{lemma}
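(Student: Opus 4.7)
The plan is to parametrize $l$ linearly to construct $\phi$, then realize the family of projected curves as an algebraic family whose defining equation has coefficients polynomial in $t$, yielding $\theta$ as a morphism of projective varieties.

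First I would pick any point $Q'\in l\setminus\{Q\}$ and define $\phi:P^{1}\to l$ to be the unique projective linear isomorphism sending $[1:0]\mapsto Q$ and $[0:1]\mapsto Q'$, setting $Q_{t}:=\phi([1:t])$ in the affine chart so that $Q_{0}=Q$. Since $l\cap\omega=\{Q\}$, for every $t\neq 0$ the point $Q_{t}$ lies in $l\setminus\omega$, and the conic projection $\pi_{Q_{t}}:P^{3}\setminus\{Q_{t}\}\to\omega$ is well defined. Because $C$ is not contained in a hyperplane, it is certainly not contained in any line through $Q_{t}$, so $C_{t}:=\overline{\pi_{Q_{t}}(C\setminus\{Q_{t}\})}$ is a genuine plane curve in $\omega$; for generic $t$ the projection $\pi_{Q_{t}}|_{C}$ is birational onto its image, forcing $\deg(C_{t})=\deg(C)=:m$.

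Next I would establish algebraicity of the family by introducing the Chow form $R_{C}$ of $C$, a homogeneous polynomial of degree $m$ on the Grassmannian of lines in $P^{3}$, vanishing precisely on lines meeting $C$ (equivalently, if one prefers to avoid the Grassmannian language, one can work with explicit equations defining $C$ and use iterated resultants to compute the equation of the cone $\overline{Q_{t}C}$, which restricts to $C_{t}$ on $\omega$). For $R\in\omega$, the condition $R\in C_{t}$ is equivalent to $R_{C}(\overline{Q_{t}R})=0$. Since the Pl\"ucker coordinates of $\overline{Q_{t}R}$ are bilinear in $Q_{t}$ and $R$, substitution into $R_{C}$ produces a bihomogeneous polynomial $\mathcal{F}(t,R)$ of bidegree $(m,m)$ whose zero locus in $P^{1}\times\omega$ is the universal projected curve $\mathcal{C}$, with fiber $\mathcal{C}_{t}=C_{t}$ for every $t\neq 0$. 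Expanding $\mathcal{F}$ as a polynomial in the coordinates of $R$ yields $N+1$ coefficients which are homogeneous polynomials in $t$ of a common degree; viewed projectively, these define the desired morphism $\theta:P^{1}\to Par_{m}=P^{N}$.

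The main obstacle will be ensuring that the coefficient vector never vanishes identically in $t$, so that $\theta$ is everywhere defined. Identical vanishing of $\mathcal{F}(t_{0},\cdot)$ as a polynomial in $R$ would force every line through $Q_{t_{0}}$ to meet $C$, which is impossible when $Q_{t_{0}}\notin C$; and for the finitely many values of $t$ (including possibly $t=0$) at which $Q_{t}\in C$ the cone $\overline{Q_{t}C}$ may acquire an extraneous factor which must be divided out, equivalently one extends the rational map $P^{1}\dashrightarrow P^{N}$ across the finite indeterminacy set using properness of $P^{N}$. Interpreting $Par_{m}=P^{N}$ as the parameter space of plane cycles of degree $m$ (as is forced by the fact that $P^{N}$ contains reducible and non-reduced curves), the resulting morphism $\theta$ agrees with the conic projection of $C$ from $Q_{t}$ to $\omega$ for every $t\neq 0$, as required.
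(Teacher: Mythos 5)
Your proof is correct, and the overall strategy coincides with the paper's: realize the projected curves as the fibres of an algebraic family over $P^{1}$ whose defining equation has coefficients polynomial in $t$, and read off $\theta$ from those coefficients. Where you differ is in the device used to establish algebraicity. The paper defines the incidence correspondence $C_{t}(x,y)$ by a first-order formula (a point, a point of $C$, and a scalar $\lambda$ witnessing collinearity with $Q_{t}$), takes its Zariski closure in $A^{3}$, invokes irreducibility and dimension to obtain a single equation $G(x,y,t)=0$, and projectivizes; you instead substitute the Pl\"ucker coordinates of $\overline{Q_{t}R}$, bilinear in $t$ and $R$, into the Chow form of $C$ to get the bihomogeneous equation $\mathcal{F}(t,R)$ directly. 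Your route buys an explicit bidegree $(m,m)$ and, more importantly, a clean treatment of the point the paper dismisses as ``a simple exercise'': you identify exactly when the coefficient vector could vanish identically (only when $Q_{t_{0}}\in C$, a finite set of parameters) and extend across that indeterminacy locus by the valuative criterion, i.e.\ the fact that a rational map from the smooth curve $P^{1}$ to the projective space $P^{N}$ is automatically a morphism. The paper's route is more elementary in that it needs no Grassmannian machinery and works directly from the defining equations $F_{1}=F_{2}=0$ of $C$, but it leaves unexamined both the constancy of the degree in $(X,Y,W)$ and the well-definedness of $\theta$ at the special parameters. One small caution common to both arguments: for non-generic $t\neq 0$ the projection may fail to be birational onto its image, in which case the degree-$m$ form you obtain is a power of the reduced equation; this is harmless only because $Par_{m}=P^{N}$ is read as parametrizing degree-$m$ divisors rather than reduced curves, as you correctly note at the end.
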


\begin{proof}
Without loss of generality, by making a linear change of variables, we can suppose that the coordinates of $P^{3}$ are given by
$\{X,Y,Z,W\}$ and $\{X,Y,W\}$ denote the coordinates of the hyperplane $\omega$, embedded into $P^{3}$ by the map;\\

$i:P^{2}\rightarrow P^{3}$\\

$[X:Y:W]\mapsto [X:Y:0:W]$\\

We denote the restriction of these coordinates to $A^{2}$ and $A^{3}$ by $(x,y)$ and $(x,y,z)$ respectively, and we have a corresponding inclusion;\\

$i:A^{2}\rightarrow A^{3}$\\

$(x,y)\mapsto (x,y,0)$\\

We can assume that $Q$ corresponds to the origin $(0,0,0)$ of the coordinate system $(x,y,z)$, with projective coordinates $[0:0:0:1]$, and $l$ corresponds to the line $x=y=0$.  Let $C$ be defined by the affine equations $F_{1}(x,y,z)=F_{2}(x,y,z)=0$, and let $(0,0,t)$ parametrise a point $Q_{t}$, along the line $l$. Let;\\

$C_{t}(x,y)\equiv\exists\lambda\exists z_{1}\exists z_{2}\exists z_{3}[F_{1}(z_{1},z_{2},z_{3})=F_{2}(z_{1},z_{2},z_{3})=0\\
\indent \indent \indent \indent \indent \indent \indent \indent \indent \ \ \ \ \ \ \ \ \ \ \wedge\lambda(z_{1},z_{2},z_{3})+(1-\lambda)(0,0,t)=(x,y,0)]$ $(*)$\\

By the assumptions on $C$, the construction in Section 4 of \cite{depiro6}, and the definition $(*)$, for $t\neq 0$, $\overline{C_{t}(x,y)}=pr_{Q_{t}}(C)$, where the closure is taken inside the projective plane $\omega$ or $P^{2}$. We let $\overline{C}\subset A^{3}$ denote the closure of $C(x,y,t)$. By elementary dimension considerations, and the fact that the generic fibre $C_{t}(x,y)$ is irreducible, being an open subset of the irreducible curve $pr_{Q_{t}}(C)$, $\overline{C}$ is also irreducible. One can, therefore, find a single equation $G(x,y,t)=0$, defining the surface $\overline{C}$. By making the substitutions $x={X\over W}$ and $y={Y\over W}$, we can find a homogeneous polynomial $H(X,Y,W,t)$, of degree $m$, in the variables $\{X,Y,W\}$, whose restriction to $A^{3}$, corresponds to $G(x,y,t)$. We define $\theta$, as a morphism to $Par_{m}$, by taking the coefficients of the polynomial $H$. It is a simple exercise to check that the conditions of the lemma are satisfied.\\

\end{proof}

We observe the following further property of the degenerations, given in Lemma 5.13.

\begin{lemma}{Projective Degenerations}\\

Let $C\subset P^{3}$ be an irreducible projective curve and $\omega$ a hyperplane, intersecting $C$ transversely in $m$ distinct points $\{P_{1},\ldots,P_{m}\}$, where $m=deg(C)$. Let $l$ be a line, intersecting $\omega$ in a new point $Q$, then the curve $C_{0}$ consists of the union of lines $\bigcup_{1\leq j\leq m}l_{P_{j}Q}$.
\end{lemma}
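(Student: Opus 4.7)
The plan is to exploit the explicit coordinate-based formula that defines the family $\{C_t\}$ in the proof of Lemma 5.13, and simply substitute $t=0$. Adopting the same setup, I would place $Q$ at the origin $(0,0,0)$ of the affine system $(x,y,z)$, take $\omega$ to be the plane $z=0$, parametrise $l$ by $Q_t = (0,0,t)$, and note that the existential formula
\[
C_t(x,y) \equiv \exists\lambda\, \exists(z_1,z_2,z_3)\in C\; [\lambda(z_1,z_2,z_3) + (1-\lambda)(0,0,t) = (x,y,0)]
\]
continues to make sense at $t=0$ and should describe the degenerate fibre.

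Setting $t=0$ in this formula yields the three equations $\lambda z_1 = x$, $\lambda z_2 = y$, and $\lambda z_3 = 0$. The third equation forces either $\lambda = 0$, which gives only the single point $Q=(0,0)$, or $z_3 = 0$, which means $(z_1,z_2,0) \in C\cap\omega = \{P_1,\ldots,P_m\}$. In the latter case, writing $P_j$ affinely as $(p_{j1},p_{j2})$, the pair $(x,y) = \lambda(p_{j1},p_{j2})$ traces out the entire affine line through $Q$ and $P_j$ as $\lambda$ varies. Taking Zariski closure in $A^2$ and then projectivising inside $\omega$, I would read off the set-theoretic equality $C_0 = \bigcup_{j=1}^m l_{P_j Q}$ at once.

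To promote this to an equality of elements of $Par_m$, i.e. to pin down the limit polynomial $H(X,Y,W,0)$ up to a nonzero scalar, I would use a degree-and-reducedness argument. The transversality of $C\cap\omega$ guarantees that the $m$ points $P_j$ are distinct, so the $m$ lines $l_{P_j Q}$ through the common centre $Q$ are distinct as well, and their union is a reduced plane curve of degree exactly $m$. Consequently $H(X,Y,W,0)$ must factor as $c\prod_j L_j^{n_j}$, where $L_j$ is a linear form cutting out $l_{P_j Q}$ and $\sum_j n_j = \deg H(X,Y,W,0) \leq m$; since $m$ distinct linear factors are already forced to appear, each $n_j = 1$ and $H(X,Y,W,0) = c\prod_j L_j$ up to scalar, as required. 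The main obstacle I anticipate is ruling out a non-reduced or lower-degree collapse of $H$ in the limit — but the transversality hypothesis on $C\cap\omega$, supplying exactly $m$ distinct intersection points, is precisely what prevents this, since it pins the total degree of the limit polynomial at $m$ from below through the $m$ distinct irreducible components in its zero locus.
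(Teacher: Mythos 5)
Your argument is essentially correct, but it reaches the conclusion by a different route from the paper. The paper lifts the whole family to $P^{3}$: it shows that each $C_{t}$ is the intersection $Cone_{Q_{t}}(C)\cap\omega$, passes to the closure of the family of cones, and then computes $Cone_{Q_{0}}(C)\cap\omega$ directly, observing that a generator $l_{wQ_{0}}$ meets $\omega$ only in $Q_{0}$ when $w\in C\setminus\omega$ and is contained in $\omega$ when $w\in\{P_{1},\ldots,P_{m}\}$. You instead stay in the plane, evaluate the existential formula at $t=0$ to exhibit the $m$ lines inside the fibre, and then pin down the limit polynomial $H(X,Y,W,0)$ by divisibility and degree. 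Your second step is a genuine improvement in rigour over the paper, which never addresses why the limit in $Par_{m}$ carries no extra or embedded components. Two cautions, though. First, your sentence claiming to ``read off the set-theoretic equality at once'' from the $t=0$ formula is too quick in the wrong direction: the naive fibre of the constructible set is only \emph{contained} in the fibre of the closure (the latter also picks up limits of points from nearby fibres), so at that stage you have only $\bigcup_{j}l_{P_{j}Q}\subseteq C_{0}$; it is precisely your degree argument that supplies the reverse inclusion, so the two paragraphs should not be presented as independent. Second, the degree argument needs $H(X,Y,W,0)\not\equiv 0$ before ``$m$ distinct linear factors are forced to appear'' means anything; this follows because $t$ does not divide the irreducible polynomial $G(x,y,t)$ cutting out the surface $\overline{C}$ (equivalently, because $\theta$ is a morphism into $P^{N}$, one divides out any common power of $t$ before specialising). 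You should also note that the $m$ lines $l_{P_{j}Q}$ are pairwise distinct only if $Q$ lies on no secant $l_{P_{i}P_{j}}$; the paper silently assumes this as well, and without it the degree count leaves room for a spurious component of degree $m-k$.
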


\begin{proof}
We show that the curves $C_{t}$ are obtained as the intersection of $\omega$, with the cone $Cone_{Q_{t}}(C)$, for $t\in P^{1}$, $(*)$. We, first, consider the case, that $t\neq 0$, and, without loss of generality, assume that $Q_{t}\notin C$, the case when $Q_{t}\in C$ being left to the reader. It is a straightforward exercise, to check that;\\

$Cone_{Q_{t}}(C)=\bigcup_{w\in C}l_{Q_{t}w}$\\

is a closed, irreducible,subvariety of $P^{3}$. The fact that $pr_{Q_{t}}(C)=(Cone_{Q_{t}}(C)\cap\omega)$ follows immediately from the definitions of the projection $pr_{Q_{t}}$, and the cone $Cone_{Q_{t}}(C)$. As before, we assume that $C$ is defined by the affine equations $F_{1}(x,y,z)=F_{2}(x,y,z)=0$, $Q_{t}$ is parametrised by $(0,0,t)$ and $\omega$ is defined by $z=0$, in the coordinate system $(x,y,z)$. Let;\\

$D_{t}(x,y,z)\equiv\exists\lambda\exists z_{1}\exists z_{2}\exists z_{3}[F_{1}(z_{1},z_{2},z_{3})=F_{2}(z_{1},z_{2},z_{3})=0\\
\indent \indent \indent \indent \indent \indent \indent \indent \ \ \ \ \ \ \ \ \ \ \ \ \ \wedge\lambda(z_{1},z_{2},z_{3})+(1-\lambda)(0,0,t)=(x,y,z)]$\\

Let $\overline D\subset A^{4}$ denote the closure of $D(x,y,z,t)$. As before, using the fact that the generic fibre $D_{t}(x,y,z)$ is an open subset of the irreducible variety $Cone_{Q_{t}}(C)$, $\overline{D}$ is irreducible and we can find a single equation $G(x,y,z,t)=0$, defining the surface $\overline{D}$. By projectivizing the polynomial $G$, we can find a single homogeneous polynomial $G(X,Y,Z,W,t)$, parametrising the cones $Cone_{Q_{t}}(C)$. We let;\\

$H(X,Y,W,t)\equiv G(X,Y,0,W,t)$\\

By definition, $H$ parametrises the intersections $(Cone_{Q_{t}}(C)\cap\omega)$. It is a simple exercise, to check that this is a different definition of the same variety, given in Lemma 8.1, hence $(*)$ is shown. In order to complete the proof, it is sufficient to compute the intersection $(Cone_{Q_{0}}(C)\cap\omega)$. If $w\in (C\setminus\omega)$, then $(l_{wQ_{0}}\cap\omega)=Q_{0}$, whereas, if $w\in (C\cap\omega)$, then $(l_{wQ_{0}}\cap\omega)=l_{wQ_{0}}$, using the fact that $Q_{0}\in\omega$. The result follows immediately, from the fact that $(C\cap\omega)=\{P_{1},\ldots,P_{m}\}$, and distinctness of the point $Q=Q_{0}$.
\end{proof}

The above lemmas allow us to easily construct degenerations to lines through a given point. In order to obtain the extra asymptotic conditions, we require the following variant;\\

\begin{lemma}{Mirror Construction}\\

Let $\{\omega_{1},\omega_{2}\}\subset P^{3}$ be two planes intersecting in a line $l$. Let $\overline{C}\subset\omega_{1}$, with the property that $pr_{P}(\overline C)=C$, relative to the hyperplane $\omega_{2}\subset P^{3}$. Let $l'$ be a line, passing through $P$, and intersecting $\omega$ in $Q$. The set of projections;\\

$pr_{x}(\overline{C}):x\in (l'\setminus Q)\}$\\

together with the limit curve $"pr_{Q}(\overline{C})"$, forms a projective degeneration, in the sense of the previous lemma and remarks at the beginning of Section 1. Moreover, if $\{P_{1},\ldots,P_{m}\}$ denote the points of intersections $({\overline C\cap l})=(C\cap l)$, then the degeneration is asymptotic relative to the axis $l$.

\end{lemma}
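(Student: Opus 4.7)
The strategy is to adapt the projective-degeneration machinery of Lemma 5.13 and Lemma 5.14 to the situation in which the curve $\overline{C}$ being projected lies inside a fixed plane $\omega_1$. The simplifying feature is that, for any $x \in P^3$ outside $\omega_1 \cup \omega_2$, the central projection $pr_x|_{\omega_1}\colon \omega_1 \to \omega_2$ is a projective linear isomorphism fixing the line $l = \omega_1 \cap \omega_2$ pointwise, since any $w \in l$ already belongs to $\omega_2$ and so $pr_x(w) = w$.

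First I would build the algebraic family following the proof of Lemma 5.13 essentially verbatim: identify $l' \cong P^1$ with $Q \leftrightarrow 0$, take an affine equation for $\overline{C}$ in $\omega_1$, and substitute the parametric description of the projection centre $x_t \in l'$. This furnishes a morphism $\theta\colon P^1 \to \mbox{Par}_m$ whose fibre over $t \neq 0$ is $pr_{x_t}(\overline{C})$, and the projective closure of the total space supplies the limit fibre over $t = 0$. I would then identify this limit by the cone argument from Lemma 5.14: since $Q \in \omega_2 \setminus \omega_1$, for $w \in \overline{C} \setminus l$ the line $l_{Qw}$ meets $\omega_2$ only at $Q$, while for $w = P_j \in \overline{C} \cap l$ the line $l_{QP_j}$ lies entirely in $\omega_2$; hence $\mbox{Cone}_Q(\overline{C}) \cap \omega_2 = \bigcup_{j=1}^{m} l_{P_j Q}$, the required concurrent configuration of $m$ lines through $Q$.

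For the asymptotic condition relative to $l$, the isomorphism $pr_x|_{\omega_1}$ sends $\overline{C}$ isomorphically onto the plane curve $pr_x(\overline{C}) \subset \omega_2$ and fixes each $P_j$, so the transversality and non-singularity of $\overline{C} \cap l$ at the $P_j$ transfer directly to $pr_x(\overline{C}) \cap l$. This produces $m$ distinct non-singular branches centred at the fixed points $\{P_1, \ldots, P_m\}$ uniformly for $x$ ranging over a cofinite subset of $l'$, which is precisely condition $(iii)'$. The main delicate step to spell out is the verification that $P \notin \omega_1$, so that the generic projection centre $x \in l'$ automatically avoids $\omega_1 \cup \omega_2$ and the key isomorphism $pr_x|_{\omega_1}$ really is an isomorphism of planes; this is implicit in the hypothesis that $pr_P(\overline{C}) = C$ is a genuine nontrivial projection between distinct planes, but without it the simplification at the heart of the argument would collapse.
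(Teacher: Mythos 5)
Your proposal is correct and follows essentially the same route as the paper: the family is built as in Lemma 5.13, the limit fibre is identified as $\bigcup_{j}l_{P_{j}Q}$ by the cone argument of Lemma 5.14, and the asymptotic condition comes from the points $\{P_{1},\ldots,P_{m}\}$ lying on $l\subset\omega_{2}$ and hence being fixed by every projection. Your explicit use of the perspectivity $pr_{x}|_{\omega_{1}}\colon\omega_{1}\rightarrow\omega_{2}$ for $x\notin(\omega_{1}\cup\omega_{2})$, together with the check that $P\notin\omega_{1}$, is a worthwhile sharpening: where the paper appeals to external results on degree and genus preservation under projection for a suitable choice of $P$, your observation makes the birationality, the preservation of the nodal structure, and the non-singularity and transversality of the branches at the $P_{j}$ automatic.
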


\begin{proof}

The proof that this gives a degeneration, satisfying the previous lemma and remarks at the beginning of Section 1, is given in the previous lemma and the paper \cite{depiro6}, using Lemma 8.1 to show that degree is preserved, and Lemma 8.3, to show that the genus may be preserved for a suitable choice of $P$ (a birational projection will preserve the genus of the original curve). The limit curve $"pr_{Q}(C)"$ is not formally defined in terms of projections, however, is easily shown to be a union $\bigcup_{y\in(\overline{C}\cap\omega_{1})}l_{Qy}$, by the previous lemma. Again, the reader should look at the paper \cite{depiro6}, Lemma 8.2. The asymptotic condition follows straightforwardly from the fact that each curve in the degeneration contains the points $\{P_{1},\ldots,P_{m}\}$, which are fixed by each projection, being in the plane  $\omega_{2}$.\\

\end{proof}

The above results correct the erroneous step of Severi's conjecture, found in \cite{Severi}, as pointed out by Zariski in \cite{Zariski}. Zariski's counterexample fails to apply to Severi's cone construction, as it is not asymptotic. We will leave a closer examination of Severi's proof to another paper.

\end{section}

\end{document}